\documentclass[12pt]{amsart}

\author{Paul \textsc{Poncet}}
\address{CMAP, \'{E}cole Polytechnique, Route de Saclay, 91128 Palaiseau Cedex, France}

\email{poncet@cmap.polytechnique.fr}

\usepackage[colorinlistoftodos,bordercolor=orange,backgroundcolor=orange!20,linecolor=orange,textsize=scriptsize]{todonotes}
\usepackage{stmaryrd}
\usepackage{amssymb}
\usepackage{amsmath}%
\usepackage{t1enc}
\usepackage{color}
\usepackage[latin1]{inputenc}

\usepackage{calrsfs} 
\usepackage{times} 
\usepackage{bbm}
\usepackage{tikz}
\usetikzlibrary{arrows}

\pagestyle{plain}

\def\twoheaddownarrow{\rlap{$\downarrow$}\raise-.5ex\hbox{$\downarrow$}}
\def\twoheaduparrow{\rlap{$\uparrow$}\raise.5ex\hbox{$\uparrow$}}

\DeclareMathOperator*{\Max}{Max}
\DeclareMathOperator*{\Min}{Min}
\DeclareMathOperator*{\Irr}{Irr}
\DeclareMathOperator*{\rMax}{rMax}

\DeclareMathOperator*{\ex}{ex}
\DeclareMathOperator*{\cp}{cp}

\DeclareMathOperator*{\car}{car}
\DeclareMathOperator*{\Cop}{Cop}
\DeclareMathOperator*{\kit}{kit}

\newcommand{\cc}{\mathfrak{c}}
\newcommand{\sss}{\mathfrak{s}}
\newcommand{\ttt}{\mathfrak{t}}

\newcommand{\doublesetminus}{\!\setminus\!\!\setminus}

\newtheorem*{theorem*}{Theorem}

\newtheorem{theorem}{Theorem}[section]
\newtheorem{corollary}[theorem]{Corollary}
\newtheorem{proposition}[theorem]{Proposition}
\newtheorem{lemma}[theorem]{Lemma}

\theoremstyle{definition}
\newtheorem{definition}[theorem]{Definition}
\newtheorem{example}[theorem]{Example}

\newtheorem{remark}[theorem]{Remark}
\newenvironment{acknowledgements}[1][]{\par\noindent\textbf{Acknowledgements#1.} }{\par}

\begin{document}

\title{Order-generation in posets and \\ convolution of closure operators}
\date{\today}

\subjclass[2020]{06A06, 
                 06B35, 
                 52A01, 
                 54A05} 

\keywords{poset, semilattice, irreducible element, prime element, order-generation, continuous poset, continuous lattice, preclosure operator, closure operator, abstract convexity, extreme point, Krein--Milman property}

\begin{abstract}
Motivated by the Hofmann--Lawson theorem, which states that every continuous lattice is inf-generated by its irreducible elements, we explore how to represent posets by extreme points with respect to a closure operator. 
For this purpose, we introduce the convolution product of closure operators, and prove that the Krein--Milman property can be transferred from one collection of subsets to another by convolution. 
This result underpins two new representation theorems of topological flavor, which generalize existing ones, even in posets lacking lattice or semilattice structures. 
We also prove a third representation theorem: given a poset equipped with a closure operator $\cc$ with adequate properties, we show that the set of kit points, defined as an extension of compact points, has the Krein--Milman property with respect to the convolution product of $\cc$ with the dual Alexandrov operator $\uparrow\!\! \cdot$; moreover, every kit point is sup-generated by a unique antichain of compact points, finite if $\cc$ is finitary. 
\end{abstract}

\maketitle

\section{Introduction}

The Hofmann--Lawson theorem \cite[Proposition~2.7]{HofmannLawson76} states that, in every continuous lattice $L$, the set $G := { \Irr L }$ of irreducible elements forms an \textit{inf-generating} subset, in the sense that 
\begin{equation}\label{eq:repr}
{ x } = { \inf { \uparrow\!\! x } \cap { G } },
\end{equation}
for all $x \in L$. 
We also say that we have a \textit{representation} of elements of $L$ by $G$. 
The first representation theorem of this kind is actually due to Birkhoff and Frink \cite[Theorem~7]{BirkhoffFrink48}, who proved that algebraic lattices are inf-generated by the subset of their completely irreducible elements. 
In Equation~\eqref{eq:repr}, the elements of the inf-generating subset $G$ act as the ``building blocks'' of $L$, from which all elements of $L$ can be reconstructed. 

These representation theorems and their extensions have been mostly proved in the case where nonempty finite infs exist, with the notable exception of Martinez \cite[Theorem~1.2]{Martinez72}. 
However, the ambient poset may fail to be a semilattice, while a representation theorem may still occur. 
An emblematic case is that of the set $\mathrsfs{H}_n$ of $n \times n$ Hermitian matrices, and its cone $\mathrsfs{H}_n^+$ of Hermitian positive semidefinite matrices, both partially ordered by the \textit{dual Loewner order} defined by $A \leqslant B$ if $A - B$ is positive semidefinite. 
By Kadison \cite[Theorem~6]{Kadison51}, the posets $(\mathrsfs{H}_n, \leqslant)$ and $(\mathrsfs{H}_n^+, \leqslant)$ are \textit{antilattices}, in the sense that two Hermitian matrices $A$ and $B$ have an inf (or a sup) with respect to the dual Loewner order if and only if these two matrices are comparable, i.e.\ $A \leqslant B$ or $B \leqslant A$. 
Despite this hurdle, we do have a representation theorem in $\mathrsfs{H}_n^+$, for we have 
\[
{ A } = { \inf { \uparrow\!\! A } \cap { e(\mathrsfs{H}_n^+) } }, 
\]
for all $A \in \mathrsfs{H}_n^+$, where $e(\mathrsfs{H}_n^+) := \{ u u^* : u \in \mathbb{C}^n \}$ is the set of extremal elements of the cone $\mathrsfs{H}_n^+$. 
This compelling example has been a source of motivation for this paper, whose main goal is to generalize these representation theorems in several directions. 
 
A first direction of generalization is to do without the semilattice property. 
In fact, a close look at the proof of \cite[Corollary~I-3.10]{Gierz03} shows that the Hofmann--Lawson representation theorem mentioned above still holds in directed-complete continuous \textit{posets}. 
Moreover, one can replace irreducible elements by elements that are maximal in the complement of a filter. 
We call such elements \textit{relatively-maximal}. 
While every relatively-maximal element is irreducible, the converse does not hold in general, although the two notions coincide in a semilattice. 

Our first two representation theorems can then be formulated as follows. 
We say that a poset equipped with a topology \textit{has small open (principal) filters} if, whenever $x \in U$ for some open upper subset $U$, there exists an open (principal) filter $V$ such that $x \in { V } \subseteq { U }$. 
A subset $A$ is said to be \textit{strongly chain-complete} if every nonempty chain of $A$ has a sup that belongs to $A$ to which it converges. 

\begin{theorem*}[Representation Theorems I and II]
Let $P$ be a poset equipped with a lower semiclosed topology. 
Assume that $P$ has small open filters (resp.\ small open principal filters), and let $S$ be a strongly chain-complete subset of $P$. 
Then $S$ is inf-generated by the set of its relatively-maximal elements (resp.\ its completely relatively-maximal elements). 
\end{theorem*}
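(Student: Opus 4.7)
The plan is to combine the topological hypothesis (small open filters, respectively small open principal filters) with a Zorn-type argument in the classical Hofmann--Lawson style. Fix $x \in S$; we must show $x = \inf(\uparrow\!\! x \cap G)$, where $G$ denotes the set of (completely) relatively-maximal elements of $S$. That $x$ is a lower bound of $\uparrow\!\! x \cap G$ is automatic, so the substance is the reverse direction: any lower bound $y$ of $\uparrow\!\! x \cap G$ must satisfy $y \leqslant x$.

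I would argue by contraposition. Suppose $y \not\leqslant x$, i.e.\ $y \notin \downarrow\!\! x$. Since the topology is lower semiclosed, $\downarrow\!\! x$ is closed, so $P \setminus \downarrow\!\! x$ is an open upper set containing $y$. The small open (principal) filter hypothesis furnishes an open (principal) filter $V$ with $y \in V \subseteq P \setminus \downarrow\!\! x$; in particular $x \notin V$, so $x$ lies in the closed lower set $P \setminus V$.

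The heart of the proof is to apply Zorn's lemma to $T := (P \setminus V) \cap \uparrow\!\! x \cap S$. This set is nonempty because $x \in T$. Given a chain $C$ in $T$, strong chain-completeness of $S$ yields a sup $c$ in $S$ to which $C$ converges; since $P \setminus V$ is closed, $c$ lies in $P \setminus V$, and $c \in \uparrow\!\! x$ trivially, so $c \in T$ is an upper bound of $C$ inside $T$. Zorn then produces a maximal element $g \in T$. A brief check shows $g$ is maximal in $S \setminus V$: if $g' \in S \setminus V$ satisfies $g' \geqslant g$, then $g' \geqslant x$, hence $g' \in T$, and maximality in $T$ forces $g' = g$. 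Thus $g$ is relatively-maximal in $S$ (completely so when $V$ is principal), so $g \in \uparrow\!\! x \cap G$.

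Now the assumption that $y$ is a lower bound of $\uparrow\!\! x \cap G$ gives $y \leqslant g$; but $V$ is an upper set containing $y$, so this forces $g \in V$, contradicting $g \in P \setminus V$. Hence $y \leqslant x$, completing the argument. The main technical point is verifying the Zorn hypothesis on $T$: it is precisely strong chain-completeness of $S$ (ensuring chains converge to their sups in $S$) combined with the closedness of $P \setminus V$ that keeps chain sups inside $T$. Everything else is formal manipulation once the topological hypothesis has produced the separating filter $V$.
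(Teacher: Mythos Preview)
Your argument is correct and complete. The Zorn step is properly justified: nonempty chains in $T$ have sups in $S$ (by strong chain-completeness) which land in $P\setminus V$ (closedness) and in $\uparrow\!\! x$ (being sups of elements above $x$), so they stay in $T$; and the passage from maximality in $T$ to maximality in $S\setminus V$ is exactly as you say. The final contradiction via $y\in V$ and $V$ upper is clean.

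However, your route differs substantially from the paper's. You give a direct, self-contained Hofmann--Lawson style argument: separate $y$ from $x$ by an open filter, then Zorn your way to a relatively-maximal element above $x$. The paper instead deploys its abstract machinery: it observes that the collection $\mathrsfs{V}$ of open filters dually separates points (this is where the topological hypotheses enter), that $S\setminus V$ is strongly chain-complete hence preinductive for each $V\in\mathrsfs{V}$, and then invokes the transfer of the Krein--Milman property (Theorem~\ref{thm:transfer} via Example~\ref{ex:scc} and Example~\ref{ex:maximal3}) to conclude that $S$ is inf-generated by its $\langle\cdot\rangle_{\mathrsfs{V}}^{\downarrow}$-extreme points, which are then identified with relatively-maximal elements through Proposition~\ref{prop:spe}. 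Your approach is more elementary and transparent for this particular statement; the paper's approach situates the result as an instance of a general pattern (preinductivity $\Rightarrow$ Krein--Milman property for the Alexandrov operator, then convolve with $\langle\cdot\rangle_{\mathrsfs{V}}$), which is the whole point of the framework but obscures the simple underlying idea you have isolated.
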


Despite their level of abstraction, these results do not encompass Martinez' representation theorem \cite[Theorem~1.2]{Martinez72}, which echoes the classical representation (called \textit{factorization}) of non-unit elements by irreducible elements in rings that are principal ideal domains. 
Led by the idea that a representation of the form of Equation~\eqref{eq:repr} can be interpreted as a Krein--Milman theorem after Poncet \cite{Poncet12c}, we investigate a second direction of generalization, where we replace irreducible elements by \textit{extreme points} with respect to a specific closure operator. 
Closure operators have been used as a central tool in various axiomizations of classical convexity theory and set-theoretic generalizations of the Krein--Milman theorem, see notably Fan \cite{Fan63}, Jamison \cite{Jamison74}, Wieczorek \cite{Wieczorek89}, van de Vel \cite{vanDeVel93}, Bachir \cite{Bachir18}. 
Recall that a \textit{closure operator} on a set $E$ is a map $\cc : 2^E \to 2^E$ such that $A \subseteq { \cc(A) } \subseteq { \cc(B) }$ and $\cc(\cc(A)) = \cc(A)$, for all subsets $A$, $B$ with $A \subseteq B$. 
Then, a \textit{$\cc$-extreme point} of $E$ is an element $x \in E$ such that $x \notin { \cc(E \setminus \{ x \}) }$, or equivalently ${ x \in \cc(A) } \Rightarrow { x \in A }$, for all subsets $A$ of $E$. 
We also define a \textit{$\cc$-compact point} of a poset $(E, \leqslant)$ as an element $x \in E$ such that $x \notin { \cc(E \setminus { \uparrow\!\! x }) }$

Interestingly, relatively-maximal elements in a poset $P$ can also be seen as the extreme points of $P$ with respect to an appropriate closure operator, and this closure operator can be expressed as the \textit{convolution product} $\cc^{\downarrow}$ of the closure operator $A \mapsto { \downarrow\!\! A }$ with the closure operator $\cc$ generated by the collection of filters of $P$. 
For this reason, we shall define the convolution product $\cc * \sss$ between two closure operators $\cc$ and $\sss$ in its full generality and develop some of its properties along the road. 

Now, given a closure operator $\cc$ and a collection of subsets $\mathrsfs{K}$, we say that $\mathrsfs{K}$ has the Krein--Milman property if 
\[
\cc(K) = \cc(\textstyle{\ex_{\cc}} K),
\]
for all $K \in \mathrsfs{K}$, where $\ex_{\cc} K$ denotes the set of $\cc$-extreme points of $K$. 
As a key intermediate result, we show how this property can be transferred from a collection $\mathrsfs{U}$ with respect to $\cc$, to a collection $\mathrsfs{V}$ with respect to $\cc * \sss$, where $\sss$ is the closure operator generated by $\mathrsfs{V}$. 
The application of this transfer theorem becomes especially interesting in the specific case of the convolution products $\cc^{\downarrow}$ and $\cc^{\uparrow}$, and is the main ingredient of our third representation theorem. 
This result is concerned with the set of \textit{kit points} $\kit_{\cc} E$ of $E$, a kit point requiring that every copoint admits a compact attaching point. 

\begin{theorem*}[Representation Theorem III]
Let $(E, \leqslant)$ be a poset and $\cc$ be a preinductive closure operator on $E$ such that ${ \downarrow\!\! \cc(A) } = { \cc(A) }$ for all subsets $A$. 
Then $K := \kit_{\cc} E$ is $\cc^{\uparrow}$-closed and has the Krein--Milman property with respect to $\cc^{\uparrow}$, i.e.\ 
\[
{ K } = { \cc^{\uparrow}(\textstyle{\ex_{\cc^{\uparrow}}} K) }. 
\]
Moreover, for every $x \in { \kit_{\cc} E }$, there exists a unique antichain $Y_x$ made of $\cc$-compact points such that 
$
x \in { { \cc(Y_x) } \cap { Y_x^{\uparrow} } }
$; if $\cc$ is finitary, this antichain is finite.
In addition, if $\cc$ separates points, then $\kit_{\cc} E$ is sup-generated by the subset of $\cc$-compact points, and $x = { \bigvee Y_x }$, for all $x \in { \kit_{\cc} E }$. 
\end{theorem*}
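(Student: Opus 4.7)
The approach is to use the transfer theorem to migrate the Krein--Milman property across the convolution $\cc^{\uparrow} = \cc * {\uparrow\!\!\cdot}$, and then to mine the resulting extreme-point description for the antichain $Y_x$. First I would pin down the identification that the $\cc^{\uparrow}$-extreme points of $K$ are exactly the $\cc$-compact points that lie in $K$: under the assumption ${\downarrow\!\!\cc(A)} = \cc(A)$, every $\cc$-closure is a lower set, which makes the complements $E \setminus {\uparrow\!\! x}$ appearing in the definition of $\cc$-compactness interchangeable with the complements ${\uparrow\!\!(K \setminus \{x\})}$ that govern $\cc^{\uparrow}$-extremality inside $K$. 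With this correspondence in hand, the transfer theorem applied to a base collection whose Krein--Milman property with respect to $\cc$ is visibly controlled by $\cc$-compactness should deliver $K = \cc^{\uparrow}(\ex_{\cc^{\uparrow}} K)$.

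For the $\cc^{\uparrow}$-closedness of $K$, I would take $x \in \cc^{\uparrow}(K)$ and verify the kit-point condition at $x$ directly: given any copoint through $x$, I would assemble a compact attaching point for it by exploiting the preinductivity of $\cc$ together with the compact attaching points inherited from the points of $K$ implicated in $x$'s closure, via a Zorn-style construction performed inside the copoint. This step is the main obstacle, since the kit condition quantifies over \emph{all} copoints at $x$, and translating this quantifier through the convolution is exactly where the hypotheses ${\downarrow\!\!\cc(A)} = \cc(A)$ and preinductivity do the heavy lifting.

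For the antichain decomposition, I would start from $x \in \cc^{\uparrow}(C_K)$, with $C_K$ the set of $\cc$-compact points in $K$, and use the lower-set property of $\cc$-closures to place $x$ in $\cc(C_K \cap {\downarrow\!\! x})$ once the convolution is unfolded. Zorn's lemma applied to antichains $Y \subseteq C_K \cap {\downarrow\!\! x}$ satisfying $x \in \cc(Y)$ -- with preinductivity ensuring that chains of such antichains admit an upper bound -- produces a maximal such antichain $Y_x$. Uniqueness is obtained by characterizing $Y_x$ intrinsically as the set of maximal $\cc$-compact points below $x$, since any non-maximal compact point can be dropped from a candidate antichain without affecting its $\cc$-closure (the lower-set property again). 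If $\cc$ is finitary, then $x \in \cc(F)$ for some finite $F \subseteq Y_x$, and the antichain property forces $F = Y_x$. Finally, if $\cc$ separates points, $x \in Y_x^{\uparrow}$ exhibits $x$ as an upper bound of $Y_x$, and any strictly smaller upper bound $x' < x$ would inherit the same data $Y_{x'} = Y_x$, contradicting separation; hence $x = \bigvee Y_x$, and every kit point is sup-generated by $\cc$-compact points.
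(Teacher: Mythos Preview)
Your plan has a structural gap: you propose to obtain the Krein--Milman property via the transfer theorem, but the transfer theorem requires as input that $K \setminus U$ have the $\cc$-KMp for every upper set $U$, and you never establish this. Saying it is ``visibly controlled by $\cc$-compactness'' hides the difficulty: showing $\cc(K \setminus U) = \cc(\ex_{\cc}(K \setminus U))$ is not a formality, and in fact it is essentially equivalent to what you are trying to prove. The paper does \emph{not} use the transfer theorem here. Instead it constructs $Y_x$ first, directly from the kit-point hypothesis: $Y_x$ is defined as the set of $\cc$-compact attaching points of the $\cc$-copoints of $x$ (one per copoint, guaranteed to exist precisely because $x$ is a kit point). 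One then checks $x \in \cc(Y_x)$ by observing that otherwise $Y_x$ would lie in some copoint $V_{j_0}$ of $x$, contradicting $y_{j_0} \notin V_{j_0}$. This single construction simultaneously yields the antichain (since $y_i \leqslant y_j$ forces $V_i \subseteq V_j$, hence $V_i = V_j$ by maximality) and the KMp (since $Y_x \subseteq \cp_{\cc} E = \ex_{\cc^{\uparrow}} K$). You never invoke the defining property of kit points in this way, and without it the argument does not close.

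Several downstream steps also fail. Your Zorn argument on antichains is ill-posed: you do not specify the order, and a chain of antichains (under inclusion) need not have an antichain as union. Your uniqueness claim via ``maximal compact points below $x$'' presupposes that $\cp_{\cc} E \cap {\downarrow\!\! x}$ is preinductive, which is not assumed; the paper instead compares two candidate antichains $Y_x, Z_x$ directly using compactness ($z \in Z_x$ lies in $\cc(Y_x)$, hence below some $y \in Y_x$, and symmetrically). For finiteness under the finitary hypothesis, the paper appeals to the lemma that a kit point has only finitely many copoints when $\cc$ is finitary, so $Y_x$ is finite by construction; your route via $x \in \cc(F)$ is salvageable but only after uniqueness is correctly proved. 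Finally, your separation argument is wrong: a smaller upper bound $x'$ need not be a kit point, and even if it were there is no reason $Y_{x'} = Y_x$. The correct step is immediate: separation gives $\cc \leqslant \mathfrak{d}$, so $x \in \cc(Y_x) \cap Y_x^{\uparrow} \subseteq \mathfrak{d}(Y_x) \cap Y_x^{\uparrow} = Y_x^{\vee}$.
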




The paper is organized as follows. 
In Section~\ref{sec:posets}, we revisit known concepts from poset theory, extended to quasi-ordered sets (qosets). 
We focus on the notion of \textit{preinductivity}, a property of subsets of a qoset that will play a central role in later representation theorems. 
We also provide several conditions - both classical and novel - under which a subset of a qoset has this property. 
In Section~\ref{sec:closures}, we begin by recalling the notions of preclosure and closure operators and elementary properties. 
We show how to associate a \textit{finitary} preclosure operator $\cc^{\circ}$ and an \textit{idempotent} preclosure operator (or closure operator) $\overline{\cc}$ to every preclosure operator $\cc$. 
In Section~\ref{sec:convolution}, we define the convolution product $\cc * \sss$ between two preclosure operators $\cc$ and $\sss$. 
We show that $*$ is a commutative, associative binary relation on the set of preclosure operators on a set $E$, and we prove various additional properties, including some results of categorical flavor. 
In Section~\ref{sec:specialcase}, we deepen our analysis of the convolution product by introducing \textit{enriched qosets} - qosets equipped with a compatible preclosure operator. 
We examine the significant case of the convolution $\cc^{\uparrow}$ of a preclosure operator $\cc$ with the dual Alexandrov operator $\uparrow\!\! \cdot$ induced by the qoset, and provide explicit formulas for this convolution. 
In the case where $\cc$ \textit{separates points}, sups of subsets are involved in these formulas, which lays the ground to our representation theorems (in their sup-generating formulations). 
In Section~\ref{sec:copoints}, we introduce the concepts of \textit{preinductive} preclosure operator and \textit{copoint} of an element. 
Given an enriched qoset $(E, \leqslant, \cc)$, we also define the notions of \textit{$\cc$-compact point} and \textit{$\cc$-extreme point}, and give various characterizations. 
In particular, we characterize $\cc$-extreme points when $\cc$ is the convolution of a preclosure operator with a preinductive closure operator (resp.\ with $\uparrow\!\! \cdot$). 
In Section~\ref{sec:extr}, we examine some notions of extremality for elements of a qoset, and highlight links between them and their underlying (pre)closure operators. 
We emphasize specifically one notion of extremality, namely that of \textit{relatively-maximal element}. 
We show that every relatively-maximal element is irreducible, while the converse does not hold in general. 
In Section~\ref{sec:km}, we consider the \textit{Krein--Milman property}, formulated in terms of extreme points in enriched qosets. 
We show how this property can be transferred from a collection of subsets $\mathrsfs{Q}$ to another, by convolution. 
We apply this result to a variety of examples, where $\mathrsfs{Q}$ is typically the collection of preinductive subsets of a qoset. 
The section culminates in two new representation theorems (Theorems \ref{thm:rep1} and \ref{thm:rep2}) that generalize the Hofmann--Lawson theorem and the Birkhoff--Frink theorem. 
In Section~\ref{sec:local}, we delve deeper into the properties of preclosure operators to establish our third and last representation theorem (Theorem~\ref{thm:rep3}). 
For this purpose, we introduce the notion of \textit{kit point}. 
Given an enriched qoset $(E, \leqslant, \cc)$ with adequate properties, we show that the set of kit points is closed and has the Krein--Milman property with respect to $\cc^{\uparrow}$, and that every kit point is sup-generated by an antichain of compact points, unique up to order-equivalence, and finite if $\cc$ is finitary. 
In passing, we explore the interplay between kitness, compactness, continuity, distributivity of $\cc$, and the Carath\'eodory number of $\cc$.

\section{Quasi-ordered sets and preinductivity}\label{sec:posets}

This section recalls concepts from poset theory, extended to qosets (quasi-ordered sets). 
We focus on the notion of \textit{preinductivity}, a property of subsets of a qoset that will play a central role in later representation theorems. 
We also provide several conditions - both classical and novel - under which a subset of a qoset has this property. 

\subsection{Main definitions and notations}

A \textit{quasi-ordered set} or \textit{qoset} $(P,\leqslant)$ is a set $P$ together with a reflexive and transitive binary relation $\leqslant$, called a \textit{quasiorder}. 
If in addition $\leqslant$ is antisymmetric, then $(P, \leqslant)$ is a \textit{partially ordered set} or \textit{poset}, and $\leqslant$ is a \textit{partial order}. 
We write $<$ for the binary relation defined by $x < y$ if $x \leqslant y$ and $y \not\leqslant x$, for all $x, y \in P$, and $\sim$ for the binary relation defined by $x \sim y$ if $x \leqslant y$ and $y \leqslant x$, for all $x, y \in P$. 
If $x \sim y$, we say that $x$ and $y$ are \textit{order-equivalent}. 
We also denote by $[x]$ the $\sim$-equivalence class of $x \in P$, i.e.\ ${ y \in [x] } \Leftrightarrow { y \sim x }$, and by $[A]$ the subset $\bigcup_{a \in A} [a]$, for all $A \subseteq P$. 
A subset $A$ is then called \textit{order-saturated} if $[A] = A$. 

Let $P$ be a qoset and $A \subseteq P$. 
We denote by $\downarrow\!\! A$ the \textit{lower} subset generated by $A$, i.e.\ $\downarrow\!\! A := \{ x \in P : \exists a \in A, x \leqslant a\}$, and we write $\downarrow\!\! x$ for the \textit{principal order-ideal} $\downarrow\!\!\{x\}$. \textit{Upper} subsets $\uparrow\!\! A$ and \textit{principal filters} $\uparrow\!\! x$ are defined dually. 

An \textit{upper bound} of $A$ is an element $x \in P$ such that $a \leqslant x$ for all $a \in A$. 
\textit{Lower bounds} are defined dually. 
We write $A^{\uparrow}$ (resp.\ $A^{\downarrow}$) for the set of upper bounds (resp.\ lower bounds) of $A$, and we say that $A$ is \textit{bounded above} if $A^{\uparrow} \neq \emptyset$ (resp.\ \textit{bounded below} if $A^{\downarrow} \neq \emptyset$). 
Given subsets $A$ and $B$, we denote by $A \leqslant B$ the assertion $a \leqslant b$ for all $a \in A$, $b \in B$. 
We write $x \leqslant A$ as a shorthand for $\{ x \} \leqslant A$, so that ${ x \leqslant A } \Leftrightarrow { x \in { A^{\downarrow} } } \Leftrightarrow { A \subseteq { \uparrow\!\! x } }$. 
Similarly, ${ A \leqslant x } \Leftrightarrow { x \in { A^{\uparrow} } } \Leftrightarrow { A \subseteq { \downarrow\!\! x } }$.

A \textit{supremum} (or \textit{sup}) of $A$ is any element of $A^{\vee} := { (A^{\uparrow})^{\downarrow} \cap A^{\uparrow} }$. 
Dually, we define an \textit{infimum} (or \textit{inf}) of $A$ and the subset $A^{\wedge}$. 
Any two sups (resp.\ infs) of $A$ are order-equivalent, and if the quasiorder is a partial order, then a sup (resp.\ an inf), if it exists, is necessarily unique, and then denoted by $\bigvee A$ (resp.\ $\bigwedge A$). 
A \textit{semilattice} is a poset in which every nonempty finite subset has an inf. 
 
\begin{example}\label{ex:ring1}
Let $R$ be a commutative ring with a multiplicative identity $1$. 
We assume that $R$ is an \textit{integral domain}, in the sense that $x y = 0$ implies $0 \in \{ x, y \}$, for all $x, y \in R$. 
Then $(R, \leqslant)$ is a qoset, where $\leqslant$ is the \textit{dual divisibility relation} defined by $x \leqslant z$ if $x = y z$, for some $y \in R$. 
In this case, we say that $z$ \textit{divides} $x$. 
The zero $0$ of $R$ is the (unique) inf of $R$ with respect to $\leqslant$. 
Recall that a \textit{unit} in $R$ is an element $u \in R$ such that $u v = 1$, for some $v \in R$. 
Then every unit $u$ is a sup of $R$ with respect to $\leqslant$, since $u$ divides $x$, for all $x \in R$. 
Moreover, $x \sim y$ if and only if $x$ and $y$ are \textit{associate}, in the sense that $x = u y$, for some unit $u$. 
In particular, $\leqslant$ is a partial order if and only if $1$ is the unique unit in $R$. 
\end{example}

A subset $D$ of a qoset $P$ is \textit{directed} if it is nonempty and, for all $x, y \in D$, one can find $z \in D$ such that $x \leqslant z$ and $y \leqslant z$. 
The qoset $P$ is \textit{directed-complete} if every directed subset has a sup in $P$. 
An \textit{order-ideal} is a directed lower subset. 
A subset is \textit{Scott-open} if it is an upper subset such that ${ D \cap U } \neq \emptyset$ whenever $D$ is a directed subset of $P$ such that ${ D^{\vee} \cap U } \neq \emptyset$. 
The collection of Scott-open subsets is a topology called the \textit{Scott topology}. 
A subset $T$ is \textit{filtered} if it is nonempty and, for all $x, y \in T$, one can find $z \in T$ such that $x \geqslant z$ and $y \geqslant z$. 
A \textit{filter} is a filtered upper subset. 

Let $P$ be a poset. 
We say that $x \in P$ is \textit{way-below} $y \in P$, written $x \ll y$, if, for every directed subset $D$ with a sup $\bigvee D$, $y \leqslant \bigvee D$ implies $x \leqslant d$ for some $d \in D$. 
We then say that the poset $P$ is \textit{continuous} if $\twoheaddownarrow x := \{ y \in P : y \ll x \}$ is directed and $x = \bigvee \twoheaddownarrow x$, for all $x \in P$. 

\begin{lemma}\label{lem:openfilter}
Let $P$ be a continuous directed-complete poset, and let $x, y \in P$. 
If $x \not\leqslant y$, then there exists a Scott-open filter $V$ such that $x \in V$ and $y \notin V$. 
\end{lemma}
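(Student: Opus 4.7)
The approach is to exploit the interpolation property of the way-below relation $\ll$, which is a standard consequence of continuity in a directed-complete poset: whenever $u \ll v$, there exists $w$ with $u \ll w \ll v$.

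The first step is to extract a witness. Since $\twoheaddownarrow x$ is directed with $\bigvee \twoheaddownarrow x = x$ and $x \not\leqslant y$, we cannot have $\twoheaddownarrow x \subseteq \downarrow\!\! y$---otherwise $x = \bigvee \twoheaddownarrow x \leqslant y$---so I fix some $u \in \twoheaddownarrow x$ with $u \not\leqslant y$. I then take $V := \twoheaduparrow u = \{ z \in P : u \ll z \}$ as the candidate filter. Using interpolation, $V$ is Scott-open: it is clearly an upper set, and if $D$ is directed with $u \ll \bigvee D$, interpolating $u \ll w \ll \bigvee D$ yields $d \in D$ with $w \leqslant d$, so $u \ll d$ and $d \in V$. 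Also $x \in V$ (since $u \ll x$), and $y \notin V$ (since $u \ll y$ would force $u \leqslant y$).

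The main obstacle is verifying that $V$ is \emph{filtered}: given $a, b \in V$, I must exhibit a common lower bound $c \in V$ with $u \ll c \leqslant a, b$, and in a continuous dcpo no meet operation is a priori available. My plan is to iterate interpolation, building $u = u_0 \ll u_1 \ll u_2 \ll \cdots \ll x$, and then to exploit the directedness of $\twoheaddownarrow a$ and of $\twoheaddownarrow b$ (both of which contain $u$) together with this chain to locate a witness $c$ lying below both $a$ and $b$ while still being way-above some $u_n$. Should this direct construction not close cleanly in the continuous-dcpo setting, the natural fallback is to apply Zorn's Lemma to the collection of Scott-open upper sets $W$ containing $u$ and disjoint from $\downarrow\!\! y$, ordered by reverse inclusion, and then argue from maximality that a minimal such $W$ is automatically filtered and yields the desired Scott-open filter.
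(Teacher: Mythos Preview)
Your setup and your identification of the difficulty are correct, but neither proposed fix closes the gap.

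The set $\twoheaduparrow u$ need not be filtered. Let $P = [0,1] \cup \bigl((1,2]\times\{0,1\}\bigr)$, where $[0,1]$ carries its usual order, each branch $(1,2]\times\{i\}$ carries the order inherited from $(1,2]$, every $t\in[0,1]$ lies below every branch element, and the two branches are mutually incomparable. This is a continuous dcpo. With $x=(2,0)$ and $y=0.3$ one may take $u=1$: indeed $1 \ll x$ and $1 \not\leqslant y$. But $\twoheaduparrow 1 = (1,2]\times\{0,1\}$, and $(1.5,0)$, $(1.5,1)$ have no common lower bound there; their only lower bounds lie in $[0,1]$, which is disjoint from $\twoheaduparrow 1$. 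Your ascending chain $1 = u_0 \ll u_1 \ll \cdots \ll x$ lives in branch $\{0\}$ from $u_1$ onward and cannot produce such a lower bound; and since the sets $\twoheaduparrow u_n$ are \emph{decreasing} in $n$, ``way-above some $u_n$'' is no weaker than ``way-above $u_0$'', so the chain adds nothing. The Zorn fallback also fails as stated: under reverse inclusion, a decreasing chain of Scott-open upper sets need not admit a Scott-open lower bound, because intersections of Scott-open sets are generally not Scott-open (in $[0,1]$, $\bigcap_n (1/n,1]=\{1\}$).

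The standard argument (to which the paper defers via \cite[Proposition~I-3.3]{Gierz03}) interpolates \emph{downward} rather than upward. Choose $v_1 \ll x$ with $v_1 \not\leqslant y$; since $v_1 = \bigvee \twoheaddownarrow v_1 \not\leqslant y$, choose $v_2 \ll v_1$ with $v_2 \not\leqslant y$; iterate to obtain $\cdots \ll v_2 \ll v_1 \ll x$ with every $v_n \not\leqslant y$. Set $V=\bigcup_n \twoheaduparrow v_n$. This is an \emph{increasing} union of Scott-open sets, hence Scott-open; $x\in\twoheaduparrow v_1\subseteq V$; and $y\notin V$ since $v_n\ll y$ would force $v_n\leqslant y$. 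For filteredness, given $a,b\in V$ with $v_n\ll a$, $v_m\ll b$ and (say) $m\geqslant n$, the element $v_m$ satisfies $v_m \leqslant v_n \leqslant a$ and $v_m \leqslant b$, and $v_m \in \twoheaduparrow v_{m+1}\subseteq V$; so $v_m$ is the required common lower bound.
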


\begin{proof}
See e.g.\ \cite[Proposition~I-3.3(ii)]{Gierz03}. 
The proof crucially uses the fact that, in a continuous poset, $u \ll w$ implies $u \ll v \ll w$ for some $v$ (a property called the \textit{interpolation property}). 
\end{proof}

For more background on continuous posets, see the monograph by Gierz et al.\ \cite{Gierz03}.  

\subsection{Maximal elements and preinductive subsets}\label{sub:max}

A \textit{chain} in a qoset is a subset $C$ such that $x \leqslant y$ or $y \leqslant x$ for all $x, y \in C$. 
A subset $A$ of a qoset $P$ is called \textit{chain-complete} if every nonempty chain $C$ of $A$ has a sup in $P$ that belongs to $A$, i.e.\ ${ C^{\vee} \cap A } \neq \emptyset$.  \footnote{Be aware that, in the literature, the empty set is sometimes included in the notion of chain-complete poset.}

Recall that, for a qoset, chain-completeness and directed-completeness are actually equivalent properties. 
The proof, which relies on the axiom of choice, is not trivial, see Cohn \cite[Proposition~5.9]{Cohn81} or Markowsky \cite[Corollary~2]{Markowsky76}. 

Let $P$ be a qoset and $A$ be a subset of $P$. 
An element $x$ is \textit{maximal in $A$} if $x \in A$ and there is no element $y \in A$ with $y > x$, equivalently if $x \in A$ and $x \leqslant y \Rightarrow x \sim y$, for all $y \in A$.  
The set of maximal elements of $A$ is denoted by $\Max A$. 
Dually, we define \textit{minimal} elements of $A$, whose set is denoted by $\Min A$. 
The subset $A$ is \textit{preinductive} if every element of $A$ has some maximal element of $A$ above it, and \textit{inductive} if every nonempty chain in $A$ has an upper bound in $A$. 
The subset $A$ satisfies the \textit{ascending chain condition} if, for every sequence $(a_n)_{n \in \mathbb{N}}$ of elements of $A$ such that $a_n \leqslant a_{n+1}$ for all $n \in \mathbb{N}$, there is some $n_0 \in \mathbb{N}$ with $a_{n_0} \sim a_{n}$ for all $n \geqslant n_0$. 

\begin{lemma}\label{lem:preinductive}
Let $P$ be a qoset and $A \subseteq P$. 
Consider the following assertions: 
\begin{enumerate}

  \item\label{lem:preinductive4} $A$ is finite;
  \item\label{lem:preinductive5} ${ \uparrow\!\! x } \cap { A }$ is finite, for all $x \in A$;
  \item\label{lem:preinductive0} $A$ satisfies the ascending chain condition;
  \item\label{lem:preinductive1} $A$ is chain-complete;
  \item\label{lem:preinductive2} $A$ is inductive;
  \item\label{lem:preinductive3} $A$ is preinductive.
\end{enumerate}
Then \eqref{lem:preinductive4} $\Rightarrow$ \eqref{lem:preinductive5} $\Rightarrow$ \eqref{lem:preinductive0} $\Rightarrow$ \eqref{lem:preinductive1} $\Rightarrow$ \eqref{lem:preinductive2} $\Rightarrow$ \eqref{lem:preinductive3}. 
\end{lemma}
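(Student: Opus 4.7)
The plan is to prove each of the five one-step implications separately, going from the strongest property to the weakest. The first two implications are essentially combinatorial; the third invokes dependent choice; the fourth is immediate from definitions; and the last is an application of Zorn's lemma.

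For \eqref{lem:preinductive4} $\Rightarrow$ \eqref{lem:preinductive5}, any subset of a finite set is finite. For \eqref{lem:preinductive5} $\Rightarrow$ \eqref{lem:preinductive0}, I take an ascending sequence $(a_n)_{n \in \mathbb{N}}$ in $A$: since $a_n \in { \uparrow\!\! a_0 } \cap { A }$ for all $n$, the sequence lives in a finite set, so by pigeonhole some value $v$ is attained at an infinite sequence of indices $i_1 < i_2 < \cdots$. For $n \geqslant i_1$, pick $k$ with $i_k \leqslant n \leqslant i_{k+1}$; then $v = a_{i_k} \leqslant a_n \leqslant a_{i_{k+1}} = v$, so $a_n \sim v$. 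Hence the sequence stabilizes modulo $\sim$ from $n_0 := i_1$ onward.

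For \eqref{lem:preinductive0} $\Rightarrow$ \eqref{lem:preinductive1}, given a nonempty chain $C$ of $A$, I want to produce a maximum element of $C$, which will then automatically be a sup of $C$ lying in $A$. Suppose for contradiction that no element of $C$ is a maximum; since $C$ is a chain, for every $c \in C$ the failure of maximality yields $c' \in C$ with $c < c'$. An application of the axiom of dependent choice produces a strictly ascending sequence $c_0 < c_1 < \cdots$ in $C \subseteq A$, which contradicts the ascending chain condition. So $C$ admits a maximum, which is the required sup. The implication \eqref{lem:preinductive1} $\Rightarrow$ \eqref{lem:preinductive2} is immediate, because a sup of $C$ in $A$ is in particular an upper bound of $C$ in $A$.

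Finally, \eqref{lem:preinductive2} $\Rightarrow$ \eqref{lem:preinductive3}: given $x \in A$, consider $A_x := A \cap { \uparrow\!\! x }$. Any nonempty chain $C$ of $A_x$ is a nonempty chain of $A$, so by inductivity admits an upper bound $u \in A$; since any $c \in C$ satisfies $c \geqslant x$ and $u \geqslant c$, we get $u \in A_x$. By Zorn's lemma, $A_x$ contains a maximal element $m$. If $y \in A$ satisfies $y \geqslant m$, then $y \geqslant x$ too, so $y \in A_x$ and $y \sim m$ by maximality in $A_x$; hence $m$ is maximal in $A$ and sits above $x$. The main technical obstacle is the chain-to-sequence reduction in \eqref{lem:preinductive0} $\Rightarrow$ \eqref{lem:preinductive1}, where dependent choice is essential to convert the failure of a maximum on a possibly uncountable chain into a genuinely countable ascending sequence violating the ACC.
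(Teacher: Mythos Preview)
Your proof is correct and follows essentially the same approach as the paper: the paper also handles \eqref{lem:preinductive0} $\Rightarrow$ \eqref{lem:preinductive1} by showing a nonempty chain with no greatest element yields a strictly ascending sequence, and \eqref{lem:preinductive2} $\Rightarrow$ \eqref{lem:preinductive3} by applying Zorn's lemma to ${ \uparrow\!\! x } \cap A$. Your version simply fills in more detail (the explicit pigeonhole for \eqref{lem:preinductive5} $\Rightarrow$ \eqref{lem:preinductive0}, the mention of dependent choice, and the check that a maximal element of $A_x$ is maximal in $A$), but the underlying arguments coincide.
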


\begin{proof}
\eqref{lem:preinductive4} $\Rightarrow$ \eqref{lem:preinductive5} $\Rightarrow$ \eqref{lem:preinductive0} is straightforward. 

\eqref{lem:preinductive0} $\Rightarrow$ \eqref{lem:preinductive1}. 
Let $C$ be a nonempty chain in $A$. 
Suppose that $C$ has no greatest element, i.e.\ for every $c \in C$ there exists $c' \in C$ such that $c' \not\leqslant c$. 
One can construct a sequence $(c_n)_{n \in \mathbb{N}}$ of elements of $A$ such that $c_n < c_{n+1}$ for all $n \in \mathbb{N}$, which contradicts the ascending chain condition. 
So $C$ has a greatest element $c_0$, and $c_0 \in { C^{\vee} \cap A }$. 
This proves that $A$ is chain-complete. 

%

\eqref{lem:preinductive1} $\Rightarrow$ \eqref{lem:preinductive2} is obvious. 

\eqref{lem:preinductive2} $\Rightarrow$ \eqref{lem:preinductive3} is Zorn's lemma applied to the qoset ${ \uparrow\!\! x } \cap A$, $x \in A$. 
\end{proof}

\begin{remark}\label{rk:noetherian}
A qoset satisfying the ascending chain condition is called a \textit{Noetherian qoset}. 
The proof of the previous lemma shows that a qoset is Noetherian if and only if every nonempty chain has a greatest element. 
It is even true that a qoset is Noetherian if and only if every order-ideal is principal. 
This implies that every Noetherian poset is a continuous directed-complete poset, in which the way-below relation coincides with the partial order. 
See e.g.\ Lawson and Xu \cite[Proposition~11]{LawsonXu24} for a series of equivalent conditions and references to original proofs. 
\end{remark}

\begin{example}[Example~\ref{ex:ring1} continued]\label{ex:ring2}
Let $R$ be an integral domain with a multiplicative identity $1$. 
Recall that a \textit{ring-ideal} in $R$ is a nonempty subset $I$ of $R$ such that $i + i' \in I$ and $x i \in I$, for all $i, i' \in I$ and $x \in R$. 
The ring-ideal $I$ is \textit{principal} if it is generated by one element, i.e.\ if it is of the form $I = (x_0) := \{ x x_0 : x \in R \}$, for some $x_0 \in R$. 
Note that principal ring-ideals coincide with principal order-ideals in $(R, \leqslant)$, where $\leqslant$ is the dual divisibility relation. 
Also, $R$ is called a \textit{principal ideal domain} or \textit{PID} if every ring-ideal of $R$ is principal.  
Now, if $R$ is a PID, then $(R, \leqslant)$ is Noetherian, in the sense of the previous remark. 
Indeed, in this case, ring-ideals in $R$ coincide with order-ideals in $(R, \leqslant)$. 
Let us prove this assertion. 

So let $I$ be an order-ideal in $(R, \leqslant)$, 
and let us show that $I$ is a ring-ideal. 
Let $i, i' \in I$. 
There is some $i'' \in I$ with $i'' \geqslant i$ and $i'' \geqslant i'$, so $i = x i''$ and $i' = x' i''$, for some $x, x' \in R$. 
Thus, $i + i' = x i'' + x' i'' = (x + x') i''$. 
This shows that $i'' \geqslant i + i'$. 
Since $I$ is a lower subset in $(R, \leqslant)$ and $i'' \in I$, we obtain $i + i' \in I$. 
Now, let $i \in I$ and $x \in R$. 
Then $x i \leqslant i$, so $x i \in I$. 
We have proved that $I$ is a ring-ideal. 

Conversely, let $I$ be a ring-ideal in $R$, and let us show that $I$ is an order-ideal in $(R, \leqslant)$. 
So let $x \in R$ and $i \in I$ with $x \leqslant i$. 
Then $x = y i$, for some $y \in R$. 
So $x \in I$. 
This proves that $I$ is a lower subset. 
Now, let $i, i' \in I$. 
Since $R$ is a PID, the ring-ideal $(i) + (i')$ is principal, i.e.\ of the form $(j)$. 
Thus, $j = x i + x' i'$, for some $x, x' \in R$, so that $j \in I$. 
Moreover, $i, i' \in { (i) + (i') } = { (j) }$, so $j \geqslant i$ and $j \geqslant i'$. 
This proves that $I$ is directed, hence is an order-ideal in $(R, \leqslant)$. 
\end{example}

\begin{lemma}
Let $P$ be a qoset and $A \subseteq P$ be preinductive. 
If $C$ is a nonempty chain of $A$, then the collection of chains $\{ { \downarrow\!\! a } \cap { C } \}_{a \in A_C}$, ordered by inclusion, is a chain, and we have
\begin{equation}\label{eq:union}
{ C } = { \bigcup_{a \in A_C} { \downarrow\!\! a } \cap { C } },
\end{equation}
where the index $a$ runs over ${ A_C } := { \uparrow\!\! C } \cap { \Max A }$. 
\end{lemma}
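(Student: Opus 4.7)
The plan is to observe that each set $T_a := {\downarrow\!\! a} \cap C$ is a down-set of the chain $C$ (i.e., if $c \in T_a$ and $c' \in C$ with $c' \leqslant c$, then $c' \leqslant a$, so $c' \in T_a$), and then exploit the classical fact that any two down-sets of a chain are comparable under inclusion. After that, the equality in \eqref{eq:union} will drop out directly from the preinductivity hypothesis.

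First, I would fix $a, b \in A_C$ and verify the chain property. Suppose $T_a \not\subseteq T_b$; pick $s \in T_a \setminus T_b$. For any $t \in T_b$, the chain property of $C$ gives $s \leqslant t$ or $t \leqslant s$. The first case would force $s \leqslant t \leqslant b$, contradicting $s \notin T_b$. Hence $t \leqslant s \leqslant a$, which yields $t \in T_a$. Thus $T_b \subseteq T_a$, so the family $\{T_a\}_{a \in A_C}$ is linearly ordered by inclusion. (Each $T_a$ is a chain as a subset of $C$.)

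Next, for the equality \eqref{eq:union}, the inclusion $\bigcup_{a \in A_C} T_a \subseteq C$ is immediate from $T_a \subseteq C$. Conversely, take $c \in C$. Since $C \subseteq A$ and $A$ is preinductive, there exists $a \in \Max A$ with $a \geqslant c$. From $a \geqslant c \in C$ we get $a \in {\uparrow\!\! C}$, so $a \in {\uparrow\!\! C} \cap \Max A = A_C$, and clearly $c \in T_a$. This establishes the reverse inclusion and completes the proof.

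I do not expect any real obstacle: the chain-of-down-sets argument is standard, and preinductivity is tailored exactly to supply the needed maximal element above each $c \in C$. The only point that requires care is to write the down-set argument for a quasiorder (rather than a partial order), which amounts to using $\leqslant$ throughout without appealing to antisymmetry.
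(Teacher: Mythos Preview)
Your proof is correct and follows essentially the same approach as the paper: both argue that if $T_a \not\subseteq T_b$, a witness $s \in T_a \setminus T_b$ forces every $t \in T_b$ to satisfy $t \leqslant s$ (hence $t \in T_a$), and both derive the union equality directly from preinductivity applied to an arbitrary $c \in C \subseteq A$. The only cosmetic difference is your explicit framing of each $T_a$ as a down-set of the chain $C$, which is the conceptual reason the argument works but does not change the steps.
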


\begin{proof}
Take $C_a := { { \downarrow\!\! a } \cap C }$, for all $a \in A_C$. 
Now consider some $a, b \in A_C$ such that $C_a$ is not included in $C_b$. 
There is some $c \in { C_a \setminus C_b }$. 
Thus, $c \leqslant a$ and $c \not\leqslant b$. 
If $z \in C_b$, then $z \leqslant c$ or $c \leqslant z$, since $C$ is a chain. 
The latter case would yield $c \leqslant b$ (contradiction), so $z \leqslant c$, hence $z \leqslant a$. 
This proves that $z \in C_a$. 
So $C_b \subseteq C_a$. 
We have just shown that $\{ C_a \}_{a \in A_C}$, ordered by inclusion, is a chain. 

Now, let us show that the union of $\{ C_a \}_{a \in A_C}$ equals $C$. 
So let $x \in C$. 
There exists some $a \in { { \uparrow\!\! x } \cap { \Max A} }$, since $A$ is preinductive. 
We obtain $x \in C_a$ with $a \in A_C$. 
So Equation~\eqref{eq:union} is proved. 
\end{proof}

We call a subset $A$ of a qoset \textit{local} if ${ \uparrow\!\! x } \cap { \Max A }$, the subset of maximal elements of $A$ above $x$, is nonempty finite for all $x \in A$. 
Obviously, a local subset is preinductive, but there is more, as the following result asserts. 

\begin{lemma}\label{lem:local0}
Let $P$ be a qoset and $A \subseteq P$. 
If $A$ is local, then $A$ is inductive. 
\end{lemma}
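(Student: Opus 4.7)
The plan is to show directly that every nonempty chain $C$ of $A$ admits an upper bound inside $A$. For each $x \in A$, set $M_x := { \uparrow\!\! x } \cap { \Max A }$; by locality this is a nonempty finite subset of $\Max A$. Since $x \leqslant y$ implies $M_y \subseteq M_x$, and $C$ is a chain, the family $\{ M_x \}_{x \in C}$ is totally ordered by inclusion. Moreover, an element of $\bigcap_{x \in C} M_x$ is precisely a maximal element of $A$ lying above every element of $C$ --- exactly the upper bound we seek. So the whole task reduces to proving that this intersection is nonempty.

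For this, I would fix any $x_0 \in C$ and consider the subfamily $\mathcal{F} := \{ M_x : x \in C,\ x \geqslant x_0 \}$. Every member of $\mathcal{F}$ is a nonempty subset of the finite set $M_{x_0}$, so $\mathcal{F}$ contains an inclusion-minimal element $M_{x_1}$. Because $\mathcal{F}$ is totally ordered by inclusion, $M_{x_1}$ is in fact the \emph{minimum} of $\mathcal{F}$, and hence $\bigcap \mathcal{F} = M_{x_1} \neq \emptyset$. For $x \in C$ with $x \leqslant x_0$, monotonicity gives $M_{x_1} \subseteq M_{x_0} \subseteq M_x$, so overall $M_{x_1} \subseteq \bigcap_{x \in C} M_x$. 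Any $m \in M_{x_1}$ then lies in $\Max A \subseteq A$ and dominates $C$, providing the required upper bound of $C$ in $A$.

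The one place where the argument demands attention is the upgrade from \emph{inclusion-minimal} to \emph{inclusion-minimum}, which uses crucially the chain hypothesis on $C$: without it, a decreasing family of nonempty subsets of a finite set may have several pairwise incomparable minimal members whose overall intersection is empty. Once this observation is in place, the remainder is elementary finite bookkeeping, and no form of the axiom of choice is invoked beyond the harmless initial choice of $x_0$.
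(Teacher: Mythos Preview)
Your argument is correct and proceeds along essentially the same lines as the paper's: both proofs fix a base point $x_0$ in the chain, note that the set ${ \uparrow\!\! x_0 } \cap { \Max A }$ is finite, and exploit the resulting total ordering (the paper on the chains ${ \downarrow\!\! a } \cap C$ indexed by $a$, you on the sets $M_x$ indexed by $x$) to extract a single maximal element dominating the whole chain. The only cosmetic difference is that the paper packages the ``these pieces form a chain under inclusion'' step into the preceding auxiliary lemma, whereas you verify the monotonicity $x \leqslant y \Rightarrow M_y \subseteq M_x$ directly, making your version self-contained.
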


\begin{proof}
Let $K$ be a nonempty chain in $A$, and let $x \in K$. 
Take $C := { \uparrow\!\! x \cap K}$. 
Since $A$ is preinductive, the previous lemma applies. 
Yet, note that the index set ${ A_C } := { \uparrow\!\! C } \cap { \Max A } = { \uparrow\!\! x } \cap { \Max A }$ in Equation~\eqref{eq:union} is finite, since $A$ is local. 
So there is some $a \in A_C$ such that $C = { { \downarrow\!\! a } \cap C }$. 
This proves that $C$ and $K$ are bounded above by $a \in A$. 
So $A$ is inductive. 
\end{proof}


A topology on a qoset is \textit{upper semiclosed} if each principal filter is a closed subset.  

\begin{lemma}[Wallace]\label{lem:wallace}
Let $P$ be a qoset equipped with an upper semiclosed topology, and let $A \subseteq P$. 
If $A$ is compact, then $A$ is inductive. 
\end{lemma}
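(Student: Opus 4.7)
The plan is to use the standard finite-intersection-property characterization of compactness. Let $C$ be a nonempty chain in $A$; I want to produce an upper bound of $C$ lying in $A$.

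First, for each $c \in C$, consider the set $F_c := { \uparrow\!\! c } \cap A$. Since the topology on $P$ is upper semiclosed, each $\uparrow\!\! c$ is closed in $P$, so each $F_c$ is closed in the subspace $A$. Any element in $\bigcap_{c \in C} F_c$ would be an element of $A$ above every $c \in C$, i.e.\ an upper bound of $C$ in $A$, which is exactly what inductivity demands. So it suffices to show that this intersection is nonempty.

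Next, I would verify the finite intersection property for $\{ F_c \}_{c \in C}$. Given finitely many $c_1, \ldots, c_n \in C$, since $C$ is a chain, one of them - call it $c_{i_0}$ - is $\geqslant$ all the others (a finite nonempty chain always has a greatest element). Then $c_{i_0} \in A$ and $c_{i_0} \in \uparrow\!\! c_j$ for each $j$, so $c_{i_0} \in \bigcap_{j=1}^n F_{c_j}$, which is therefore nonempty.

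Finally, applying the compactness of $A$ to the family of closed subsets $\{ F_c \}_{c \in C}$, which has the finite intersection property, yields $\bigcap_{c \in C} F_c \neq \emptyset$. Any point of this intersection is the desired upper bound of $C$ in $A$. The argument is essentially a one-liner once set up; there is no real obstacle beyond checking that ``upper semiclosed'' delivers the closedness of each $\uparrow\!\! c$, and that the chain property of $C$ (not just directedness) is what provides a common upper bound inside $C$ itself for any finite subfamily.
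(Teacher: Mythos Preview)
Your proof is correct and is essentially the same as the paper's: the paper argues by contradiction using the open cover $\{P \setminus {\uparrow\!\! c}\}_{c \in C}$ and extracts a single $c_0$ from a finite subcover via the chain property, while you phrase the identical argument dually using the finite intersection property for the closed sets $\{{\uparrow\!\! c} \cap A\}_{c \in C}$. The only cosmetic point is that in a qoset a finite chain need not have a \emph{unique} greatest element, but your actual claim---that some $c_{i_0}$ satisfies $c_{i_0} \geqslant c_j$ for all $j$---is exactly what is needed and is correct.
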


\begin{proof}
See Wallace \cite[Paragraph~2]{Wallace45}; see also Gierz et al.\ \cite[Proposition~VI-5.3]{Gierz03}. 
We show that $A$ is inductive. 
So let $C$ be a nonempty chain in $A$. 
If $C$ has no upper bound in $A$, then ${ A } \subseteq { \bigcup_{x \in C} P \setminus { \uparrow\!\! x } }$. 
By hypothesis, the topology on $P$ is upper semiclosed, so each subset $P \setminus { \uparrow\!\! x }$ is open. 
Since $A$ is compact, we deduce that ${ A } \subseteq { P \setminus \uparrow\!\! x_0 }$, for some $x_0 \in C$, which contradicts $x_0 \in A$. 
\end{proof}

In a topological space, the \textit{specialization order} is the quasiorder $\leqslant$ defined by $x \leqslant y$ if $x \in G \Rightarrow y \in G$, for all open subsets $G$. 
A \textit{monotone convergence space} (or \textit{$d$-space}) is a topological space that, when equipped with its specialization order, is directed-complete and such that every open subset is Scott-open. 
For instance, every continuous directed-complete poset with its Scott topology is a monotone convergence space. 
On this topic, see e.g.\ Ershov \cite{Ershov99} and Keimel and Lawson \cite{Keimel09b}. 

\begin{lemma}\label{lem:mcs}
Let $P$ be a monotone convergence space and $A \subseteq P$. 
If $A$ is closed, then $A$ is chain-complete, hence inductive. 
\end{lemma}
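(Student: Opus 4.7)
The plan is to show directly that every nonempty chain $C$ of $A$ admits a sup in $P$ which lies in $A$; the "hence inductive'' part will then be immediate by the implication \eqref{lem:preinductive1} $\Rightarrow$ \eqref{lem:preinductive2} of Lemma~\ref{lem:preinductive}.

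First I would fix a nonempty chain $C$ in $A$ and observe that $C$ is in particular a directed subset of $P$ (with respect to the specialization order $\leqslant$). Since $P$ is a monotone convergence space, it is directed-complete under $\leqslant$, so $C$ has a sup $x := \bigvee C$ in $P$.

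The core of the argument is then to show $x \in A$. I would argue by contradiction: suppose $x \in P \setminus A$. The complement $P \setminus A$ is open, hence, by the defining property of a monotone convergence space, Scott-open with respect to $\leqslant$. Applying the Scott-openness criterion to the directed set $C$, whose sup $x$ lies in $P \setminus A$, we obtain that $C \cap (P \setminus A) \neq \emptyset$, contradicting $C \subseteq A$. Therefore $x \in A$, i.e.\ $C^{\vee} \cap A \neq \emptyset$, proving chain-completeness of $A$.

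Finally, since $A$ is chain-complete, Lemma~\ref{lem:preinductive} yields that $A$ is inductive. I do not anticipate a real obstacle here: the only subtle point is to make sure we are using the correct notion of directedness (with respect to the specialization order coming from the topology, which is exactly the one in which the Scott-openness hypothesis is phrased), but this is automatic in the setting of monotone convergence spaces.
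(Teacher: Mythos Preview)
Your proof is correct and follows essentially the same approach as the paper's: both use directed-completeness to obtain the sup of the chain, and then the fact that every open subset is Scott-open to conclude the sup lies in $A$. The only cosmetic difference is that the paper phrases the second step via net convergence of $C$ to its sup (which incidentally establishes strong chain-completeness, referenced later), whereas you argue directly by contradiction using Scott-openness of $P \setminus A$.
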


\begin{proof}
Let $C$ be a nonempty chain in $A$. 
Then $C$ has a sup $m$ in $P$. 
Moreover, $C$ converges to $m$. 
Indeed, let $U$ be an open subset containing $m$; since $U$ is Scott-open, there exists some $y \in C \cap U$, and every element of the net $\uparrow\!\! y \cap C$ is in $U$. 
Since $A$ is closed and $C \subseteq A$ converges to $m$, we deduce that $m \in A$. 
\end{proof}

\begin{remark}
If the ambient qoset $P$ is chain-complete, other results of the same flavor may be formulated. 
For instance, if $A$ is an upper subset, or if it is the image of a kernel map, then $A$ is itself chain-complete, hence inductive; see Poncet \cite{Poncet22}. 
\end{remark}

Preinductive subsets are interesting in that the property of preinductivity can be interpreted as a Krein--Milman property, as we shall see in Section~\ref{sec:km} with Example~\ref{ex:maximal}. 

\subsection{Strongly chain-complete subsets}

We mention here another property that implies inductivity, which will reveal its usefulness in Section~\ref{sec:km}. 
A subset $A$ of a qoset $P$ equipped with a topology is said to be \textit{strongly chain-complete} if every nonempty chain of $A$ has a sup in $P$ that belongs to $A$ to which it converges. 
Obviously, every subset satisfying the ascending chain condition is strongly chain-complete, and every strongly chain-complete subset is chain-complete, hence inductive. 

\begin{example}
The following subsets are strongly chain-complete (we do not provide a definition for all the notions involved; the reader is referred to the related publications and monographs):
\begin{itemize}
  \item closed subsets of a compact pospace (see Nachbin \cite{Nachbin65});
  \item principal order-ideals of a pospace with compact intervals (this is easily deduced from the previous case); 
  \item closed subsets of a monotone convergence space (see the proof of Lemma~\ref{lem:mcs});
  \item sober spaces equipped with their specialization order \cite[Proposition~O-5.15(vii)]{Gierz03}; more generally, saturated subsets of a sober space (use the fact that these subsets are themselves sober);  
  \item continuous (or more generally, quasi-continuous) directed-complete posets equipped with their Scott topology (use the fact that they are sober \cite[Exercise~8.2.15]{Goubault13});
  \item compact subsets of a Hausdorff topological semilattice (use Propositions VI-1.3 and VI-1.14 in \cite{Gierz03}). 
  \item In a Hausdorff topological semilattice, let $A$ be a subset containing no upward rays, in the sense that every chain in $A$ with a least element is relatively compact; then $A$ is strongly chain-complete. 
\end{itemize}
\end{example}



\section{Preclosure operators}\label{sec:closures}

In this section, we begin by recalling the notion of preclosure operator and elementary properties. 
We show how to associate a finitary preclosure operator $\cc^{\circ}$ and an idempotent preclosure operator (or closure operator) $\overline{\cc}$ to every preclosure operator $\cc$. 

\subsection{Definition and first properties}

A \textit{preclosure operator} on a set $E$ is a map $\cc : 2^E \to 2^E$ such that $A \subseteq { \cc(A) } \subseteq { \cc(B) }$, for all subsets $A$ and $B$ such that $A \subseteq B$. 
The pair $(E, \cc)$ is then called a \textit{preclosure space}. 
A subset $A$ of $E$ is \textit{$\cc$-closed} if $\cc(A) = A$, \textit{$\cc$-open} if its complement is $\cc$-closed, and \textit{relatively $\cc$-closed} if $\cc(\cc(A)) = \cc(A)$. 
For a singleton $\{ x \}$, we usually write $\cc(x)$ instead of $\cc(\{ x \})$. 

Note that the empty set $\emptyset$ is not assumed to be closed in general; we say that $\cc$ is \textit{untied} if this is the case, i.e.\ if $\cc(\emptyset) = \emptyset$. 

\begin{proposition}\label{prop:moore}
The collection of closed subsets of a preclosure operator is stable under arbitrary intersections. 
\end{proposition}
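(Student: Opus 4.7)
The plan is to mimic the classical argument for closure operators (where idempotence plays no role for this stability property), using only the two defining axioms of a preclosure operator: extensivity ($A \subseteq \cc(A)$) and monotonicity ($A \subseteq B \Rightarrow \cc(A) \subseteq \cc(B)$).

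Concretely, let $(A_i)_{i \in I}$ be a family of $\cc$-closed subsets of $E$, and set $B := \bigcap_{i \in I} A_i$. I would first invoke extensivity to get $B \subseteq \cc(B)$. For the reverse inclusion, I would observe that for each $i \in I$, $B \subseteq A_i$, so monotonicity gives $\cc(B) \subseteq \cc(A_i)$; since $A_i$ is $\cc$-closed, $\cc(A_i) = A_i$, hence $\cc(B) \subseteq A_i$. Taking the intersection over $i \in I$ yields $\cc(B) \subseteq B$, so $\cc(B) = B$.

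I would also briefly address the degenerate case $I = \emptyset$, where the intersection is conventionally the ambient set $E$: since $\cc(E) \subseteq E$ by the codomain of $\cc$ and $E \subseteq \cc(E)$ by extensivity, $E$ is $\cc$-closed, so the statement holds vacuously in that case as well.

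There is no real obstacle here; the argument uses neither idempotence nor any distinction between preclosure and closure operators, and could arguably be presented as a one-line proof. The only point worth stating carefully is that the family $(A_i)$ is allowed to be empty, so that the phrase ``arbitrary intersections'' is covered without ambiguity.
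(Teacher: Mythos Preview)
Your proof is correct and follows essentially the same argument as the paper: apply monotonicity to get $\cc(B) \subseteq \cc(A_i) = A_i$ for each $i$, then intersect and combine with extensivity. The paper's proof is slightly more terse and does not single out the empty-family case, but the content is identical.
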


\begin{proof}
Let $\cc$ be a preclosure operator, let $\{ C_j \}_{j\in J}$ be a collection of $\cc$-closed subsets, and take $C := \bigcap_{j\in J} C_j$. 
For all $j \in J$, we have ${ \cc(C) } \subseteq { \cc(C_j) } = { C_j }$, so that $C \subseteq { \cc(C) } \subseteq { \bigcap_{j\in J} C_j } = { C }$, which proves that $C$ is closed. 
\end{proof}

A \textit{closure operator} $\cc$ on a set $E$ is a preclosure operator on $E$ that is also \textit{idempotent}, in the sense that $\cc(\cc(A)) = \cc(A)$ for all subsets $A$ of $E$. 
The pair $(E, \cc)$ is then called a \textit{closure space}. 

\begin{example}\label{ex:clogene}
Let $\mathrsfs{V}$ be a collection of subsets of a set $E$. 
Then the map $\langle \cdot \rangle_{\mathrsfs{V}}$ defined by 
\[
\langle A \rangle_{\mathrsfs{V}} = \bigcap_{V \supseteq A} V, 
\]
for all subsets $A$ of $E$, where the intersection runs over the subsets $V$ in $\mathrsfs{V}$ containing $A$, is a closure operator, called the \textit{closure operator generated by $\mathrsfs{V}$}. 
The elements of $\mathrsfs{V}$ are closed with respect to $\langle \cdot \rangle_{\mathrsfs{V}}$, yet some other closed subsets exist unless $\mathrsfs{V}$ is stable under arbitrary intersections. 
\end{example}

\begin{example}\label{ex:alex}
On a qoset, some natural closure operators arise, such as the \textit{Alexandrov closure operator} 
\[
A \mapsto { \downarrow\!\! A } := { \{ x : \exists a \in A, x \leqslant a \} }, 
\] 
and the \textit{Dedekind--MacNeille closure operator} $\mathfrak{d}$
\[
A \mapsto { \mathfrak{d}(A) } := { (A^{\uparrow})^{\downarrow} } = { \{ x : \forall u \in A^{\uparrow}, x \leqslant u \} }.
\] 
Note that $\downarrow\!\! A$ coincides with $\langle A \rangle_{\mathrsfs{L}}$ and $\mathfrak{d}(A)$ coincides with $\langle A \rangle_{\mathrsfs{P}}$,  where $\mathrsfs{L}$ and $\mathrsfs{P}$ denote respectively the collection of lower subsets and the collection of principal order-ideals of the qoset. 
Moreover, if $A$ has a sup $s$, then ${ \mathfrak{d}(A) } = { \downarrow\!\! s }$. 
\end{example}

\begin{example}\label{ex:ranzato0}
Given a qoset $P$, Ranzato \cite{Ranzato02} considered the preclosure operator $\mathfrak{p}^{\downarrow}$ defined by
\begin{equation}\label{eq:pdown}
{ \mathfrak{p}^{\downarrow}(A) } = { \bigcup_{B \subseteq A} \Max(B^{\downarrow}) },
\end{equation}
for all subsets $A$ of $P$. 
We shall also consider its dual $\mathfrak{q}^{\uparrow}$ defined by
\begin{equation}\label{eq:pup}
{ \mathfrak{q}^{\uparrow}(A) } = { \bigcup_{B \subseteq A} \Min(B^{\uparrow}) },
\end{equation}
for all subsets $A$ of $P$. 
\end{example}

It is useful to recall the following property met by closure operators. 

\begin{proposition}\label{prop:distr}
Let $(E, \cc)$ be a closure space. 
Then
\[
\cc(\bigcup_{j \in J} A_j) = \cc(\bigcup_{j \in J} \cc(A_j)),
\]
for every collection $\{ A_j \}_{j \in J}$ of subsets of $E$. 
\end{proposition}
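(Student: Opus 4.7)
The plan is to establish the equality by a standard double inclusion, using only the two axioms of a closure operator (extensivity with monotonicity, which together define a preclosure operator, plus idempotency).

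For the inclusion $\cc(\bigcup_j A_j) \subseteq \cc(\bigcup_j \cc(A_j))$, I would use extensivity: each $A_j$ sits inside $\cc(A_j)$, so taking unions gives $\bigcup_j A_j \subseteq \bigcup_j \cc(A_j)$, and then monotonicity of $\cc$ yields the desired inclusion. This direction does not even require idempotency and holds for arbitrary preclosure operators.

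For the reverse inclusion $\cc(\bigcup_j \cc(A_j)) \subseteq \cc(\bigcup_j A_j)$, I would first observe that for each fixed $j$, $A_j \subseteq \bigcup_k A_k$, so by monotonicity $\cc(A_j) \subseteq \cc(\bigcup_k A_k)$. Taking the union over $j$ gives $\bigcup_j \cc(A_j) \subseteq \cc(\bigcup_k A_k)$. Applying $\cc$ once more and using monotonicity yields $\cc(\bigcup_j \cc(A_j)) \subseteq \cc(\cc(\bigcup_k A_k))$, and finally the idempotency assumption collapses the right-hand side to $\cc(\bigcup_k A_k)$.

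There is no real obstacle here: the argument is purely formal, three lines per direction, and relies only on the defining properties already recalled just above in the paper. The only point worth flagging is that idempotency is genuinely needed in the second direction (the first direction holds for any preclosure operator), which explains why the statement is formulated for closure spaces rather than preclosure spaces.
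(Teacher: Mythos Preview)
Your proposal is correct and matches the paper's proof essentially line for line: the paper dismisses the inclusion $\subseteq$ as ``clear'' (your extensivity-then-monotonicity argument), and for the reverse inclusion it does exactly what you do --- deduce $\cc(A_j) \subseteq \cc(\bigcup_k A_k)$ from monotonicity, take the union, apply $\cc$, and collapse via idempotency. Your remark that only the second direction requires idempotency is accurate and a nice observation, though the paper does not make it explicit.
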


\begin{proof}
The inclusion $\subseteq$ is clear. 
For the reverse inclusion, note first that ${ \cc(A_i) } \subseteq { \cc(\bigcup_{j \in J} A_j) }$ for every $i \in J$, hence ${ \bigcup_{j \in J} \cc(A_j) } \subseteq { \cc(\bigcup_{j \in J} A_j) }$. 
This implies ${ \cc(\bigcup_{j \in J} \cc(A_j)) } \subseteq { \cc(\cc(\bigcup_{j \in J} A_j)) } = { \cc(\bigcup_{j \in J} A_j) }$, the last equality using the idempotency of $\cc$. 
\end{proof}

\subsection{Special constructions}

We partially order the set of preclosure operators on a set $E$ by $\cc \leqslant \sss$ if ${ \cc(A) } \subseteq { \sss(A) }$, for all subsets $A$. 


%
%

If $\cc$ is a preclosure operator, and if $\mathrsfs{C}$ denotes the collection of $\cc$-closed subsets, then there exists a least closure operator $\overline{\cc}$ such that $\cc \leqslant \overline{\cc}$, defined by $\overline{\cc} = \langle \cdot \rangle_{\mathrsfs{C}}$. 
It follows from the definition that ${ \cc(A) } \subseteq { \overline{\cc}(A) }$ and 
\[
{ \cc(\overline{\cc}(A)) } = { \overline{\cc}(A) } = { \overline{\cc}(\cc(A)) }, 
\]
for all subsets $A$ of $E$. 
Note also that $\cc$ and $\overline{\cc}$ share the same collection of closed subsets. 
As per the Knaster--Tarski fixpoint theorem and the arguments of Ranzato \cite[Section~3]{Ranzato02}, we can build $\overline{\cc}$ as 
\begin{equation}\label{eq:cbar}
\overline{\cc}(A) = \bigcup_{\alpha \in \mathbb{O}} \cc^{\alpha}(A),
\end{equation}
where $\mathbb{O}$ denotes the class of ordinals and $\cc^{\alpha}$ is defined by $\cc^{\alpha}(A) = A$ if $\alpha = 0$, $\cc^{\alpha}(A) = \cc(\cc^{\alpha - 1}(A))$ if $\alpha$ is a successor ordinal, and $\cc^{\alpha}(A) = \bigcup_{\beta < \alpha} \cc^{\beta}(A)$ if $\alpha$ is a limit ordinal.

A preclosure operator $\cc$ is called \textit{finitary} if, for all subsets $A$, 
\begin{equation}\label{eq:finitary}
{ \cc(A) } = { \bigcup_{F \subseteq A} \cc(F) }, 
\end{equation}
where the union runs over the finite subsets $F$ of $A$. 

\begin{lemma}\label{lem:fin}
Let $(E, \cc)$ be a preclosure space. 
Then $\cc$ is finitary if and only if it commutes with directed unions of subsets, in the sense that 
\[
\cc(\bigcup_{D \in \mathrsfs{D}} D) = \bigcup_{D \in \mathrsfs{D}} \cc(D), 
\]
for every directed collection $\mathrsfs{D}$ of subsets of $E$. 
In this case, the collections of $\cc$-closed subsets and relatively $\cc$-closed subsets are both stable under directed unions. 
\end{lemma}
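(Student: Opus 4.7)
The plan is to handle the two implications of the equivalence separately, then derive the two stability properties as easy consequences.

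For the forward direction, assume $\cc$ is finitary. Given a directed collection $\mathrsfs{D}$ and $A := \bigcup_{D \in \mathrsfs{D}} D$, the inclusion $\bigcup_{D \in \mathrsfs{D}} \cc(D) \subseteq \cc(A)$ is immediate from monotonicity. For the reverse inclusion, pick $x \in \cc(A)$; by finitariness there is a finite subset $F = \{a_1, \ldots, a_n\} \subseteq A$ with $x \in \cc(F)$. Each $a_i$ lies in some $D_i \in \mathrsfs{D}$, and a standard induction using directedness of $\mathrsfs{D}$ produces a single $D \in \mathrsfs{D}$ containing $D_1 \cup \cdots \cup D_n$, hence $F$. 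Then $x \in \cc(F) \subseteq \cc(D)$, which gives the desired inclusion.

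For the converse, assume $\cc$ commutes with directed unions. Given any subset $A$, the family $\mathrsfs{D} := \{F \subseteq A : F \text{ finite}\}$ is directed by inclusion, with $\bigcup \mathrsfs{D} = A$. Applying the hypothesis yields $\cc(A) = \bigcup_{F \in \mathrsfs{D}} \cc(F)$, which is exactly~\eqref{eq:finitary}.

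For the last assertion, let $\{D_j\}_{j \in J}$ be a directed collection. If each $D_j$ is $\cc$-closed, then by the equivalence just proved, $\cc(\bigcup_j D_j) = \bigcup_j \cc(D_j) = \bigcup_j D_j$, so the union is $\cc$-closed. If each $D_j$ is only relatively $\cc$-closed (i.e.\ $\cc(\cc(D_j)) = \cc(D_j)$), then the collection $\{\cc(D_j)\}_{j \in J}$ is again directed by inclusion (monotonicity of $\cc$ turns a $D_k$ containing $D_i \cup D_j$ into a $\cc(D_k)$ containing $\cc(D_i) \cup \cc(D_j)$). Applying finitariness twice,
\[
\cc\Bigl(\cc\bigl(\bigcup_j D_j\bigr)\Bigr) = \cc\Bigl(\bigcup_j \cc(D_j)\Bigr) = \bigcup_j \cc(\cc(D_j)) = \bigcup_j \cc(D_j) = \cc\bigl(\bigcup_j D_j\bigr),
\]
so the union is relatively $\cc$-closed.

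The only mildly subtle point is verifying that the image $\{\cc(D_j)\}_{j \in J}$ of a directed family under $\cc$ is itself directed, which is needed for the relatively-closed case; everything else is routine manipulation of monotonicity and the definition of finitariness.
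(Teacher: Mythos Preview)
Your proof is correct and follows essentially the same approach as the paper's: both directions of the equivalence are handled identically (finite subsets cover $A$ directedly; a finite $F$ lands in a single $D$ by directedness), and the stability of relatively $\cc$-closed subsets is obtained via the same chain of equalities, after noting that $\{\cc(D_j)\}_{j \in J}$ is again directed. The only differences are cosmetic: you present the two implications in the opposite order and treat the $\cc$-closed case explicitly rather than dismissing it as ``similar.''
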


\begin{proof}
Assume that $\cc$ commutes with directed unions of subsets. 
Using the fact that $A \subseteq E$ is the directed union of its finite subsets, we obtain Equation~\eqref{eq:finitary}. 

Conversely, assume that Equation~\eqref{eq:finitary} holds, and let $\mathrsfs{D}$ be a directed collection of subsets. 
Take $A := { \bigcup \mathrsfs{D} }$, and let $x \in { \cc(A) }$. 
There exists some finite subset $F$ of $A$ with $x \in \cc(F)$. 
If $f \in F$, there is some $D_f \in { \mathrsfs{D} }$ such that $f \in { D_f }$. 
Since $\mathrsfs{D}$ is directed, there is some $D \in { \mathrsfs{D} }$ such that $D_f \subseteq D$ for all $f \in F$. 
Thus, $F \subseteq D$, so $x \in { \cc(D) }$. 
This shows that $x \in { \bigcup \cc(\mathrsfs{D}) }$. 
So $\cc$ commutes with directed unions of subsets.

Now assume that $\cc$ is finitary, and let $\{ A_j \}_{j \in J}$ be a directed collection of relatively $\cc$-closed subsets. 
Take $A := { \bigcup_{j \in J} A_j }$. 
Then 
\begin{align*}
{ \cc(\cc(A)) } &= { \cc(\cc(\bigcup_{j \in J} A_j )) } = { \cc(\bigcup_{j \in J} \cc(A_j)) } \\
&= { \bigcup_{j \in J} \cc(\cc(A_j)) } = { \bigcup_{j \in J} \cc(A_j) } \\
&= { \cc(\bigcup_{j \in J} A_j) } = { \cc(A) }, 
\end{align*}
where we have used that $\cc(\cc(A_j)) = \cc(A_j)$ for all $j \in J$, and the fact that $\{ \cc(A_j) \}_{j \in J}$ is also a directed collection of subsets. 
This proves that $A$ is relatively $\cc$-closed. 
A similar proof shows that, if $\{ A_j \}_{j \in J}$ is a directed collection of $\cc$-closed subsets, then its union is $\cc$-closed. 
\end{proof}

\begin{example}[Example \ref{ex:clogene} continued]\label{ex:clogene0}
Let $\mathrsfs{V}$ be a collection of subsets of a set $E$. 
If $\mathrsfs{V}$ is stable under arbitrary intersections and directed unions, then $\langle \cdot \rangle_{\mathrsfs{V}}$ is finitary. 
Indeed, first note that $E \in \mathrsfs{V}$ can be assumed without loss of generality. 
Then $\mathrsfs{V}$ coincides with the collection of $\langle \cdot \rangle_{\mathrsfs{V}}$-closed subsets. 
Now, if $\mathrsfs{D}$ is a directed collection of subsets, then $\{ \langle D \rangle_{\mathrsfs{V}} \}_{D \in \mathrsfs{D}}$ is a directed collection of closed subsets. 
By assumption, its union is closed and contains $\bigcup \mathrsfs{D}$. 
Thus, 
\[
{ \langle \bigcup \mathrsfs{D} \rangle_{\mathrsfs{V}}  } = { \bigcup_{D \in \mathrsfs{D}} { \langle D \rangle_{\mathrsfs{V}} } }. 
\]
By the previous lemma, this proves that $\langle \cdot \rangle_{\mathrsfs{V}}$ is finitary. 
\end{example}

\begin{example}[Example \ref{ex:clogene0} continued]\label{ex:infconvex1}
Let $(P, \leqslant)$ be a qoset. 
A subset $H$ of $P$ is called \textit{inf-closed} if $F^{\wedge} \subseteq H$, for all finite subsets $F$ of $H$. 
As an example, every filter in $P$ is inf-closed. 
The collection $\mathrsfs{H}$ of inf-closed subsets is stable under arbitrary intersections and directed unions, hence the closure operator $\langle \cdot \rangle_{\mathrsfs{H}}$ is finitary. 
\end{example}

\begin{remark}
Finitary untied closure operators are called \textit{convexity operators}, and are at the core of an axiomatization of classical convexity theory called \textit{abstract convexity}. 
The data of a convexity operator on a set $E$ is equivalent to that of a collection $\mathrsfs{C}$ of subsets of $E$, called a \textit{convexity}, which satisfies the following axioms: 
\begin{itemize}
	\item $\emptyset, E \in \mathrsfs{C}$, 
	\item $\mathrsfs{C}$ is stable under arbitrary intersections, 
	\item $\mathrsfs{C}$ is stable under directed unions. 
\end{itemize}
Then $\langle \cdot \rangle_{\mathrsfs{C}}$ is a convexity operator, the elements of $\mathrsfs{C}$, i.e.\ the $\langle \cdot \rangle_{\mathrsfs{C}}$-closed subsets, are the \textit{convex} subsets, and $\langle A \rangle_{\mathrsfs{C}}$ is the \textit{convex hull} of the subset $A$. 
Abstract convexity theory was widely developed in van de Vel's monograph \cite{vanDeVel93}. 
For abstract convexity concepts applied to posets and semilattices, see Poncet \cite{Poncet11}, \cite{Poncet12c} and references therein. 
\end{remark}

\begin{proposition}\label{prop:ccirc}
Let $(E, \cc)$ be a preclosure space. 
Then the map $\cc^{\circ}$ defined by 
\[
{ \cc^{\circ}(A) } = { \bigcup_{F \subseteq A} \cc(F) },
\]
for all subsets $A$ of $E$, where the union runs over the finite subsets $F$ of $A$, 
is a finitary preclosure operator on $E$ such that $\cc^{\circ} \leqslant \cc$. 
Moreover, 
\begin{enumerate}
  \item\label{prop:ccirc1} if $\cc$ is a closure operator, then $\cc^{\circ}$ is a finitary closure operator, and ${ \cc^{\circ}(\cc(A)) } = { \cc(A) } = { \cc(\cc^{\circ}(A)) }$, for all $A \subseteq E$;
  \item\label{prop:ccirc2} if $\cc$ is an untied closure operator, then $\cc^{\circ}$ is a convexity operator;
  \item\label{prop:ccirc3} if $\cc$ is untied, then $\cc$ is a convexity operator if and only if $\overline{\cc} = \cc = \cc^{\circ}$. 
\end{enumerate}
\end{proposition}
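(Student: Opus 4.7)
The plan is to first establish the basic properties (preclosure, bounded by $\cc$, finitary) directly from the definition, and then handle (1), (2), (3) by leveraging these in turn.

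First I would check that $\cc^{\circ}$ is a preclosure operator: extensivity follows since $\{x\} \subseteq A$ finite gives $x \in \cc(\{x\}) \subseteq \cc^{\circ}(A)$, and monotonicity is immediate because every finite subset of $A$ is a finite subset of any $B \supseteq A$. The inequality $\cc^{\circ} \leqslant \cc$ follows from monotonicity of $\cc$ applied to each $F \subseteq A$. The crucial preliminary observation is that \textbf{$\cc^{\circ}(F) = \cc(F)$ for every finite $F$}: one inclusion is $\cc^{\circ} \leqslant \cc$, and the other comes from taking $F$ itself as a finite subset of $F$. This immediately yields the finitary property: $\bigcup_{F \subseteq A \text{ finite}} \cc^{\circ}(F) = \bigcup_{F \subseteq A \text{ finite}} \cc(F) = \cc^{\circ}(A)$.

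For part (1), the two sandwich equalities are straightforward: $\cc^{\circ}(\cc(A)) \subseteq \cc(\cc(A)) = \cc(A) \subseteq \cc^{\circ}(\cc(A))$, and $\cc(A) \subseteq \cc(\cc^{\circ}(A)) \subseteq \cc(\cc(A)) = \cc(A)$, using $A \subseteq \cc^{\circ}(A) \subseteq \cc(A)$ and idempotency of $\cc$. The main step — and the one I expect to be the principal obstacle — is idempotency of $\cc^{\circ}$. The reverse inclusion $\cc^{\circ}(A) \subseteq \cc^{\circ}(\cc^{\circ}(A))$ is just extensivity. For the forward inclusion, take a finite $G \subseteq \cc^{\circ}(A)$; each $g \in G$ lies in some $\cc(F_g)$ with $F_g \subseteq A$ finite, so the finite union $F = \bigcup_{g \in G} F_g \subseteq A$ satisfies $G \subseteq \cc(F)$, whence by monotonicity and idempotency $\cc(G) \subseteq \cc(\cc(F)) = \cc(F) \subseteq \cc^{\circ}(A)$. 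Taking the union over finite $G \subseteq \cc^{\circ}(A)$ gives $\cc^{\circ}(\cc^{\circ}(A)) \subseteq \cc^{\circ}(A)$.

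Part (2) is then nearly immediate: under (1), $\cc^{\circ}$ is a finitary closure operator, and untiedness transfers because $\cc^{\circ}(\emptyset) = \cc(\emptyset) = \emptyset$ (the only finite subset of $\emptyset$ is $\emptyset$ itself). Finally, for (3), the ``if'' direction follows because $\overline{\cc} = \cc$ forces $\cc$ to be a closure operator (as $\overline{\cc}$ always is), $\cc^{\circ} = \cc$ forces it to be finitary, and untiedness is given. For the ``only if'' direction, a convexity operator is by definition a finitary untied closure operator, so $\overline{\cc} = \cc$ holds trivially and finitarity yields $\cc^{\circ}(A) = \bigcup_{F \subseteq A \text{ finite}} \cc(F) = \cc(A)$ by the defining property in Equation~\eqref{eq:finitary}.
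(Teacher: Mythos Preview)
Your proof is correct and follows essentially the same approach as the paper: the same key observation $\cc^{\circ}(F)=\cc(F)$ for finite $F$, the same finite-union trick for idempotency in part~(1), and the same reductions for (2) and (3). The only cosmetic difference is that for the identities $\cc^{\circ}\circ\cc=\cc=\cc\circ\cc^{\circ}$ you use a direct sandwich argument, whereas the paper appeals to Proposition~\ref{prop:distr}; your version is equally valid and arguably more self-contained.
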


\begin{proof}
The map $\cc^{\circ}$ clearly satisfies $A \subseteq { \cc^{\circ}(A) } \subseteq { \cc^{\circ}(B) }$ for all subsets $A \subseteq B$, hence is a preclosure operator. 
Moreover, $\cc^{\circ}(F) = \cc(F)$ for all finite subsets $F$ of $E$, so $\cc^{\circ}$ is finitary. 
The assertion $\cc^{\circ} \leqslant \cc$ is evident. 

\eqref{prop:ccirc1}. 
Let $x \in \cc^{\circ}(\cc^{\circ}(A))$. 
Then $x \in \cc(F)$ for some finite subset $F$ of $\cc^{\circ}(A)$. 
For every $y \in F$, there exists some finite subset $F_y$ of $A$ such that $y \in \cc(F_y)$. 
Let $F'$ be the finite subset $\bigcup_{y \in F} F_y$. 
Then $F' \subseteq A$, $F \subseteq { \cc(F') }$, and $x \in { \cc(F) } \subseteq { \cc(\cc(F')) } = { \cc(F') }$. 
This shows that $x \in { \cc^{\circ}(A) }$. 
So $\cc^{\circ}$ is indeed a closure operator. 
The remaining part of \eqref{prop:ccirc1}, i.e.\ the fact that $\cc^{\circ} \circ \cc = \cc = \cc \circ \cc^{\circ}$, is an easy consequence of Proposition~\ref{prop:distr}. 

\eqref{prop:ccirc2}. 
Since $\cc$ is untied, $\cc^{\circ}$ is also untied. 
By \eqref{prop:ccirc1}, $\cc^{\circ}$ is thus a convexity operator. 

\eqref{prop:ccirc3}. 
Since $\cc$ is untied, the fact that $\cc$ is a convexity operator if and only if $\overline{\cc} = \cc = \cc^{\circ}$ is clear from the definitions. 
\end{proof}


\begin{example}[Example \ref{ex:clogene0} continued]\label{ex:clogene1}
Let $\mathrsfs{V}$ be a collection of subsets of a set $E$. 
Let $\mathrsfs{V}^{\circ}$ denote the collection of subsets $C$ of $E$ satisfying
\[
{ F \subseteq C } \Rightarrow { { \langle F \rangle_{\mathrsfs{V}} } \subseteq C  },
\]
for all finite subsets $F$. 
Then $\mathrsfs{V}^{\circ}$ is stable under arbitrary intersections and directed unions, hence $ \langle \cdot \rangle_{\mathrsfs{V}^{\circ}}$ is finitary. 
Moreover, it is easily seen that 
\[
{ \langle A \rangle_{\mathrsfs{V}}^{\circ} } = { \langle A \rangle_{\mathrsfs{V}^{\circ}} },
\]
for all subsets $A$. 
\end{example}

With the next result we show that one can replace the class of ordinals by the set of natural numbers $\mathbb{N}$ in Equation~\eqref{eq:cbar} when the preclosure operator at stake is finitary. 
Given a preclosure space $(E, \cc)$, we define $\cc^{n}$ by $\cc^{n}(A) = A$ if $n = 0$, and $\cc^{n}(A) = \cc(\cc^{n-1}(A))$ if $n > 0$, for all subsets $A$ and $n \in \mathbb{N}$. 

\begin{proposition}\label{prop:progag}
Let $(E, \cc)$ be a preclosure space. 
If $\cc$ is finitary (resp.\ finitary and untied), then $\overline{\cc}$ is a finitary closure operator (resp.\ a convexity operator) satisfying
\[
\overline{\cc}(A) = \bigcup_{n \in \mathbb{N}} \cc^{n}(A),
\]
for all subsets $A$ of $E$. 
\end{proposition}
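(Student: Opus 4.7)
The plan is to define $\widehat{\cc}(A) := \bigcup_{n \in \mathbb{N}} \cc^{n}(A)$ and establish the proposition in four stages: $\widehat{\cc}$ is a closure operator, it equals $\overline{\cc}$, it is finitary, and it remains untied when $\cc$ is untied.

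First, I would check the easy properties of $\widehat{\cc}$. Extensivity follows from $A = \cc^{0}(A) \subseteq \widehat{\cc}(A)$, and monotonicity follows from a straightforward induction showing $\cc^{n}(A) \subseteq \cc^{n}(B)$ whenever $A \subseteq B$. The nontrivial point is idempotency, and this is where finitarity enters. Note that $\{\cc^{n}(A)\}_{n \in \mathbb{N}}$ is an increasing, hence directed, collection of subsets. Applying Lemma~\ref{lem:fin}, which says that a finitary preclosure operator commutes with directed unions, gives
\[
\cc(\widehat{\cc}(A)) \;=\; \cc\bigl(\bigcup_{n \in \mathbb{N}} \cc^{n}(A)\bigr) \;=\; \bigcup_{n \in \mathbb{N}} \cc^{n+1}(A) \;\subseteq\; \widehat{\cc}(A),
\]
so $\widehat{\cc}(A)$ is $\cc$-closed, and a second application of $\widehat{\cc}$ cannot enlarge it. This is the main obstacle: without finitarity, the union defining $\widehat{\cc}(A)$ has no reason to be $\cc$-closed, and one genuinely needs to iterate up to an ordinal strictly larger than $\omega$, as recalled around Equation~\eqref{eq:cbar}.

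Next I would identify $\widehat{\cc}$ with $\overline{\cc}$. Since $\widehat{\cc}$ is a closure operator satisfying $\cc = \cc^{1} \leqslant \widehat{\cc}$, the minimality of $\overline{\cc}$ among closure operators above $\cc$ yields $\overline{\cc} \leqslant \widehat{\cc}$. Conversely, an easy induction gives $\cc^{n}(A) \subseteq \overline{\cc}^{n}(A) = \overline{\cc}(A)$ for every $n$, since $\overline{\cc}$ is already idempotent and above $\cc$; taking the union over $n$ yields $\widehat{\cc} \leqslant \overline{\cc}$. This gives the displayed formula for $\overline{\cc}$.

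To show $\overline{\cc}$ is finitary, I would prove by induction on $n$ that each $\cc^{n}$ is finitary: in the inductive step, if $x \in \cc^{n}(A) = \cc(\cc^{n-1}(A))$, then finitarity of $\cc$ provides a finite $G \subseteq \cc^{n-1}(A)$ with $x \in \cc(G)$, and finitarity of $\cc^{n-1}$ provides, for each $g \in G$, a finite $F_g \subseteq A$ with $g \in \cc^{n-1}(F_g)$; then $F := \bigcup_{g \in G} F_g$ is finite, $G \subseteq \cc^{n-1}(F)$ by monotonicity, and $x \in \cc^{n}(F)$. Interchanging the two unions in
\[
\overline{\cc}(A) \;=\; \bigcup_{n \in \mathbb{N}} \bigcup_{F \subseteq A \text{ finite}} \cc^{n}(F) \;=\; \bigcup_{F \subseteq A \text{ finite}} \overline{\cc}(F)
\]
then shows $\overline{\cc}$ is finitary. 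Finally, if $\cc$ is untied, a trivial induction gives $\cc^{n}(\emptyset) = \emptyset$ for all $n$, hence $\overline{\cc}(\emptyset) = \emptyset$, and together with finitarity this makes $\overline{\cc}$ a convexity operator in the sense of the preceding remark.
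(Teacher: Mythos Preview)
Your proof is correct and follows essentially the same approach as the paper's own proof: both define the union $\bigcup_{n \in \mathbb{N}} \cc^n(A)$, show it is $\cc$-closed using finitarity (you invoke Lemma~\ref{lem:fin} where the paper argues directly with a finite subset, but these are the same), conclude idempotency, identify it with $\overline{\cc}$ via the minimality of $\overline{\cc}$, and prove finitarity by showing each $\cc^n$ is finitary by induction. The only cosmetic difference is that the paper compresses the identification $\widehat{\cc} = \overline{\cc}$ into the single chain $\cc \leqslant \widehat{\cc} \leqslant \overline{\cc} \leqslant \overline{\widehat{\cc}} = \widehat{\cc}$.
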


\begin{proof}
Let $\sss$ be the preclosure operator $A \mapsto { \bigcup_{n \in \mathbb{N}} \cc^{n}(A) }$. 
Let $A$ be a subset of $E$. 
We first show that $\cc(\sss(A)) = \sss(A)$. 
So let $x \in \cc(\sss(A))$. 
Since $\cc$ is finitary, there is some finite subset $F$ of $\sss(A)$ with $x \in \cc(F)$. 
Then, for every $y \in F$, there is some $n_y \in \mathbb{N}$ such that $y \in \cc^{n_y}(A)$. 
Take $n := \max_{y \in F} n_y$. 
We deduce that ${ F } \subseteq { \cc^{n}(A) }$. 
Thus, $x \in { \cc(F) } \subseteq { \cc^{n + 1}(A) } \subseteq { \sss(A) }$. 
This proves that $\cc(\sss(A)) = \sss(A)$, hence $\cc^{m}(\sss(A)) = \sss(A)$ for all $m \in \mathbb{N}$, which yields $\sss(\sss(A)) = \sss(A)$. 
So $\sss$ is a closure operator, i.e.\ $\overline{\sss} = \sss$. 
Moreover, $\sss$ is finitary since each of the preclosure operators $\cc^{m}$ with $m \in \mathbb{N}$ is itself finitary, as a straightforward induction shows. 
Now we have $\cc \leqslant \sss \leqslant \overline{\cc} \leqslant \overline{\sss} = \sss$, so that $\sss = \overline{\cc}$. 
This proves that $\overline{\cc}$ is a finitary closure operator satisfying the equation of the proposition. 
If moreover $\cc$ is untied, then $\overline{\cc}(\emptyset) = \sss(\emptyset) = \emptyset$, so $\overline{\cc}$ is untied, hence is a convexity operator. 
\end{proof}


\begin{proposition}
Let $(E, \cc)$ be a preclosure space. 
Then $\overline{\cc^{\circ}}$ and $(\overline{\cc})^{\circ}$ are finitary closure operators such that $\overline{\cc^{\circ}} \leqslant (\overline{\cc})^{\circ}$. 
\end{proposition}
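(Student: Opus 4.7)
The plan is to invoke the machinery already built up in Propositions \ref{prop:ccirc} and \ref{prop:progag}, together with two straightforward monotonicity observations.

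First, I would verify that both operators are finitary closure operators. By Proposition \ref{prop:ccirc}, $\cc^{\circ}$ is a finitary preclosure operator, so applying Proposition \ref{prop:progag} with $\cc^{\circ}$ in place of $\cc$ immediately yields that $\overline{\cc^{\circ}}$ is a finitary closure operator. On the other hand, $\overline{\cc}$ is itself a closure operator by construction, so Proposition \ref{prop:ccirc}\eqref{prop:ccirc1} shows that $(\overline{\cc})^{\circ}$ is also a finitary closure operator. This handles the first two claims.

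For the inequality $\overline{\cc^{\circ}} \leqslant (\overline{\cc})^{\circ}$, the key observation is that the operation $\cdot^{\circ}$ is monotone on the poset of preclosure operators: if $\cc_1 \leqslant \cc_2$, then $\cc_1(F) \subseteq \cc_2(F)$ for every finite subset $F$, so taking unions over finite subsets of $A$ yields $\cc_1^{\circ}(A) \subseteq \cc_2^{\circ}(A)$, hence $\cc_1^{\circ} \leqslant \cc_2^{\circ}$. Applied to $\cc \leqslant \overline{\cc}$, this gives $\cc^{\circ} \leqslant (\overline{\cc})^{\circ}$. Since $(\overline{\cc})^{\circ}$ is a closure operator and $\overline{\cc^{\circ}}$ is, by construction, the least closure operator dominating $\cc^{\circ}$, the desired inequality follows at once from the universal property that defines $\overline{\cdot}$.

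I do not anticipate any real obstacle here: the whole argument reduces to bookkeeping with previously established facts. The only subtle point is to avoid confusing $(\overline{\cc})^{\circ}$ with $\overline{\cc^{\circ}}$, and to remember that these two operators can genuinely differ (so no equality should be expected in general), reflecting the fact that the finitarization $\cdot^{\circ}$ and the idempotent completion $\overline{\cdot}$ pull in opposite directions on the lattice of preclosure operators.
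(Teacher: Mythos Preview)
Your proof is correct. The first part, identifying both operators as finitary closure operators via Propositions~\ref{prop:ccirc} and~\ref{prop:progag}, is exactly what the paper does.

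For the inequality, you take a slightly different route from the paper. You exploit the monotonicity of $\cdot^{\circ}$ together with the universal property of $\overline{\cdot}$: from $\cc \leqslant \overline{\cc}$ you get $\cc^{\circ} \leqslant (\overline{\cc})^{\circ}$, and since the right-hand side is already a closure operator, the least closure operator above $\cc^{\circ}$ must lie below it. The paper instead uses that both operators are finitary, so the inequality need only be checked on finite subsets $F$; there $(\overline{\cc})^{\circ}(F) = \overline{\cc}(F)$, and $\overline{\cc^{\circ}}(F) \subseteq \overline{\cc}(F)$ follows from $\cc^{\circ} \leqslant \cc$ and the monotonicity of $\overline{\cdot}$. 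Your argument is arguably the more conceptual of the two, as it avoids passing through finite subsets and works directly at the level of the order on preclosure operators; the paper's argument, on the other hand, makes explicit use of the finitarity just established, which has a certain economy. Both are entirely elementary and equally valid.
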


\begin{proof}
By Propositions~\ref{prop:ccirc} and \ref{prop:progag}, $(\overline{\cc})^{\circ}$ and $\overline{\cc^{\circ}}$ are indeed finitary closure operators. 
So $\overline{\cc^{\circ}} \leqslant (\overline{\cc})^{\circ}$ if and only if ${ \overline{\cc^{\circ}}(F) } \subseteq { (\overline{\cc})^{\circ}(F) }$, for all finite subsets $F$, which amounts to ${ \overline{\cc^{\circ}}(F) } \subseteq { \overline{\cc}(F) }$, for all finite subsets $F$. 
This inclusion is clear, so the result is proved. 
\end{proof}

\section{Convolution of preclosure operators}\label{sec:convolution}

In this section, we define the \textit{convolution product} $\cc * \sss$ between two preclosure operators $\cc$ and $\sss$. 
We show that $*$ is a commutative, associative binary relation on the set of preclosure operators on a set $E$, and we prove various additional properties. 
We also prove some results of categorical flavor. 
Notably, we show that the convolution product induces a functor from the category of bi-preclosure spaces and bi-continuous maps to the category of preclosure spaces and continuous maps. 

\subsection{Convolution product}

A (pre)closure space $(E, \cc)$ endowed with an additional (pre)closure operator $\sss$ is called a \textit{bi-(pre)closure space} and denoted by $(E, \cc, \sss)$. 

\begin{definition}
Let $(E, \cc, \sss)$ be a bi-preclosure space. 
We define the \textit{convolution product} $\cc * \sss$ of $\cc$ and $\sss$ by
\begin{equation}\label{eq:convol}
{ \cc * \sss(A) } = { \bigcap_{B \subseteq E} \cc(A \cap B) \cup \sss(A \setminus B) },
\end{equation}
for all subsets $A$ of $E$. 
\end{definition}

We denote by $\mathfrak{C}(E)$ the set of preclosure operators on a set $E$. 

\begin{theorem}\label{thm:basic}
Given a set $E$, the convolution product is a commutative, associative binary relation on $\mathfrak{C}(E)$. 
\end{theorem}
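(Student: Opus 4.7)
The plan is to reduce everything to a ``partition'' reformulation of the convolution, from which both commutativity and associativity become essentially tautological. First I would check that $\cc * \sss$ really is a preclosure operator: extensivity follows by writing $A = (A \cap B) \cup (A \setminus B) \subseteq \cc(A \cap B) \cup \sss(A \setminus B)$ and intersecting over $B$, while monotonicity for $A \subseteq A'$ is clear since $A \cap B \subseteq A' \cap B$ and $A \setminus B \subseteq A' \setminus B$ for every $B \subseteq E$.

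Next I would establish the symmetric reformulation
\[
(\cc * \sss)(A) = \bigcap_{A = B_1 \sqcup B_2} \cc(B_1) \cup \sss(B_2),
\]
where the intersection ranges over all ordered two-partitions of $A$ (empty parts allowed). This holds because the map $B \mapsto (A \cap B, A \setminus B)$ sends subsets of $E$ surjectively onto such partitions, every $(B_1, B_2)$ being reached by $B := B_1$. Commutativity of $*$ is then immediate from this manifestly symmetric form; equivalently, one sees it directly by the substitution $B \leftrightarrow E \setminus B$ in Equation~\eqref{eq:convol}.

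The heart of the proof is associativity. Expanding $((\cc * \sss) * \ttt)(A)$ by Equation~\eqref{eq:convol} and substituting the definition of $\cc * \sss$ inside, I would apply the set-theoretic distributive law $\bigl(\bigcap_i X_i\bigr) \cup Y = \bigcap_i (X_i \cup Y)$ to flatten the double intersection into a single intersection over pairs $(B, C)$ of subsets of $E$:
\[
((\cc * \sss) * \ttt)(A) = \bigcap_{B, C \subseteq E} \cc(A \cap B \cap C) \cup \sss((A \cap B) \setminus C) \cup \ttt(A \setminus B).
\]
As $(B, C)$ varies, the triple $(A \cap B \cap C,\, (A \cap B) \setminus C,\, A \setminus B)$ is always an ordered three-partition of $A$, and every such partition $(B_1, B_2, B_3)$ is reached by taking $B := B_1 \cup B_2$ and $C := B_1$. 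An entirely analogous computation for $(\cc * (\sss * \ttt))(A)$, parameterized by a pair $(B, D)$ recovered from $(B_1, B_2, B_3)$ via $B := B_1$ and $D := B_2$, produces the same ``three-fold partition'' expression
\[
\bigcap_{A = B_1 \sqcup B_2 \sqcup B_3} \cc(B_1) \cup \sss(B_2) \cup \ttt(B_3),
\]
yielding the desired equality.

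The main obstacle is purely bookkeeping: one must carefully parameterize ordered partitions by pairs of subsets of $E$ and apply the distributive law when pulling unions through arbitrary intersections. Once the partition reformulation is at hand, associativity reduces to the observation that partitioning a set into three parts can be iterated in either nesting order, and an obvious induction would extend the formula to $n$-fold convolutions as intersections over ordered $n$-partitions.
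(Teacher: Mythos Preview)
Your proposal is correct and follows essentially the same approach as the paper: both arguments reduce associativity to the observation that each side of $((\cc*\sss)*\ttt)(A)=(\cc*(\sss*\ttt))(A)$ equals a single intersection indexed by ordered three-partitions, with the only cosmetic difference that you partition $A$ while the paper partitions $E$ (writing the common value as $\bigcap_{B_1,B_2,B_3}\cc(A\cap B_1)\cup\sss(A\cap B_2)\cup\ttt(A\cap B_3)$ over partitions of $E$). Your write-up is more explicit about extensivity, monotonicity, and the distributive-law step, which the paper leaves as ``evident'' and ``straightforward''.
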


\begin{proof}
Let $\cc$ and $\sss$ be preclosure operators on $E$. 
That $\cc * \sss$ be a preclosure operator, is evident from the definition of the convolution product. 
Commutativity is straightforward. 
Let us show that $*$ is associative. 
So let $\cc$, $\sss$, and $\ttt$ be three preclosure operators, and let $A \subseteq E$. 
To prove that $(\cc * \sss) * \ttt (A) = \cc * (\sss * \ttt)(A)$, it suffices to notice that both terms are equal to the quantity
\[
\bigcap_{B_1, B_2, B_3} \cc(A \cap B_1) \cup \sss(A \cap B_2) \cup \ttt(A \cap B_3),
\]
where the intersection runs over the subsets $B_1$, $B_2$, $B_3$ that form a partition of $E$. 
\end{proof}

\begin{example}[Example~\ref{ex:clogene1} continued]\label{ex:clogene3}
Let $E$ be a set. 
Then 
\[
{ { \langle \cdot \rangle_{\mathrsfs{U}} } * { \langle \cdot \rangle_{\mathrsfs{V}} } } = { \langle \cdot \rangle_{\mathrsfs{U} * \mathrsfs{V}} }, 
\]
for all collections $\mathrsfs{U}$, $\mathrsfs{V}$ of subsets of $E$, 
where $\mathrsfs{U} * \mathrsfs{V}$ denotes the collection made of subsets $U \cup V$ with $U \in \mathrsfs{U}$, $V \in \mathrsfs{V}$. 
\end{example}

A preclosure operator $\cc$ is called \textit{\v{C}ech} if $\cc$ is untied and ${ \cc(A \cup A') } = { \cc(A) \cup \cc(A') }$ for all subsets $A$, $A'$, and \textit{topological} if it is both \v{C}ech and idempotent. 

\begin{proposition}\label{prop:properties}
The following properties hold for the convolution product of preclosure operators $\cc$, $\sss$ on a set $E$:
\begin{enumerate}
  \item\label{thm:basic1} $\cc * \top = \top * \cc = \top$, where $\top : A \mapsto E$;
  \item\label{thm:basic2} $\cc * \langle \cdot \rangle = \langle \cdot \rangle * \cc = \cc$, where $\langle A \rangle = E$ if $A \neq \emptyset$ and $\langle \emptyset \rangle = \emptyset$;
  \item\label{thm:basic3} $\cc * \sss(\emptyset) = \cc(\emptyset) \cup \sss(\emptyset)$;
  \item\label{thm:basic4} $\cc * \sss \leqslant \cc' * \sss'$ whenever $\cc \leqslant \cc'$ and $\sss \leqslant \sss'$;
  \item\label{thm:basic5} $\cc * \sss$ is untied if $\cc$ and $\sss$ are untied;
  \item\label{thm:basic6} $\cc * \sss$ is idempotent if $\cc$ and $\sss$ are idempotent;
  \item\label{thm:basic7} $\cc$ is \v{C}ech if and only if $\cc$ is untied and $\cc * \cc = \cc$;
  \item\label{thm:basic8} $\cc * \sss$ is \v{C}ech if $\cc$ and $\sss$ are \v{C}ech;
  \item\label{thm:basic9} $\cc$ is topological if and only if $\cc$ is untied and $\cc * \cc = \cc \circ \cc$;
  \item\label{thm:basic10} $\cc * \sss$ is topological if $\cc$ and $\sss$ are topological. 
\end{enumerate}
Moreover, taking $B := \cc(\emptyset)$, 
\begin{enumerate}
  \setcounter{enumi}{10}
  \item\label{thm:basic2b} $\cc * { \langle \cdot \rangle^B } = { \langle \cdot \rangle^B } * \cc = \cc$, where ${ \langle A \rangle^B } = E$ if $A \neq \emptyset$ and ${ \langle \emptyset \rangle^B } = B$;
  \item\label{thm:basic11} $\cc * \bot^B = \bot^B * \cc = \bot^B$, where $\bot^ B : A \mapsto { A \cup B }$;
  \item\label{thm:basic12} $\cc * \sss \leqslant \cc$, if $\sss(\emptyset) \subseteq B$;
  \item\label{thm:basic13} every $\cc$-closed subset is $\cc * \sss$-closed, if $\sss(\emptyset) \subseteq B$;
  \item\label{thm:basic14} $\cc * \sss(x) = \cc(x) \cap \sss(x)$, for all $x \in E$, if $\sss(\emptyset) = B$. 
\end{enumerate}
\end{proposition}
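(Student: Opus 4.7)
The plan is to verify each of the fifteen assertions by unfolding Equation~\eqref{eq:convol} and choosing the probe subset $B$ (or $B'$) optimally. Items~\eqref{thm:basic1}--\eqref{thm:basic5} are routine: \eqref{thm:basic1} follows from $\top(A \setminus B) = E$ for every $B$; \eqref{thm:basic2} from splitting on whether $A \subseteq B'$ (in which case $\langle A \setminus B' \rangle = \emptyset$ and the term is $\cc(A)$) or $A \not\subseteq B'$ (in which case it is $E$); \eqref{thm:basic3} from the collapse of both $A \cap B$ and $A \setminus B$ when $A = \emptyset$; \eqref{thm:basic4} is immediate monotonicity; and \eqref{thm:basic5} is a direct corollary of \eqref{thm:basic3}. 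For item~\eqref{thm:basic6}, I set $A' := \cc * \sss(A)$; the nontrivial inclusion $\cc * \sss(A') \subseteq A'$ is obtained by fixing an arbitrary $B$ in the outer intersection defining $A'$ and then choosing $B' := \cc(A \cap B)$ in the definition of $\cc * \sss(A')$. Idempotency of $\cc$ collapses $\cc(A' \cap B') \subseteq \cc(B') = \cc(A \cap B)$, while the containment $A' \setminus B' \subseteq \sss(A \setminus B)$, coming from $A' \subseteq B' \cup \sss(A \setminus B)$, combined with idempotency of $\sss$, gives $\sss(A' \setminus B') \subseteq \sss(A \setminus B)$.

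Item~\eqref{thm:basic7} is immediate: if $\cc$ is \v{C}ech then $\cc(A \cap B) \cup \cc(A \setminus B) = \cc(A)$ for every $B$, so $\cc * \cc = \cc$; conversely, applying $\cc * \cc = \cc$ to $A \cup A'$ with $B = A$ yields $\cc(A \cup A') \subseteq \cc(A) \cup \cc(A')$. Item~\eqref{thm:basic8} is the crux of the proposition. Untiedness follows from \eqref{thm:basic5}; for binary additivity I argue by contradiction. Suppose $x \in \cc * \sss(A \cup A')$ with $x \notin \cc * \sss(A) \cup \cc * \sss(A')$; extract witnesses $B_1, B_2 \subseteq E$ and set
\[
B := (A \cap B_1) \cup ((A' \setminus A) \cap B_2).
\]
Then $B \subseteq A \cup A'$, so $(A \cup A') \cap B = B$, and one computes $(A \cup A') \setminus B = (A \setminus B_1) \cup ((A' \setminus A) \setminus B_2)$. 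Using \v{C}ech-ness of $\cc$ and $\sss$ to distribute over these binary unions, each resulting piece is, by monotonicity, contained in one of $\cc(A \cap B_1)$, $\cc(A' \cap B_2)$, $\sss(A \setminus B_1)$, $\sss(A' \setminus B_2)$, none of which contains $x$, a contradiction. Items~\eqref{thm:basic9} and~\eqref{thm:basic10} then follow: the converse in \eqref{thm:basic9} uses $B = A$ to get $\cc \circ \cc(A) = \cc * \cc(A) \subseteq \cc(A) \cup \cc(\emptyset) = \cc(A)$, forcing idempotency, after which $\cc * \cc = \cc$ together with \eqref{thm:basic7} gives \v{C}ech-ness; \eqref{thm:basic10} combines \eqref{thm:basic6} and \eqref{thm:basic8}.

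The last five items reduce to direct calculations with $B = \cc(\emptyset)$. For \eqref{thm:basic2b}, split on whether $A \subseteq B'$: the term equals $\cc(A) \cup B = \cc(A)$ in the affirmative case (by monotonicity $B \subseteq \cc(A)$), and $E$ otherwise. Item~\eqref{thm:basic11} is analogous, with $B' = \emptyset$ providing the upper bound $A \cup B$ and the reverse containment being automatic. For \eqref{thm:basic12}, the choice $B' = A$ yields $\cc(A) \cup \sss(\emptyset) \subseteq \cc(A) \cup B = \cc(A)$; \eqref{thm:basic13} follows from \eqref{thm:basic12} together with the preclosure property; and \eqref{thm:basic14} evaluates the two cases $x \in B'$ and $x \notin B'$, with $\cc(\emptyset) = \sss(\emptyset) = B$ absorbing $B$ into $\cc(x)$ and $\sss(x)$ respectively. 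The main obstacle is \eqref{thm:basic8}: unlike the others, combining two witnesses $B_1, B_2$ into a single subset $B$ that handles the overlap $A \cap A'$ correctly requires the additivity of both \v{C}ech factors to be exploited simultaneously.
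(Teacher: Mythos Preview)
Your proof is correct and for most items follows the paper's argument closely (in particular, your treatment of \eqref{thm:basic6} with the choice $B' := \cc(A \cap B)$ is exactly the paper's). The one genuine difference is \eqref{thm:basic8}. The paper does not construct a combined witness $B$ at all: instead, it invokes the commutativity and associativity of $*$ established in Theorem~\ref{thm:basic} and simply writes
\[
(\cc * \sss) * (\cc * \sss) = (\cc * \cc) * (\sss * \sss) = \cc * \sss,
\]
using \eqref{thm:basic7} on each factor, and then concludes via the characterisation in \eqref{thm:basic7} together with untiedness from \eqref{thm:basic5}. Your direct argument, splicing $B_1$ and $B_2$ into $B = (A \cap B_1) \cup ((A' \setminus A) \cap B_2)$ and distributing via the \v{C}ech property, is correct and self-contained (it does not appeal to associativity), but it obscures the algebraic point that the paper's one-line proof makes vivid: \v{C}ech operators are precisely the idempotents of the monoid $(\mathfrak{C}^{\emptyset}(E), *)$, so their product is again an idempotent because $*$ is commutative. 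Your route would still work in a setting where associativity of $*$ were unavailable; the paper's buys brevity and highlights the monoid structure.
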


\begin{proof}
\eqref{thm:basic1}, \eqref{thm:basic2}, \eqref{thm:basic3}, \eqref{thm:basic4}, \eqref{thm:basic5}, \eqref{thm:basic2b}, and \eqref{thm:basic11} easily follow from the definition of the convolution product. 

\eqref{thm:basic6}. 
Assume that $\cc$ and $\sss$ are idempotent, i.e.\ are closure operators, and let us show that so is $\cc * \sss$. 
So let $A \subseteq E$, and let $x \in \cc * \sss(A')$ with ${ A' } := { \cc * \sss(A) }$. 
Let $B \subseteq E$. 
Take ${ B' } := { \cc(A \cap B) }$, and suppose that $x \notin B'$. 
Since $x \in \cc * \sss(A')$, we have $x \in { \cc(A' \cap B') \cup \sss(A' \setminus B') }$. 
If $x \in { \cc(A' \cap B') }$, then $x \in { \cc(B') } = { B' }$ since $\cc$ is idempotent, which contradicts $x \notin B'$. 
So $x \notin \cc(A' \cap B')$, hence $x \in \sss(A' \setminus B')$. 
Moreover, from the definitions of $A'$ and $B'$ we deduce that ${ A' \setminus B' } \subseteq { \sss(A \setminus B) }$, so that ${ \sss(A' \setminus B') } \subseteq { \sss(\sss(A \setminus B)) } = { \sss(A \setminus B) }$. 
This proves that $x \in \sss(A \setminus B)$. 
Thus, we have proved that $x \in { B' \cup \sss(A \setminus B) }$, i.e.\ $x \in { \cc(A \cap B) \cup \sss(A \setminus B) }$, for all $B \subseteq E$. 
This shows that $x \in \cc * \sss(A)$. 
So $\cc * \sss$ is a closure operator. 

\eqref{thm:basic7}. 
Assume that $\cc$ is \v{C}ech, and let $A \subseteq E$. 
Then $\cc$ is untied.  
Moreover, 
\begin{align*}
{ \cc * \cc(A) } &= { \bigcap_{B \subseteq E} \cc(A \cap B) \cup \cc(A \setminus B) } \\
&= { \bigcap_{B \subseteq E} \cc((A \cap B) \cup (A \setminus B)) } = { \bigcap_{B \subseteq E} \cc(A) } = { \cc(A) }. 
\end{align*}
So $\cc * \cc = \cc$. 
Conversely, assume that $\cc$ is untied and $\cc * \cc = \cc$. 
Let $A$, $A'$ be subsets of $E$. 
Then ${ \cc(A \cup A') } = { \cc * \cc(A \cup A') }$. 
Taking $B = A$ in Equation~\eqref{eq:convol}, this implies ${ \cc(A \cup A') } \subseteq { \cc((A \cup A') \cap A) \cup \cc((A \cup A') \setminus A) } \subseteq { \cc(A) \cup \cc(A') }$. 
This shows that $\cc$ is \v{C}ech. 

\eqref{thm:basic8}. 
Assume that $\cc$ and $\sss$ are \v{C}ech. 
Then $(\cc * \sss) * (\cc * \sss) = (\cc * \cc) * (\sss * \sss) = \cc * \sss$. 
So $\cc * \sss$ is \v{C}ech. 

\eqref{thm:basic9}. 
Assume that $\cc$ is topological. 
Then $\cc$ is \v{C}ech, so $\cc * \cc = \cc$. 
Moreover, $\cc$ is idempotent, so $\cc = \cc \circ \cc$. 
This shows that $\cc * \cc = \cc \circ \cc$. 
Conversely, assume that $\cc$ is untied and $\cc * \cc = \cc \circ \cc$. 
Since $\cc \leqslant \cc \circ \cc$ and $\cc * \cc \leqslant \cc$, we obtain $\cc = \cc \circ \cc$ and $\cc * \cc = \cc$, so $\cc$ is idempotent and \v{C}ech, i.e.\ topological.  

\eqref{thm:basic10}. 
Assume that $\cc$ and $\sss$ are topological. 
Then $\cc * \sss$ is idempotent and \v{C}ech, i.e.\ topological. 


\eqref{thm:basic12}. 
Let $A \subseteq E$. 
Then ${ \cc * \sss(A) } \subseteq { \cc(A \cap E) \cup \sss(A \setminus E) } = { \cc(A) \cup \sss(\emptyset) }$, so ${ \cc * \sss(A) } \subseteq { \cc(A) \cup \cc(\emptyset) } = { \cc(A) }$. 
This shows that $\cc * \sss \leqslant \cc$. 

\eqref{thm:basic13}. 
Let $A \subseteq E$ be $\cc$-closed. 
By \eqref{thm:basic12}, we have $\cc * \sss(A) \subseteq \cc(A) = A$, so $\cc * \sss(A) = A$. 
This proves that $A$ is $\cc * \sss$-closed. 

\eqref{thm:basic14}. 
Let $y \in { \cc(x) \cap \sss(x) }$, and let $B$ be a subset of $E$. 
If $x \in B$, then 
$
y \in { \cc(x) } 
  = { \cc(\{ x \} \cap B) } 
  \subseteq { \cc(\{ x \} \cap B) \cup \sss(\{ x \} \setminus B) }
$. 
If $x \notin B$, then 
$
y \in { \sss(x) } 
  = { \sss(\{ x \} \setminus B) } 
  \subseteq { \cc(\{ x \} \cap B) \cup \sss(\{ x \} \setminus B) }
$. 
This shows that $y \in { \cc * \sss(x) }$. 
Thus, ${ \cc(x) \cap \sss(x) } \subseteq { \cc * \sss(x) }$. 
By \eqref{thm:basic12} and the assumption $\cc(\emptyset) = \sss(\emptyset)$, the reverse inclusion holds. 
So ${ \cc(x) \cap \sss(x) } = { \cc * \sss(x) }$. 
\end{proof}

\begin{remark}
Let $\mathfrak{C}^{B}(E)$ denote the set of preclosure operators $\cc$ on $E$ such that $\cc(\emptyset) = B$. 
The previous result shows in particular that $(\mathfrak{C}^{B}(E), *, \leqslant)$ is a partially ordered monoid with neutral element $\langle \cdot \rangle^B$ and absorbing element $\bot^B$. 
Moreover, in $\mathfrak{C}^{\emptyset}(E)$, elements such that $\cc * \cc = \cc$ coincide with \v{C}ech preclosure operators, and elements such that $\cc * \cc = \cc \circ \cc$ coincide with topological closure operators. 
\end{remark}

\begin{remark}
Though we do not have a counterexample to give, it seems unlikely that the convolution product restrict to a binary relation on \textit{finitary} preclosure operators. 
Yet, we shall see with Theorem~\ref{thm:finitecopoints} that the convolution product $\cc * \sss$ is finitary in case $\sss$ is finitary and $\cc$ is a \textit{local} closure operator (see the definition in Section~\ref{subsec:local}).  
\end{remark}

\begin{remark}
More generally, one may consider the \textit{convolution product over $\mathrsfs{B}$}, defined as
\[
{ \cc *_{\mathrsfs{B}} \sss(A) } = { \bigcap_{B \in \mathrsfs{B}} \cc(A \cap B) \cup \sss(A \setminus B) },
\]
where $\mathrsfs{B}$ is a collection of subsets of $E$. 
Commutativity and associativity of $*_{\mathrsfs{B}}$ hold if $\mathrsfs{B}$ is a Boolean algebra. 
\end{remark}

\subsection{Continuous maps and functoriality}

Let $E$ be a set. 
Recall that $\mathfrak{C}(E)$ denotes the set of preclosure operators on $E$. 
This is a complete lattice: given a collection $(\cc_j)_{j \in J}$ of preclosure operators on $E$, its inf is given by
$A \mapsto \bigcap_{j \in J} \cc_j(A)$, 
and its sup by $A \mapsto \bigcup_{j \in J} \cc_j(A)$. 
The map $\mathfrak{C}$ extends to a contravariant functor from the category of sets to the category of complete lattices. 
Indeed, given a map $f : E \to E'$, we let $\mathfrak{C}(f) : \mathfrak{C}(E') \to \mathfrak{C}(E)$ be defined by 
\[
\mathfrak{C}(f)(\cc') := f^{-1} \cc' f : A \mapsto f^{-1}(\cc'(f(A))),
\] 
for every preclosure operator $\cc'$ on $E'$. 
Note that $f^{-1} \cc' f$ is untied (resp.\ finitary, idempotent) if $\cc'$ is. 

A map $f : E \to E'$ between two preclosure spaces $(E,\cc)$ and $(E',\cc')$ is \textit{$(\cc, \cc')$-continuous} if ${ f(\cc(A)) } \subseteq { \cc'(f(A)) }$, for all subsets $A$ of $E$, in other words if $\cc \leqslant f^{-1} \cc' f$. 

\begin{example}
Let $P$, $P'$ be qosets, and let $f : P \to P'$. 
Then $f$ is $(\downarrow\!\! \cdot, \downarrow\!\! \cdot)$-continuous if and only if it is \textit{order-preserving}, i.e.\ $f(x) \leqslant f(y)$ whenever $x \leqslant y$. 
\end{example}

\begin{example}[Example~\ref{ex:clogene3} continued]\label{ex:clogene4}
Let $E$ (resp.\ $E'$) be a set equipped with a collection $\mathrsfs{V}$ (resp.\ $\mathrsfs{V}'$) of subsets. 
Then a map $f : E \to E'$ is $(\langle \cdot \rangle_{\mathrsfs{V}}, \langle \cdot \rangle_{\mathrsfs{V}'})$-continuous if ${ f^{-1}(\mathrsfs{V}') } \subseteq { \mathrsfs{V} }$. (Note that the latter is sufficient but not necessary in general.)
This means that the map that associates to each pair $(E, \mathrsfs{V})$ the closure space $(E, \langle \cdot \rangle_{\mathrsfs{V}})$ extends to a functor. 
\end{example}

The constructions $\cc \mapsto \cc^{\circ}$ and $\cc \mapsto \overline{\cc}$ are functorial in the following sense. 

\begin{proposition}
Let $(E, \cc)$ and $(E', \cc')$ be preclosure spaces, and let $f : E \to E'$. 
If $f$ is $(\cc, \cc')$-continuous, then $f$ is $(\cc^{\circ}, \cc'^{\circ})$-continuous and $(\overline{\cc}, \overline{\cc}')$-continuous. 
\end{proposition}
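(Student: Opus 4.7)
The plan is to handle the two claims separately, exploiting in each case the explicit characterization of $\cc^{\circ}$ and $\overline{\cc}$ established in the preceding subsection.

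For $\cc^{\circ}$, I would argue directly from the definition. Fix a subset $A \subseteq E$ and let $y \in f(\cc^{\circ}(A))$, say $y = f(x)$ with $x \in \cc^{\circ}(A)$. By the definition of $\cc^{\circ}$, there exists a finite subset $F \subseteq A$ with $x \in \cc(F)$. The $(\cc, \cc')$-continuity of $f$ then yields $y = f(x) \in f(\cc(F)) \subseteq \cc'(f(F))$. Since $f(F)$ is a finite subset of $f(A)$, the definition of $\cc'^{\circ}$ gives $y \in \cc'^{\circ}(f(A))$. This shows $f(\cc^{\circ}(A)) \subseteq \cc'^{\circ}(f(A))$, which is the desired continuity.

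For $\overline{\cc}$, I would bypass any transfinite iteration by invoking the universal property: $\overline{\cc}$ is the \emph{least} closure operator above $\cc$. Reformulating the hypothesis as $\cc \leqslant f^{-1} \cc' f$ and combining it with $\cc' \leqslant \overline{\cc'}$ (together with monotonicity of the pullback construction $\sss \mapsto f^{-1} \sss f$) yields $\cc \leqslant f^{-1} \overline{\cc'} f$. As already noted in the paper, the pullback $f^{-1} \sss f$ is idempotent whenever $\sss$ is; hence $f^{-1} \overline{\cc'} f$ is a closure operator that lies above $\cc$. By minimality of $\overline{\cc}$, we conclude $\overline{\cc} \leqslant f^{-1} \overline{\cc'} f$, i.e.\ $f$ is $(\overline{\cc}, \overline{\cc'})$-continuous.

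The main subtlety is really just to remember that the pullback of a closure operator along an arbitrary map remains a closure operator; once this is in hand, both statements fall out immediately from the way $\cc^{\circ}$ and $\overline{\cc}$ are built. There is no genuine obstacle, and in particular one avoids having to carry out a transfinite induction on the iterates $\cc^{\alpha}$ in order to push continuity through to the idempotent hull.
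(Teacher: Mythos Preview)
Your proof is correct. The paper itself simply writes ``The easy proof is left to the reader,'' so there is no detailed argument to compare against. Your treatment of the $\cc^{\circ}$ part is the obvious one. For $\overline{\cc}$, your use of the minimality of $\overline{\cc}$ among closure operators above $\cc$, together with the already-recorded fact that $f^{-1}\sss f$ inherits idempotency from $\sss$, is clean and avoids the transfinite induction on the iterates $\cc^{\alpha}$ that Equation~\eqref{eq:cbar} might otherwise suggest; this is likely what the author had in mind by calling the proof easy.
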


\begin{proof}
The easy proof is left to the reader. 
\end{proof}

\begin{lemma}\label{lem:ineqf}
Let $(E', \cc', \sss')$ be a bi-preclosure space, and let $f : E \to E'$. 
Then 
\[
(f^{-1} \cc' f) * (f^{-1} \sss' f) \leqslant f^{-1} (\cc' * \sss') f.
\] 
Morever, if $f$ is injective, then the previous inequality is an equality. 
\end{lemma}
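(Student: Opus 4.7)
The plan is to expand both sides using the definition of convolution, push the preimage through unions and arbitrary intersections (with both of which it commutes), and then exhibit a natural matching between the indexing subsets on each side. For any $A \subseteq E$, the left-hand side equals
\[
\bigl((f^{-1} \cc' f) * (f^{-1} \sss' f)\bigr)(A) = \bigcap_{B \subseteq E} f^{-1}\bigl( \cc'(f(A \cap B)) \cup \sss'(f(A \setminus B)) \bigr),
\]
while the right-hand side equals
\[
f^{-1}\bigl((\cc' * \sss')(f(A))\bigr) = \bigcap_{B' \subseteq E'} f^{-1}\bigl( \cc'(f(A) \cap B') \cup \sss'(f(A) \setminus B') \bigr).
\]

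For the inequality, I will fix an arbitrary $B' \subseteq E'$ and substitute $B := f^{-1}(B')$ in the left-hand indexing family. The elementary set-theoretic inclusions $f(A \cap f^{-1}(B')) \subseteq f(A) \cap B'$ and $f(A \setminus f^{-1}(B')) \subseteq f(A) \setminus B'$ (the latter is simply the fact that $x \notin f^{-1}(B')$ means $f(x) \notin B'$), combined with the monotonicity of $\cc'$ and $\sss'$, show that the term indexed by this $B$ on the left is contained in the term indexed by $B'$ on the right. Since the intersection on the left ranges over \emph{every} $B \subseteq E$ and, in particular, over every $f^{-1}(B')$, it follows that the left-hand side is contained in the right-hand side.

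For the reverse inclusion under the assumption that $f$ is injective, I will fix $B \subseteq E$ and substitute $B' := f(B)$ on the right. Injectivity upgrades the set-theoretic inclusions $f(A \cap B) \subseteq f(A) \cap f(B)$ and $f(A \setminus B) \subseteq f(A) \setminus f(B)$ to equalities, so the term indexed by $f(B)$ on the right \emph{coincides} with the term indexed by $B$ on the left. Intersecting over all $B \subseteq E$ then yields the reverse inclusion, hence equality.

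The argument is essentially symbolic bookkeeping with preimages, so I do not foresee any substantive obstacle; the only delicate point worth flagging in writing is to identify exactly where injectivity is used. The first inequality requires no hypothesis on $f$ because taking preimages behaves well with respect to arbitrary intersections and to complements, whereas the reverse inequality relies crucially on the identity $f(A \setminus B) = f(A) \setminus f(B)$, which fails as soon as some element of $B$ shares a fiber with an element of $A \setminus B$.
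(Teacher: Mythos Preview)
Your proof is correct and follows essentially the same approach as the paper's: both directions use the same substitutions $B := f^{-1}(B')$ for the inequality and $B' := f(B)$ for the reverse, together with the identical set-theoretic inclusions (and their injective upgrades). The only cosmetic difference is that the paper argues pointwise with an element $x$ while you work directly with the intersections of subsets.
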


\begin{proof}
Take $\cc := f^{-1} \cc' f$ and $\sss := f^{-1} \sss' f$. 

Let $A \subseteq E$ and $x \in \cc * \sss(A)$. 
We want to show that $x \in f^{-1} (\cc' * \sss') f(A)$, i.e.\ $f(x) \in \cc' * \sss' (f(A))$. 
So let $B' \subseteq E'$. 
We take $B := f^{-1}(B')$. 
Then $x \in { \cc(A \cap B) \cup \sss(A \setminus B) }$, so $f(x) \in { \cc'(f(A \cap B)) \cup \sss'(f(A \setminus B)) } \subseteq { \cc'(f(A) \cap B') \cup \sss'(f(A) \setminus B') }$. 
This proves that $f(x) \in \cc' * \sss' (f(A))$, as required. 

Now, assume that $f$ is injective. 
Let $A \subseteq E$ and $x \notin  \cc * \sss(A)$. 
Then there is some $B_0 \subseteq E$ with $x \notin \cc(A \cap B_0)$ and $x \notin \sss(A \setminus B_0)$, equivalently $f(x) \notin \cc'(f(A \cap B_0))$ and $f(x) \notin \sss'(f(A \setminus B_0))$.  
Since $f$ is injective, we have ${ f(A \cap B_0) } = { f(A) \cap f(B_0) }$ and ${ f(A \setminus B_0) } = { f(A) \setminus f(B_0) }$, 
so we deduce that $f(x) \notin \cc' * \sss' (f(A))$, i.e.\ $x \notin f^{-1}(\cc' * \sss' (f(A)))$, as required. 
\end{proof}

The following results shows that the convolution product induces a functor from the category of bi-preclosure spaces to the category of preclosure spaces. 

\begin{proposition}\label{prop:functor}
Let $(E, \cc, \sss)$, $(E', \cc', \sss')$ be bi-preclosure spaces. 
If a map $f : E \to E'$ is both $(\cc, \cc')$-continuous and $(\sss, \sss')$-continuous, then $f$ is $(\cc * \sss, \cc' * \sss')$-continuous.
\end{proposition}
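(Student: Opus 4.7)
The plan is to deduce the result directly from two facts already established in the paper: the monotonicity of the convolution product with respect to the pointwise order on preclosure operators (Proposition~\ref{prop:properties}\eqref{thm:basic4}), and the inequality comparing convolution with the pullback construction $\cc' \mapsto f^{-1} \cc' f$ from Lemma~\ref{lem:ineqf}. No explicit manipulation of the intersection defining the convolution product will be needed.

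First, I would unfold the definitions: $(\cc, \cc')$-continuity of $f$ amounts to $\cc \leqslant f^{-1} \cc' f$, and similarly $\sss \leqslant f^{-1} \sss' f$. Applying Proposition~\ref{prop:properties}\eqref{thm:basic4} (which asserts that $*$ is monotone in both arguments), these two inequalities combine to give
\[
\cc * \sss \;\leqslant\; (f^{-1} \cc' f) * (f^{-1} \sss' f).
\]
Next, Lemma~\ref{lem:ineqf} provides the comparison $(f^{-1} \cc' f) * (f^{-1} \sss' f) \leqslant f^{-1}(\cc' * \sss') f$. Chaining these two inequalities yields $\cc * \sss \leqslant f^{-1}(\cc' * \sss') f$, which is precisely the $(\cc * \sss, \cc' * \sss')$-continuity of $f$.

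There is essentially no obstacle here, since the nontrivial content — namely the fact that pullback interacts correctly (as an inequality) with convolution — has been isolated in Lemma~\ref{lem:ineqf}. The only thing to double-check is that the monotonicity of Proposition~\ref{prop:properties}\eqref{thm:basic4} is legitimately applied, which it is because $f^{-1} \cc' f$ and $f^{-1} \sss' f$ are themselves preclosure operators on $E$. Thus the proposition reduces to a one-line composition of two previously proved facts.
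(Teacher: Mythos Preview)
Your proof is correct and follows essentially the same approach as the paper's own proof: rewrite continuity as inequalities, apply monotonicity of the convolution product, then invoke Lemma~\ref{lem:ineqf}. In fact your citation of Proposition~\ref{prop:properties}\eqref{thm:basic4} for the monotonicity step is slightly more precise than the paper's reference to Theorem~\ref{thm:basic}.
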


\begin{proof}
The assumptions of continuity rewrite as 
\[
\cc \leqslant f^{-1} \cc' f \mbox{ and } \sss \leqslant f^{-1} \sss' f. 
\]
By Theorem~\ref{thm:basic}, we have $\cc * \sss \leqslant (f^{-1} \cc' f) * (f^{-1} \sss' f)$. 
This yields 
\[
\cc * \sss \leqslant f^{-1} (\cc' * \sss') f
\]
by Lemma~\ref{lem:ineqf}, so $f$ is  $(\cc * \sss, \cc' * \sss')$-continuous.
\end{proof}

\section{Convolution of preclosure operators: special case}\label{sec:specialcase}

This section deepens our analysis of the convolution product by introducing \textit{enriched qosets} - qosets equipped with a compatible preclosure operator. 
We examine the significant case of the convolution $\cc^{\uparrow}$ of a preclosure operator $\cc$ with the dual Alexandrov operator $\uparrow\!\! \cdot$ induced by the qoset. 
Theorems \ref{thm:charac} and \ref{thm:convex} provide explicit formulas for this convolution. 
In the case where $\cc$ \textit{separates points}, sups of subsets are involved in these formulas, which lays the ground to our representation theorems (in their sup-generating formulations) of Sections \ref{sec:km} and \ref{sec:local}. 

\subsection{Enriched qosets}

Let $(E, \leqslant)$ be a qoset and $\cc$ be a preclosure operator on $E$. 
We say that $(E, \leqslant, \cc)$ (or merely $\cc$, if the context is clear) is
\begin{enumerate}
  \item\label{def:compat1} \textit{absorbing} if ${ \downarrow\!\! \cc(A) } = { \cc(A) } = { \cc(\downarrow\!\! A) }$, for all $A \subseteq E$; 
  \item\label{def:compat2} \textit{right-absorbing} if ${ \downarrow\!\! \cc(A) } = { \cc(A) }$, for all $A \subseteq E$;
  \item\label{def:compat3} \textit{compatible} if ${ \downarrow\!\! A } \subseteq { \cc(A) }$, for all $A \subseteq E$. 
\end{enumerate}
Note that \eqref{def:compat1} $\Rightarrow$ \eqref{def:compat2} $\Rightarrow$ \eqref{def:compat3}.  

\begin{proposition}\label{prop:topped}
Let $(E, \leqslant)$ be a qoset and $\cc$ be a preclosure operator on $E$. 
If $\cc$ is compatible and idempotent, then $\cc$ is absorbing. 
\end{proposition}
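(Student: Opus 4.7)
The plan is to unpack the three conditions and observe that absorbingness splits into two equalities, namely $\downarrow\!\!\cc(A) = \cc(A)$ (right-absorbingness) and $\cc(A) = \cc(\downarrow\!\! A)$, and that each equality follows from one application of compatibility followed by one application of idempotency. Since compatibility says $\downarrow\!\! B \subseteq \cc(B)$ for every subset $B$, the natural move is to feed $B := \cc(A)$ into compatibility in one step, and $B := A$ in the other.

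More concretely, I would first prove right-absorbingness: one inclusion $\cc(A) \subseteq \downarrow\!\!\cc(A)$ is immediate from reflexivity of $\leqslant$, and the reverse inclusion is obtained by applying compatibility to $\cc(A)$, giving $\downarrow\!\!\cc(A) \subseteq \cc(\cc(A))$, then collapsing the outer $\cc$ via idempotency. Then I would prove $\cc(A) = \cc(\downarrow\!\! A)$: the inclusion $\cc(A) \subseteq \cc(\downarrow\!\! A)$ follows from $A \subseteq \downarrow\!\! A$ and the monotonicity built into the definition of a preclosure operator; the reverse inclusion uses compatibility applied to $A$ (so $\downarrow\!\! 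A \subseteq \cc(A)$), followed by monotonicity and idempotency to get $\cc(\downarrow\!\! A) \subseteq \cc(\cc(A)) = \cc(A)$.

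There is essentially no obstacle here: the statement is a direct two-line bookkeeping consequence of the definitions, and the only thing to watch is the order in which compatibility and idempotency are invoked (compatibility first on the chosen set, then idempotency to kill the extra $\cc$). The proof does not need any of the more elaborate machinery of the paper (convolution, finitariness, the $\cc^\circ$/$\overline{\cc}$ constructions); it is purely a consequence of monotonicity, extensivity, idempotency, and the single inclusion $\downarrow\!\! B \subseteq \cc(B)$.
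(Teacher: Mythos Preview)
Your proposal is correct and follows essentially the same approach as the paper's proof: apply compatibility to $\cc(A)$ to obtain $\downarrow\!\!\cc(A) \subseteq \cc(\cc(A)) = \cc(A)$, and apply compatibility to $A$ (plus monotonicity) to obtain $\cc(\downarrow\!\! A) \subseteq \cc(\cc(A)) = \cc(A)$. The only difference is that you spell out the trivial reverse inclusions explicitly, whereas the paper leaves them implicit.
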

  
\begin{proof}
Let $A \subseteq E$. 
From the compatibility condition ${ \downarrow\!\! A } \subseteq { \cc(A) }$, we deduce ${ \cc(\downarrow\!\! A) } \subseteq { \cc(\cc(A)) } = { \cc(A) }$, since $\cc$ is idempotent. 
We also have ${ \downarrow\!\! \cc(A) } \subseteq { \cc(\cc(A)) } = { \cc(A) }$. 
This shows that $\cc$ is absorbing. 
\end{proof}

\begin{definition}
A triplet $(E, \leqslant, \cc)$ is an \textit{enriched qoset} if $(E, \leqslant)$ is a qoset and $\cc$ is a right-absorbing preclosure operator on $E$. 
\end{definition}


\begin{remark}
If $(E, \leqslant, \cc)$ is an enriched qoset, then so are $(E, \leqslant, \cc^{\circ})$ and $(E, \leqslant, \overline{\cc})$. 
\end{remark}

\subsection{Preclosure operators convoluted with a quasiorder}

Let $(E, \leqslant, \cc)$ be an enriched qoset. 
The convolution product of $\cc$ with $\downarrow\!\! \cdot$ (resp.\ $\uparrow\!\! \cdot$) is denoted by $\cc^{\downarrow}$ (resp.\ $\cc^{\uparrow}$), as a shorthand for ${ \cc } * { \downarrow\!\! \cdot }$ (resp.\ ${ \cc } * { \uparrow\!\! \cdot }$). 
Be aware of the inequalities $\cc^{\downarrow} \leqslant \cc$ and $\cc^{\uparrow} \leqslant \cc$. 
Note also that the mapping
\[
(E, \leqslant, \cc) \mapsto (E, \sim, \cc^{\uparrow})
\]
is an endofunctor of the category of enriched qosets (with order-preserving, continuous maps as morphisms), thanks to Proposition~\ref{prop:functor}. 

\begin{remark}
Given a collection of subsets $\mathrsfs{V}$, the notation $\langle A \rangle_{\mathrsfs{V}}^{\uparrow}$ will refer to the closure of $A$ with respect to the closure operator ${ \langle \cdot \rangle_{\mathrsfs{V}} } * { \uparrow\!\! \cdot }$, and should not be mixed up with the set of upper bounds of $\langle A \rangle_{\mathrsfs{V}}$. 
Similar remarks hold for the notations $\langle A \rangle_{\mathrsfs{V}}^{\downarrow}$ and $\langle A \rangle_{\mathrsfs{V}}^{\circ}$. 
\end{remark}

\begin{lemma}\label{lem:charac}
Let $(E, \leqslant, \cc)$ be an enriched qoset, and let $x \in E$. 
Then
\[
{ x \in \cc^{\uparrow}(A) } \Rightarrow { x \in \cc^{\uparrow}({ \downarrow\!\! x } \cap A) }, 
\]
for all $A \subseteq E$. 
\end{lemma}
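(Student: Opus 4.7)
The plan is to unpack the definition of the convolution product and apply the hypothesis to a well-chosen auxiliary subset. By definition,
\[
\cc^{\uparrow}(A) = \bigcap_{B \subseteq E} \cc(A \cap B) \cup {\uparrow\!\!(A \setminus B)},
\]
so the hypothesis $x \in \cc^{\uparrow}(A)$ says that for every $B \subseteq E$, either $x \in \cc(A \cap B)$ or $x \in {\uparrow\!\!(A \setminus B)}$. What I need to prove is that, writing $A' := {\downarrow\!\! x} \cap A$, the same kind of statement holds for $A'$: for every $B \subseteq E$, either $x \in \cc(A' \cap B)$ or $x \in {\uparrow\!\!(A' \setminus B)}$.

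First I would fix an arbitrary $B \subseteq E$ and apply the hypothesis not to $B$ but to the subset $B' := B \cap {\downarrow\!\! x}$. The point of this choice is that $A \cap B' = (\downarrow\!\! x \cap A) \cap B = A' \cap B$, so the first alternative reads exactly $x \in \cc(A' \cap B)$, which is what we want. In the remaining case we have $x \in {\uparrow\!\!(A \setminus B')}$, i.e.\ there exists $y \in A$ with $y \leqslant x$ and $y \notin B \cap {\downarrow\!\! x}$.

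The key observation is then that $y \leqslant x$ forces $y \in {\downarrow\!\! x}$, so the condition $y \notin B \cap {\downarrow\!\! x}$ reduces to $y \notin B$. Hence $y \in ({\downarrow\!\! x} \cap A) \setminus B = A' \setminus B$ and, since $y \leqslant x$, we get $x \in {\uparrow\!\!(A' \setminus B)}$. Combining the two cases, $x \in \cc(A' \cap B) \cup {\uparrow\!\!(A' \setminus B)}$ for every $B \subseteq E$, and another appeal to the definition of the convolution yields $x \in \cc^{\uparrow}(A')$, as required.

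There is no genuine obstacle here: the proof reduces to choosing the right test set $B'$ in the infimum defining $\cc^{\uparrow}$, together with the trivial implication $y \leqslant x \Rightarrow y \in {\downarrow\!\! x}$. In particular, the right-absorbing property of $\cc$ is not needed for this lemma, although it will presumably be used in later results that invoke it.
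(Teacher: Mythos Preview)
Your proof is correct and essentially identical to the paper's: both fix $B$, apply the hypothesis to $B' := B \cap {\downarrow\!\! x}$, observe that $A \cap B' = A' \cap B$, and in the second alternative use $y \leqslant x \Rightarrow y \in {\downarrow\!\! x}$ to conclude $y \in A' \setminus B$. Your remark that right-absorption is not used here is also accurate.
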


\begin{proof}
Take ${ A_x } := { { \downarrow\!\! x } \cap A }$, and let $B$ be a subset of $E$. 
We need to show that $x \in { \cc(A_x \cap B) }$ or $x \in { \uparrow\!\! (A_x \setminus B) }$. 
Since $x \in { \cc^{\uparrow}(A) }$, we have $x \in { { \cc(A \cap B_x) } \cup { \uparrow\!\! (A \setminus B_x) } }$, where ${ B_x } := { { \downarrow\!\! x } \cap B }$.
If $x \in { \cc(A \cap B_x) }$, then $x \in { \cc(A \cap { \downarrow\!\! x } \cap B) } = { \cc(A_x \cap B) }$. 
If $x \in { \uparrow\!\! (A \setminus B_x) }$, there exists some $a \in { A \setminus B_x }$ with $a \leqslant x$; 
this implies that $a \in { A_x \setminus B }$, so $x \in { \uparrow\!\! (A_x \setminus B) }$. 
This proves that $x \in { \cc^{\uparrow}(A_x) }$.
\end{proof}

Let $(E, \leqslant)$ be a qoset. 
We say that a preclosure operator $\cc$ on $E$ \textit{separates points} (resp.\ \textit{dually separates points}) if, whenever $x \not\leqslant y$, there exists some $\cc$-closed subset $V$ such that $y \in V$ and $x \notin V$ (resp.\ $x \in V$ and $y \notin V$). 
Similarly, we say that a collection $\mathrsfs{V}$ of subsets of $E$ \textit{separates points} (resp.\ \textit{dually separates points}) if, whenever $x \not\leqslant y$, there exists some $V \in \mathrsfs{V}$ such that $y \in V$ and $x \notin V$ (resp.\ $x \in V$ and $y \notin V$). 
It is easily seen that $\mathrsfs{V}$ (dually) separates points if and only if $\langle \cdot \rangle_{\mathrsfs{V}}$ (dually) separates points. 

Recall that $\mathfrak{d}$ denotes the Dedekind--MacNeille closure operator $A \mapsto (A^{\uparrow})^{\downarrow}$, see Example~\ref{ex:alex}. 

\begin{proposition}\label{prop:sep}
Let $(E, \leqslant, \cc)$ be an enriched qoset. 
Then the following conditions are equivalent:
\begin{enumerate}
  \item\label{prop:sep1} $\cc$ separates points; 
  \item\label{prop:sep0} ${ \cc(\downarrow\!\! x) } = { \downarrow\!\! x }$, for all $x \in E$; 
  \item\label{prop:sep2} $x \notin { \cc(\downarrow\!\! y) }$, whenever $x \not \leqslant y$; 
  \item\label{prop:sep3} $\cc \leqslant \mathfrak{d}$. 
\end{enumerate}
Moreover, if these conditions are satisfied, then ${ x \leqslant y } \Leftrightarrow { x \in \cc(y) }$, for all $x, y \in E$. 
\end{proposition}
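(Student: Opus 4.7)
The plan is to prove the equivalence cyclically as $(1) \Rightarrow (2) \Rightarrow (3) \Rightarrow (4) \Rightarrow (1)$, and then deduce the ``moreover'' clause. The unifying observation is that, because $(E, \leqslant, \cc)$ is enriched (so $\cc$ is right-absorbing), every $\cc$-closed subset $V$ satisfies $\downarrow\!\! V = \cc(V) = V$ and is therefore a lower set. This turns the separation property, which \emph{a priori} only produces some $\cc$-closed set $V$ with $y \in V$ and $x \notin V$, into a statement about principal order-ideals.

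The key computation that will be used throughout is that $\mathfrak{d}(\downarrow\!\! y) = ((\downarrow\!\! y)^{\uparrow})^{\downarrow} = (\uparrow\!\! y)^{\downarrow} = \downarrow\!\! y$, since the set of upper bounds of $\downarrow\!\! y$ is exactly $\uparrow\!\! y$, and dually the set of lower bounds of $\uparrow\!\! y$ is $\downarrow\!\! y$. In particular, $\mathfrak{d}(y) = \downarrow\!\! y$ as well.

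For $(1) \Rightarrow (2)$: fix $x$ and suppose, for contradiction, some $z \in \cc(\downarrow\!\! x)$ with $z \not\leqslant x$. Applying separation to the pair $(z, x)$ yields a $\cc$-closed $V$ with $x \in V$, $z \notin V$; since $V$ is a lower set, $\downarrow\!\! x \subseteq V$, hence $\cc(\downarrow\!\! x) \subseteq V$, contradicting $z \notin V$. Combined with extensivity $\downarrow\!\! x \subseteq \cc(\downarrow\!\! x)$, this gives (2). The implication $(2) \Rightarrow (3)$ is trivial: $x \not\leqslant y$ means $x \notin \downarrow\!\! y = \cc(\downarrow\!\! y)$. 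For $(3) \Rightarrow (4)$, I would take $A \subseteq E$ and $z \in \cc(A)$; for any $u \in A^{\uparrow}$ we have $A \subseteq \downarrow\!\! u$, hence $z \in \cc(\downarrow\!\! u)$, and (3) in contrapositive form gives $z \leqslant u$, proving $z \in (A^{\uparrow})^{\downarrow} = \mathfrak{d}(A)$. For $(4) \Rightarrow (1)$, given $x \not\leqslant y$, the set $V = \downarrow\!\! y$ does the job: by the key computation, $\cc(\downarrow\!\! y) \subseteq \mathfrak{d}(\downarrow\!\! y) = \downarrow\!\! y$, so $\downarrow\!\! y$ is $\cc$-closed, contains $y$, and cannot contain $x$.

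For the moreover clause: if $x \leqslant y$, then since $y \in \cc(y)$ and $\cc$ is right-absorbing, $x \in \downarrow\!\! y \subseteq \downarrow\!\!\cc(y) = \cc(y)$. Conversely, if $x \in \cc(y)$, then by (4) we have $x \in \mathfrak{d}(y) = \downarrow\!\! y$, i.e.\ $x \leqslant y$. No step should be especially delicate; the only place that requires care is ensuring that the right-absorbing hypothesis (rather than the stronger compatibility or idempotence) suffices to turn closed sets into lower sets in the implication $(1) \Rightarrow (2)$, which is exactly what the definition of enriched qoset provides.
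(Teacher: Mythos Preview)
Your proof is correct and proceeds along essentially the same lines as the paper's, with only a cosmetic difference in the cyclic ordering: you argue $(1)\Rightarrow(2)\Rightarrow(3)\Rightarrow(4)\Rightarrow(1)$, whereas the paper does $(4)\Rightarrow(2)\Leftrightarrow(3)$, $(2)\Rightarrow(1)$, and $(1)\Rightarrow(4)$. The substantive ingredients---that right-absorption makes every $\cc$-closed set a lower set, and that $\mathfrak{d}(\downarrow\!\! y)=\downarrow\!\! y$---are identical, and your ``moreover'' argument matches the paper's as well.
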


\begin{proof}
\eqref{prop:sep3} $\Rightarrow$ \eqref{prop:sep0}. 
Let $y \in \cc(\downarrow\!\! x)$. 
Take $A := { \downarrow\!\! x }$. 
Then $y \in \mathfrak{d}(A) = { (A^{\uparrow})^{\downarrow} } = { \downarrow\!\! x }$. 
This shows that ${ \cc(\downarrow\!\! x) } = { \downarrow\!\! x }$. 

\eqref{prop:sep0} $\Leftrightarrow$ \eqref{prop:sep2} is obvious. 

\eqref{prop:sep0} $\Rightarrow$ \eqref{prop:sep1}. 
If $x \not \leqslant y$, take $V := { \downarrow\!\! y }$, which is $\cc$-closed. 
Then $y \in V$ and $x \notin V$. 

\eqref{prop:sep1} $\Rightarrow$ \eqref{prop:sep3}. 
Let $x \in { \cc(A) }$ and $y \in { A^{\uparrow} }$. 
If $x \not\leqslant y$, there exists some $\cc$-closed subset $V$ with $y \in V$ and $x \notin V$. 
Then $A \subseteq { \downarrow\!\! y } \subseteq { V }$, so $x \in { \cc(A) } \subseteq { \cc(V) } = { V }$, a contradiction. 
So $x \leqslant y$. 
This shows that $x \in { (A^{\uparrow})^{\downarrow} } = { \mathfrak{d}(A) }$. 

Now, assume that $\cc$ separates points. 
We have ${ \cc(\downarrow\!\! x) } = { \downarrow\!\! x } \subseteq { \cc(x) } \subseteq { \cc(\downarrow\!\! x) }$, so that ${ \downarrow\!\! x } = { \cc(x) }$, for all $x \in E$. 
This amounts to $y \leqslant x$ if and only if $y \in \cc(x)$, for all $x, y \in E$. 
\end{proof}

\begin{remark}
If $\cc$ separates points, then $\cc^{\circ}$ and $\overline{\cc}$ separate points (use e.g.\ Condition~\eqref{prop:sep3} of the previous proposition). 
\end{remark}

\begin{example}
Let $P$ be a qoset. 
\begin{itemize}
  \item $\mathfrak{d}$ separates points, i.e.\ the collection of principal order-ideals of $P$ separate points. 
  \item The collection of principal filters of $P$ dually separates points, since $x \in { \uparrow\!\! x }$ and $y \notin { \uparrow\!\! x }$ whenever $x, y \in P$ with $x \not\leqslant y$. 
  \item If $P$ is a continuous directed-complete poset, then the collection of Scott-open filters of $P$ dually separates points, as per Lemma~\ref{lem:openfilter}. 
  \item If $P$ is a distributive semilattice, then the collection of prime ideals of $P$ separates points, see e.g.\ Ern\'e \cite[Theorem~4]{Erne06b}. 
\end{itemize}
\end{example}

Let $(E, \leqslant, \cc)$ be an enriched qoset. 
A subset $B \subseteq E$ \textit{has a $\cc$-sup} if $B$ has a sup in $E$ that belongs to $\cc(B)$ (in which case every sup of $B$ belongs to $\cc(B)$). 
Note that, if $\cc$ separates points and $B$ has a $\cc$-sup, then $B$ satisfies 
\[
{ \cc(B) } = { \mathfrak{d}(B) } = { \downarrow\!\! B^{\vee} }. 
\]

While the equation defining the convolution product as an intersection gives an ``outer'' representation, Equation~\eqref{eq:inner} below provides us with an ``inner'' representation. 

\begin{theorem}\label{thm:charac}
Let $(E, \leqslant, \cc)$ be an enriched qoset. 
Then 
\begin{equation}\label{eq:inner}
{ \cc^{\uparrow}(A) } = { \bigcup_{B \subseteq A} \cc(B) \cap B^{\uparrow} },
\end{equation}
for all $A \subseteq E$. 
Moreover, if $\cc$ separates points, then 
\begin{equation}\label{eq:inner2}
{ \cc^{\uparrow}(A) } = { \bigcup_{B \subseteq A} B^{\vee} }, 
\end{equation}
for all $A \subseteq E$, where the union runs over the subsets $B \subseteq A$ with a $\cc$-sup. 
\end{theorem}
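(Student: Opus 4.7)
The plan is to establish Equation~\eqref{eq:inner} directly from the defining formula of the convolution product, and then deduce Equation~\eqref{eq:inner2} by invoking the characterization $\cc \leqslant \mathfrak{d}$ of ``$\cc$ separates points'' supplied by Proposition~\ref{prop:sep}.

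\textbf{Equation~\eqref{eq:inner}, inclusion $\supseteq$.} Fix $B \subseteq A$ and $x \in \cc(B) \cap B^{\uparrow}$. To show $x \in \cc^{\uparrow}(A)$, I pick an arbitrary $C \subseteq E$ and split cases. If $B \subseteq C$, then $B \subseteq A \cap C$, so $x \in \cc(B) \subseteq \cc(A \cap C)$. If $B \not\subseteq C$, then some $b \in B \setminus C$ satisfies $b \leqslant x$ (because $x \in B^{\uparrow}$), and since $B \subseteq A$, we have $b \in A \setminus C$, giving $x \in \uparrow\!\! (A \setminus C)$. Either way, $x \in \cc(A \cap C) \cup \uparrow\!\! (A \setminus C)$, and taking the intersection over $C$ yields $x \in \cc^{\uparrow}(A)$.

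\textbf{Equation~\eqref{eq:inner}, inclusion $\subseteq$.} Let $x \in \cc^{\uparrow}(A)$, and choose as witnessing subset $B := A \cap { \downarrow\!\! x }$. Applying the defining intersection of $\cc^{\uparrow}(A)$ to the particular set $C := { \downarrow\!\! x }$ gives $x \in \cc(B) \cup \uparrow\!\! (A \setminus { \downarrow\!\! x })$. The second alternative would produce some $a \in A$ with $a \not\leqslant x$ and $a \leqslant x$, which is impossible; hence $x \in \cc(B)$. Since trivially $B \subseteq { \downarrow\!\! x }$ gives $x \in B^{\uparrow}$, this proves $x \in \cc(B) \cap B^{\uparrow}$ with $B \subseteq A$. (One may alternatively derive the same conclusion by first invoking Lemma~\ref{lem:charac} to replace $A$ by $A_x$, but the direct argument is more transparent.)

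\textbf{Equation~\eqref{eq:inner2}.} Assume now that $\cc$ separates points. For $\supseteq$, if $B \subseteq A$ has a $\cc$-sup $s$, then by definition $s \in \cc(B) \cap B^{\vee}$, and since $\cc(B)$ is a lower subset (by right-absorbingness of $\cc$), every other sup $s' \sim s$ also lies in $\cc(B)$; thus $B^{\vee} \subseteq \cc(B) \cap B^{\uparrow} \subseteq \cc^{\uparrow}(A)$ by Equation~\eqref{eq:inner}. For $\subseteq$, let $x \in \cc^{\uparrow}(A)$. By Equation~\eqref{eq:inner}, there is $B \subseteq A$ with $x \in \cc(B) \cap B^{\uparrow}$. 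Proposition~\ref{prop:sep} gives $\cc \leqslant \mathfrak{d}$, so $x \in \mathfrak{d}(B) = (B^{\uparrow})^{\downarrow}$, meaning $x$ is a lower bound of $B^{\uparrow}$; combined with $x \in B^{\uparrow}$, this shows $x \in B^{\vee}$. Since moreover $x \in \cc(B)$, the subset $B$ has a $\cc$-sup (namely $x$), and thus $B$ is a legitimate index in the union of Equation~\eqref{eq:inner2}.

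\textbf{Main obstacle.} There is no serious obstacle here. The only non-obvious steps are the case split on whether $B \subseteq C$ in the first inclusion and the cunning choice $B := A \cap { \downarrow\!\! x }$ in the second; after that, Equation~\eqref{eq:inner2} is a direct consequence of $\cc \leqslant \mathfrak{d}$.
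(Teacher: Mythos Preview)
Your proof is correct and follows essentially the same approach as the paper's: the same choice $B := A \cap {\downarrow\!\! x}$ for the inclusion $\subseteq$ of Equation~\eqref{eq:inner}, the same case split for $\supseteq$, and the same appeal to $\cc \leqslant \mathfrak{d}$ via Proposition~\ref{prop:sep} for Equation~\eqref{eq:inner2}. The one minor difference is that for $\subseteq$ you plug $C := {\downarrow\!\! x}$ directly into the defining intersection, whereas the paper routes through Lemma~\ref{lem:charac} and the inequality $\cc^{\uparrow} \leqslant \cc$; your direct argument is a small streamlining, and you already note the alternative.
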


\begin{proof}
We first prove the inclusion $\subseteq$ in Equation~\eqref{eq:inner}. 
So let $x \in { \cc^{\uparrow}(A) }$, and take ${ B } := { \downarrow\!\! x \cap A }$. 
Then $x$ is an upper bound of $B$, i.e.\ $x \in { B^{\uparrow} }$. 
Moreover, by Lemma~\ref{lem:charac}, $x \in { \cc^{\uparrow}(B) }$, so $x \in { \cc(B) }$. 
Thus, $x \in { \cc(B) \cap B^{\uparrow} }$. 

Now we prove the reverse inclusion $\supseteq$ in Equation~\eqref{eq:inner}. 
So let $x \in { \cc(B) \cap B^{\uparrow} }$, for some $B \subseteq A$, and let $B'$ be a subset of $E$. 
We need to show that $x \in { \cc(A \cap B') }$ or $x \in { \uparrow\!\! (A \setminus B') }$. 
Assume that $x \notin { \cc(A \cap B') }$. 
Then $B \not\subseteq { B' }$, so there exists some $b \in { B \setminus B' } \subseteq { A \setminus B' }$. 
Since $x \in { B^{\uparrow} }$, we have $b \leqslant x$, which yields $x \in { \uparrow\!\! (A \setminus B') }$. 
Thus, $x \in { \cc^{\uparrow}(A) }$. 

Now assume that $\cc$ separates points. 
Let us show that Equation~\eqref{eq:inner2} holds. 
So let $x \in { \cc^{\uparrow}(A) }$, and take ${ B } := { \downarrow\!\! x \cap A }$. 
We have just proved that $x \in { \cc(B) \cap B^{\uparrow} }$. 
Since $\cc \leqslant \mathfrak{d}$ by Proposition~\ref{prop:sep}, we get $x \in { \mathfrak{d}(B) \cap B^{\uparrow} }$. 
This exactly means that $x \in B^{\vee}$, which shows that $B$ has a $\cc$-sup. 
So the inclusion $\subseteq$ in Equation~\eqref{eq:inner2} holds. 
For the reverse inclusion, let $x \in B^{\vee}$, for some subset $B \subseteq A$ with a $\cc$-sup. 
Then $B$ has a sup $y \in \cc(B)$, so $x \sim y$. 
Thus, $x \in { \cc(B) }$, which yields $x \in { \cc(B) \cap B^{\uparrow} }$. 
So $x \in { \cc^{\uparrow}(A) }$, as required. 
\end{proof}

Let $P$ be a qoset, and let $A \subseteq K$ be subsets of $P$. 
Then $A$ \textit{sup-generates} $K$, or $K$ is \textit{sup-generated} by $A$, if
\[
x \in { (\downarrow\!\! x \cap A)^{\vee} },
\]
for all $x \in K$. 
An equivalent condition is that, whenever $x \in  K$, $y \in P$ with $x \not\leqslant y$, there exists some $a \in A$ with $a \leqslant x$ and $a \not\leqslant y$. 

\begin{corollary}\label{coro:supgene}
Let $P$ be a qoset and $A \subseteq P$. 
Then $A$ sup-generates $\mathfrak{d}^{\uparrow}(A)$, and the following conditions are equivalent:
\begin{enumerate}
  \item\label{coro:supgene1} $A$ sup-generates $P$;
  \item\label{coro:supgene2} $P = { \mathfrak{d}^{\uparrow}(A) }$;
  \item\label{coro:supgene3} the collection $\{ P \setminus { \uparrow\!\! a } : a \in A \}$ separates points.
\end{enumerate}
\end{corollary}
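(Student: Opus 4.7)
The plan is to leverage Theorem~\ref{thm:charac}, noting that $\mathfrak{d}$ is itself a right-absorbing closure operator (since $\mathfrak{d}(A) = (A^{\uparrow})^{\downarrow}$ is automatically a lower set), so $(P, \leqslant, \mathfrak{d})$ is an enriched qoset, and $\mathfrak{d}$ separates points by the preceding example (or condition~\eqref{prop:sep3} of Proposition~\ref{prop:sep}, which is trivially an equality). Thus Equation~\eqref{eq:inner2} applies with $\cc = \mathfrak{d}$. Moreover, observe that a subset $B \subseteq P$ has a $\mathfrak{d}$-sup iff $B$ merely has a sup: any upper bound that is a minimal upper bound automatically lies in $(B^{\uparrow})^{\downarrow} = \mathfrak{d}(B)$.

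To establish that $A$ sup-generates $\mathfrak{d}^{\uparrow}(A)$, I take $x \in \mathfrak{d}^{\uparrow}(A)$ and, using Equation~\eqref{eq:inner2}, pick $B \subseteq A$ with $x \in B^{\vee}$. Since $x$ is an upper bound of $B$, we have $B \subseteq { \downarrow\!\! x } \cap A$; in particular $x$ is still an upper bound of the larger set $B' := { \downarrow\!\! x } \cap A$, and any upper bound of $B'$ is \emph{a fortiori} an upper bound of $B$, hence dominates $x$. This shows $x \in {B'}^{\vee}$, as required.

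For the equivalences: \eqref{coro:supgene1} $\Rightarrow$ \eqref{coro:supgene2} is immediate, for if $A$ sup-generates $P$ then for every $x \in P$ the subset $B_x := { \downarrow\!\! x } \cap A$ is a subset of $A$ with a sup (namely $x$), so Equation~\eqref{eq:inner2} places $x$ in $\mathfrak{d}^{\uparrow}(A)$. The converse \eqref{coro:supgene2} $\Rightarrow$ \eqref{coro:supgene1} is precisely the first part of the corollary. For \eqref{coro:supgene1} $\Leftrightarrow$ \eqref{coro:supgene3}, I would simply translate the second, equivalent form of sup-generation spelled out in the paragraph before the statement: ``$x \not\leqslant y$ implies the existence of $a \in A$ with $a \leqslant x$ and $a \not\leqslant y$'' reads, on setting $V_a := P \setminus {\uparrow\!\! a}$, as ``$x \not\leqslant y$ implies the existence of $a \in A$ with $x \notin V_a$ and $y \in V_a$'', which is exactly the separation property for the collection $\{ V_a : a \in A \}$.

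There is no real obstacle here: the work has already been done in Theorem~\ref{thm:charac} and Proposition~\ref{prop:sep}. The only micro-subtlety worth flagging explicitly in the write-up is the observation that the auxiliary condition ``$B$ has a $\mathfrak{d}$-sup'' in Equation~\eqref{eq:inner2} degenerates to ``$B$ has a sup'' in the Dedekind--MacNeille case, so no restriction of the index set is needed when instantiating the formula.
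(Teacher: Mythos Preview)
Your proposal is correct and follows essentially the same route as the paper: the paper also invokes Theorem~\ref{thm:charac} with $\cc = \mathfrak{d}$, notes that a subset has a $\mathfrak{d}$-sup iff it has a sup, and then handles the three equivalences exactly as you do. If anything, you give more detail for the claim that $A$ sup-generates $\mathfrak{d}^{\uparrow}(A)$ (the paper simply says this ``easily follows from the definition''), and your argument there is fine.
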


\begin{proof}
Since $\mathfrak{d}$ separates points and a subset $B$ has a $\mathfrak{d}$-sup if and only if it has a sup, we have 
\[
{ \mathfrak{d}^{\uparrow}(A) } = { \bigcup_{B \subseteq A} B^{\vee} },
\] 
for all $A \subseteq E$, where the union runs over the subsets $B \subseteq A$ with a sup. 
The fact that $A$ sup-generates $\mathfrak{d}^{\uparrow}(A)$ now easily follows from the definition. 

\eqref{coro:supgene1} $\Rightarrow$ \eqref{coro:supgene2}. 
Let $x \in P$. 
Since $A$ sup-generates $P$, we have $x \in { B^{\vee} }$ with $B := { \downarrow\!\! x \cap A }$, so $x \in { \mathfrak{d}^{\uparrow}(A) }$. 
This shows that $P = { \mathfrak{d}^{\uparrow}(A) }$. 

\eqref{coro:supgene2} $\Rightarrow$ \eqref{coro:supgene1}. 
From the fact that $A$ sup-generates $\mathfrak{d}^{\uparrow}(A)$ and the assumption $P = { \mathfrak{d}^{\uparrow}(A) }$, we obtain that $A$ sup-generates $P$. 

\eqref{coro:supgene3} $\Leftrightarrow$ \eqref{coro:supgene1}. 
Saying that $A$ sup-generates $P$ is equivalent to 
\[
\forall x, y \in  P \mbox { with } x \not\leqslant y, \exists a \in A : a \leqslant x \mbox{ and } a \not\leqslant y, 
\]
which is tatamount to the assertion that the collection $\{ P \setminus { \uparrow\!\! a } \}_{a \in A}$ separates points. 
\end{proof}

\begin{example}[Example~\ref{ex:ranzato0} continued]\label{ex:ranzato1}
Let $P$ be a qoset. 
Given a subset $B$, we have ${ \Min(B^{\uparrow}) } = { \mathfrak{q}(B) \cap B^{\uparrow} }$, where $\mathfrak{q}$ is the preclosure operator defined by 
\[
{ \mathfrak{q}(B) } = { \bigcap_{y \in B^{\uparrow}} P \setminus \Uparrow\!\! y }, 
\]
and $\Uparrow\!\! y$ denotes the subset $\{ x \in P : y < x \}$. 
Thus, the preclosure operator $\mathfrak{q}^{\uparrow}$ defined by Equation~\eqref{eq:pup} in Example~\ref{ex:ranzato0} is indeed the convolution of $\mathfrak{q}$ with $\uparrow\!\! \cdot$. 
Moreover, it is easily seen that $\mathfrak{q}$ separates points if and only if $P$ is a chain. 
Dual assertions hold for the preclosure operator $\mathfrak{p}$ defined by 
\[
{ \mathfrak{p}(B) } = { \bigcap_{y \in B^{\downarrow}} P \setminus \Downarrow\!\! y }, 
\]
where $\Downarrow\!\! y$ denotes the subset $\{ x \in P : x < y \}$. 
\end{example}

The following result notably provides us with sufficient conditions for $\cc^{\uparrow}$ to be finitary.

\begin{theorem}\label{thm:convex}
Let $(E, \leqslant, \cc)$ be an enriched qoset. 
Then $(\cc^{\uparrow})^{\circ} = (\cc^{\circ})^{\uparrow}$. 
Assume moreover that one of the following conditions is satisfied: 
\begin{enumerate}
  \item\label{thm:convex1} every principal order-ideal of $E$ is finite, or
  \item\label{thm:convex2} $\cc$ is finitary. 
\end{enumerate}
Then $\cc^{\uparrow}$ is finitary and 
\begin{equation}\label{eq:finitarybis}
{ \cc^{\uparrow}(A) } = { \bigcup_{F \subseteq A} \cc(F) \cap F^{\uparrow} },
\end{equation}
for all $A \subseteq E$, where the union runs over the finite subsets $F \subseteq A$. 
Moreover, if $\cc$ separates points, then 
\begin{equation}\label{eq:separating}
{ \cc^{\uparrow}(A) } = { \bigcup_{F \subseteq A} F^{\vee} },
\end{equation}
for all $A \subseteq E$, where the union runs over the finite subsets $F \subseteq A$ with a $\cc$-sup. 
\end{theorem}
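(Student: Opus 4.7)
The plan is to first establish $(\cc^{\uparrow})^{\circ} = (\cc^{\circ})^{\uparrow}$, then derive Equation~\eqref{eq:finitarybis} under either hypothesis, and finally combine with Proposition~\ref{prop:sep} to obtain Equation~\eqref{eq:separating}. The main tool throughout is the ``inner'' formula of Theorem~\ref{thm:charac}.

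For the identity $(\cc^{\uparrow})^{\circ} = (\cc^{\circ})^{\uparrow}$, I would first check that $(E, \leqslant, \cc^{\circ})$ is itself an enriched qoset, so that Theorem~\ref{thm:charac} applies to $\cc^{\circ}$ as well: since $\cc$ is right-absorbing, ${ \downarrow\!\! \cc^{\circ}(A) } = { \downarrow\!\! \bigcup_{F \text{ fin.} \subseteq A} \cc(F) } = { \bigcup_{F} \cc(F) } = { \cc^{\circ}(A) }$. Now for $\subseteq$, given a finite $F \subseteq A$, every $B \subseteq F$ is finite, so Equation~\eqref{eq:inner} yields ${ \cc^{\uparrow}(F) } = { \bigcup_{B \subseteq F} \cc^{\circ}(B) \cap B^{\uparrow} } \subseteq { (\cc^{\circ})^{\uparrow}(A) }$. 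For $\supseteq$, if $x \in { \cc^{\circ}(B) \cap B^{\uparrow} }$ for some $B \subseteq A$, then $x \in \cc(F)$ for some finite $F \subseteq B$, and $x \in { B^{\uparrow} } \subseteq { F^{\uparrow} }$; using the $\supseteq$ inclusion in Equation~\eqref{eq:inner}, we get $x \in { \cc^{\uparrow}(F) } \subseteq { (\cc^{\uparrow})^{\circ}(A) }$.

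Next, for Equation~\eqref{eq:finitarybis}, the $\supseteq$ inclusion follows from Equation~\eqref{eq:inner}. For $\subseteq$, let $x \in \cc^{\uparrow}(A)$; the proof of Theorem~\ref{thm:charac} gives $x \in { \cc(B) \cap B^{\uparrow} }$ for $B := { \downarrow\!\! x \cap A }$. Under hypothesis~\eqref{thm:convex1}, $B \subseteq { \downarrow\!\! x }$ is finite, so $F := B$ works. Under hypothesis~\eqref{thm:convex2}, finitariness of $\cc$ yields a finite $F \subseteq B$ with $x \in \cc(F)$; since $F \subseteq { \downarrow\!\! x }$, we have $x \in F^{\uparrow}$, as required. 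Once Equation~\eqref{eq:finitarybis} is in hand, finitariness of $\cc^{\uparrow}$ follows immediately: by the $\supseteq$ inclusion in Equation~\eqref{eq:inner}, each $\cc(F) \cap F^{\uparrow}$ is contained in $\cc^{\uparrow}(F)$, so ${ \cc^{\uparrow}(A) } \subseteq { (\cc^{\uparrow})^{\circ}(A) }$, and the reverse inclusion is trivial.

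Finally, for Equation~\eqref{eq:separating} under the assumption that $\cc$ separates points, the $\supseteq$ inclusion is immediate: any sup of a subset with a $\cc$-sup lies in $\cc(F) \cap F^{\uparrow}$, hence in $\cc^{\uparrow}(F)$ by Equation~\eqref{eq:inner}. For $\subseteq$, starting from Equation~\eqref{eq:finitarybis}, let $x \in { \cc(F) \cap F^{\uparrow} }$ for some finite $F \subseteq A$. By Proposition~\ref{prop:sep}, $\cc \leqslant \mathfrak{d}$, so $x \in { \mathfrak{d}(F) \cap F^{\uparrow} } = { (F^{\uparrow})^{\downarrow} \cap F^{\uparrow} } = { F^{\vee} }$; hence $x$ is a sup of $F$, and $x \in \cc(F)$ witnesses that $F$ has a $\cc$-sup. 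The main obstacle is essentially bookkeeping: verifying that the enriched-qoset hypothesis carries over to $\cc^{\circ}$, and checking that the explicit witness $B = { \downarrow\!\! x \cap A }$ from the proof of Theorem~\ref{thm:charac} interacts well with each of the two finiteness hypotheses.
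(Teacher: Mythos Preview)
Your proposal is correct and follows essentially the same route as the paper: both rely on the inner formula of Theorem~\ref{thm:charac}, the observation that $\cc(F) = \cc^{\circ}(F)$ for finite $F$, and the specific witness $B = {\downarrow\!\! x} \cap A$ (or a finite subset of it) for the $\subseteq$ inclusions. The only cosmetic difference is ordering: the paper first establishes Equation~\eqref{eq:finitarybis} under hypothesis~\eqref{thm:convex2}, then applies that case to $\sss := \cc^{\circ}$ (which is always finitary) to get the inclusion $(\cc^{\circ})^{\uparrow} \leqslant (\cc^{\uparrow})^{\circ}$ for free, whereas you prove both inclusions of the identity directly before turning to~\eqref{eq:finitarybis}.
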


\begin{proof}
Note first that it is equivalent to show that $\cc^{\uparrow}$ is finitary or to prove Equation~\eqref{eq:finitarybis}, thanks to Equation~\eqref{eq:inner} of Theorem~\ref{thm:charac}. 

Case \eqref{thm:convex1}. 
Let $x \in { \cc^{\uparrow}(A) }$, and let $F$ be the finite subset ${ \downarrow\!\! x } \cap A$ of $A$. 
By Lemma~\ref{lem:charac}, $x \in { \cc^{\uparrow}(F) }$. 
This proves that $\cc^{\uparrow}$ is finitary. 

Case \eqref{thm:convex2}. 
Let $x \in \cc^{\uparrow}(A)$. 
Then by Theorem~\ref{thm:charac}, $x \in \cc(B) \cap B^{\uparrow}$, for some $B \subseteq A$. 
Since $\cc$ is finitary, there is some finite subset $F$ of $B$ with $x \in \cc(F)$. 
From $F \subseteq B$ we get ${ B^{\uparrow} } \subseteq { F^{\uparrow} }$, hence $x \in { \cc(F) \cap F^{\uparrow} }$. 
This shows that Equation~\eqref{eq:finitarybis} holds, and that $\cc^{\uparrow}$ is finitary. 

Now, let us show that $(\cc^{\uparrow})^{\circ} = (\cc^{\circ})^{\uparrow}$. 
Take $\sss := \cc^{\circ}$. 
Then $\sss^{\uparrow} \leqslant \cc^{\uparrow}$, so $(\sss^{\uparrow})^{\circ} \leqslant (\cc^{\uparrow})^{\circ}$. 
From the first part of the proof, we know that $\sss^{\uparrow}$ is finitary, which yields $(\cc^{\circ})^{\uparrow} = \sss^{\uparrow} \leqslant (\cc^{\uparrow})^{\circ}$. 
To prove the converse inequality $(\cc^{\uparrow})^{\circ} \leqslant (\cc^{\circ})^{\uparrow}$, let $A \subseteq E$ and $x \in (\cc^{\uparrow})^{\circ}(A)$. 
Then $x \in \cc^{\uparrow}(F_0)$, for some finite subset $F_0 \subseteq A$. 
Thus, there is some subset $F_1$ of $F_0$ with $x \in { \cc(F_1) \cap F_1^{\uparrow} }$. 
Since $F_1$ is finite, we have $\cc(F_1) = \cc^{\circ}(F_1)$. 
So $x \in (\cc^{\circ})^{\uparrow}(A)$, as required. 

To conclude the proof, assume that $\cc$ separates points and $\cc^{\uparrow}$ is finitary, and let $x \in \cc^{\uparrow}(A)$. 
Let $F_0$ be a finite subset of $A$ with $x \in \cc^{\uparrow}(F_0)$. 
By Equation~\eqref{eq:inner2} of Theorem~\ref{thm:charac}, there is some (necessarily finite) subset $F_1$ of $F_0$ with $x \in F_1^{\vee}$. 
This shows that Equation~\eqref{eq:separating} holds. 
\end{proof}

\begin{remark}\label{rk:notation}
Since $(\cc^{\uparrow})^{\circ} = (\cc^{\circ})^{\uparrow}$, we shall denote this preclosure operator by $\cc^{\uparrow\circ}$. 
\end{remark}

\begin{remark}
The fact that $\cc$ finitary implies $\cc^{\uparrow}$ finitary can also be seen as a direct consequence of Theorem~\ref{thm:finitecopoints}. 
\end{remark}

\begin{example}[Example~\ref{ex:infconvex1} continued]\label{ex:infconvex2}
Let $(P, \leqslant)$ be a qoset. 
Let $\mathrsfs{U}$ denote the collection of upper, inf-closed subsets of $P$. 
Since $\mathrsfs{U}$ is stable under arbitrary intersections and directed unions, the closure operator $\langle \cdot \rangle_{\mathrsfs{U}}$ is finitary. 
Moreover,  
\[
{ \langle A \rangle_{\mathrsfs{U}}^{\downarrow} } = { \langle A \rangle_{\mathrsfs{H}} },
\]
for all $A \subseteq P$, where $\mathrsfs{H}$ denotes the collection of inf-closed subsets. 

To see why this holds, let us show first the inclusion ${ \langle A \rangle_{\mathrsfs{H}} } \subseteq { \langle A \rangle_{\mathrsfs{U}}^{\downarrow} }$. 
It suffices to show that $\langle A \rangle_{\mathrsfs{U}}^{\downarrow}$ is inf-closed. 
So let $F \subseteq { \langle A \rangle_{\mathrsfs{U}}^{\downarrow} }$ be a finite subset, let $B \subseteq P$, and take $U := { \langle A \cap B \rangle_{\mathrsfs{U}} } \in { \mathrsfs{U} }$. 
If $F \subseteq U$, then $F^{\wedge} \subseteq U$, since $U$ is inf-closed. 
Otherwise, there is some $f \in { F \setminus U }$. 
This yields $f \in { \downarrow\!\! (A \setminus B) }$, so $F^{\wedge} \subseteq { \downarrow\!\! f } \subseteq { \downarrow\!\! (A \setminus B) }$. 
This proves that 
\[
F^{\wedge} \subseteq { \bigcap_{B \subseteq P} { \langle A \cap B \rangle_{\mathrsfs{U}} } \cup { \downarrow\!\! (A \setminus B) } } = { \langle A \rangle_{\mathrsfs{U}}^{\downarrow} }.
\] 
So $\langle A \rangle_{\mathrsfs{U}}^{\downarrow}$ is inf-closed, as required. 

For the reverse inclusion, let $F \subseteq A$ be a finite subset with a $\langle \cdot \rangle_{\mathrsfs{U}}$-inf (i.e.\, with a $\langle \cdot \rangle_{\mathrsfs{U}}$-sup for the dual quasiorder $\geqslant$). 
Then $F \subseteq { \langle A \rangle_{\mathrsfs{H}} }$, so $F^{\wedge} \subseteq { \langle A \rangle_{\mathrsfs{H}} }$, since $\langle A \rangle_{\mathrsfs{H}}$ is inf-closed. 
Now, $\cc := { \langle \cdot \rangle_{\mathrsfs{U}} }$ is finitary, as already noted, and dually separates points, so we can apply Equation~\eqref{eq:separating} of Theorem~\ref{thm:convex} (taken dually) to obtain the desired inclusion. 
\end{example}

\section{Copoints, compact points, and extreme points}\label{sec:copoints}

In this section, we introduce the concepts of \textit{preinductive} preclosure operator and \textit{copoint} of an element. 
Given an enriched qoset $(E, \leqslant, \cc)$, we also define the notions of \textit{$\cc$-compact point} and \textit{$\cc$-extreme point}, and give various characterizations. 
In particular, we characterize $\cc$-extreme points when $\cc$ is the convolution of a preclosure operator with a preinductive closure operator (resp.\ with $\uparrow\!\! \cdot$). 

\subsection{Copoints and preinductivity}

Let $(E, \cc)$ be a preclosure space. 
Given $x \in E$, a maximal element of the set 
\begin{equation}\label{eq:cop}
\{ V \subseteq E : V \mbox{ $\cc$-closed and } V \not\ni x \},
\end{equation}
ordered by inclusion, is called a \textit{$\cc$-copoint of $x$}, and the set of $\cc$-copoints of $x$ is denoted by $\Cop_{\cc}(x)$. 
A \textit{$\cc$-copoint} is a subset which is the $\cc$-copoint of some $x \in E$. 
If $V \in \Cop_{\cc}(x)$, we say that $x$ is a \textit{$\cc$-attaching point} of $V$. 

Given $x \in E$, $\cc$ is called \textit{preinductive at $x$} if the set defined by \eqref{eq:cop} is preinductive (as per the definition given in Section~\ref{sub:max}). 
We call $\cc$ \textit{preinductive} if it is preinductive at every $x \in E$. 
Note that, if $\cc$ is preinductive, then every $x \notin \overline{\cc}(\emptyset)$ has a $\cc$-copoint, i.e.\ $\Cop_{\cc}(x)$ is nonempty. 

\begin{example}[Example~\ref{ex:clogene} continued]\label{ex:clogene2}
Let $\mathrsfs{V}$ be a collection of subsets of a set $E$. 
If $\mathrsfs{V}$ is stable under directed unions, then the closure operator $\langle \cdot \rangle_{\mathrsfs{V}}$ is preinductive. 
Moreover, every $\langle \cdot \rangle_{\mathrsfs{V}}$-copoint belongs to $\mathrsfs{V}$. 
In this situation, we better speak of \textit{$\mathrsfs{V}$-copoints}. 
\end{example}

\begin{proposition}\label{prop:finitaryispreinduc}
Let $(E, \cc)$ be a preclosure space. 
If $\cc$ is finitary, then $\cc$ is preinductive. 
\end{proposition}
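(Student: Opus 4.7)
The plan is to combine Lemma~\ref{lem:fin} with the chain of implications in Lemma~\ref{lem:preinductive}. Fix $x \in E$ and let $\mathrsfs{S}_x$ denote the set of $\cc$-closed subsets $V$ of $E$ with $x \notin V$, ordered by inclusion. The goal is to show that $\mathrsfs{S}_x$ is preinductive; by Lemma~\ref{lem:preinductive}, it suffices to prove that $\mathrsfs{S}_x$ is inductive.

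So I would take a nonempty chain $\mathrsfs{C}$ in $\mathrsfs{S}_x$ and propose $W := \bigcup \mathrsfs{C}$ as an upper bound in $\mathrsfs{S}_x$. A chain is directed when ordered by inclusion, hence $\mathrsfs{C}$ is a directed collection of $\cc$-closed subsets. By Lemma~\ref{lem:fin}, the collection of $\cc$-closed subsets is stable under directed unions, so $W$ is $\cc$-closed. Moreover $x \notin V$ for every $V \in \mathrsfs{C}$, so $x \notin W$, which gives $W \in \mathrsfs{S}_x$. By construction $V \subseteq W$ for all $V \in \mathrsfs{C}$, so $W$ is the desired upper bound.

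This shows that $\mathrsfs{S}_x$ is inductive, and therefore preinductive by the implication \eqref{lem:preinductive2} $\Rightarrow$ \eqref{lem:preinductive3} of Lemma~\ref{lem:preinductive} (Zorn's lemma). Since $x \in E$ was arbitrary, $\cc$ is preinductive. The only subtlety worth mentioning is that the proof invokes the axiom of choice through Zorn's lemma, but there is no real obstacle here: the finitary hypothesis is exactly strong enough to carry chains of closed subsets to closed subsets via directed unions, and the rest is immediate.
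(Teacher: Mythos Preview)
Your proof is correct and follows essentially the same approach as the paper: invoke Lemma~\ref{lem:fin} to see that directed unions of $\cc$-closed subsets are $\cc$-closed, observe that the union of a chain in $\mathrsfs{S}_x$ therefore lies in $\mathrsfs{S}_x$, and conclude via Lemma~\ref{lem:preinductive}. The paper phrases this as ``stable under directed unions, hence preinductive'' (implicitly going through chain-completeness), whereas you spell out inductivity explicitly; the substance is identical.
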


\begin{proof}
By Lemma~\ref{lem:fin}, the collection of $\cc$-closed subsets is stable under directed unions. 
Thus, for all $x \in E$, the set defined by \eqref{eq:cop} is itself stable under directed unions, hence preinductive by Lemma~\ref{lem:preinductive}. 
\end{proof}

\begin{proposition}\label{prop:preinduc}
Let $(E, \cc)$ be a preclosure space. 
If $\cc$ is preinductive, then every $\cc$-closed subset is an intersection of $\cc$-copoints. 
\end{proposition}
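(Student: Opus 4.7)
The plan is to show that any $\cc$-closed subset $C$ equals the intersection of all $\cc$-copoints containing it, by producing, for each point $x$ outside $C$, a $\cc$-copoint of $x$ that sits above $C$.

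First I would fix a $\cc$-closed subset $C$ and a point $x \in E \setminus C$. Consider the set
\[
\mathrsfs{F}_x := \{ V \subseteq E : V \text{ is $\cc$-closed and } V \not\ni x \},
\]
ordered by inclusion. Since $C$ is $\cc$-closed and $x \notin C$, we have $C \in \mathrsfs{F}_x$. By the hypothesis that $\cc$ is preinductive at $x$, the set $\mathrsfs{F}_x$ is preinductive in the sense of Section~\ref{sub:max}, so there exists a maximal element $V_x$ of $\mathrsfs{F}_x$ with $V_x \supseteq C$. By definition, $V_x$ is a $\cc$-copoint of $x$.

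Next I would assemble the $V_x$: by construction $C \subseteq V_x$ for every $x \in E \setminus C$, hence $C \subseteq \bigcap_{x \notin C} V_x$. Conversely, for each $x \notin C$ we have $x \notin V_x$, so $x \notin \bigcap_{x \notin C} V_x$; this yields the reverse inclusion and gives
\[
C = \bigcap_{x \in E \setminus C} V_x,
\]
exhibiting $C$ as an intersection of $\cc$-copoints. In the degenerate case $C = E$ the intersection is taken over the empty family and equals $E$ by convention, so the statement still holds.

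The only step requiring any care is the application of preinductivity at $x$: one must check that the set $\mathrsfs{F}_x$ appearing in the definition of a $\cc$-copoint of $x$ is exactly the set whose maximal element above $C$ we need, and that this maximal element is automatically a $\cc$-copoint \emph{of $x$} (not merely a maximal $\cc$-closed subset above $C$ missing $x$). Both facts are immediate from the definitions, so there is no real obstacle here — the proposition is essentially a direct unwinding of the definitions of preinductivity and $\cc$-copoint.
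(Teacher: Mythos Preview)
Your proof is correct and follows essentially the same approach as the paper's: for each $x \notin C$, preinductivity yields a $\cc$-copoint of $x$ containing $C$, and these copoints intersect to $C$. The paper phrases this via the closure operator $\langle \cdot \rangle_{\mathrsfs{V}_{\cc}}$ generated by the collection of $\cc$-copoints (showing $V = \langle V \rangle_{\mathrsfs{V}_{\cc}}$ by contradiction), but the underlying argument is identical to yours.
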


\begin{proof}
Let $V$ be a $\cc$-closed subset and $x \in { \langle V \rangle_{\mathrsfs{V}_{\cc}} }$, where $\mathrsfs{V}_{\cc}$ denotes the collection of $\cc$-copoints. 
If $x \notin V$, there exists some $W \in \Cop_{\cc}(x)$ with $W \supseteq V$, since $\cc$ is preinductive. 
From $x \in { \langle V \rangle_{\mathrsfs{V}_{\cc}} }$, we get $x \in W$, a contradiction. 
So $x \in V$. 
This proves that ${ V } = { \langle V \rangle_{\mathrsfs{V}_{\cc}} }$, i.e.\ $V$ is an intersection of $\cc$-copoints. 
%
\end{proof}



%

\begin{proposition}\label{prop:ccpreinduc}
Let $(E, \cc)$ be a preclosure space. 
If $\cc$ is topological, then every $x \in E$ has at most one $\cc$-copoint, and the following conditions are equivalent:
\begin{enumerate}
  \item\label{prop:ccpreinduc1} $\cc$ is preinductive;
  \item\label{prop:ccpreinduc2} every $x \in E$ has a unique $\cc$-copoint;
  \item\label{prop:ccpreinduc3} $\cc$ is an Alexandrov closure operator. 
\end{enumerate}
\end{proposition}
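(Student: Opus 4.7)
The plan is to first settle the uniqueness statement with a short union argument, then prove the equivalences in the cycle \eqref{prop:ccpreinduc1} $\Rightarrow$ \eqref{prop:ccpreinduc2} $\Rightarrow$ \eqref{prop:ccpreinduc3} $\Rightarrow$ \eqref{prop:ccpreinduc1}. For uniqueness: if $V_1, V_2$ are both $\cc$-copoints of $x$, the topological (\v{C}ech plus idempotent) property gives ${ \cc(V_1 \cup V_2) } = { \cc(V_1) \cup \cc(V_2) } = { V_1 \cup V_2 }$, so $V_1 \cup V_2$ is a $\cc$-closed subset avoiding $x$; maximality of $V_1$ and $V_2$ then forces $V_1 = V_1 \cup V_2 = V_2$.

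For \eqref{prop:ccpreinduc1} $\Rightarrow$ \eqref{prop:ccpreinduc2}, note that $\cc$ topological implies $\cc$ untied, so $\cc(\emptyset) = \emptyset$ lies in the set \eqref{eq:cop} for any $x$, and preinductivity yields a maximal element above it, that is, a $\cc$-copoint of $x$; uniqueness then comes from the first part. For \eqref{prop:ccpreinduc3} $\Rightarrow$ \eqref{prop:ccpreinduc1}, recall that an Alexandrov closure operator is one whose family of closed subsets is stable under arbitrary unions; therefore the set \eqref{eq:cop} is itself stable under arbitrary unions, hence has a maximum element (namely the union of all its members), which trivially makes it preinductive.

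The interesting direction is \eqref{prop:ccpreinduc2} $\Rightarrow$ \eqref{prop:ccpreinduc3}, and the main observation to isolate is the following absorption property: if $V_x$ denotes the unique $\cc$-copoint of $x$, then every $\cc$-closed subset $C$ with $x \notin C$ satisfies $C \subseteq V_x$. This is because $C \cup V_x$ is $\cc$-closed by topologicality and still avoids $x$, so it extends $V_x$ and by maximality must coincide with $V_x$. Once this is available, given any family $\{ C_i \}_{i \in I}$ of $\cc$-closed subsets and any $x \notin \bigcup_i C_i$, each $C_i$ is contained in $V_x$, whence $\bigcup_i C_i \subseteq V_x$ and therefore ${ \cc(\bigcup_i C_i) } \subseteq { \cc(V_x) } = { V_x }$, so $x \notin \cc(\bigcup_i C_i)$. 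As $x$ was arbitrary outside $\bigcup_i C_i$, this union is $\cc$-closed, establishing the Alexandrov property.

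The main obstacle is precisely this absorption step in \eqref{prop:ccpreinduc2} $\Rightarrow$ \eqref{prop:ccpreinduc3}: it is the one place where uniqueness of the copoint and the \v{C}ech--idempotent structure must combine nontrivially, while the remaining implications are essentially formal consequences of Proposition~\ref{prop:moore} and the definitions.
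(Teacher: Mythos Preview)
Your proof is correct and follows essentially the same route as the paper's: the paper also proves uniqueness via the union argument, establishes the absorption property $C \subseteq V_x$ (it does so in the introductory part rather than inside \eqref{prop:ccpreinduc2} $\Rightarrow$ \eqref{prop:ccpreinduc3}), and then uses it to show that arbitrary unions of $\cc$-closed sets are $\cc$-closed. The only cosmetic difference is in \eqref{prop:ccpreinduc3} $\Rightarrow$ \eqref{prop:ccpreinduc1}, where the paper invokes the quasiorder description $\cc = {\downarrow\!\cdot}$ and exhibits $E \setminus {\uparrow\! x}$ as the unique copoint, while you use the equivalent closure-under-unions characterization.
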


\begin{proof}
Let $x \in E$, let $V$ be a $\cc$-copoint of $x$, and let $F$ be $\cc$-closed with $x \notin F$. 
Take $W := { V \cup F }$. 
By assumption, $\cc$ is topological, so $W$ is $\cc$-closed, and $x \notin { W }$. 
Thus, $W$ is a $\cc$-copoint of $x$. 
By maximality, $W = V$. 
So $V \supseteq F$, for all $\cc$-closed subsets $F$ with $x \notin F$. 
In particular, $x$ has at most one $\cc$-copoint. 

\eqref{prop:ccpreinduc1} $\Rightarrow$ \eqref{prop:ccpreinduc2}. 
Let $x \in E$. 
Since $\cc$ is preinductive and untied, $x$ has at least one $\cc$-copoint. 
So $x$ has a unique $\cc$-copoint. 


\eqref{prop:ccpreinduc2} $\Rightarrow$ \eqref{prop:ccpreinduc3}. 
We show that a union of $\cc$-closed subsets is $\cc$-closed. 
Combined with Proposition~\ref{prop:distr}, it is then an easy task to conclude that $\cc$ is Alexandrov closure operator. 
So let $(F_j)_{j \in J}$ be a collection of $\cc$-closed subsets, and take $F := { \bigcup_{j \in J} F_j }$. 
Let $x \in \cc(F)$, and let $V$ be the unique $\cc$-copoint of $x$. 
Suppose that $x \notin F$. 
Then $x \notin F_j$, for all $j \in J$. 
By the introductory part of the proof, $V \supseteq F_j$, for all $j \in J$. 
This yields $F \subseteq V$, so $\cc(F) \subseteq V$. 
Now, $x \in \cc(F)$, so $x \in V$, a contradiction. 
This proves that $x \in F$. 
So $\cc(F) = F$, i.e.\ $F$ is $\cc$-closed. 


\eqref{prop:ccpreinduc3} $\Rightarrow$ \eqref{prop:ccpreinduc1}. 
By assumption, there is some quasiorder $\leqslant$ such that ${ \cc(A) } = { \downarrow\!\! A }$, for all $A \subseteq E$. 
If $x \in E$, then $E \setminus { \uparrow\!\! x }$ is a lower subset, i.e.\ is $\cc$-closed. 
Moreover, if $F$ is a $\cc$-closed subset with $x \notin F$, then $F$ is a lower subset, so $F \subseteq { E \setminus { \uparrow\!\! x } }$. 
This proves that $E \setminus { \uparrow\!\! x }$ is the unique $\cc$-copoint of $x$, and that $\cc$ is preinductive. 
\end{proof}

\subsection{Compact points}

Let $(E, \leqslant, \cc)$ be an enriched qoset. 
If $x \in A \subseteq E$, then $x$ is a \textit{$\cc$-compact point} of $A$ if $x \notin \cc(A \setminus { \uparrow\!\! x })$. 
We denote by $\cp_{\cc} A$ the set of $\cc$-compact points of $A$. 

The following result gives a characterization of compact points. 

\begin{proposition}\label{prop:compacts}
Let $(E, \leqslant, \cc)$ be an enriched qoset, let $A \subseteq E$ and $x \in A$.
Consider the following assertions: 
\begin{enumerate}
  \item\label{prop:compacts1} $x$ is a $\cc$-compact point of $A$;
  \item\label{prop:compacts2} $x \in \cc(B)$ implies $x \in { \downarrow\!\! B }$, for all $B \subseteq A$;
  \item\label{prop:compacts4} $A \setminus { \uparrow\!\! x }$ is $\cc$-closed.
\end{enumerate}
Then \eqref{prop:compacts1} $\Leftrightarrow$ \eqref{prop:compacts2} $\Leftarrow$ \eqref{prop:compacts4}. 
Moreover, if $A$ is $\cc$-closed, then all conditions are equivalent. 
\end{proposition}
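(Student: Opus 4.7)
The plan is to unpack the definitions and use the right-absorbing property of $\cc$ (which holds since $(E,\leqslant,\cc)$ is an enriched qoset) only for the \emph{moreover} clause; the rest follows from the definitions.

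First I would prove \eqref{prop:compacts1} $\Rightarrow$ \eqref{prop:compacts2}. Assume $x \notin \cc(A \setminus \uparrow\!\! x)$ and let $B \subseteq A$ with $x \in \cc(B)$. If $x \notin \downarrow\!\! B$, then no $b \in B$ satisfies $x \leqslant b$, so $B \subseteq A \setminus \uparrow\!\! x$, which would give $x \in \cc(B) \subseteq \cc(A \setminus \uparrow\!\! x)$, contradicting compactness. For \eqref{prop:compacts2} $\Rightarrow$ \eqref{prop:compacts1}, apply \eqref{prop:compacts2} to $B := A \setminus \uparrow\!\! x$: if $x \in \cc(B)$, then $x \in \downarrow\!\! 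B$, so some $b \in B$ has $x \leqslant b$, i.e.\ $b \in \uparrow\!\! x$, contradicting $b \in A \setminus \uparrow\!\! x$. The implication \eqref{prop:compacts4} $\Rightarrow$ \eqref{prop:compacts1} is immediate, since if $A \setminus \uparrow\!\! x$ is $\cc$-closed, then $\cc(A \setminus \uparrow\!\! x) = A \setminus \uparrow\!\! x$, which excludes $x$ by reflexivity of $\leqslant$.

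For the \emph{moreover} clause, assume $A$ is $\cc$-closed and that $x$ is a $\cc$-compact point of $A$; I need to establish \eqref{prop:compacts4}. The inclusion $\cc(A \setminus \uparrow\!\! x) \subseteq \cc(A) = A$ is clear, so it remains to rule out any $y \in \cc(A \setminus \uparrow\!\! x)$ with $y \in \uparrow\!\! x$, i.e.\ $x \leqslant y$. Here the right-absorbing property enters: since $\downarrow\!\! \cc(A \setminus \uparrow\!\! x) = \cc(A \setminus \uparrow\!\! x)$, the set $\cc(A \setminus \uparrow\!\! x)$ is a lower set, so $x \leqslant y \in \cc(A \setminus \uparrow\!\! x)$ forces $x \in \cc(A \setminus \uparrow\!\! x)$, contradicting compactness. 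Hence no such $y$ exists, and $\cc(A \setminus \uparrow\!\! x) = A \setminus \uparrow\!\! x$.

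The argument is essentially a direct unfolding of definitions, and there is no serious obstacle; the only subtle point is recognizing that the downward stability of images of $\cc$ (the right-absorbing axiom built into the definition of an enriched qoset) is precisely what bridges compactness back to the closedness of $A \setminus \uparrow\!\! x$ when $A$ itself is $\cc$-closed.
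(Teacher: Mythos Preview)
Your proof is correct and follows essentially the same route as the paper's: the equivalence \eqref{prop:compacts1} $\Leftrightarrow$ \eqref{prop:compacts2} and the implication \eqref{prop:compacts4} $\Rightarrow$ \eqref{prop:compacts1} are handled by the same direct contradictions, and for the \emph{moreover} clause you invoke the right-absorbing property $\downarrow\!\! \cc(A \setminus {\uparrow\!\! x}) = \cc(A \setminus {\uparrow\!\! x})$ exactly as the paper does to pull $x$ into $\cc(A \setminus {\uparrow\!\! x})$ from any $y \geqslant x$ lying there.
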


\begin{proof}
\eqref{prop:compacts4} $\Rightarrow$ \eqref{prop:compacts1}. 
If $x \in { \cc(A \setminus { \uparrow\!\! x }) }$, then $x \in { A \setminus { \uparrow\!\! x } }$ since $A \setminus { \uparrow\!\! x }$ is $\cc$-closed, a contradiction. 
So $x \notin { \cc(A \setminus { \uparrow\!\! x }) }$, i.e.\ $x$ is a $\cc$-compact point of $A$. 

\eqref{prop:compacts1} $\Rightarrow$ \eqref{prop:compacts2}. 
Suppose that $x \in \cc(B)$ for some $B \subseteq A$. 
If $x \notin { \downarrow\!\! B }$, then $B \subseteq { A \setminus { \uparrow\!\! x } }$, so $x \in { \cc(A \setminus { \uparrow\!\! x }) }$, a contradiction. 
Thus, $x \in { \downarrow\!\! B }$, as required. 

\eqref{prop:compacts2} $\Rightarrow$ \eqref{prop:compacts1}. 
Suppose that $x \in { \cc(A \setminus { \uparrow\!\! x }) }$. 
Then $x \in { \downarrow\!\! (A \setminus { \uparrow\!\! x }) }$. 
So there exists some $y \in { A \setminus { \uparrow\!\! x } }$ with $x \leqslant y$, a contradiction. 
Thus, $x \notin { \cc(A \setminus { \uparrow\!\! x }) }$, i.e.\ $x$ is a $\cc$-compact point of $A$. 

\eqref{prop:compacts1} $\Rightarrow$ \eqref{prop:compacts4} if $A$ is $\cc$-closed. 
We have ${ \cc(A \setminus { \uparrow\!\! x }) } \subseteq { \cc(A) } = { A }$. 
Suppose that there is some $y \in { \cc(A \setminus { \uparrow\!\! x }) }$ with $x \leqslant y$. 
Then $x \in { \downarrow\!\! \cc(A \setminus { \uparrow\!\! x }) } = { \cc(A \setminus { \uparrow\!\! x }) }$, a contradiction. 
We have proved that ${ \cc(A \setminus { \uparrow\!\! x }) } \subseteq { A \setminus { \uparrow\!\! x } }$, so $A \setminus { \uparrow\!\! x }$ is $\cc$-closed. 
\end{proof}

\begin{corollary}\label{coro:compacts}
Let $(E, \leqslant, \cc)$ be an enriched qoset, and let $x \in E$. 
Consider the following assertions: 
\begin{enumerate}
  \item\label{coro:compacts1} $x$ is a $\cc$-compact point of $E$;
  \item\label{coro:compacts2} $x \in \cc(B)$ implies $x \in { \downarrow\!\! B }$, for all $B \subseteq E$;
  \item\label{coro:compacts3} $E \setminus { \uparrow x }$ is $\cc$-closed;
  \item\label{coro:compacts5} $E \setminus { \uparrow x }$ is the unique $\cc$-copoint of $x$;
  \item\label{coro:compacts6} $x$ has a unique $\cc$-copoint. 
\end{enumerate}    
Then \eqref{coro:compacts1} $\Leftrightarrow$ \eqref{coro:compacts2} $\Leftrightarrow$ \eqref{coro:compacts3} $\Leftrightarrow$ \eqref{coro:compacts5} $\Rightarrow$ \eqref{coro:compacts6}. 
Moreover, if $\cc$ is preinductive and separates points, then all conditions are equivalent.  
\end{corollary}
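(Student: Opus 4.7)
The plan is to leverage Proposition~\ref{prop:compacts} for the first three conditions, and then analyze the copoint structure using the right-absorbing property.

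First, I would apply Proposition~\ref{prop:compacts} to $A := E$. Since $\cc(E) \subseteq E$ gives ${\cc(E) = E}$, the set $E$ is $\cc$-closed, so the final sentence of that proposition ensures the full equivalence of conditions \eqref{prop:compacts1}, \eqref{prop:compacts2}, \eqref{prop:compacts4} for $A = E$. Transcribed, this yields \eqref{coro:compacts1} $\Leftrightarrow$ \eqref{coro:compacts2} $\Leftrightarrow$ \eqref{coro:compacts3}.

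Next, I would prove \eqref{coro:compacts3} $\Leftrightarrow$ \eqref{coro:compacts5}. The implication \eqref{coro:compacts5} $\Rightarrow$ \eqref{coro:compacts3} is immediate since every copoint is $\cc$-closed by definition. For the converse, the key observation is that, since $(E, \leqslant, \cc)$ is an enriched qoset, $\cc$ is right-absorbing, so every $\cc$-closed subset is a lower subset. Hence any $\cc$-closed $W$ with $x \notin W$ must satisfy $W \subseteq E \setminus {\uparrow\!\! x}$: otherwise some $y \in W$ with $y \geqslant x$ would force $x \in { \downarrow\!\! W } = { W }$. So, assuming \eqref{coro:compacts3}, the subset $E \setminus {\uparrow\!\! x}$ is itself $\cc$-closed and dominates every other $\cc$-closed subset avoiding $x$, making it the unique $\cc$-copoint of $x$. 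The implication \eqref{coro:compacts5} $\Rightarrow$ \eqref{coro:compacts6} is trivial.

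Finally, under the additional assumptions that $\cc$ is preinductive and separates points, I would establish \eqref{coro:compacts6} $\Rightarrow$ \eqref{coro:compacts5}, which closes the loop. Let $V$ be the unique $\cc$-copoint of $x$. By the argument above, $V \subseteq E \setminus {\uparrow\!\! x}$. For the reverse inclusion, take $y \in E \setminus {\uparrow\!\! x}$, i.e.\ $x \not\leqslant y$; since $\cc$ separates points, there exists a $\cc$-closed subset $W$ with $y \in W$ and $x \notin W$. By preinductivity, $W$ is contained in some $\cc$-copoint $V'$ of $x$, and uniqueness forces $V' = V$. Hence $y \in W \subseteq V$, yielding $V = E \setminus {\uparrow\!\! x}$, which is \eqref{coro:compacts5}.

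The argument is straightforward once the right-absorbing property is used to pin down that $\cc$-closed subsets avoiding $x$ are automatically contained in $E \setminus {\uparrow\!\! x}$; the only subtle point is the last step, where separation of points is precisely what is needed to fill $E \setminus {\uparrow\!\! x}$ with enough $\cc$-closed subsets, and preinductivity lets us absorb each such subset into a copoint.
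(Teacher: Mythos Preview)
Your proof is correct and follows essentially the same approach as the paper's own proof: both invoke Proposition~\ref{prop:compacts} with $A = E$ for the first three equivalences, both use that $\cc$-closed subsets are lower subsets (the right-absorbing property) to show any $\cc$-closed $W$ avoiding $x$ lies in $E \setminus {\uparrow\!\! x}$, and both close the loop under the extra hypotheses by combining separation of points with preinductivity to absorb such a $W$ into the unique copoint. The only cosmetic difference is that the paper proves \eqref{coro:compacts6} $\Rightarrow$ \eqref{coro:compacts3} (using that $\downarrow\!\! y$ is $\cc$-closed via Proposition~\ref{prop:sep}) while you prove \eqref{coro:compacts6} $\Rightarrow$ \eqref{coro:compacts5} directly from the definition of separation; the underlying argument is the same.
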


\begin{proof}
\eqref{coro:compacts1} $\Leftrightarrow$ \eqref{coro:compacts2} $\Leftrightarrow$ \eqref{coro:compacts3} is already clear from Proposition~\ref{prop:compacts}. 

\eqref{coro:compacts5} $\Rightarrow$ \eqref{coro:compacts3} and \eqref{coro:compacts5} $\Rightarrow$ \eqref{coro:compacts6} are obvious. 

\eqref{coro:compacts3} $\Rightarrow$ \eqref{coro:compacts5}. 
We need to prove both existence and uniqueness. 
For existence, we prove that $E \setminus { \uparrow\!\! x }$ is a $\cc$-copoint of $x$. 
It is a $\cc$-closed subset by \eqref{coro:compacts3}, which does not contain $x$. 
Let us show that it is a maximal subset with this property. 
So let ${ V } \supseteq { E \setminus { \uparrow\!\! x } }$ be $\cc$-closed with $x \notin V$. 
Then ${ \uparrow\!\! x } \subseteq { E \setminus V }$, so $V \subseteq { E \setminus { \uparrow\!\! x } }$. 
Thus, ${ V } = { E \setminus { \uparrow\!\! x } }$, as required. 
To prove uniqueness, let $W$ be a $\cc$-copoint of $x$. 
Then again $W \subseteq { E \setminus { \uparrow\!\! x } }$. 
By maximality of $W$, we get ${ W } = { E \setminus { \uparrow\!\! x } }$. 
So $x$ has a unique $\cc$-copoint. 

\eqref{coro:compacts6} $\Rightarrow$ \eqref{coro:compacts3} if $\cc$ is preinductive and separates points. 
Let $V_x$ be the unique $\cc$-copoint of $x$. 
Then $x \in { E \setminus V_x }$ and $E \setminus V_x$ is an upper subset, so ${ \uparrow\!\! x } \subseteq { E \setminus V_x }$. 
We show ${ \uparrow\!\! x } = { E \setminus V_x }$. 
So let $y \in { E \setminus V_x }$. 
If $y \notin { \uparrow\!\! x }$, then $x \notin { \downarrow\!\! y }$. 
By assumption, $\cc$ separates points, so $\downarrow\!\! y$ is $\cc$-closed, hence there is some $\cc$-copoint $W$ of $x$ containing $\downarrow\!\! y$.
By uniqueness of $V_x$, we have $W = V_x$, so $y \in V_x$, a contradiction. 
Thus, $y \in { \uparrow\!\! x }$. 
This shows that ${ \uparrow\!\! x } = { E \setminus V_x }$, so ${ E \setminus { \uparrow\!\! x } }$ is $\cc$-closed. 
\end{proof}

\begin{remark}
The previous corollary, with Condition~\eqref{coro:compacts3}, shows in particular that the $\cc$-compact points of $E$ coincide with the $\overline{\cc}$-compact points of $E$. 
\end{remark}

\begin{example}[Example~\ref{ex:infconvex2} continued]\label{ex:infconvex3}
Let $P$ be a qoset. 
An element $x \in P$ is \textit{prime} if, whenever $x \geqslant f_0$ for some finite subset $F$ with an inf $f_0$, we have $x \geqslant f$ for some $f \in F$. 
Prime elements are exactly the $\langle \cdot \rangle_{\mathrsfs{U}}$-compact points of $(P, \geqslant)$, where $\mathrsfs{U}$ denotes the collection of upper, inf-closed subsets. 
Note that, in the set $\mathbb{N}$ of natural numbers, partially ordered by dual divisibility, prime elements in the previous sense coincide with prime powers (where $1$ is not considered as a prime power). 
\end{example}

\subsection{Extreme points}

Let $(E, \leqslant, \cc)$ be an enriched qoset. 
Recall that we write $[x]$ for the equivalence class of $x \in E$ with respect to the relation $\sim$, i.e.\ 
\[
{ y \in [x] } \Leftrightarrow { y \sim x } \Leftrightarrow { y \leqslant x \mbox{ and } x \leqslant y },  
\]
for all $x, y \in E$, and that we denote by $[A]$ the subset $\bigcup_{a \in A} [a]$, for all $A \subseteq E$. 

An element $x$ of a subset $A$ of $E$ is a \textit{$\cc$-extreme point} of $A$ if $x \notin \cc(A \setminus [x])$. 
We denote by $\ex_{\cc} A$ the set of $\cc$-extreme points of $A$. 
Note that, if no specific quasiorder is at stake, a preclosure space $(E, \cc)$ is often considered as the enriched qoset $(E, =, \cc)$.  

\begin{example}
Let $f : E \to E$ be any self-map on a set $E$. 
Let $\sss_{f}$ be the preclosure operator on $E$ defined by
\[
\sss_{f}(A) = A \cup f^{-1}(A),
\] 
for all $A \subseteq E$. 
Then $x \in A$ is an $\sss_{f}$-extreme point of $A$ if and only if $f(x) \notin A$ or $f(x) = x$, 
and $x$ is an $\sss_{f}$-extreme point of $E$ if and only if $x$ is a fixed point of $f$. 
\end{example}

\begin{example}
Let $f : E \to E'$ be a map between topological spaces $E$ and $E'$. 
We write $\overline{A}$ for the topological closure of $A \subseteq E$ in $E$. 
Let $\cc_f$ be the preclosure operator on $E$ defined by 
\[
\cc_f(A) = \bigcup_{{ f^{-1}(F') } \subseteq { A }} \overline{f^{-1}(F')}, 
\]
where $F'$ runs over the closed subsets of $E'$ such that ${ f^{-1}(F') } \subseteq { A }$. 
Then $x$ is a $\cc_f$-extreme point of $E$ if and only if $f$ is continuous at $x \in E$.  
\end{example}

\begin{corollary}\label{coro:extremes}
Let $(E, \leqslant, \cc)$ be an enriched qoset, and let $A \subseteq E$. 
Then 
\begin{align}\label{eq:excp}
{ \textstyle{\ex_{\cc}} A } = { { \textstyle{\cp_{\cc}} A } \cap { \Max A } }. 
\end{align}
If $x \in A$, consider the following assertions: 
\begin{enumerate}
  \item\label{coro:extremes1} $x$ is a $\cc$-extreme point of $A$;
  \item\label{coro:extremes2} $x \in \cc(B)$ implies $x \in [B]$, for all $B \subseteq A$;
  \item\label{coro:extremes4} $A \setminus [x]$ is $\cc$-closed. 
\end{enumerate}
Then \eqref{coro:extremes1} $\Leftrightarrow$ \eqref{coro:extremes2} $\Leftarrow$ \eqref{coro:extremes4}. 
Moreover, if $A$ is $\cc$-closed, then all conditions are equivalent. 
\end{corollary}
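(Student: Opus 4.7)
The approach is to mirror the proof of Proposition~\ref{prop:compacts}, with the equivalence class $[x]$ replacing the principal filter $\uparrow\!\! x$, and to exploit that a right-absorbing preclosure operator is automatically compatible (Proposition~\ref{prop:topped}-style reasoning), so $\downarrow\!\! A \subseteq \cc(A)$ and every $\cc$-closed subset is a lower set.

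First I would establish Equation~\eqref{eq:excp}. For the inclusion $\ex_{\cc} A \subseteq \cp_{\cc} A \cap \Max A$: since $[x] \subseteq \uparrow\!\! x$ gives $A \setminus \uparrow\!\! x \subseteq A \setminus [x]$, every $\cc$-extreme point is a fortiori a $\cc$-compact point; moreover, if a $\cc$-extreme $x \in A$ were not maximal in $A$, there would exist $y \in A$ with $x \leqslant y$ and $y \notin [x]$, whence $y \in A \setminus [x]$ and $x \in { \downarrow\!\! y } \subseteq { \cc(A \setminus [x]) }$ by compatibility, contradicting extremality. For the reverse inclusion: if $x \in \Max A$, then any $y \in { \uparrow\!\! x \cap A }$ must satisfy $x \sim y$ and thus lies in $[x] \cap A$, yielding $\uparrow\!\! x \cap A = [x] \cap A$ and hence $A \setminus \uparrow\!\! x = A \setminus [x]$, so $\cc$-compactness and $\cc$-extremality coincide at $x$.

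Next I would verify the equivalence (1)$\Leftrightarrow$(2) and the implication (4)$\Rightarrow$(1), all purely formal. For (1)$\Rightarrow$(2): if $x \in \cc(B)$ with $B \subseteq A$ but $x \notin [B]$, then no element of $B$ is equivalent to $x$, so $B \subseteq A \setminus [x]$ and $x \in \cc(A \setminus [x])$, contradicting (1). For (2)$\Rightarrow$(1): taking $B := A \setminus [x]$ in (2), the fact that $x \notin [A \setminus [x]]$ (an element equivalent to $x$ cannot simultaneously lie outside $[x]$) forces $x \notin \cc(A \setminus [x])$. For (4)$\Rightarrow$(1): $x \in [x]$ gives $x \notin A \setminus [x] = \cc(A \setminus [x])$.

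Finally, under the additional hypothesis that $A$ is $\cc$-closed, I would prove (1)$\Rightarrow$(4), the only step that genuinely uses the enriched-qoset structure. From $\cc(A \setminus [x]) \subseteq \cc(A) = A$ and the right-absorption of $\cc$, the subset $\cc(A \setminus [x])$ is a lower set contained in $A$; if some $y \in \cc(A \setminus [x])$ lay in $[x]$, then $x \leqslant y$ would force $x$ itself into the lower set $\cc(A \setminus [x])$, violating (1). Hence $\cc(A \setminus [x]) \subseteq A \setminus [x]$, as required. I do not anticipate a real obstacle; the only point that needs care is making sure the identification $A \setminus \uparrow\!\! x = A \setminus [x]$ is correctly conditioned on maximality of $x$ in $A$, which is precisely what gives Equation~\eqref{eq:excp} its content.
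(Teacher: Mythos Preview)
Your proof is correct. The arguments you give for the equivalences and for Equation~\eqref{eq:excp} are all sound; in particular your observation that maximality of $x$ in $A$ forces ${ \uparrow\!\! x } \cap A = [x] \cap A$, hence $A \setminus {\uparrow\!\! x} = A \setminus [x]$, is a clean way to get the inclusion $\cp_{\cc} A \cap \Max A \subseteq \ex_{\cc} A$.

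The paper organizes the proof differently. Rather than re-running the proof of Proposition~\ref{prop:compacts} with $[x]$ in place of $\uparrow\!\! x$, it simply observes that $\cc$-extreme points in $(E, \leqslant, \cc)$ are exactly the $\cc$-compact points in the enriched qoset $(E, \sim, \cc)$, so Proposition~\ref{prop:compacts} applies verbatim to yield all three implications. For Equation~\eqref{eq:excp} the paper then leans on the already-established characterization~\eqref{coro:extremes2}: from $x \in \cp_{\cc} A$ and $x \in \cc(B)$ one gets $x \in {\downarrow\!\! B}$, and maximality upgrades this to $x \in [B]$; conversely, $x \leqslant a \in A$ gives $x \in \cc(a)$, and extremality via~\eqref{coro:extremes2} forces $x \sim a$. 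Your route is more self-contained and makes the set-theoretic content explicit; the paper's is shorter and highlights the conceptual point that ``extreme'' is just ``compact'' for the symmetrized quasiorder. One minor remark: the implication right-absorbing $\Rightarrow$ compatible is stated immediately after the definitions (the chain \eqref{def:compat1} $\Rightarrow$ \eqref{def:compat2} $\Rightarrow$ \eqref{def:compat3}), not in Proposition~\ref{prop:topped}, which goes the other way.
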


\begin{proof}
The second part of the corollary follows from Proposition~\ref{prop:compacts}, since extreme points in $(E, \leqslant, \cc)$ coincide with compact points in $(E, \sim, \cc)$. 
We still have to prove Equation~\eqref{eq:excp}. 
Let $x \in { { \cp_{\cc} A } \cap { \Max A } }$, and suppose that $x \in \cc(B)$ for some $B \subseteq A$. 
Since $x \in \cp_{\cc} A$, we have $x \in { \downarrow\!\! B}$. 
So $x \leqslant b$, for some $b \in B$. 
Now, $x \in \Max A$, so $x \sim b$. 
Thus, $x \in [B]$. 
This proves that $x \in \ex_{\cc} A$. 

Conversely, let $x \in \ex_{\cc} A$. 
Then obviously $x \in { \cp_{\cc} A }$. 
Let us show that $x \in  \Max A$. 
So let $a \in A$ with $x \leqslant a$. 
Then $x \in \cc(a)$, and using $x \in \ex_{\cc} A$ we obtain $x \sim a$, as required. 
\end{proof}

\begin{proposition}\label{prop:cpconvol}
Let $(E, \leqslant, \cc)$ be an enriched qoset and $\sss$ be a closure operator on $E$ that absorbs $\sim$. 
Let $A \subseteq E$ and $x \in E$. 
Then the following conditions are equivalent:
\begin{enumerate}
  \item\label{prop:cpconvol1} $x \in { \ex_{\cc * \sss} A }$;
  \item\label{prop:cpconvol3} $x \in { \ex_{\cc} (A \setminus V) }$, for some $\sss$-closed subset $V$ of $E$.  
\end{enumerate}
Moreover, if $\sss$ is preinductive, then one can choose $V \in \Cop_{\sss}(x)$ in \eqref{prop:cpconvol3}. 
\end{proposition}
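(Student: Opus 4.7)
The plan is to directly unpack both sides using the convolution formula. By definition, $x \in \ex_{\cc * \sss} A$ means $x \in A$ and $x \notin (\cc * \sss)(A \setminus [x])$, which, by the defining equation of $\cc * \sss$, is equivalent to the existence of a subset $B \subseteq E$ such that
\[
x \notin \cc((A \setminus [x]) \cap B) \quad \text{and} \quad x \notin \sss((A \setminus [x]) \setminus B).
\]
I will build a dictionary between such witnesses $B$ and $\sss$-closed sets $V$ separating $x$ from parts of $A$.

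For \eqref{prop:cpconvol1} $\Rightarrow$ \eqref{prop:cpconvol3}, starting from such a witness $B$, I would set $V := \sss((A \setminus [x]) \setminus B)$, which is $\sss$-closed by idempotency of $\sss$ and does not contain $x$. Since $(A \setminus [x]) \setminus B \subseteq V$, the elementary inclusion $(A \setminus V) \setminus [x] \subseteq (A \setminus [x]) \cap B$ follows, and monotonicity of $\cc$ then gives $x \notin \cc((A \setminus V) \setminus [x])$. Combined with $x \in A \setminus V$, this yields $x \in \ex_\cc(A \setminus V)$. Conversely, given an $\sss$-closed $V$ with $x \in \ex_\cc(A \setminus V)$, I would pick $B := E \setminus V$: then $(A \setminus [x]) \cap B = (A \setminus V) \setminus [x]$ makes the $\cc$-condition immediate from the extremality hypothesis, while $(A \setminus [x]) \setminus B \subseteq V$ together with $\sss(V) = V$ gives $x \notin \sss((A \setminus [x]) \setminus B)$, closing the circle.

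For the last assertion, if $\sss$ is preinductive at $x$, the set of $\sss$-closed subsets of $E$ not containing $x$ is preinductive under inclusion, so the $V$ produced in the forward direction admits a maximal element $V' \supseteq V$ in that poset, i.e.\ $V' \in \Cop_\sss(x)$. Since $A \setminus V' \subseteq A \setminus V$, monotonicity of $\cc$ transfers $x \in \ex_\cc(A \setminus V)$ to $x \in \ex_\cc(A \setminus V')$, as required. The only real obstacle throughout is the set-theoretic bookkeeping of complements, which must be carried out carefully; the hypothesis that $\sss$ absorbs $\sim$ guarantees that $\sss$-closed sets are $\sim$-saturated, so that the chosen $V$ never cuts through the class $[x]$ and the various manipulations involving $[x]$ remain consistent.
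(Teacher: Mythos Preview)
Your proof is correct and follows essentially the same route as the paper's: unpack the convolution to get a witnessing subset $B$, set $V$ to be the $\sss$-closure of the piece of $A \setminus [x]$ falling on the $\sss$-side, and verify the required inclusions; for the converse take $B := E \setminus V$. The only cosmetic difference is that the paper's $B$ is your $E \setminus B$, and the paper does not explicitly invoke the ``$\sss$ absorbs $\sim$'' hypothesis in its argument either, so your closing remark about it is harmless but not strictly load-bearing in the displayed computations.
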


\begin{proof}
We write $A_x$ as a shorthand for $A \setminus { [x] }$. 

\eqref{prop:cpconvol1} $\Rightarrow$ \eqref{prop:cpconvol3}. 
We have $x \in A$ and $x \notin { \cc * \sss(A_x) }$. 
Using the definition of the convolution product, there is some subset $B$ of $E$ with $x \notin { \cc(A_x \setminus B) }$ and $x \notin { \sss(A_x \cap B) }$. 
Take $V := \sss(A_x \cap B)$, which is $\sss$-closed since $\sss$ is idempotent. 
Then $x \in { A \setminus V }$. 
If $x \in { \cc(A_x \setminus V) }$, then $x \in { \cc(A_x \setminus (A_x \cap B)) } = { \cc(A_x \setminus B) }$, a contradiction. 
So $x \notin { \cc(A_x \setminus V) }$. 
This proves that $x \in { \ex_{\cc} (A \setminus V) }$. 
Moreover, if $\sss$ is preinductive, there exists some $W \in \Cop_{\sss}(x)$ with $W \supseteq V$, and it is easily seen that $x \in { \ex_{\cc} (A \setminus W) }$. 

\eqref{prop:cpconvol3} $\Rightarrow$ \eqref{prop:cpconvol1}. 
We have $x \in { A \setminus V}$ and $x \notin { \ex(A_x \setminus V) }$. 
If $x \in { \sss(A_x \cap V) }$, then $x \in { \sss(V) } = { V }$, a contradiction. 
So $x \notin { \sss(A_x \cap V) }$. 
This shows that $x \notin { \cc * \sss(A_x) }$. 
So $x \in { \ex_{\cc * \sss} A }$. 
\end{proof}

\begin{proposition}\label{prop:extrcharac}
Let $(E, \leqslant, \cc)$ be an enriched qoset. 
Let $A \subseteq E$ and $x \in A$. 
Consider the following assertions:
\begin{enumerate}
  \item\label{prop:extrcharac1} $x$ is a $\cc^{\uparrow}$-extreme point of $A$;
  \item\label{prop:extrcharac2a} $x \in { \ex_{\cc}(A \setminus U) }$, for some upper subset $U \subseteq E$; 
  \item\label{prop:extrcharac2b} $x \in { \ex_{\cc}(A \cap { \downarrow\!\! x }) }$;
  \item\label{prop:extrcharac3} $x \in { \cc(B) \cap B^{\uparrow} }$ implies $x \in [B]$, for all $B \subseteq A$;
  \item\label{prop:extrcharac4} $x \in B^{\vee}$ implies $x \in [B]$, for all $B \subseteq A$ with a $\cc$-sup;
  \item\label{prop:extrcharac5} $x \in { \cc(F) \cap F^{\uparrow} }$ implies $x \in [F]$, for all finite subsets $F \subseteq A$;
  \item\label{prop:extrcharac6} $x \in F^{\vee}$ implies $x \in [F]$, for all finite subsets $F \subseteq A$ with a $\cc$-sup.
\end{enumerate}
Then \eqref{prop:extrcharac1} $\Leftrightarrow$ \eqref{prop:extrcharac2a} $\Leftrightarrow$ \eqref{prop:extrcharac2b} $\Leftrightarrow$ \eqref{prop:extrcharac3} $\Rightarrow$ \eqref{prop:extrcharac4} $\Rightarrow$ \eqref{prop:extrcharac6} and \eqref{prop:extrcharac3} $\Rightarrow$ \eqref{prop:extrcharac5} $\Rightarrow$ \eqref{prop:extrcharac6}. 
Moreover,
\begin{itemize}
  \item if $\cc$ separates points, then \eqref{prop:extrcharac4} $\Rightarrow$ \eqref{prop:extrcharac3} and \eqref{prop:extrcharac6} $\Rightarrow$ \eqref{prop:extrcharac5};
  \item if $\cc$ is finitary, then \eqref{prop:extrcharac5} $\Rightarrow$ \eqref{prop:extrcharac3};
  \item if $\cc$ separates points and is finitary, then all conditions are equivalent.
\end{itemize}
\end{proposition}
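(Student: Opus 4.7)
The plan is to anchor the core equivalences at condition \eqref{prop:extrcharac3}, since the inner formula for $\cc^{\uparrow}$ supplied by Theorem~\ref{thm:charac} translates membership in $\cc^{\uparrow}(A \setminus [x])$ directly into a statement about subsets $B$ with $x \in \cc(B) \cap B^{\uparrow}$. First I would write down the equation
\[
\cc^{\uparrow}(A \setminus [x]) = \bigcup_{B \subseteq A \setminus [x]} \cc(B) \cap B^{\uparrow}
\]
from Theorem~\ref{thm:charac}, then argue that $x \notin \cc^{\uparrow}(A \setminus [x])$ means exactly that no subset $B \subseteq A$ with $B \cap [x] = \emptyset$ (equivalently, with $x \notin [B]$) satisfies $x \in \cc(B) \cap B^{\uparrow}$, which is precisely the contrapositive of \eqref{prop:extrcharac3}. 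This establishes \eqref{prop:extrcharac1} $\Leftrightarrow$ \eqref{prop:extrcharac3}.

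Next I would deal with \eqref{prop:extrcharac2a} and \eqref{prop:extrcharac2b}. For \eqref{prop:extrcharac1} $\Leftrightarrow$ \eqref{prop:extrcharac2a}, I would apply Proposition~\ref{prop:cpconvol} with $\sss := \uparrow\!\! \cdot$, noting that the dual Alexandrov operator is topological (hence preinductive by Proposition~\ref{prop:ccpreinduc}), absorbs $\sim$, and has exactly the upper subsets as closed subsets. For \eqref{prop:extrcharac2a} $\Leftrightarrow$ \eqref{prop:extrcharac2b}, I would observe that \eqref{prop:extrcharac2b} is the case $U := E \setminus \downarrow\!\! x$ of \eqref{prop:extrcharac2a}, and conversely that any upper $U$ with $x \notin U$ satisfies $\downarrow\!\! x \cap U = \emptyset$, whence $A \cap \downarrow\!\! x \subseteq A \setminus U$ and $x$ remains $\cc$-extreme in the smaller set by monotonicity of $\cc$.

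For the weaker conditions, \eqref{prop:extrcharac3} $\Rightarrow$ \eqref{prop:extrcharac4} follows because if $B$ has a $\cc$-sup then $B^{\vee} \subseteq \cc(B) \cap B^{\uparrow}$; the implications \eqref{prop:extrcharac3} $\Rightarrow$ \eqref{prop:extrcharac5}, \eqref{prop:extrcharac4} $\Rightarrow$ \eqref{prop:extrcharac6}, and \eqref{prop:extrcharac5} $\Rightarrow$ \eqref{prop:extrcharac6} are obtained either by restriction to finite subsets or by the same sup-argument in the finite setting. For the conditional converses, separation yields $\cc \leqslant \mathfrak{d}$ via Proposition~\ref{prop:sep}, so $x \in \cc(B) \cap B^{\uparrow}$ forces $x \in (B^{\uparrow})^{\downarrow} \cap B^{\uparrow} = B^{\vee}$ and makes $x$ itself a sup of $B$ lying in $\cc(B)$; this gives \eqref{prop:extrcharac4} $\Rightarrow$ \eqref{prop:extrcharac3}, and the same argument restricted to finite subsets gives \eqref{prop:extrcharac6} $\Rightarrow$ \eqref{prop:extrcharac5}. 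Finitariness of $\cc$ instead lets me replace $B$ by a finite $F \subseteq B \subseteq \downarrow\!\! x$ with $x \in \cc(F)$, so $x \in \cc(F) \cap F^{\uparrow}$, producing \eqref{prop:extrcharac5} $\Rightarrow$ \eqref{prop:extrcharac3}. Combining these, both hypotheses together force all seven conditions to coincide.

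None of the steps will be individually deep once Theorem~\ref{thm:charac}, Lemma~\ref{lem:charac}, Proposition~\ref{prop:sep} and Proposition~\ref{prop:cpconvol} are in hand; the main obstacle will be bookkeeping, namely keeping consistent track of the $\sim$-class $[x]$ rather than $\{x\}$ in the definition of extreme points, since that is precisely what allows \eqref{prop:extrcharac1} $\Leftrightarrow$ \eqref{prop:extrcharac3} to go through in the quasi-ordered, non-antisymmetric setting.
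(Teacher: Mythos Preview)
Your proposal is correct and follows essentially the same route as the paper: the core equivalence \eqref{prop:extrcharac1} $\Leftrightarrow$ \eqref{prop:extrcharac3} via the inner formula of Theorem~\ref{thm:charac}, \eqref{prop:extrcharac1} $\Leftrightarrow$ \eqref{prop:extrcharac2a} via Proposition~\ref{prop:cpconvol}, and the remaining implications handled exactly as you describe. Your argument for \eqref{prop:extrcharac4} $\Rightarrow$ \eqref{prop:extrcharac3} under separation is even slightly more direct than the paper's---you show $B$ itself has a $\cc$-sup (namely $x$), whereas the paper routes through Theorem~\ref{thm:charac} to extract a subset $B_1 \subseteq B$ with a $\cc$-sup---but both work.
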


\begin{proof}
\eqref{prop:extrcharac1} $\Leftrightarrow$ \eqref{prop:extrcharac2a} is a direct application of Proposition~\ref{prop:cpconvol} to the case of extreme points, with $\sss : A \to { \uparrow\!\! A }$. 

\eqref{prop:extrcharac2a} $\Rightarrow$ \eqref{prop:extrcharac2b}. 
Since $x \notin U$ and $U$ is an upper set, ${ \downarrow\!\! x } \subseteq { E \setminus U }$. 
So if $x \in \cc(A \cap { \downarrow\!\! x } \setminus { \uparrow\!\! x })$, then $x \in \cc(A \setminus U \setminus { \uparrow\!\! x })$, which contradicts $x \in { \ex_{\cc}(A \setminus U) }$. 
This proves that $x \in { \ex_{\cc}(A \cap { \downarrow\!\! x }) }$. 

\eqref{prop:extrcharac2b} $\Rightarrow$ \eqref{prop:extrcharac2a} is obvious with $U := { E \setminus { \downarrow\!\! x } }$. 

\eqref{prop:extrcharac1} $\Leftrightarrow$ \eqref{prop:extrcharac3} follows from the characterization of extreme points given by Corollary~\ref{coro:extremes} and Equation~\eqref{eq:inner} of Theorem~\ref{thm:charac}. 

\eqref{prop:extrcharac3} $\Rightarrow$ \eqref{prop:extrcharac4} and \eqref{prop:extrcharac5} $\Rightarrow$ \eqref{prop:extrcharac6} are clear with the inclusion $B^{\vee} \subseteq { \cc(B) \cap B^{\uparrow} }$, which holds for all subsets $B \subseteq A$ with a $\cc$-sup, as per the end of the proof of Theorem~\ref{thm:charac}. 

\eqref{prop:extrcharac4} $\Rightarrow$ \eqref{prop:extrcharac6} and \eqref{prop:extrcharac3} $\Rightarrow$ \eqref{prop:extrcharac5} are obvious. 

\eqref{prop:extrcharac4} $\Rightarrow$ \eqref{prop:extrcharac3} if $\cc$ separates points. 
Let $B \subseteq A$ with $x \in { \cc(B) \cap B^{\uparrow} }$. 
By Theorem~\ref{thm:charac}, $x \in { \cc^{\uparrow}(B) }$. 
Since $\cc$ separates points, $x \in { B_1^{\vee} }$, for some subset $B_1 \subseteq B$ with a $\cc$-sup, using again Theorem~\ref{thm:charac}. 
By \eqref{prop:extrcharac4}, we have $x \in [B_1]$, so $x \in [B]$. 

\eqref{prop:extrcharac6} $\Rightarrow$ \eqref{prop:extrcharac5} if $\cc$ separates points. 
The proof is similar to the lines just above. 

\eqref{prop:extrcharac5} $\Rightarrow$ \eqref{prop:extrcharac3} if $\cc$ is finitary. 
Let $B \subseteq A$ with $x \in { \cc(B) \cap B^{\uparrow} }$. 
Since $\cc$ is finitary, there is some finite subset $F \subseteq B$ with $x \in \cc(F)$. 
Moreover, $B^{\uparrow} \subseteq F^{\uparrow}$, so $x \in { \cc(F) \cap F^{\uparrow} }$. 
By \eqref{prop:extrcharac5}, we have $x \in [F]$, so $x \in [B]$.  

\eqref{prop:extrcharac6} $\Rightarrow$ \eqref{prop:extrcharac3} if $\cc$ separates points and is finitary. 
This is straightforward, for we have already proved that, in this case, \eqref{prop:extrcharac6} $\Rightarrow$ \eqref{prop:extrcharac5} and  \eqref{prop:extrcharac5} $\Rightarrow$ \eqref{prop:extrcharac3}. 
\end{proof}

\begin{example}[Example~\ref{ex:infconvex3} continued]\label{ex:infconvex4}
Let $P$ be a qoset. 
An element $x \in P$ is \textit{irreducible} if, whenever $x \in F^{\wedge}$ for some finite subset $F$, we have $x \in [F]$. 
Irreducible elements are exactly the $\langle \cdot \rangle_{\mathrsfs{H}}$-extreme points of $P$, where $\mathrsfs{H}$ denotes the collection of inf-closed subsets of $P$. 
Indeed, we already know that $\langle \cdot \rangle_{\mathrsfs{H}} = \langle \cdot \rangle_{\mathrsfs{U}}^{\downarrow}$ (see Example~\ref{ex:infconvex2}), where $\mathrsfs{U}$ denotes the collection of upper, inf-closed subsets. 
Since $\cc := { \langle \cdot \rangle_{\mathrsfs{U}} }$ is finitary and dually separates points, all conditions of the previous proposition are equivalent. 
In particular, Condition~\eqref{prop:extrcharac6} (taken dually) amounts to the definition of an irreducible element. 
\end{example}


%


\subsection{Galois embedding of enriched qosets}

Let $(E, \leqslant, \cc)$ and $(E', \leqslant, \cc')$ be enriched qosets. 
A \textit{Galois embedding} from $E$ to $E'$ is the data of two order-preserving, continuous maps $i : E \to E'$ and $s : E' \to E$ such that:
\begin{itemize}
  \item $s(i(x)) = x$, for all $x \in E$;
  \item $i(s(x')) \leqslant x'$, for all $x' \in E'$. 
\end{itemize}
Then, necessarily, $i$ is injective and $s$ is surjective. 
Moreover, the pair $(i, s)$ forms a \textit{Galois connection}, in the sense that 
\[
{ i(x) \leqslant x' } \Leftrightarrow { x \leqslant s(x') },
\]
for all $x \in E$, $x' \in E'$. 

\begin{proposition}
Let $(E, \leqslant, \cc)$, $(E', \leqslant, \cc')$ be enriched qosets, with a Galois embedding $(i, s)$ from $E$ to $E'$. 
Then 
\[
{ x \in { \textstyle{\cp_{\cc}} (A) } } \Leftrightarrow { i(x) \in { \textstyle{\cp_{\cc'}} (s^{-1}(A)) } },
\]
and
\[
{ x \in { \textstyle{\ex_{\cc^{\uparrow}}} (A) } } \Leftrightarrow { i(x) \in { \textstyle{\ex_{\cc'^{\uparrow}}} (s^{-1}(A)) } },
\]
for all $x \in E$ and $A \subseteq E$. 
\end{proposition}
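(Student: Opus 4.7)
The plan is to prove both equivalences by a direct two-sided argument relying on three facts: the retraction identity $s \circ i = \mathrm{id}_E$, the continuity of $i$ and $s$ (i.e.\ $i(\cc(A)) \subseteq \cc'(i(A))$ and $s(\cc'(A')) \subseteq \cc(s(A'))$), and the Galois identity $i(x) \leqslant x' \Leftrightarrow x \leqslant s(x')$. For the second equivalence we additionally invoke the ``inner'' representation of $\cc^{\uparrow}$ provided by Theorem~\ref{thm:charac}. Observe also that $x \in A$ is equivalent to $i(x) \in s^{-1}(A)$, since $s(i(x)) = x$, so membership in the base set transfers trivially.

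For the compact-points equivalence, I would argue by contrapositive in each direction. Assuming $i(x) \in \cc'(s^{-1}(A) \setminus {\uparrow\!\! i(x)})$, applying $s$ and continuity gives $x \in \cc\bigl(s(s^{-1}(A) \setminus {\uparrow\!\! i(x)})\bigr)$; by Galois, $i(x) \not\leqslant y$ forces $x \not\leqslant s(y)$, so $s(s^{-1}(A) \setminus {\uparrow\!\! i(x)}) \subseteq A \setminus {\uparrow\!\! x}$, contradicting $x \in \cp_{\cc}(A)$. Conversely, assuming $x \in \cc(A \setminus {\uparrow\!\! x})$, applying $i$ and continuity gives $i(x) \in \cc'(i(A \setminus {\uparrow\!\! x}))$; the identity $i(x) \leqslant i(a) \Leftrightarrow x \leqslant s(i(a)) = a$ (consequence of Galois and the retraction) shows $i(A \setminus {\uparrow\!\! x}) \subseteq s^{-1}(A) \setminus {\uparrow\!\! i(x)}$, contradicting $i(x) \in \cp_{\cc'}(s^{-1}(A))$.

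For the extreme-points equivalence, by Theorem~\ref{thm:charac} I would work with the formula $\cc^{\uparrow}(A \setminus [x]) = \bigcup_{B \subseteq A \setminus [x]} \cc(B) \cap B^{\uparrow}$ and its analogue for $\cc'^{\uparrow}$. In the forward direction, given a witness $B' \subseteq s^{-1}(A) \setminus [i(x)]$ with $i(x) \in \cc'(B') \cap B'^{\uparrow}$, applying $s$ and continuity yields $x \in \cc(s(B'))$, while $B' \leqslant i(x)$ gives $s(B') \leqslant x$; the crucial point is $s(B') \subseteq A \setminus [x]$, which holds because for $b' \leqslant i(x)$ with $b' \not\sim i(x)$ the Galois identity gives $x \leqslant s(b') \Leftrightarrow i(x) \leqslant b'$, and the latter fails, so $s(b') \not\sim x$. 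The backward direction is strictly symmetric: a witness $B \subseteq A \setminus [x]$ with $x \in \cc(B) \cap B^{\uparrow}$ transfers via $i$ to $i(B) \subseteq s^{-1}(A) \setminus [i(x)]$ with $i(x) \in \cc'(i(B)) \cap i(B)^{\uparrow}$, the exclusion $i(b) \not\sim i(x)$ following again from $i(x) \leqslant i(b) \Leftrightarrow x \leqslant b$.

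The main conceptual obstacle is that $(i, s)$ is \emph{not} a Galois embedding with respect to the equivalence quasiorder $\sim$ on $E'$ (one has $i(s(x')) \leqslant x'$ but generally not $i(s(x')) \sim x'$), so the equivalence cannot be obtained as a formal consequence of the endofunctor $(E, \leqslant, \cc) \mapsto (E, \sim, \cc^{\uparrow})$ mentioned in Section~\ref{sec:specialcase}. The ``inner'' representation of Theorem~\ref{thm:charac} sidesteps this difficulty: the witnessing subsets $B$ and $B'$ are automatically contained in $\downarrow\!\! x$ and $\downarrow\!\! i(x)$ respectively, and on this side the Galois identity transfers $\sim$ cleanly between $E$ and $E'$.
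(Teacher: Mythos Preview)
Your proposal is correct and follows essentially the same approach as the paper: both arguments transport witnessing subsets back and forth via $i$ and $s$, using continuity together with the Galois identity to control the relevant order relations. The only cosmetic difference is that the paper argues directly via the characterization of Proposition~\ref{prop:extrcharac}\eqref{prop:extrcharac3} (testing all $B \subseteq A$ and concluding $x \in [B]$), whereas you phrase the same computation as a contrapositive on the definition using Theorem~\ref{thm:charac}; your closing remark on why functoriality alone does not suffice is a nice observation not made explicit in the paper.
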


\begin{proof}
Let $x \in E$ and $A \subseteq E$. 

Assume that $x \in { \cp_{\cc} (A) }$. 
Suppose that $i(x) \in \cc'(A')$, for some $A' \subseteq s^{-1}(A)$. 
Then $x = s(i(x)) \in s(\cc'(A')) \subseteq \cc(B)$, where $B := s(A')$, thanks to the continuity of $s$. 
Since $B \subseteq A$ and $x \in { \cp_{\cc} (A) }$, we get $x \in { \downarrow\!\! B }$. 
So there is some $y' \in A'$ with $x \leqslant s(y')$. 
Thus, $i(x) \leqslant i(s(y')) \leqslant y'$. 
This shows that $i(x) \in { \downarrow\!\! A' }$. 
So $i(x) \in { \cp_{\cc'} (s^{-1}(A)) }$. 

Conversely, assume that $i(x) \in { \cp_{\cc'} (s^{-1}(A)) }$. 
Suppose that $x \in \cc(B)$, for some $B \subseteq A$. 
Then $i(x) \in i(\cc(B)) \subseteq \cc'(i(B))$, thanks to the continuity of $i$. 
Since $i(B) \subseteq s^{-1}(A)$ and $i(x) \in { \cp_{\cc'} (s^{-1}(A)) }$, we get $i(x) \in { \downarrow\!\! i(B) }$. 
So there is some $y \in B$ with $i(x) \leqslant i(y)$. 
This implies $x \leqslant y$, so $x \in { \downarrow\!\! B }$. 
So $x \in { \cp_{\cc} (A) }$. 

Now, assume that $x \in { \ex_{\cc^{\uparrow}} (A) }$. 
Suppose that $i(x) \in { { \cc'(A') } \cap { A'^{\uparrow} } }$, for some $A' \subseteq s^{-1}(A)$. 
Then $x = s(i(x)) \in s(\cc'(A')) \subseteq \cc(B)$, where $B := s(A')$, thanks to the continuity of $s$. 
Moreover, $x = s(i(x)) \in { B^{\uparrow} }$. 
So $x \in { { \cc(B) } \cap { B^{\uparrow} } }$. 
Since $B \subseteq A$ and $x \in { \ex_{\cc^{\uparrow}} (A) }$, we get $x \in { [B] }$. 
So there is some $y' \in A'$ with $x \sim s(y')$. 
Thus, $i(x) \sim i(s(y')) \leqslant y'$. 
Recall that $i(x) \in { A'^{\uparrow} }$, so $y' \leqslant i(x)$. 
This shows that $i(x) \sim y'$, so $i(x) \in [A']$. 
So $i(x) \in { \ex_{\cc'^{\uparrow}} (s^{-1}(A)) }$. 

Conversely, assume that $i(x) \in { \ex_{\cc'^{\uparrow}} (s^{-1}(A)) }$. 
Let $B \subseteq A$ such that $x \in { { \cc(B) } \cap { B^{\uparrow} } }$. 
Then $i(x) \in i(\cc(B)) \subseteq \cc'(i(B))$, thanks to the continuity of $i$. 
Moreover, $i(x) \in i(B)^{\uparrow}$. 
Since $i(B) \subseteq s^{-1}(A)$ and $i(x) \in { \ex_{\cc'^{\uparrow}} (s^{-1}(A)) }$, we get $i(x) \in { [i(B)] }$. 
So there is some $y \in B$ with $i(x) \sim i(y)$. 
This implies $x \sim y$, so $x \in [B]$. 
So $x \in { \ex_{\cc^{\uparrow}} (A) }$. 
\end{proof}


\section{Extreme points in qosets}\label{sec:extr}

In this section, we examine some notions of extremality for elements of a qoset, and highlight links between them and their underlying (pre)closure operators. 
We do not attempt to review here all notions of extremality developed in the poset literature; rather, we  emphasize a new notion of extremality, namely that of \textit{relatively-maximal element}. 
We shall see that every relatively-maximal element is irreducible, while the converse does not hold in general. 
Relatively-maximal elements will reveal their importance in Section~\ref{sec:km} with our Representation Theorem I (Theorem~\ref{thm:rep1}), which has the notable consequence that every continuous directed-complete poset is inf-generated by the subset of its relatively-maximal elements. 

\begin{proposition}\label{prop:max}
Let $P$ be a qoset, and let $x \in P$. 
Then the following conditions are equivalent:
\begin{enumerate}
  \item\label{prop:max1} $x$ is maximal in $P$;
  \item\label{prop:max2} $x \in \ex_{\downarrow \cdot} P$. 
\end{enumerate}
\end{proposition}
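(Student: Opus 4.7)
The plan is to unpack both definitions and observe that they coincide verbatim.

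First, I would recall from Section~\ref{sub:max} that $x$ is maximal in $P$ if and only if $x \in P$ and, for every $y \in P$, $x \leqslant y$ implies $x \sim y$, i.e.\ $y \in [x]$. Next, I would unfold the extreme-point condition for the Alexandrov closure operator: $x \in \ex_{\downarrow \cdot} P$ means $x \notin { \downarrow\!\!(P \setminus [x]) }$, which by the explicit formula for $\downarrow\!\! \cdot$ is equivalent to saying that there is no $y \in { P \setminus [x] }$ such that $x \leqslant y$.

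The final step is to note that this last statement is exactly the contrapositive of the maximality condition: the nonexistence of $y \in P$ with $x \leqslant y$ and $y \notin [x]$ is the same as the assertion that $x \leqslant y$ forces $y \in [x]$ for all $y \in P$. Thus \eqref{prop:max1} $\Leftrightarrow$ \eqref{prop:max2}.

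There is no real obstacle here; the proof is a direct application of the definition of $\cc$-extreme point (Corollary~\ref{coro:extremes}) to the particular closure operator $\cc = { \downarrow\!\! \cdot }$, which is compatible in the sense required so that $[x]$ plays its intended role. One could alternatively cite Corollary~\ref{coro:extremes} with $A := P$ (which is $\downarrow\!\! \cdot$-closed) to get the equivalence with $P \setminus [x]$ being $\downarrow\!\! \cdot$-closed, and then observe that this lower-set condition is precisely maximality of $x$.
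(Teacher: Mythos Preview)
Your proposal is correct and follows essentially the same approach as the paper: both unfold the definition $x \in \ex_{\downarrow \cdot} P \Leftrightarrow x \notin \downarrow\!\!(P \setminus [x])$ and observe that this is precisely the negation of the existence of some $y \not\sim x$ with $x \leqslant y$, i.e.\ the maximality of $x$. The paper's version is just the one-line contrapositive of what you wrote.
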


\begin{proof}
We have $x \in { \downarrow\!\! (P \setminus [x]) }$ if and only if $x \leqslant y$ for some $y \not\sim x$, if and only if $x$ is not maximal in $P$. 
\end{proof}

\begin{proposition}\label{prop:irr}
Let $P$ be a qoset, and let $x \in P$. 
Then the following conditions are equivalent:
\begin{enumerate}
  \item\label{prop:irr1} $x$ is irreducible;
  \item\label{prop:irr2} $x \in \ex_{\langle \cdot \rangle_{\mathrsfs{H}}} P$, 
\end{enumerate}
where $\mathrsfs{H}$ denotes the collection of inf-closed subsets of $P$. 
\end{proposition}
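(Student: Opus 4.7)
The plan is to follow the reasoning outlined in Example~\ref{ex:infconvex4}. Set $\cc := \langle \cdot \rangle_{\mathrsfs{U}}$, with $\mathrsfs{U}$ the collection of upper, inf-closed subsets of $P$. By Example~\ref{ex:infconvex2}, $\langle \cdot \rangle_{\mathrsfs{H}} = \cc^{\downarrow}$, so it suffices to show that $x$ is irreducible if and only if $x \in \ex_{\cc^{\downarrow}} P$.

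The strategy is to apply Proposition~\ref{prop:extrcharac} to the enriched qoset obtained by reversing the order, namely $(P, \geqslant, \cc)$. This is indeed an enriched qoset: every $\cc$-closed subset lies in $\mathrsfs{U}$, hence is upper in $(P, \leqslant)$ and therefore lower in $(P, \geqslant)$, which exactly means that $\cc$ is right-absorbing for $\geqslant$. Under this convention, the convolution $\cc^{\uparrow}$ computed in $(P, \geqslant, \cc)$ equals $\cc * {\downarrow\!\! \cdot} = \cc^{\downarrow}$ in our original notation, so $\ex_{\cc^{\uparrow}} P$ in $(P, \geqslant)$ coincides with $\ex_{\cc^{\downarrow}} P$ in $(P, \leqslant)$.

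To invoke the full chain of equivalences of Proposition~\ref{prop:extrcharac} — in particular condition~\eqref{prop:extrcharac6} — I need $\cc$ to be finitary and to separate points for $(P, \geqslant)$. Finitariness is recorded in Example~\ref{ex:infconvex2}. Separation in $(P, \geqslant)$ amounts to: whenever $y \not\leqslant x$ in $(P, \leqslant)$, there is a $\cc$-closed set containing $y$ but not $x$; this holds because $\uparrow\!\! y$ is upper and inf-closed, hence belongs to $\mathrsfs{U}$, and witnesses the separation. With these hypotheses, condition~\eqref{prop:extrcharac6} applied in $(P, \geqslant, \cc)$ and transcribed back to $(P, \leqslant)$ reads: $x \in \ex_{\cc^{\downarrow}} P$ if and only if, for every finite $F \subseteq P$ admitting a $\cc$-inf, $x \in F^{\wedge}$ implies $x \in [F]$.

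The final step is to remove the qualifier ``with a $\cc$-inf''. Since $\cc(F)$ is inf-closed and contains $F$, every inf of $F$ automatically lies in $\cc(F)$; moreover, whenever $x \in F^{\wedge}$, the subset $F$ already has an inf, namely $x$, so the restriction is vacuous within the implication. What remains is precisely the definition of irreducibility given in Example~\ref{ex:infconvex4}, yielding the stated equivalence. The main subtlety I expect to navigate is orientation bookkeeping: keeping track of when upper/lower, sup/inf, and separation refer to $\leqslant$ or to $\geqslant$, and ensuring Proposition~\ref{prop:extrcharac} is applied to the correct enriched qoset $(P, \geqslant, \cc)$ even though the conclusion is phrased on $(P, \leqslant)$.
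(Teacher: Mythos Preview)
Your proposal is correct and follows exactly the route the paper takes: the paper's proof is a one-line reference to Example~\ref{ex:infconvex4}, which identifies $\langle \cdot \rangle_{\mathrsfs{H}} = \langle \cdot \rangle_{\mathrsfs{U}}^{\downarrow}$, notes that $\langle \cdot \rangle_{\mathrsfs{U}}$ is finitary and dually separates points, and then invokes Proposition~\ref{prop:extrcharac} (taken dually) so that condition~\eqref{prop:extrcharac6} becomes the definition of irreducibility. Your write-up simply unpacks this with more detail on the orientation bookkeeping and on why the ``with a $\cc$-inf'' qualifier is vacuous, which the paper leaves implicit.
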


\begin{proof}
See Example~\ref{ex:infconvex4}. 
\end{proof}

An element $x$ of a qoset $P$ is \textit{relatively-maximal} if there is some filter $T$ of $P$ such that $x \in { \Max (P \setminus T) }$. 
We denote by $\rMax P$ the set of relatively-maximal elements of $P$. 
We will see with Corollary~\ref{coro:irr} that every relatively-maximal element is irreducible. 
Following Jiang and Xu \cite[Definition~2.1]{Jiang08}, an order-ideal of a qoset $P$ is \textit{maximal relative to a filter} if there exists a filter $T$ of $P$ such that this order-ideal is maximal among the order-ideals that do not intersect $T$. 

\begin{proposition}\label{prop:spe}
Let $P$ be a qoset, and let $x \in P$. 
Then the following conditions are equivalent:
\begin{enumerate}
  \item\label{prop:spe1} $x$ is relatively-maximal;
  \item\label{prop:spe3} $\downarrow\!\! x$ is a maximal order-ideal relative to a filter; 
  \item\label{prop:spe2} $x \in \ex_{\mathfrak{t}^{\downarrow}} P$, 
\end{enumerate}
where $\mathfrak{t} := { \langle \cdot \rangle_{\mathrsfs{T}} }$ and $\mathrsfs{T}$ denotes the collection of filters of $P$. 
\end{proposition}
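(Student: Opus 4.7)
The plan is to obtain the equivalence (\ref{prop:spe1}) $\Leftrightarrow$ (\ref{prop:spe2}) by a direct application of Proposition~\ref{prop:cpconvol}, and to check (\ref{prop:spe1}) $\Leftrightarrow$ (\ref{prop:spe3}) by unfolding the definitions.

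For (\ref{prop:spe1}) $\Leftrightarrow$ (\ref{prop:spe2}), I will apply Proposition~\ref{prop:cpconvol} to the enriched qoset $(P, \leqslant, {\downarrow\!\! \cdot})$ with $\sss := \mathfrak{t}$ and $A := P$, using the commutativity of the convolution to get $\mathfrak{t}^{\downarrow} = \mathfrak{t} \ast {\downarrow\!\! \cdot} = {\downarrow\!\! \cdot} \ast \mathfrak{t}$. The preparatory checks are: $\mathfrak{t}$ is a closure operator (Example~\ref{ex:clogene}); it absorbs $\sim$, because every filter is an upper subset and therefore every intersection of filters---that is, every $\mathfrak{t}$-closed subset---is $\sim$-saturated; and it is preinductive, because $\mathrsfs{T}$ is stable under directed unions, so Example~\ref{ex:clogene2} applies and additionally identifies the $\mathfrak{t}$-copoints of $x$ as the filters of $P$ that are maximal among those not containing $x$. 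Proposition~\ref{prop:cpconvol}, together with its preinductive refinement, then yields that $x \in \ex_{\mathfrak{t}^{\downarrow}} P$ if and only if $x \in \ex_{\downarrow \cdot}(P \setminus T)$ for some filter $T$ of $P$ (every filter being $\mathfrak{t}$-closed on the one hand, and every $\mathfrak{t}$-copoint of $x$ being a filter on the other). Since the argument in Proposition~\ref{prop:max} extends verbatim to any subset, giving $\ex_{\downarrow \cdot} B = \Max B$ for all $B \subseteq P$, this amounts to the existence of a filter $T$ such that $x \in \Max(P \setminus T)$, which is precisely the definition of $x$ being relatively-maximal.

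For (\ref{prop:spe1}) $\Rightarrow$ (\ref{prop:spe3}), I would start from a filter $T$ such that $x \in \Max(P \setminus T)$. Then $\downarrow\!\! x$ is an order-ideal disjoint from $T$ (since $T$ is upper); if $I$ is any order-ideal with $\downarrow\!\! x \subseteq I$ and $I \cap T = \emptyset$, then for any $y \in I$ the directedness of $I$ produces $z \in I$ above both $x$ and $y$, and the conditions $z \in P \setminus T$ and $x \leqslant z$ combined with the maximality of $x$ in $P \setminus T$ force $z \sim x$, whence $y \leqslant x$ and $I = \downarrow\!\! x$. For the converse (\ref{prop:spe3}) $\Rightarrow$ (\ref{prop:spe1}), starting from a filter $T$ with respect to which $\downarrow\!\! x$ is a maximal order-ideal, first $x \notin T$; if $y \in P \setminus T$ satisfies $x \leqslant y$, then $\downarrow\!\! y \supseteq \downarrow\!\! x$ is an order-ideal still disjoint from $T$ (because $T$ is upper), so maximality forces $\downarrow\!\! y = \downarrow\!\! x$ and hence $x \sim y$, which shows that $x \in \Max(P \setminus T)$.

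The main obstacle is the verification that $\mathfrak{t}$ satisfies the hypotheses of Proposition~\ref{prop:cpconvol}, especially that it absorbs $\sim$ and is preinductive with copoints in $\mathrsfs{T}$; these facts are however immediate from the twin observations that every filter is an upper subset and that $\mathrsfs{T}$ is closed under directed unions. Once the machinery of Proposition~\ref{prop:cpconvol} is brought to bear, both equivalences reduce to transparent manipulations with lower subsets and filters.
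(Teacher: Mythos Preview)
Your proof is correct and follows essentially the same approach as the paper. For (\ref{prop:spe1}) $\Leftrightarrow$ (\ref{prop:spe2}) the paper simply invokes Proposition~\ref{prop:cpconvol} without spelling out the verification of its hypotheses, whereas you make explicit the choice $\cc = {\downarrow\!\!\cdot}$, $\sss = \mathfrak{t}$, the commutativity of the convolution, and the preinductivity of $\mathfrak{t}$ via Example~\ref{ex:clogene2}; your arguments for (\ref{prop:spe1}) $\Leftrightarrow$ (\ref{prop:spe3}) are the same as the paper's, phrased as a direct inclusion rather than a contradiction in the forward direction.
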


\begin{proof}
\eqref{prop:spe1} $\Leftrightarrow$ \eqref{prop:spe2} is a consequence of Proposition~\ref{prop:cpconvol}. 

\eqref{prop:spe1} $\Rightarrow$ \eqref{prop:spe3}.  
Since $x$ is relatively-maximal, we have $x \in { \Max (P \setminus T) }$, for some filter $T$. 
Suppose that $\downarrow\!\! x$ is not maximal relatively to $T$. 
Then there is some order-ideal $I$ that strictly contains $\downarrow\!\! x$ and such that ${ I \cap T } = { \emptyset }$. 
Thus, there is some $i \in I$ with $i \not\leqslant x$. 
Since $I$ is directed, there is some $i' \in I$ with $x \leqslant i'$ and $i \leqslant i'$. 
This implies that $x < i'$. 
Since $x$ is maximal in $P \setminus T$, this yields $i' \in { I \cap T }$, a contradiction. 
This shows that $\downarrow\!\! x$ is a maximal order-ideal relatively to the filter $T$. 

\eqref{prop:spe3} $\Rightarrow$ \eqref{prop:spe1}.  
Assume that $\downarrow\!\! x$ is a maximal order-ideal relative to some filter $T$. 
Then $x \in { P \setminus T }$. 
Let us show that $x \in { \Max (P \setminus T) }$. 
So let $y \in { P \setminus T }$ with $x \leqslant y$. 
If ${ \downarrow\!\! y } \cap { T }$ is nonempty, there is some $t \leqslant y$ with $t \in T$; then $y \in T$ since $T$ is an upper set, a contradiction. 
So ${ \downarrow\!\! y } \cap { T }$ is empty. 
Thus, ${ \downarrow\!\! x } = { \downarrow\!\! y }$, by maximality of $\downarrow\!\! x$. 
This means that $x \sim y$. 
So $x \in { \Max (P \setminus T) }$, which proves that $x$ is relatively-maximal. 
\end{proof}

We call an element $x$ of a qoset $P$ \textit{strongly irreducible} if $\Uparrow\!\! x$ is a filter. 
This terminology will be justified later with Corollary~\ref{coro:irr}. 
Note that strongly irreducible elements were called irreducible in \cite[Definition~I-3.5]{Gierz03} and used as a generalization to posets of irreducible elements defined on a semilattice. 

\begin{proposition}\label{prop:strirr}
Let $P$ be a qoset, and let $x \in P$. 
Then the following conditions are equivalent:
\begin{enumerate}
  \item\label{prop:strirr1} $x$ is strongly irreducible;
  \item\label{prop:strirr3} ${ x \in \Max (F^{\downarrow}) } \Rightarrow { x \in [F] }$, for all finite subsets $F$ of $P$; 
  \item\label{prop:strirr2} $x \in { \ex_{\mathfrak{p}^{\downarrow\circ}} P }$, 
\end{enumerate}
where $\mathfrak{p}$ is defined as per Example~\ref{ex:ranzato1}. 
\end{proposition}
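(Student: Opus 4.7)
The plan is to prove (1) $\Leftrightarrow$ (2) by directly translating strong irreducibility into a concrete statement about $\Uparrow\!\! x$, and then (2) $\Leftrightarrow$ (3) by unpacking the explicit formula for $\mathfrak{p}^{\downarrow\circ}$. As a preliminary I would observe that $\Uparrow\!\! x$ is always an upper subset of $P$ (if $y \in \Uparrow\!\! x$ and $y \leqslant z$, then $z \leqslant x$ would give $y \leqslant z \leqslant x$, forcing $y \sim x$, a contradiction); so (1) reduces to the assertion that $\Uparrow\!\! x$ is nonempty and filtered.

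For (1) $\Rightarrow$ (2): given a finite $F$ with $x \in \Max(F^{\downarrow})$, one has $x \leqslant F$, so either some $f \in F$ satisfies $f \sim x$ (whence $x \in [F]$), or $F \subseteq \Uparrow\!\! x$. In the latter case, the filter $\Uparrow\!\! x$ (by iterated filteredness when $F \neq \emptyset$, or by nonemptiness when $F = \emptyset$) supplies $z \in \Uparrow\!\! x$ with $z \leqslant F$, contradicting the maximality of $x$ in $F^{\downarrow}$. For (2) $\Rightarrow$ (1): applying (2) with $F = \emptyset$ (for which $F^{\downarrow} = P$ and $[F] = \emptyset$) yields $\Uparrow\!\! x \neq \emptyset$; applying (2) with $F = \{y_1, y_2\} \subseteq \Uparrow\!\! x$ (so that $x \notin [F]$) forces $x \notin \Max(F^{\downarrow})$, producing some $z > x$ with $z \leqslant y_1$ and $z \leqslant y_2$, which is the filtered condition.

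For (2) $\Leftrightarrow$ (3), I would combine the identity $\mathfrak{p}^{\downarrow}(B) = \bigcup_{C \subseteq B} \Max(C^{\downarrow})$ from Example~\ref{ex:ranzato1} with Proposition~\ref{prop:ccirc} to obtain
\[
\mathfrak{p}^{\downarrow\circ}(A) = \bigcup_{F \subseteq A,\, F \text{ finite}} \Max(F^{\downarrow}).
\]
Condition (3), namely $x \notin \mathfrak{p}^{\downarrow\circ}(P \setminus [x])$, then reads: no finite $F$ with $F \cap [x] = \emptyset$ satisfies $x \in \Max(F^{\downarrow})$, which by contraposition is exactly (2). The whole argument is essentially a definition-chase; the only delicate point, easily overlooked, is the role of the empty-$F$ case of (2), which encodes the nonemptiness clause of being a filter.
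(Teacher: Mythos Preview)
Your proof is correct and follows essentially the same route as the paper. The argument for (1) $\Leftrightarrow$ (2) is identical in substance (including the crucial role of $F = \emptyset$), and for (2) $\Leftrightarrow$ (3) the paper invokes Proposition~\ref{prop:extrcharac} (dually) together with the identity $\Max(F^{\downarrow}) = \mathfrak{p}^{\circ}(F) \cap F^{\downarrow}$, whereas you compute the explicit formula $\mathfrak{p}^{\downarrow\circ}(A) = \bigcup_{F \subseteq A \text{ finite}} \Max(F^{\downarrow})$ directly from Proposition~\ref{prop:ccirc}; the two amount to the same definition-chase.
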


\begin{proof}
Recall that $\mathfrak{p}^{\downarrow\circ}$ is the preclosure operator $(\mathfrak{p}^{\circ})^{\downarrow} = (\mathfrak{p}^{\downarrow})^{\circ}$, see Theorem~\ref{thm:convex} and Remark~\ref{rk:notation} (taken dually). 

\eqref{prop:strirr1} $\Rightarrow$ \eqref{prop:strirr3}. 
Let $F$ be a finite subset of $P$ with $x \in \Max (F^{\downarrow})$, and suppose that $x \notin [F]$. 
Then $F \subseteq { \Uparrow\!\! x }$. 
This latter subset is filtered by \eqref{prop:strirr1}, so $F \subseteq { \uparrow\!\! y } \subseteq { \Uparrow\!\! x }$, for some $y \in P$. 
This implies that $y \in F^{\downarrow}$ and $x < y$, which contradicts $x \in \Max (F^{\downarrow})$. 
Thus, $x \in [F]$, as required. 

\eqref{prop:strirr3} $\Rightarrow$ \eqref{prop:strirr1}. 
If $\Uparrow\!\! x$ is empty, then $x \in { \Max P }$, which can be written as $x \in { \Max (F^{\downarrow}) }$ with $F := \emptyset$. 
This yields $x \in [\emptyset]$, a contradiction. 
So $\Uparrow\!\! x$ is nonempty. 
Let $F$ be a finite subset of $\Uparrow\!\! x$. 
If $x \in { \Max F^{\downarrow} }$, then $x \in [F]$, a contradiction. 
So $x \notin { \Max F^{\downarrow} }$, i.e.\ there exists some $y \in { F^{\downarrow} }$ with $y > x$. 
This shows that $\Uparrow\!\! x$ is a filter, so $x$ is strongly irreducible.

\eqref{prop:strirr3} $\Leftrightarrow$ \eqref{prop:strirr2}. 
Combine Proposition~\ref{prop:extrcharac} (taken dually) and the fact that ${ \Max (F^{\downarrow}) } = { { \mathfrak{p}(F) } \cap { F^{\downarrow} } } = { { \mathfrak{p}^{\circ}(F) } \cap { F^{\downarrow} } }$, for all finite subsets $F$. 
\end{proof}


A \textit{semilattice} (resp.\ \textit{semilattice with a top}) is a poset in which every nonempty finite subset (resp.\  every finite subset) admits an inf. 
A qoset is \textit{Riesz} if, whenever $F \leqslant F'$ for finite subsets $F$, $F'$, there is some $x$ such that $F \leqslant x \leqslant F'$. 
Note that this is a self-dual property, in the sense that a qoset $(P, \leqslant)$ is Riesz if and only if its dual $(P, \geqslant)$ is Riesz. 
Moreover, every semilattice with a top is a Riesz poset. 

\begin{theorem}\label{thm:closures}
Let $P$ be a qoset. 
With the notations of the propositions above, we have
\begin{align}\label{eq:closures}
{ \langle A \rangle_{\mathrsfs{H}} } = { \langle A \rangle_{\mathrsfs{T}}^{\downarrow\circ} } \subseteq { \langle A \rangle_{\mathrsfs{T}}^{\downarrow} } \subseteq { \mathfrak{p}^{\downarrow}(A) },
\end{align}
for all subsets $A$ of $P$. 
Moreover, 
\begin{enumerate}
  \item\label{thm:closures0} if $P$ is Riesz, then $\mathfrak{p}^{\downarrow\circ}$ is idempotent, and ${ \langle \cdot \rangle_{\mathrsfs{H}} } = { \langle \cdot \rangle_{\mathrsfs{T}}^{\downarrow\circ} } = { \mathfrak{p}^{\downarrow\circ} }$;
  \item\label{thm:closures1} if $P$ is a semilattice, or if every principal filter of $P$ is finite, then $\langle \cdot \rangle_{\mathrsfs{T}}^{\downarrow}$ is finitary, and ${ \langle \cdot \rangle_{\mathrsfs{H}} } = { \langle \cdot \rangle_{\mathrsfs{T}}^{\downarrow} }$;
  \item\label{thm:closures2} if $P$ is a semilattice with a top, then $\langle \cdot \rangle_{\mathrsfs{T}}^{\downarrow}$ is finitary, and ${ \langle \cdot \rangle_{\mathrsfs{H}} } = { \langle \cdot \rangle_{\mathrsfs{T}}^{\downarrow} } = { \mathfrak{p}^{\downarrow\circ} }$. 
\end{enumerate}
\end{theorem}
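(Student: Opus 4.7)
The plan is to reduce the entire theorem to one identification: for every finite $B \subseteq P$,
\[
\langle B \rangle_{\mathrsfs{T}} \cap B^{\downarrow} = B^{\wedge}.
\]
For $\subseteq$, an element $x$ of the left side lies in every filter containing $B$; in particular, for each $y \in B^{\downarrow}$ the principal filter $\uparrow\!\! y$ contains $B$, so $y \leqslant x$, which combined with $x \leqslant B$ forces $x \in B^{\wedge}$. For $\supseteq$, the filtered property of a filter $T \supseteq B$, applied finitely many times (this is where finiteness of $B$ enters), produces a common lower bound $z \in T$ of $B$; any inf $x$ of $B$ then satisfies $z \leqslant x$, so $x \in T$ since $T$ is upper. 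The corner case $B = \emptyset$ reduces to identifying the intersection of all filters with the set of greatest elements of $P$, which agrees with $\emptyset^{\wedge}$ (any point in every principal filter $\uparrow\!\! y$ dominates every $y$).

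Assembling the chain: combining the $\circ$-formula of Proposition~\ref{prop:ccirc} with the inner formula of Theorem~\ref{thm:charac} (taken dually) and the key lemma yields
\[
\langle A \rangle_{\mathrsfs{T}}^{\downarrow\circ} = \bigcup_{\substack{B \subseteq A \\ B \text{ finite}}} B^{\wedge}.
\]
This union contains $A$ (via $B = \{a\}$) and is inf-closed: if $g_i \in B_i^{\wedge}$ for $i = 1, \ldots, k$ with each $B_i$ finite, a routine double-inclusion gives $\{g_1, \ldots, g_k\}^{\wedge} = (B_1 \cup \cdots \cup B_k)^{\wedge}$, keeping the inf inside the union. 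Hence this union equals $\langle A \rangle_{\mathrsfs{H}}$, which is the first equality of \eqref{eq:closures}. The middle inclusion is $\cc^{\circ} \leqslant \cc$ from Proposition~\ref{prop:ccirc}. The rightmost inclusion reduces termwise to $\langle B \rangle_{\mathrsfs{T}} \cap B^{\downarrow} \subseteq \mathfrak{p}(B) \cap B^{\downarrow}$, which factors through $B^{\wedge} \subseteq \mathfrak{p}(B)$: an inf of $B$ cannot be strictly below any lower bound of $B$.

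For \eqref{thm:closures0}, the Riesz property makes $B^{\downarrow}$ upward directed for every finite $B$ (apply Riesz with $F = \{y_1, y_2\} \leqslant B = F'$), so each maximal lower bound of $B$ is in fact the greatest, giving $\Max(B^{\downarrow}) = B^{\wedge}$; the case $B = \emptyset$ is handled similarly with $F' = \emptyset$, identifying $\Max(P)$ with the set of greatest elements of $P$. Thus $\mathfrak{p}^{\downarrow}(F) = \bigcup_{B \subseteq F} B^{\wedge}$ for finite $F$, so $\mathfrak{p}^{\downarrow\circ} = \langle \cdot \rangle_{\mathrsfs{T}}^{\downarrow\circ} = \langle \cdot \rangle_{\mathrsfs{H}}$, which is a closure operator, whence idempotency. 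For \eqref{thm:closures1}, each hypothesis yields finitariness of $\langle \cdot \rangle_{\mathrsfs{T}}^{\downarrow}$ via Theorem~\ref{thm:convex}: finite principal filters by the dual of case~\eqref{thm:convex1}; and in a semilattice, filters coincide with upper, inf-closed subsets, a collection stable under directed unions, so $\langle \cdot \rangle_{\mathrsfs{T}}$ is already finitary by Example~\ref{ex:clogene0} and then $\langle \cdot \rangle_{\mathrsfs{T}}^{\downarrow}$ is finitary by case~\eqref{thm:convex2}. Finitariness collapses $\langle \cdot \rangle_{\mathrsfs{T}}^{\downarrow}$ with $\langle \cdot \rangle_{\mathrsfs{T}}^{\downarrow\circ}$, so the first equality of the chain gives $\langle \cdot \rangle_{\mathrsfs{H}} = \langle \cdot \rangle_{\mathrsfs{T}}^{\downarrow}$. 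Finally, \eqref{thm:closures2} follows by combining \eqref{thm:closures0} and \eqref{thm:closures1}, since a semilattice with a top is Riesz as noted above.

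The main obstacle is Step~1, in particular the corner case $B = \emptyset$ where one must correctly identify the intersection of all filters with the (possibly empty) set of greatest elements of $P$; once this identification is secured, the remaining arguments are formal bookkeeping among the general formulas supplied by Sections~\ref{sec:closures}--\ref{sec:specialcase} and the structural inputs of Riesz qosets and semilattices.
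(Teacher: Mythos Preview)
Your proof is correct and is organized differently from the paper's. The paper obtains the first equality of~\eqref{eq:closures} by routing through the auxiliary collection $\mathrsfs{U}$ of upper inf-closed subsets: it invokes the earlier identity $\langle \cdot \rangle_{\mathrsfs{H}} = \langle \cdot \rangle_{\mathrsfs{U}}^{\downarrow}$ from Example~\ref{ex:infconvex2} and the inclusion $\mathrsfs{T} \subseteq \mathrsfs{U}$ for one inequality, and the fact that $\mathrsfs{T}$ dually separates points (via Theorem~\ref{thm:convex}) for the other. You instead isolate the single identity $\langle B \rangle_{\mathrsfs{T}} \cap B^{\downarrow} = B^{\wedge}$ for finite $B$ and deduce $\langle A \rangle_{\mathrsfs{T}}^{\downarrow\circ} = \bigcup_{B \subseteq A,\ B \text{ finite}} B^{\wedge}$, which you identify with $\langle A \rangle_{\mathrsfs{H}}$ by exhibiting it directly as the smallest inf-closed set containing $A$; this is more self-contained and bypasses $\mathrsfs{U}$ entirely. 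For the rightmost inclusion the paper argues at the operator level ($\langle \cdot \rangle_{\mathrsfs{T}} \leqslant \mathfrak{p}$, then convolute with $\downarrow\!\!\cdot$), whereas you argue termwise via $\langle B \rangle_{\mathrsfs{T}} \cap B^{\downarrow} \subseteq B^{\wedge} \subseteq \Max(B^{\downarrow})$; both are valid. Parts~\eqref{thm:closures0}--\eqref{thm:closures2} are argued essentially as in the paper.

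One minor slip in your treatment of~\eqref{thm:closures1}: in a semilattice without top the empty set is upper and inf-closed but not a filter, so $\mathrsfs{T}$ and $\mathrsfs{U}$ do not literally coincide, and $\mathrsfs{T}$ is not stable under arbitrary intersections, so Example~\ref{ex:clogene0} does not apply to $\mathrsfs{T}$ as stated. The fix is painless: adding or removing $\emptyset$ does not change the generated closure operator, so $\langle \cdot \rangle_{\mathrsfs{T}} = \langle \cdot \rangle_{\mathrsfs{U}}$, and Example~\ref{ex:clogene0} applied to $\mathrsfs{U}$ gives the desired finitariness. The paper sidesteps this by simply declaring $\langle \cdot \rangle_{\mathrsfs{T}}$ ``obviously finitary'' in a semilattice.
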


\begin{proof}
We write $\mathrsfs{U}$ for the collection of upper, inf-closed subsets. 
Obviously, every filter is an upper, inf-closed subset, i.e.\ $\mathrsfs{T} \subseteq \mathrsfs{U}$. 
Thus, ${ \langle \cdot \rangle_{\mathrsfs{U}}^{\downarrow} } \leqslant { \langle \cdot \rangle_{\mathrsfs{T}}^{\downarrow} }$. 
Moreover, we know that ${ \langle \cdot \rangle_{\mathrsfs{H}} } = { \langle \cdot \rangle_{\mathrsfs{U}}^{\downarrow} }$, see Example~\ref{ex:infconvex2}. 
So ${ \langle \cdot \rangle_{\mathrsfs{H}} } \leqslant { \langle \cdot \rangle_{\mathrsfs{T}}^{\downarrow} }$. 
Since $\langle \cdot \rangle_{\mathrsfs{H}}$ is finitary, we deduce that 
\[
{ \langle \cdot \rangle_{\mathrsfs{H}} } \leqslant { \langle \cdot \rangle_{\mathrsfs{T}}^{\downarrow\circ} } \leqslant { \langle \cdot \rangle_{\mathrsfs{T}}^{\downarrow} }. 
\]

Let us show that ${ \langle \cdot \rangle_{\mathrsfs{T}}^{\downarrow\circ} } \leqslant { \langle \cdot \rangle_{\mathrsfs{H}} }$. 
It suffices to prove that ${ \langle F \rangle_{\mathrsfs{T}}^{\downarrow} } \subseteq { \langle F \rangle_{\mathrsfs{H}} }$, for all finite subsets $F$. 
So let $F$ be a finite subset and $x \in { \langle F \rangle_{\mathrsfs{T}}^{\downarrow} }$. 
Since $\mathrsfs{T}$ dually separates points, we have $x \in F_1^{\wedge}$, for some finite subset $F_1 \subseteq F$, by Equation~\eqref{eq:separating} of Theorem~\ref{thm:convex} (taken dually). 
The subset $\langle F_1 \rangle_{\mathrsfs{H}}$ is inf-closed, so it contains $F_1^{\wedge}$. 
This yields $x \in { \langle F \rangle_{\mathrsfs{H}} }$, as required. 

To finish the proof of Assertion~\eqref{eq:closures}, we now show that ${ \langle \cdot \rangle_{\mathrsfs{T}} } \leqslant { \mathfrak{p} }$, which will imply ${ \langle \cdot \rangle_{\mathrsfs{T}}^{\downarrow} } \leqslant { \mathfrak{p}^{\downarrow} }$. 
So let $x \in { \langle A \rangle_{\mathrsfs{T}} }$. 
If $y \in A^{\downarrow}$, then $A \subseteq { \uparrow\!\! y }$. 
Since $\uparrow\!\! y$ is a filter containing $A$, we have $x \in { \uparrow\!\! y }$. 
Thus, $x \in { P \setminus { \Downarrow\!\! y } }$. 
This shows that $x \in { \bigcap_{y \in A^{\downarrow}} { P \setminus { \Downarrow\!\! y } } } = { \mathfrak{p}(A) }$, as required. 


\eqref{thm:closures0}. 
To prove the assertion, it suffices to show that ${ \mathfrak{p}^{\downarrow}(F) } \subseteq { \langle F \rangle_{\mathrsfs{T}}^{\downarrow} }$, for all finite subsets $F$. 
So let $F$ be a finite subset and $x \in { \Max F^{\downarrow} }$. 
Let $\ell$ be a lower bound of $F$. 
Then $\{x, \ell \} \leqslant F$, so by the Riesz property there is some $y$ such that $\{ x, \ell \} \leqslant y \leqslant F$. 
By maximality of $x$, we deduce that $x \sim y$, which implies $\ell \leqslant x$. 
This proves that $x \in F^{\wedge}$. 
By Theorem~\ref{thm:convex}, $x \in { \langle F \rangle_{\mathrsfs{T}}^{\downarrow} }$. 
With Equation~\eqref{eq:pdown} of Example~\ref{ex:ranzato0} given for $\mathfrak{p}^{\downarrow}$, we deduce that ${ \mathfrak{p}^{\downarrow}(F) } \subseteq { \langle F \rangle_{\mathrsfs{T}}^{\downarrow} }$, as required. 

\eqref{thm:closures1}.
If $P$ is a semilattice, then $\langle \cdot \rangle_{\mathrsfs{T}}$ is obviously finitary, so $\langle \cdot \rangle_{\mathrsfs{T}}^{\downarrow}$ is finitary by Theorem~\ref{thm:convex}. 
Now, assume that all principal filters of $P$ are finite, and let $x \in { \langle A \rangle_{\mathrsfs{T}}^{\downarrow} }$. 
By Theorem~\ref{thm:charac}, there is some $B \subseteq A$ with $x \in { { \langle B \rangle_{\mathrsfs{T}} } \cap B^{\downarrow} }$. 
Then $B \subseteq { \uparrow\!\! x }$, and the latter set is finite by assumption, so $B$ is finite. 
This shows again that $\langle \cdot \rangle_{\mathrsfs{T}}^{\downarrow}$ is finitary. 
Thus, in both cases, $\langle \cdot \rangle_{\mathrsfs{T}}^{\downarrow}$ and $\langle \cdot \rangle_{\mathrsfs{H}}$ coincide. 

\eqref{thm:closures2}.
As a semilattice with a top, $P$ is a Riesz poset, and we have ${ \mathrsfs{T} } = { \mathrsfs{U} }$. 
Thus, ${ \langle \cdot \rangle_{\mathrsfs{T}}^{\downarrow} } = { \langle \cdot \rangle_{\mathrsfs{U}}^{\downarrow} } = { \langle \cdot \rangle_{\mathrsfs{H}} } = { \mathfrak{p}^{\downarrow\circ} }$, and $\langle \cdot \rangle_{\mathrsfs{T}}^{\downarrow}$ is finitary. 
\end{proof}

\begin{remark}\label{rk:sc1}
Another sufficient condition for $\langle \cdot \rangle_{\mathrsfs{T}}^{\downarrow}$ to be finitary (and for $\langle \cdot \rangle_{\mathrsfs{T}}^{\downarrow}$ and $\langle \cdot \rangle_{\mathrsfs{H}}$ to coincide) is that every element of $P$ has finitely many $\mathrsfs{T}$-copoints, see Theorem~\ref{thm:finitecopoints}. 
\end{remark}

The following result uses Theorem~\ref{thm:closures} to clarify the relation between irreducible, relatively-maximal, and strongly irreducible elements. 

\begin{corollary}\label{coro:irr}
Let $P$ be a qoset, and let $x \in P$. 
Consider the following assertions: 
\begin{enumerate}
	\item\label{irr4} $x$ is strongly irreducible in $P$; 
	\item\label{irr5bis} $x$ is relatively-maximal in $P$; 
	\item\label{irr3} $x$ is irreducible in $P$. 
\end{enumerate}
Then 
\eqref{irr4} $\Rightarrow$ \eqref{irr5bis} $\Rightarrow$ \eqref{irr3}. 
Moreover, 
\begin{itemize}
  \item if $P$ is Riesz, then \eqref{irr4} $\Leftrightarrow$ \eqref{irr5bis} $\Leftrightarrow$ \eqref{irr3}; 
  \item if $P$ is a semilattice, or if every principal filter of $P$ is finite, then \eqref{irr4} $\Rightarrow$ \eqref{irr5bis} $\Leftrightarrow$ \eqref{irr3}. 
\end{itemize}
\end{corollary}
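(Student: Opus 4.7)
My plan is to translate each of the three notions of extremality into the language of extreme points via Propositions~\ref{prop:irr}, \ref{prop:spe}, and \ref{prop:strirr}, and then exploit the comparison of closure operators established in Theorem~\ref{thm:closures}. More precisely, I would record once and for all that irreducibility, relative-maximality, and strong irreducibility of $x$ amount respectively to $x \in \ex_{\langle\cdot\rangle_{\mathrsfs{H}}}P$, $x \in \ex_{\langle\cdot\rangle_{\mathrsfs{T}}^{\downarrow}}P$, and $x \in \ex_{\mathfrak{p}^{\downarrow\circ}}P$. The general principle I would repeatedly invoke is that if $\cc \leqslant \sss$ as preclosure operators, then $\ex_{\sss} A \subseteq \ex_{\cc} A$, since $\cc(A\setminus[x]) \subseteq \sss(A\setminus[x])$.

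For the implication \eqref{irr5bis}$\Rightarrow$\eqref{irr3}, I would invoke the inequality ${ \langle \cdot \rangle_{\mathrsfs{H}} } \leqslant { \langle \cdot \rangle_{\mathrsfs{T}}^{\downarrow} }$ from Equation~\eqref{eq:closures} of Theorem~\ref{thm:closures}, which instantly gives ${ \ex_{\langle\cdot\rangle_{\mathrsfs{T}}^{\downarrow}} P } \subseteq { \ex_{\langle\cdot\rangle_{\mathrsfs{H}}} P }$. For \eqref{irr4}$\Rightarrow$\eqref{irr5bis} the cleanest route is direct rather than through closure comparisons (the operators $\mathfrak{p}^{\downarrow\circ}$ and $\langle\cdot\rangle_{\mathrsfs{T}}^{\downarrow}$ are not comparable \emph{a priori}): assuming $x$ strongly irreducible, I would take $T := \Uparrow\!\! x$, which is a filter by definition, and check that $x \in \Max(P \setminus T)$. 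Indeed, $x \notin T$ trivially, and for any $y \in P \setminus T$ with $x \leqslant y$, the fact $y \not> x$ combined with $y \geqslant x$ forces $x \sim y$, as required by Proposition~\ref{prop:spe}\eqref{prop:spe1}.

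It remains to handle the reverse implications under the specified hypotheses. Under the Riesz assumption, Theorem~\ref{thm:closures}\eqref{thm:closures0} gives ${ \langle\cdot\rangle_{\mathrsfs{H}} } = { \mathfrak{p}^{\downarrow\circ} }$, hence ${ \ex_{\langle\cdot\rangle_{\mathrsfs{H}}} P } = { \ex_{\mathfrak{p}^{\downarrow\circ}} P }$, so \eqref{irr3}$\Rightarrow$\eqref{irr4}; chaining with the two implications already proved yields the full three-way equivalence. Under the semilattice assumption or the assumption that every principal filter of $P$ is finite, Theorem~\ref{thm:closures}\eqref{thm:closures1} gives ${ \langle\cdot\rangle_{\mathrsfs{H}} } = { \langle\cdot\rangle_{\mathrsfs{T}}^{\downarrow} }$, hence ${ \ex_{\langle\cdot\rangle_{\mathrsfs{H}}} P } = { \ex_{\langle\cdot\rangle_{\mathrsfs{T}}^{\downarrow}} P }$, i.e.\ \eqref{irr3}$\Leftrightarrow$\eqref{irr5bis}. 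There is no real obstacle here beyond bookkeeping; the only step needing a standalone argument (rather than a one-line appeal to the ordering of closure operators from Theorem~\ref{thm:closures}) is the direct verification \eqref{irr4}$\Rightarrow$\eqref{irr5bis}, and even that is essentially immediate once $T$ is chosen to be $\Uparrow\!\! x$.
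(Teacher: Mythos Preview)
Your proposal is correct and follows essentially the same route as the paper: both argue \eqref{irr4}$\Rightarrow$\eqref{irr5bis} directly via the filter $T=\Uparrow\!\! x$, derive \eqref{irr5bis}$\Rightarrow$\eqref{irr3} from the inclusion ${ \langle \cdot \rangle_{\mathrsfs{H}} } \leqslant { \langle \cdot \rangle_{\mathrsfs{T}}^{\downarrow} }$ of Theorem~\ref{thm:closures}, and obtain the reverse implications from the coincidences in Theorem~\ref{thm:closures}\eqref{thm:closures0} and \eqref{thm:closures1}. Your explicit formulation of the principle ``$\cc \leqslant \sss$ implies $\ex_{\sss} A \subseteq \ex_{\cc} A$'' merely makes visible what the paper uses tacitly.
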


\begin{proof}
\eqref{irr4} $\Rightarrow$ \eqref{irr5bis}. 
Let $T$ denote the filter $\Uparrow\!\! x$. 
Then $x$ is easily seen to be maximal in $P \setminus T$, so $x$ is relatively-maximal. 

\eqref{irr5bis} $\Rightarrow$ \eqref{irr3} is a direct consequence of the fact that ${ \langle A \rangle_{\mathrsfs{H}} } \subseteq { \langle A \rangle_{\mathrsfs{T}}^{\downarrow} }$, for all subsets $A$. 


\eqref{irr3} $\Rightarrow$ \eqref{irr4} if $P$ is Riesz. 
This follows directly from the fact that $\langle \cdot \rangle_{\mathrsfs{H}}$ and $\mathfrak{p}^{\downarrow\circ}$ coincide. 

\eqref{irr3} $\Rightarrow$ \eqref{irr5bis} if $P$ is a semilattice, or if all principal filters of $P$ are finite. 
This follows directly from the fact that $\langle \cdot \rangle_{\mathrsfs{H}}$ and $\langle \cdot \rangle_{\mathrsfs{T}}^{\downarrow}$ coincide. 
%
\end{proof}

\begin{remark}
The latter result shows in particular that, if $(P, \leqslant)$ is a semilattice with a top, then irreducibility, relative maximality, and strong irreducibility are equivalent properties.  
\end{remark}

\begin{remark}[Remark~\ref{rk:sc1} continued]\label{rk:sc2}
If every element of $P$ has finitely many $\mathrsfs{T}$-copoints, then irreducible elements and relatively-maximal elements coincide. 
\end{remark}

\begin{remark}\label{rk:complspe}
As a parallel to the previous notions of irreducibility, relative maximality, and strong irreducibility, we may consider the following ones. 
In a qoset $P$, an element $x$ is called
\begin{enumerate}
  \item\label{rk:complirr4} \textit{strongly completely irreducible} if $\Uparrow\!\! x$ is a principal filter; 
  \item\label{rk:complirr5} \textit{completely relatively-maximal} if $x \in { \Max(P \setminus { \uparrow\!\! y }) }$, for some $y$; 
  \item\label{rk:complirr6} \textit{completely irreducible} if ${ x \in B^{\wedge} } \Rightarrow { x \in [B] }$, for all subsets $B$. 
\end{enumerate}
It is easy to show that \eqref{rk:complirr4} $\Rightarrow$ \eqref{rk:complirr5} $\Rightarrow$ \eqref{rk:complirr6}. 
It is also possible to characterize each of these notions along the same lines as above. 
For instance, the following conditions are equivalent:
\begin{enumerate}
  \item\label{rk:complirr1} $x$ is strongly completely irreducible;
  \item\label{rk:complirr3} ${ x \in \Max (B^{\downarrow}) } \Rightarrow { x \in [B] }$, for all subsets $B$ of $P$; 
  \item\label{rk:complirr2} $x \in { \ex_{\mathfrak{p}^{\downarrow}} P }$, 
\end{enumerate}
where $\mathfrak{p}$ is defined as per Example~\ref{ex:ranzato1}. 
Indeed, \eqref{rk:complirr1} $\Leftrightarrow$ \eqref{rk:complirr3} was proved by Ranzato \cite[Theorem~6.1]{Ranzato02}; and to prove \eqref{rk:complirr3} $\Leftrightarrow$ \eqref{rk:complirr2}, one can combine Proposition~\ref{prop:extrcharac} (taken dually) and the fact that ${ \Max (B^{\downarrow}) } = { { \mathfrak{p}(B) } \cap { B^{\downarrow} } }$, for all subsets $B$. 
\end{remark}

\begin{example}\label{ex:finiteposet}
Consider the finite poset $P$ of Figure~\ref{fig:finiteposet}, where $P$ is the four element set $\{ x_1, x_2, y_1, y_2 \}$ endowed with the  relations $y_1 < x_1$, $y_1 < x_2$, $y_2 < x_1$, $y_2 < x_2$. 
Note that $P$ is not a Riesz poset. 
Moreover, $y_1, y_2$ are irreducible, but not strongly irreducible, since 
${ \Uparrow\!\! y_1 } = { \Uparrow\!\! y_2 } = { \{ x_1, x_2 \} }$ is not a filter.  
\end{example}

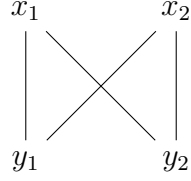
\begin{figure}
	\begin{center}
		\begin{tikzpicture}
			\node (y1) at (0,0) {$y_1$};
			\node (x1) at (0,2) {$x_1$};
			\node (y2) at (2,0) {$y_2$};
			\node (x2) at (2,2) {$x_2$};
			\draw (x1) -- (y1);
			\draw (x2) -- (y1);
			\draw (x2) -- (y2);
			\draw (x1) -- (y2);
		\end{tikzpicture}
	\end{center}
	\caption{Hasse diagram of the poset of Example~\ref{ex:finiteposet}. }
	\label{fig:finiteposet}
\end{figure}

\begin{example}\label{ex:hdP}
Consider the countably infinite poset $P$ of Figure~\ref{fig:hdP}, where $P$ is the set $\{ x_n \}_{n \geqslant 1} \cup \{ y_n \}_{n \geqslant 1}$ endowed with the following relations: 
\begin{align*}
y_1 &< \{ x_1, x_2, \ldots \} \\
y_n &< \{ x_1, \ldots, x_n \}
\end{align*}
for all $n \geqslant 1$, and no other relations hold. 
Note that $P$ is not Riesz, nor a semilattice. 
In this poset, every element is irreducible. 
In particular, $y_1$ is irreducible. 
However, $y_1$ is not relatively-maximal. 
We also see that $y_1$ is an inf of relatively-maximal elements ($y_1 = \bigwedge_{n \geqslant 1} x_n$). 
The element $y_2$ is relatively-maximal, since it is a maximal element in ${ P \setminus \uparrow\!\! y_1 } = { \{ y_2, y_3, \ldots \} }$. 
However, $y_2$ is not strongly irreducible as ${ \Uparrow\!\! y_2 } = { \{ x_1, x_2 \} }$ is not filtered. 
\end{example}

\begin{figure}
	\begin{center}
		\begin{tikzpicture}
			\node (y1) at (0,0) {$y_1$};
			\node (x1) at (0,2) {$x_1$};
			\node (y2) at (2,0) {$y_2$};
			\node (x2) at (2,2) {$x_2$};
			\node (y3) at (4,0) {$y_3$};
			\node (x3) at (4,2) {$x_3$};	
			\node (y4) at (6,0) {$y_4$};
			\node (x4) at (6,2) {$x_4$};	
			\node (x5) at (8,2) {$ $};
			\draw (y1) -- (x1);
			\draw (x2) -- (y1);
			\draw (y2) -- (x1);
			\draw (y2) -- (x2);
			\draw (x3) -- (y1);
			\draw (y3) -- (x1);
			\draw (y3) -- (x2);
			\draw (y3) -- (x3);
			\draw (x4) -- (y1);
			\draw (y4) -- (x1);
			\draw (y4) -- (x2);
			\draw (y4) -- (x3);
			\draw (y4) -- (x4);
			\draw[dashed] (x5) -- (y1);
		\end{tikzpicture}
	\end{center}
	\caption{Hasse diagram of the poset of Example~\ref{ex:hdP}. }
	\label{fig:hdP}
\end{figure}
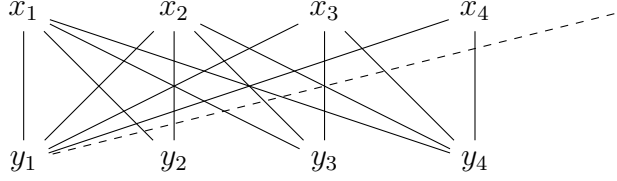

\section{Krein--Milman property and representation theorems}\label{sec:km}

This section focuses on the \textit{Krein--Milman property}: we show how this property can be transferred from a collection of subsets $\mathrsfs{Q}$ to another collection of subsets, thanks to the use of the convolution product. 
We apply this result to a variety of examples, where $\mathrsfs{Q}$ is typically the collection of \textit{preinductive} subsets of a qoset, as defined in Section~\ref{sec:posets}. 
The section culminates in two new representation theorems, which respectively generalize the Hofmann--Lawson theorem and the Birkhoff--Frink theorem, both recalled in the Introduction. 


\subsection{The Krein--Milman property}

Given a preclosure operator $\cc$ and a collection of subsets $\mathrsfs{V}$, we write $\cc_{\mathrsfs{V}}$ as a shorthand for $\cc * { \langle \cdot \rangle_{\mathrsfs{V}} }$. 
This notation happens to be consistent with the equality $\langle \cdot \rangle_{\mathrsfs{V}} = { \langle \cdot \rangle } * { \langle \cdot \rangle_{\mathrsfs{V}} }$, where $\langle \cdot \rangle$ is the closure operator defined in Proposition~\ref{prop:properties}. 


\begin{example}
Let $(E, \cc)$ be a closure space, quasi-ordered with $x \leqslant y$ if $x \in \cc(y)$, for all $x, y \in E$. 
Let $\mathrsfs{V}$ is a collection of $\cc$-open subsets containing all subsets of the form $E \setminus { \downarrow\!\! x }$, for $x \in E$. 
Then 
\[
{ x \in \cc_{\mathrsfs{V}}(A) } \Leftrightarrow { x \in \cc({ \downarrow\!\! x } \cap { A }) }, 
\]
for all $A \subseteq E$. 
\end{example}

\begin{example}
Let $E$ is a topological space, $\cc$ be the related topological closure operator on $E$, and $\mathrsfs{Q}$ be the collection of compact saturated subsets of $E$, i.e.\ compact subsets that are intersections of open sets. Then the $\cc_{\mathrsfs{Q}}$-closed subsets are exactly the closed subsets with respect to the \textit{patch topology} on $E$, see Goubault-Larrecq \cite[Definition~9.1.26]{Goubault13}. 
\end{example}

\begin{lemma}\label{lem:eqv}
Let $(E, \cc)$ be a preclosure space and $\mathrsfs{V}$ be a collection of subsets of $E$. 
Then the following equality holds:
\begin{equation}\label{eq:cv}
\cc_{\mathrsfs{V}}(A) = \bigcap_{V \in \mathrsfs{V}} V \cup \cc(A \setminus V),
\end{equation}
for all subsets $A$ of $E$. 
\end{lemma}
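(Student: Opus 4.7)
The plan is to unfold $\cc_{\mathrsfs{V}}$ by its defining formula as a convolution product and then establish both inclusions separately. By definition of the convolution,
\[
\cc_{\mathrsfs{V}}(A) = \bigcap_{B \subseteq E} \cc(A \cap B) \cup \langle A \setminus B \rangle_{\mathrsfs{V}},
\]
so the claim is that restricting the index from arbitrary subsets $B \subseteq E$ to complements of elements $V \in \mathrsfs{V}$ does not change the intersection.

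For the inclusion $\cc_{\mathrsfs{V}}(A) \subseteq \bigcap_{V \in \mathrsfs{V}} V \cup \cc(A \setminus V)$, I would specialize the convolution formula: for each $V \in \mathrsfs{V}$, take $B := E \setminus V$, so that $A \cap B = A \setminus V$ and $A \setminus B = A \cap V \subseteq V$. Since $V$ itself lies in $\mathrsfs{V}$, this yields $\langle A \setminus B \rangle_{\mathrsfs{V}} \subseteq V$, and hence the corresponding term in the intersection defining $\cc_{\mathrsfs{V}}(A)$ is contained in $V \cup \cc(A \setminus V)$. Intersecting over all $V \in \mathrsfs{V}$ gives the desired inclusion.

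For the reverse inclusion, let $x \in \bigcap_{V \in \mathrsfs{V}} V \cup \cc(A \setminus V)$ and fix an arbitrary $B \subseteq E$; one needs $x \in \cc(A \cap B) \cup \langle A \setminus B \rangle_{\mathrsfs{V}}$. Assuming $x \notin \cc(A \cap B)$, it suffices to show that $x$ belongs to every $V \in \mathrsfs{V}$ with $V \supseteq A \setminus B$ (if no such $V$ exists, then $\langle A \setminus B \rangle_{\mathrsfs{V}}$ is the empty intersection and therefore equals $E$, making the conclusion trivial). For any such $V$, the inclusion $A \setminus V \subseteq A \cap B$ together with monotonicity of $\cc$ yields $\cc(A \setminus V) \subseteq \cc(A \cap B)$, so $x \notin \cc(A \setminus V)$; combined with $x \in V \cup \cc(A \setminus V)$, this forces $x \in V$, as required. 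The argument is essentially a matching between the ``arbitrary $B$'' quantification in the definition of the convolution and the ``$V$ in the generating family'' quantification; monotonicity of $\cc$ does all the work, and the only mild subtlety to keep in mind is that the empty intersection equals $E$, so there is no real obstacle.
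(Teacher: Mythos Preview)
Your proof is correct and follows essentially the same approach as the paper's. The only cosmetic difference is that the paper writes the convolution in the commuted form $\langle \cdot \rangle_{\mathrsfs{V}} * \cc$ (so it specializes to $B = V$ rather than $B = E \setminus V$, and in the reverse inclusion assumes $x \notin \langle A \cap B \rangle_{\mathrsfs{V}}$ to produce a witnessing $V$ and land in $\cc(A \setminus B)$), but after the change of variable $B \leftrightarrow E \setminus B$ the two arguments are identical.
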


\begin{proof}
Let $\sss$ be the preclosure operator $A \mapsto { \bigcap_{V \in \mathrsfs{V}} V \cup \cc(A \setminus V) }$, and let $A \subseteq E$. 
If $x \in \cc_{\mathrsfs{V}}(A)$ and $V \in \mathrsfs{V}$, then $x \in { \langle A \cap V \rangle_{\mathrsfs{V}} \cup \cc(A \setminus V) } \subseteq { \langle V \rangle_{\mathrsfs{V}} \cup \cc(A \setminus V) } = { V \cup \cc(A \setminus V) }$. 
This proves that $x \in \sss(A)$. 

Now let $x \in \sss(A)$, and let $B \subseteq E$. 
If $x \notin { \langle A \cap B \rangle_{\mathrsfs{V}} }$, there exists some $V \in \mathrsfs{V}$ such that ${ V } \supseteq { A \cap B }$ and $x \notin V$. 
Then $x \in { \cc(A \setminus V) } \subseteq { \cc(A \setminus B) }$. 
This proves that $x \in \cc_{\mathrsfs{V}}(A)$. 
\end{proof}

\begin{definition}\label{def:kmp}
Let $(E, \leqslant, \cc)$ be an enriched qoset. 
A subset $K$ of $E$ (resp.\ a collection $\mathrsfs{K}$ of subsets of $E$) \textit{has the Krein--Milman property} with respect to $\cc$ (the \textit{$\cc$-KMp}, for short) if 
\begin{align}\label{eq:km}
\cc(K) = \cc(\textstyle{\ex_{\cc}} K)
\end{align}
(resp.\ for all $K \in \mathrsfs{K}$). 
\end{definition}

\begin{remark}
We may more generally define the Krein--Milman property with respect to a \textit{pair} $(\cc, \ttt)$ and replace Equation~\eqref{eq:km} by $\ttt(K) = \ttt(\textstyle{\ex_{\cc}} K)$. 
This would encompass the classical theorem proved by Krein and Milman \cite{Krein40} for compact convex subsets in locally-convex Hausdorff topological vector spaces, where both the convex hull operator and the closed convex hull operator are at play. 
Moreover, the transfer of the Krein--Milman property given by Theorem~\ref{thm:transfer} below would work equally well. 
However, we shall stick to Definition~\ref{def:kmp} in the frame of this paper, as the broader definition would remain unused. 
\end{remark}

Recall also that, if no specific quasiorder is at stake, a preclosure space $(E, \cc)$ is often considered as the enriched qoset $(E, =, \cc)$. 


\begin{example}
Let $f : E \to E'$ be a map between sets $E$ and $E'$. 
Then $f^{-1} f : A \mapsto f^{-1}(f(A))$ is a convexity operator on $E$. 
Moreover, a subset $K$ of $E$ has the $f^{-1} f$-KMp if and only if the restriction $f_{|K}$ of $f$ to $K$ is injective. 
\end{example}

\begin{example}\label{ex:polytopes1}
Given a finitary closure operator $\cc$ on a set $E$, a \textit{$\cc$-polytope} is a subset of the form $\cc(F)$, for some finite subset $F$ of $E$. 
Various conditions are known to be equivalent to the collection of $\cc$-polytopes having the $\cc$-KMp; this holds if and only if every $\cc$-copoint has a unique $\cc$-attaching point, see Edelman and Jamison \cite{Edelman85} and Poncet \cite[Theorem~7.2]{Poncet12c}. 
\end{example}

\begin{example}\label{ex:maximal}
The preinductive subsets of a qoset $P$ as defined in Section~\ref{sec:posets} are exactly the subsets that have the $\downarrow\!\! \cdot$-KMp, where $\downarrow\!\! \cdot$ denotes the Alexandrov closure operator. 
\end{example}

\begin{example}[Example~\ref{ex:ranzato1} continued]\label{ex:ranzato2}
Ranzato showed that a poset $P$ has the $\overline{\mathfrak{p}^{\downarrow}}$-KMp if $P$ is an algebraic sup-semilattice \cite[Remark~6.7]{Ranzato02} or if it is Noetherian \cite[Theorem~6.8]{Ranzato02}. 
\end{example}

As announced, the following result, though not difficult to prove, is one of the stepping stones of this paper. 
If $\mathrsfs{K}$, $\mathrsfs{V}$ are two collections of subsets of a set $E$, we write $\mathrsfs{K} \doublesetminus \mathrsfs{V}$ for the collection
\[
{ \mathrsfs{K} \doublesetminus \mathrsfs{V} } := { \{ K \setminus V : K \in \mathrsfs{K}, V \in \mathrsfs{V} \} }. 
\]

\begin{theorem}[Transfer of the Krein--Milman property]\label{thm:transfer}
Let $(E, \leqslant, \cc)$ be an enriched qoset and $\sss$ be a closure operator on $E$ that absorbs $\sim$. 
Let $\mathrsfs{K}$ be a collection of subsets of $E$. 
Assume that $\mathrsfs{K} \doublesetminus \mathrsfs{V}$ has the $\cc$-KMp, 
for some collection of subsets $\mathrsfs{V}$ that generates $\sss$, i.e.\ ${ \sss } = { \langle \cdot \rangle_{\mathrsfs{V}} }$. 
Then $\mathrsfs{K}$ has the $\cc * \sss$-KMp. 
\end{theorem}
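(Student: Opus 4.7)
The plan is to prove the two inclusions of $\cc * \sss(K) = \cc * \sss(\textstyle{\ex_{\cc * \sss}} K)$ for each $K \in \mathrsfs{K}$. The reverse inclusion follows immediately from monotonicity of $\cc * \sss$ together with $\textstyle{\ex_{\cc * \sss}} K \subseteq K$, so the substantive work is the forward one.

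For the forward inclusion, I would first unfold $\cc * \sss$ via Lemma~\ref{lem:eqv}. Since $\sss = \langle \cdot \rangle_{\mathrsfs{V}}$, the convolution $\cc * \sss$ coincides with $\cc_{\mathrsfs{V}}$, giving
\[
{ \cc * \sss(K) } = { \bigcap_{V \in \mathrsfs{V}} V \cup \cc(K \setminus V) }.
\]
The task therefore reduces to showing that $V \cup \cc(K \setminus V) \subseteq V \cup \cc(\textstyle{\ex_{\cc * \sss}} K)$ for every $V \in \mathrsfs{V}$; intersecting over $V$ and applying Lemma~\ref{lem:eqv} once more (now to $\textstyle{\ex_{\cc * \sss}} K$ in place of $K$) yields the desired inclusion.

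To establish this pointwise inclusion, fix $V \in \mathrsfs{V}$. The subset $K \setminus V$ lies in $\mathrsfs{K} \doublesetminus \mathrsfs{V}$, so by the hypothesized Krein--Milman property, $\cc(K \setminus V) = \cc(\textstyle{\ex_{\cc}}(K \setminus V))$. Moreover, $V \in \mathrsfs{V}$ is $\sss$-closed, so the ``\eqref{prop:cpconvol3} $\Rightarrow$ \eqref{prop:cpconvol1}'' direction of Proposition~\ref{prop:cpconvol} (applied with $A = K$) yields ${ \textstyle{\ex_{\cc}}(K \setminus V) } \subseteq { \textstyle{\ex_{\cc * \sss}} K }$. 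Monotonicity of $\cc$ then gives $\cc(K \setminus V) \subseteq \cc(\textstyle{\ex_{\cc * \sss}} K)$, which is exactly what is needed.

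The only delicate step is the invocation of Proposition~\ref{prop:cpconvol}, which requires $\sss$ to be idempotent (granted, since $\sss$ is a closure operator) and to absorb $\sim$ (granted as a hypothesis of the theorem). Apart from this, the proof is a clean assemblage of Lemma~\ref{lem:eqv}, the Krein--Milman property assumed on $\mathrsfs{K} \doublesetminus \mathrsfs{V}$, and monotonicity of $\cc$; no further structural hypotheses on $\cc$, $\mathrsfs{K}$, or $\leqslant$ are needed.
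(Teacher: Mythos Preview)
Your approach is the same as the paper's, but there is a small technical slip in the final step. Lemma~\ref{lem:eqv} applied to $\ex_{\cc*\sss}K$ gives
\[
\cc*\sss(\textstyle{\ex_{\cc*\sss}}K)=\bigcap_{V\in\mathrsfs{V}} V\cup\cc\bigl((\textstyle{\ex_{\cc*\sss}}K)\setminus V\bigr),
\]
not $\bigcap_{V}V\cup\cc(\ex_{\cc*\sss}K)$. So the pointwise inclusion you actually need is $V\cup\cc(K\setminus V)\subseteq V\cup\cc\bigl((\ex_{\cc*\sss}K)\setminus V\bigr)$; the version you wrote, with $\cc(\ex_{\cc*\sss}K)$ on the right, does not feed back into Lemma~\ref{lem:eqv}. (In general $\cc(A)\setminus V\not\subseteq\cc(A\setminus V)$, so the weaker inclusion cannot be upgraded after the fact.)

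The fix is immediate and already implicit in your argument: since $\ex_{\cc}(K\setminus V)\subseteq K\setminus V\subseteq E\setminus V$, combining this with $\ex_{\cc}(K\setminus V)\subseteq\ex_{\cc*\sss}K$ from Proposition~\ref{prop:cpconvol} yields $\ex_{\cc}(K\setminus V)\subseteq(\ex_{\cc*\sss}K)\setminus V$. Monotonicity of $\cc$ then gives the sharper inclusion $\cc(K\setminus V)\subseteq\cc\bigl((\ex_{\cc*\sss}K)\setminus V\bigr)$, and the rest of your argument goes through unchanged. This is exactly what the paper does.
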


\begin{proof}
Take $\ttt := { \cc * \sss } = { \cc_{\mathrsfs{V}} }$. 
Let $K \in \mathrsfs{K}$, $x \in { \ttt(K) }$, and $V \in \mathrsfs{V}$ such that $V \not\ni x$. 
Then $x \in { \cc(K \setminus V) }$ by Lemma~\ref{lem:eqv}. 
Since $\mathrsfs{K} \doublesetminus \mathrsfs{V}$ has the $\cc$-KMp, we deduce that $x \in { \cc(\ex_{\cc} (K \setminus V)) }$. 
By Proposition~\ref{prop:cpconvol}, ${ \ex_{\cc} (K \setminus V) } \subseteq { (\ex_{\ttt} K) \setminus V }$, so that $x \in \cc((\ex_{\ttt} K) \setminus V)$. 
Using again Lemma~\ref{lem:eqv}, we obtain $x \in { \ttt(\ex_{\ttt} K) }$. 
This shows that $\mathrsfs{K}$ has the $\ttt$-KMp, as required. 
\end{proof}

\begin{example}[Example~\ref{ex:polytopes1} continued]\label{ex:polytopes2}
Let $(E, \leqslant, \cc)$ be an enriched qoset, where $\cc$ is a finitary closure operator. 
If the collection of $\cc$-polytopes has the $\cc$-KMp, then the collection of $\cc^{\uparrow}$-polytopes has the $\cc^{\uparrow}$-KMp. 
Indeed, let $\mathrsfs{K}$ be the collection of $\cc^{\uparrow}$-polytopes, and $\mathrsfs{U}$ be the collection of upper subsets. 
Let $L \in { \mathrsfs{K} \doublesetminus \mathrsfs{U} }$. 
Then $L$ can be written as $\cc^{\uparrow}(F) \setminus U$, for some finite subset $F$ and some upper subset $U$. 
We have ${ F \setminus U } \subseteq { L } \subseteq { \cc(F \setminus U) } =: P$, and this latter subset is a $\cc$-polytope. 
By assumption, $P$ has the $\cc$-KMp. 
Thus, ${ \cc(L) } = { P } = { \cc(\ex_{\cc} P) }$. 
Moreover, $L \subseteq P$ implies ${ \ex_{\cc} P } \subseteq { \ex_{\cc} L }$, so that $\cc(L) = \cc(\ex_{\cc} L)$. 
This shows that $\mathrsfs{K} \doublesetminus \mathrsfs{U}$ has the $\cc$-KMp. 
From Theorem~\ref{thm:transfer}, we deduce that $\mathrsfs{K}$ has the $\cc^{\uparrow}$-KMp, as required. 
\end{example}

Recall that a subset $A$ of a qoset is called \textit{order-saturated} if $[A] = A$. 

\begin{example}[Example~\ref{ex:maximal} continued]\label{ex:maximal2}
Let $P$ be a qoset and $\mathrsfs{V}$ be a collection of order-saturated subsets of $P$. 
Consider the enriched qoset $(P, \sim, \downarrow\!\!\cdot)$ equipped with $\sss := \langle \cdot \rangle_{\mathrsfs{V}})$. 
Let $\mathrsfs{K}_{\mathrsfs{V}}$ be the collection made of the subsets $K$ of $P$ such that $K \setminus V$ be preinductive for all $V \in \mathrsfs{V}$. 
Then $\mathrsfs{K}_{\mathrsfs{V}}$ contains all finite subsets of $P$, and more generally all subsets satisfying the ascending chain condition (see Lemma~\ref{lem:preinductive}). 
Moreover, $\mathrsfs{K}_{\mathrsfs{V}} \doublesetminus \mathrsfs{V}$ has the $\downarrow\!\!\cdot$-KMp. 
So $\mathrsfs{K}_{\mathrsfs{V}}$ has the $\langle \cdot \rangle_{\mathrsfs{V}}^{\downarrow}$-KMp, by Theorem~\ref{thm:transfer}. 
In particular, if $P$ is Noetherian, then $P$ has the $\langle \cdot \rangle_{\mathrsfs{V}}^{\downarrow}$-KMp. 
\end{example}


\begin{example}[Example~\ref{ex:maximal2} continued]
Let $P$ be a qoset equipped with an upper semiclosed topology, and $\mathrsfs{V}$ be a collection of order-saturated, open subsets.
Let $\mathrsfs{K}$ be the collection of compact subsets of $P$. 
Then every element of $\mathrsfs{K} \doublesetminus \mathrsfs{V}$ is compact, hence preinductive by Lemma~\ref{lem:wallace}. 
Thus, $\mathrsfs{K}$ has the $\langle \cdot \rangle_{\mathrsfs{V}}^{\downarrow}$-KMp. 
\end{example}

\begin{example}[Example~\ref{ex:maximal2} continued]
Let $P$ be a monotone convergence space and $\mathrsfs{V}$ be a collection of open subsets. 
Every $V \in \mathrsfs{V}$ is an upper subset, hence is order-saturated. 
Let $\mathrsfs{F}$ be the collection of closed subsets of $P$. 
Then every element of $\mathrsfs{F} \doublesetminus \mathrsfs{V}$ is closed, hence preinductive by Lemma~\ref{lem:mcs}. 
Thus, $\mathrsfs{F}$ has the $\langle \cdot \rangle_{\mathrsfs{V}}^{\downarrow}$-KMp. 
\end{example}

\begin{example}[Example~\ref{ex:maximal2} continued]\label{ex:scc}
Let $P$ be a qoset equipped with a topology, and $\mathrsfs{V}$ be a collection of open upper subsets.
Let $\mathrsfs{S}$ be the collection of strongly chain-complete subsets of $P$. 
Then every element of $\mathrsfs{S} \doublesetminus \mathrsfs{V}$ is easily seen to be strongly chain-complete, hence preinductive. 
Thus, $\mathrsfs{S}$ has the $\langle \cdot \rangle_{\mathrsfs{V}}^{\downarrow}$-KMp. 
\end{example}

\subsection{Representation theorems}\label{subsec:repthm}

Let $P$ be a qoset, and let $A \subseteq K$ be subsets of $P$. 
Recall from Section~\ref{sec:specialcase} 
that $A$ \textit{sup-generates} (resp.\ \textit{inf-generates}) $K$, or $K$ is \textit{sup-generated} (resp.\ \textit{inf-generated}) by $A$, if
\[
x \in { (\downarrow\!\! x \cap A)^{\vee} },
\]
(resp.\ $x \in { (\uparrow\!\! x \cap A)^{\wedge} }$) for all $x \in K$. 
An equivalent condition is that, whenever $x \in  K$, $y \in P$ with $x \not\leqslant y$, there exists some $a \in A$ with $a \leqslant x$ and $a \not\leqslant y$ \cite[Proposition~I-3.9]{Gierz03}. 
Informally, we speak of a representation of points of $K$ by $A$. 
Note that, in this case, every $x \in K$ is also a sup (resp.\ an inf) of $\downarrow\!\! x \cap A$ (resp.\ of $\uparrow\!\! x \cap A$) in the qoset $K$. 


\begin{example}[Example~\ref{ex:maximal2} continued]\label{ex:maximal3}
Let $P$ be a qoset and $\mathrsfs{V}$ be a collection of upper subsets of $P$ that dually separates points.
Let $\mathrsfs{K}_{\mathrsfs{V}}$ be the collection made of the subsets $K$ of $P$ such that $K \setminus V$ be preinductive for all $V \in \mathrsfs{V}$. 
From Example~\ref{ex:maximal2}, $\mathrsfs{K}_{\mathrsfs{V}}$ has the $\langle \cdot \rangle_{\mathrsfs{V}}^{\downarrow}$-KMp. 
Thus, if $K \in \mathrsfs{K}_{\mathrsfs{V}}$ and $x \in K$, then $x \in \langle \ex K \rangle_{\mathrsfs{V}}^{\downarrow}$, where $\ex K$ denotes the set of extreme points of $K$ with respect to $\langle \cdot \rangle_{\mathrsfs{V}}^{\downarrow}$.
By assumption, the collection $\mathrsfs{V}$ dually separates points.  
Thus, by Theorem~\ref{thm:charac} (taken dually), $x \in B^{\wedge}$, for some subset $B$ of $\ex K$ with an inf, which implies that $x \in (\uparrow\!\! x \cap \ex K)^\wedge$. 
This shows that $K$ is inf-generated by the set of its extreme points with respect to $\langle \cdot \rangle_{\mathrsfs{V}}^{\downarrow}$, for all $K \in \mathrsfs{K}_{\mathrsfs{V}}$. 
In particular, if $P$ is Noetherian, then $P$ is inf-generated by $\ex_{\langle \cdot \rangle_{\mathrsfs{V}}^{\downarrow}} P$. 
\end{example}

It was proved by Hofmann and Lawson \cite[Proposition~2.7]{HofmannLawson76} that irreducible elements form an inf-generating subset of every continuous lattice; see also \cite[Corollary~I-3.10]{Gierz03} for a generalization to continuous directed-complete semilattices. 
The following theorem, applied to the case of a continuous directed-complete poset equipped with the Scott or Lawson topology, generalizes \cite[Corollary~I-3.10]{Gierz03} further, as it does not require the ambient poset to be a semilattice. 

A topology on a qoset is \textit{lower semiclosed} if each principal order-ideal is a closed subset. 
We say that a qoset equipped with a topology \textit{has small open filters} if, whenever $x \in U$ for some open upper subset $U$, there exists an open filter $V$ such that $x \in { V } \subseteq { U }$. 

\begin{theorem}[Representation Theorem I]\label{thm:rep1}
Let $P$ be a qoset equipped with a lower semiclosed topology. 
Assume that $P$ has small open filters, and let  $S$ be a strongly chain-complete subset of $P$. 
Then $S$ is inf-generated by the set of its relatively-maximal elements. 
\end{theorem}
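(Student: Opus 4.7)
The plan is to take $\mathrsfs{V}$ to be the collection of open filters of $P$, apply Example~\ref{ex:maximal3} to obtain inf-generation of $S$ by the $\langle \cdot \rangle_{\mathrsfs{V}}^{\downarrow}$-extreme points of $S$, and finally identify these extreme points with relatively-maximal elements of $P$. First I would verify that $\mathrsfs{V}$ dually separates points: given $x \not\leqslant y$, the subset $P \setminus {\downarrow\!\! y}$ is an open upper subset (by lower semiclosedness) containing $x$, so the small-open-filters hypothesis yields $V \in \mathrsfs{V}$ with $x \in V$ and $y \notin V$. Second, $S$ belongs to the collection $\mathrsfs{K}_{\mathrsfs{V}}$ of Example~\ref{ex:maximal3}, since for each $V \in \mathrsfs{V}$ the subset $S \setminus V = S \cap (P \setminus V)$ is strongly chain-complete as the intersection of $S$ with the closed set $P \setminus V$, hence preinductive by Lemma~\ref{lem:preinductive}. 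Example~\ref{ex:maximal3} then delivers inf-generation of $S$ by $E := \ex_{\langle \cdot \rangle_{\mathrsfs{V}}^{\downarrow}} S$.

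It remains to show $E \subseteq \rMax P$. The collection $\mathrsfs{V}$ is stable under directed unions (a directed union of open filters is itself an open filter), so $\langle \cdot \rangle_{\mathrsfs{V}}$ is preinductive by Example~\ref{ex:clogene2}, and Proposition~\ref{prop:cpconvol} applied with $\cc = \downarrow\!\! \cdot$ and $\sss = \langle \cdot \rangle_{\mathrsfs{V}}$ provides, for each $x \in E$, a copoint $V \in \Cop_{\langle \cdot \rangle_{\mathrsfs{V}}}(x)$ -- that is, a maximal open filter of $P$ not containing $x$ -- such that $x \in \Max(S \setminus V)$. The crucial step is to show $\Uparrow\!\! x \subseteq V$, which upgrades this to $x \in \Max(P \setminus V)$ and hence to $x \in \rMax P$: for any $z > x$, the subset $P \setminus {\downarrow\!\! x}$ is open upper and contains $z$, so small-open-filters produces an open filter $W$ with $z \in W$ and $x \notin W$, and the maximality of $V$ among open filters avoiding $x$ forces $W \subseteq V$, giving $z \in V$. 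Monotonicity of inf-generation in the generating subset then transfers the conclusion from $E$ to $\rMax P \cap S$, which is the set of relatively-maximal elements $S$ contains.

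The main obstacle is this last upgrade: the abstract Krein--Milman machinery yields only $x \in \Max(S \setminus V)$, a form of maximality relative to $S$, whereas relative-maximality in $P$ requires controlling every strict upper bound of $x$, including those outside $S$. Closing this gap relies essentially on the maximality of $V$ as a copoint of $\langle \cdot \rangle_{\mathrsfs{V}}$ together with a second application of the small-open-filters hypothesis, so the assumption on $P$ is genuinely invoked twice in the proof.
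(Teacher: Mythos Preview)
Your plan matches the paper up to and including the appeal to Example~\ref{ex:maximal3}. The divergence is in the final step, where your argument has a genuine gap.

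The claim ``the maximality of $V$ among open filters avoiding $x$ forces $W \subseteq V$'' is wrong: $V$ is \emph{a} maximal such filter (a copoint), not \emph{the} maximum one. There can be several incomparable $\langle\cdot\rangle_{\mathrsfs{V}}$-copoints of $x$, and $W$ need only lie below one of them---not necessarily the specific one handed to you by Proposition~\ref{prop:cpconvol}. Concretely, take $P=\{a,b,c\}$ with $a<b$, $a<c$, and the discrete topology. Then the open filters are $\{b\}$, $\{c\}$, $P$, and the $\langle\cdot\rangle_{\mathrsfs{V}}$-copoints of $a$ are $\{b\}$ and $\{c\}$. With $S=\{a\}$ (trivially strongly chain-complete) one has $a\in E$, yet $\Uparrow\!\! a=\{b,c\}$ is contained in no single copoint, and in fact $a\notin\rMax P$. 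So not only your argument but the very inclusion $E\subseteq\rMax P$ you are trying to establish is false in general.

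The paper avoids this by targeting $\rMax S$ rather than $\rMax P\cap S$. Once Example~\ref{ex:maximal3} delivers inf-generation of $S$ by $E=\ex_{\langle\cdot\rangle_{\mathrsfs{V}}^{\downarrow}}S$, the inclusion $E\subseteq\rMax S$ follows immediately from Proposition~\ref{prop:spe} together with the trivial observation that every open filter is a filter, so $\langle\cdot\rangle_{\mathrsfs{T}}^{\downarrow}\leqslant\langle\cdot\rangle_{\mathrsfs{V}}^{\downarrow}$ and hence $\ex_{\langle\cdot\rangle_{\mathrsfs{V}}^{\downarrow}}S\subseteq\ex_{\langle\cdot\rangle_{\mathrsfs{T}}^{\downarrow}}S$. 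No upgrade from $\Max(S\setminus V)$ to $\Max(P\setminus V)$ is attempted, and no second appeal to the small-open-filters hypothesis is needed.
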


\begin{proof}
Let $\mathrsfs{V}$ denote the collection of open filters of $P$. 
Then $\mathrsfs{V}$ dually separates points, thanks to the assumptions made on the topology of $P$. 
Besides, $S \setminus V$ is strongly chain-complete, hence preinductive, for all $V \in \mathrsfs{V}$. 
By Example~\ref{ex:maximal3}, $S$ is inf-generated by the set $\ex S$ of its extreme points with respect to $\langle \cdot \rangle_{\mathrsfs{V}}^{\downarrow}$. 
To finish the proof, it suffices to note that $\ex S$ is included in $\rMax S$, the set of relatively-maximal elements of $S$, by Proposition~\ref{prop:spe}.  
\end{proof}




The preceding point raises the question about what posets with a given topology have small open filters. 
We refer to \cite{Gierz03} for the notions used in the following proposition and not already defined above. 

\begin{proposition}\label{prop:ulc}
Let $P$ be a poset equipped with a topology. 
Assume that one of the following conditions is satisfied:
\begin{enumerate}
  \item\label{prop:ulc1} $P$ is a continuous directed-complete poset equipped with the Scott or Lawson topology, or 
  \item\label{prop:ulc2} $P$ is a semitopological semilattice such that, for all $x \in U$ open, there exists $z \in U$ with $x \in { (\uparrow\!\! z)^{\circ} }$, or 
  \item\label{prop:ulc3} $P$ is a locally-compact Hausdorff topological semilattice with small semilattices. 
\end{enumerate}
Then the topology is lower semiclosed and $P$ has small open filters. 
\end{proposition}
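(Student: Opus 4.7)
The plan is to verify the two conclusions -- lower semiclosedness of the topology (each principal order-ideal $\downarrow\!\! x$ is closed) and the small-open-filters property (for every $x$ in an open upper set $U$ there is an open filter $V$ with $x \in V \subseteq U$) -- separately in each of the three cases.

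For lower semiclosedness, in case~(1) the set $\downarrow\!\! x$ is Scott-closed: it is visibly a lower subset, and if $D \subseteq \downarrow\!\! x$ is directed with sup $s$, then $x$ is an upper bound of $D$ so $s \leqslant x$. Since the Lawson topology refines the Scott topology, $\downarrow\!\! x$ is also Lawson-closed. In case~(3) the Hausdorff hypothesis makes the diagonal $\Delta \subseteq P \times P$ closed, and joint continuity of the meet operation makes $y \mapsto (y, y \wedge x)$ continuous, so $\downarrow\!\! x = \{ y \in P : y = y \wedge x \}$ is the preimage of $\Delta$, hence closed. In case~(2) a similar argument will work: given $y \not\leqslant x$, one chooses an open set $U$ with $y \in U$ and $x \notin U$ using a suitable Hausdorff-type separation, and then applies the hypothesis to find $z$ with $y \in (\uparrow\!\! z)^{\circ}$ while $z \not\leqslant x$, so that $(\uparrow\!\! z)^{\circ}$ witnesses the openness of $P \setminus \downarrow\!\! x$ near $y$.

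For small open filters, in case~(1) the continuity of $P$ gives $x = \bigvee \twoheaddownarrow\!\! x$ with $\twoheaddownarrow\!\! x$ directed, so Scott-openness of $U$ produces $y \ll x$ with $y \in U$. Interpolation ($y \ll y' \ll x$) combined with the standard construction in the proof of Lemma~\ref{lem:openfilter} (see also \cite[Theorem~III-3.6 and its corollaries]{Gierz03}) yields a Scott-open filter $V$ with $x \in V \subseteq U$. The Lawson case reduces to this, since a Lawson-open upper subset is automatically Scott-open (the additional Lawson generators being lower subsets). In case~(2), the hypothesis provides $z \in U$ with $x \in (\uparrow\!\! z)^{\circ}$; since $U$ is upper and $z \in U$, we have $(\uparrow\!\! z)^{\circ} \subseteq \uparrow\!\! z \subseteq U$. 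The semilattice structure makes $\uparrow\!\! z$ a filter, and separate continuity of $\wedge$ is used to verify that $V := (\uparrow\!\! z)^{\circ}$ is closed under finite meets and upward closed, hence itself a filter. In case~(3), the small-semilattice hypothesis produces a subsemilattice $S$ with $x \in S^{\circ} \subseteq S \subseteq U$; local compactness and the Hausdorff topological semilattice structure let us replace $S$ by a compact subsemilattice neighborhood of $x$, and then $V := (\uparrow\!\! x \cap S)^{\circ}$ (taking interior in $P$) serves as an open filter containing $x$ and contained in $U$, after verifying filteredness using joint continuity of $\wedge$.

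The hard part in each case will be checking that the constructed neighborhood is genuinely a \emph{filter} rather than merely an open upper subset. In case~(1) this rests on the delicate interpolation-based argument that converts a witness $y \ll x$ into a Scott-open filter; in case~(2) it amounts to showing that taking the interior of the principal filter $\uparrow\!\! z$ preserves both upward closure and downward directedness under the separately continuous meet; and in case~(3) it requires combining the compactness of small subsemilattices with the Hausdorff topological structure to ensure that the filter generated in $S$ has nonempty interior in $P$ containing $x$. The lower-semiclosedness verifications, by contrast, are essentially routine applications of the Hausdorff structure (cases 2, 3) and of directed-completeness (case 1).
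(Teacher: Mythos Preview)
Your Case~\eqref{prop:ulc1} is fine and matches the paper. The substantive gaps are in Cases~\eqref{prop:ulc2} and~\eqref{prop:ulc3}.

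In Case~\eqref{prop:ulc2} you assert that separate continuity of $\wedge$ makes $V := (\uparrow\!\! z)^{\circ}$ closed under finite meets. Separate continuity does give that the interior of an upper set is upper (via $y \mapsto y \wedge a$), but it does \emph{not} show that $a, b \in (\uparrow\!\! z)^{\circ}$ implies $a \wedge b \in (\uparrow\!\! z)^{\circ}$: you only know $a \wedge b \geqslant z$, and there is no reason for $a \wedge b$ to lie in the \emph{interior}. The paper's intended argument (``adapt the proof of \cite[Proposition~I-3.3]{Gierz03}'') is to iterate the hypothesis: from $z_0 := z$ produce $z_1 \in U$ with $z_0 \in (\uparrow\!\! z_1)^{\circ}$, then $z_2$ with $z_1 \in (\uparrow\!\! z_2)^{\circ}$, etc., obtaining a decreasing sequence $(z_n)$. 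Then $V := \bigcup_n (\uparrow\!\! z_n)^{\circ}$ is open, upper, contained in $U$ (since each $z_n \in U$ and $U$ is upper), and filtered because any $a, b \in V$ lie in some $(\uparrow\!\! z_n)^{\circ}$ and $z_n \in (\uparrow\!\! z_{n+1})^{\circ} \subseteq V$ is a common lower bound. Your single-step construction misses exactly this filteredness. (Your lower-semiclosedness argument in this case also invokes a ``Hausdorff-type separation'' that is not part of the hypotheses.)

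In Case~\eqref{prop:ulc3} your candidate $V := (\uparrow\!\! x \cap S)^{\circ}$ need not contain $x$, since $x$ is generally not in $(\uparrow\!\! x)^{\circ}$. The paper's route is cleaner and avoids this: use local compactness to find a compact neighborhood $K \subseteq U$ of $x$, use small semilattices to find a subsemilattice neighborhood $V \subseteq K$, and note that $V$ (or its closure in $K$) is a compact semilattice, hence has a least element $z$ by \cite[Proposition~VI-1.13(v)]{Gierz03}. Then $x \in V^{\circ} \subseteq (\uparrow\!\! z)^{\circ}$ with $z \in U$, so the hypothesis of Case~\eqref{prop:ulc2} is satisfied and that case finishes the job.
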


\begin{proof}
Case~\eqref{prop:ulc1}. 
Use \cite[Proposition I-3.3]{Gierz03} and \cite[Proposition~III-1.6(i)]{Gierz03}. 

Case~\eqref{prop:ulc2}. 
Adapt the technique of the proof of \cite[Proposition I-3.3]{Gierz03}, using the fact that the topological interior of an upper set is again an upper set in a semitopological semilattice. 

Case~\eqref{prop:ulc3}. 
For $x \in U$, use local compactness to pick a compact neighborhood $K$ of $x$ contained in $U$. 
Use the assumption of small semilattices to find a semilattice neighborhood $V$ of $x$ contained in $K$. 
Then $V \subseteq K$ is a compact semilattice, so it has a least element $z$ by \cite[Proposition~VI-1.13(v)]{Gierz03}.  
We thus have \eqref{prop:ulc2} and hence the desired property.
\end{proof}

We say that a qoset equipped with a topology \textit{has small open principal filters} if, whenever $x \in U$ for some open upper subset $U$, there exists an open principal filter $V$ such that $x \in { V } \subseteq { U }$. 
Recall from Remark~\ref{rk:complspe} that an element $x$ of a qoset $P$ is \textit{completely relatively-maximal} if $x \in { \Max(P \setminus { \uparrow\!\! y }) }$, for some $y \in P$. 

\begin{theorem}[Representation Theorem II]\label{thm:rep2}
Let $P$ be a qoset equipped with a lower semiclosed topology. 
Assume that $P$ has small open principal filters, and let $S$ be a strongly chain-complete subset of $P$. 
Then $S$ is inf-generated by the set of its completely relatively-maximal elements. 
\end{theorem}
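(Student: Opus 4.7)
The strategy is to replicate the proof of Theorem~\ref{thm:rep1} verbatim, only substituting the collection of open filters by the collection $\mathrsfs{V}$ of open \emph{principal} filters of $P$. Three points need to be checked.

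First, $\mathrsfs{V}$ must dually separate points: given $x \not\leqslant y$ in $P$, the lower semiclosedness of the topology makes $\downarrow\!\! y$ closed, so $P \setminus \downarrow\!\! y$ is an open upper subset containing $x$; the small open principal filter hypothesis then fits an element of $\mathrsfs{V}$ between $\{ x \}$ and $P \setminus \downarrow\!\! y$, yielding the separation.

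Second, the machinery of Examples~\ref{ex:scc} and~\ref{ex:maximal3} must apply to $\mathrsfs{V}$. Since every open principal filter is in particular an open upper subset, the argument of Example~\ref{ex:scc} shows that $S \setminus V$ remains strongly chain-complete, and therefore preinductive, for every $V \in \mathrsfs{V}$; this places $S$ in the collection $\mathrsfs{K}_{\mathrsfs{V}}$ of Example~\ref{ex:maximal3}. That example then yields that $S$ is inf-generated by the set $\ex S$ of its extreme points with respect to $\langle \cdot \rangle_{\mathrsfs{V}}^{\downarrow}$.

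Third, the extreme points so obtained must be shown to be completely relatively-maximal. Given $x \in \ex S$, Lemma~\ref{lem:eqv} furnishes some $V = \uparrow\!\! y \in \mathrsfs{V}$ satisfying $x \notin V \cup \downarrow\!\! (S \setminus [x] \setminus V)$; a short verification (any $s \in S \setminus \uparrow\!\! y$ with $x \leqslant s$ that fails to lie in $[x]$ would belong to $S \setminus [x] \setminus V$ and contradict $x \notin \downarrow\!\!(S\setminus[x]\setminus V)$) then shows $x \in \Max(S \setminus \uparrow\!\! y)$, which by Remark~\ref{rk:complspe} is the defining property of complete relative-maximality. This final step, parallel to the passage from open filters to relatively-maximal elements carried out in Proposition~\ref{prop:spe}, is where I expect the only delicate bookkeeping: one must ensure that the principal filter $\uparrow\!\! y$ genuinely witnesses the maximality of $x$ in $S$, using both clauses in the conclusion of Lemma~\ref{lem:eqv}.
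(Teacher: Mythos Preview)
Your proposal is correct and follows exactly the route the paper intends: replace open filters by open principal filters in the proof of Theorem~\ref{thm:rep1} and verify the three ingredients (dual separation, preservation of strong chain-completeness under removing an open upper set, and identification of the resulting extreme points). Your use of Lemma~\ref{lem:eqv} in the third step is appropriate, since the paper states no principal-filter analogue of Proposition~\ref{prop:spe}; it yields a genuine element $V=\uparrow\!\! y$ of $\mathrsfs{V}$ (rather than merely a $\langle\cdot\rangle_{\mathrsfs{V}}$-closed set), which is precisely what is needed to land in $\Max(S\setminus\uparrow\!\! y)$.
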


\begin{proof}
The proof is very similar to that of Theorem~\ref{thm:rep1}. 
\end{proof}

\begin{example}[Compactly generated posets]
Let $P$ be a poset. 
An element $y$ of $P$ is \textit{compact} if $y \ll y$, where $\ll$ denotes the usual way-below relation, as defined in Section~\ref{sec:posets}. 
As noted with \cite[Remark~I-4.24]{Gierz03}, $y \in P$ is compact if and only if $\uparrow\!\! y$ is a Scott-open filter. 
The poset $P$ is \textit{compactly generated} if $P$ is sup-generated by the set of its compact elements. 
Now, if $P$ is a compactly generated directed-complete poset, equipped with the Scott or Lawson topology, then the topology is lower semiclosed, and $P$ has small open principal filters. 
Moreover, $P$ is itself strongly chain-complete. 
Thus, $P$ is inf-generated by the set of its completely relatively-maximal elements (hence by the set of its completely irreducible elements as well). 
This generalizes the well-known representation theorem for algebraic lattices due to Birkhoff and Frink \cite[Theorem~7]{BirkhoffFrink48}; see also  Ern\'e \cite[Proposition~7]{Erne87b}. 
\end{example}

\section{Local operators and representation of kit points}\label{sec:local}


In this final section, we delve deeper into the properties of preclosure operators to establish our third and last representation theorem. 
For this purpose, we introduce the notion of \textit{kit point}, which requires that every copoint admits a compact attaching point. 
Given an enriched qoset $(E, \leqslant, \cc)$ with adequate properties, we show that the set of kit points is $\cc^{\uparrow}$-closed and has the Krein--Milman property with respect to $\cc^{\uparrow}$, and that every kit point is sup-generated by an antichain of compact points, unique up to order-equivalence, and finite if $\cc$ is finitary. 
In passing, we explore the interplay between kitness, compactness, continuity, distributivity of $\cc$, and the Carath\'eodory number of $\cc$. 
As an additional warranty that our framework is meaningful enough, we prove that $\cc$-compact points agree with $\cc^{\uparrow}$-extreme points if $\cc$ is distributive, which reduces to the known coincidence between prime elements and irreducible elements in a distributive semilattice. 



\subsection{Locality, kit points and Carath\'eodory number}\label{subsec:local}

Let $(E, \cc)$ be a preclosure operator. 
Given $x \in E$, we say that $\cc$ is \textit{local at $x$} if the set
\[
\{ V \subseteq E : V \mbox{ $\cc$-closed and } V \not\ni x \},
\]
ordered by inclusion, is local, i.e.\ if $\cc$ is preinductive at $x$ and $x$ has finitely many $\cc$-copoints. 
We call $\cc$ \textit{local} if it is local at every $x \in E$. 

\begin{example}
If $E$ is a finite set, then every closure operator on $E$ is local. 
\end{example}

\begin{example}
Let $P$ be a qoset. 
The Alexandrov closure operator $\downarrow\!\! \cdot$ (resp.\ its dual $\uparrow\!\! \cdot$) is local, and every $x \in P$ has a \textit{unique} copoint, given by $P \setminus { \uparrow\!\! x }$ (resp.\ given by $P \setminus { \downarrow\!\! x }$). See Proposition~\ref{prop:ccpreinduc}. 
\end{example}

\begin{theorem}\label{thm:finitecopoints}
Let $(E, \cc, \sss)$ be a bi-preclosure space. 
If $\cc$ is local and idempotent and $\sss$ is finitary, then $\cc * \sss$ is finitary. 
\end{theorem}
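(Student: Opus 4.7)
The plan is to prove the nontrivial inclusion $(\cc * \sss)(A) \subseteq \bigcup_{F} (\cc * \sss)(F)$, where $F$ runs over finite subsets of $A$; the reverse inclusion is just monotonicity. So fix $A \subseteq E$ and $x \in (\cc * \sss)(A)$, and I would aim to construct explicitly a finite $F \subseteq A$ witnessing $x \in (\cc * \sss)(F)$.

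The key idea is that locality of $\cc$ at $x$ reduces the ``outer'' intersection in the definition of convolution to finitely many critical test subsets, namely the $\cc$-copoints of $x$. First I would enumerate these copoints as $V_1, \ldots, V_n$ (finite by locality; the degenerate case where no copoint exists forces $x \in \cc(\emptyset)$ and is handled trivially by $F = \emptyset$). For each $i$, I plug $B = V_i$ into the defining formula~\eqref{eq:convol} of $\cc * \sss$ to get $x \in \cc(A \cap V_i) \cup \sss(A \setminus V_i)$. Because $\cc$ is idempotent and $V_i$ is already a $\cc$-closed subset not containing $x$, the first alternative is impossible: $\cc(A \cap V_i) \subseteq \cc(V_i) = V_i \not\ni x$. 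Hence $x \in \sss(A \setminus V_i)$, and the hypothesis that $\sss$ is finitary supplies a finite $F_i \subseteq A \setminus V_i$ with $x \in \sss(F_i)$.

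Then I would take $F := F_1 \cup \cdots \cup F_n$, a finite subset of $A$, and verify $x \in (\cc * \sss)(F)$ by checking, for an arbitrary $B \subseteq E$, that $x \in \cc(F \cap B) \cup \sss(F \setminus B)$. Assuming $x \notin \cc(F \cap B)$, the subset $\cc(F \cap B)$ is $\cc$-closed (idempotency) and avoids $x$, so by the preinductivity component of locality it is contained in some copoint $V_i$; in particular $F \cap B \subseteq V_i$. Since $F_i \cap V_i = \emptyset$ by construction, this forces $F_i \cap B = \emptyset$, so $F_i \subseteq F \setminus B$, and monotonicity of $\sss$ transports $x \in \sss(F_i)$ into $x \in \sss(F \setminus B)$, as required.

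The main obstacle is really the last containment bookkeeping: making sure that the copoint $V_i$ produced by preinductivity, depending on $B$, matches one of the finitely many copoints selected in advance to define the $F_i$. This is resolved precisely because locality allows us to fix the \emph{same} finite list $V_1, \ldots, V_n$ for all $B$ simultaneously. Idempotency of $\cc$ enters crucially in two places (copoints are $\cc$-closed, and $\cc(F \cap B)$ is $\cc$-closed), and finitariness of $\sss$ is used only to extract the $F_i$'s; no additional hypothesis on $\cc$ beyond locality and idempotency seems needed.
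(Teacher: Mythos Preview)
Your proof is correct and follows essentially the same approach as the paper's: enumerate the finitely many $\cc$-copoints of $x$, use idempotency to rule out the $\cc$-branch at each copoint and finitariness of $\sss$ to extract finite witnesses $F_i$, take their union $F$, and then for arbitrary $B$ push $\cc(F\cap B)$ into some copoint via preinductivity to land in the $\sss$-branch. You even handle the degenerate case of no copoints (forcing $x\in\cc(\emptyset)$), which the paper leaves implicit.
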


\begin{proof}
Let $A \subseteq E$ and $x \in { \cc * \sss(A) }$. 
Let $V_1, \ldots, V_k$ be the $\cc$-copoints of $x$, and let $i \in \{ 1, \ldots, k \}$. 
We have $x \notin V_i$, by definition of $\cc$-copoints. 
Now, $x \in { \cc * \sss(A) }$, so $x \in { \cc(A \cap V_i) }$ or $x \in { \sss(A \setminus V_i) }$. 
If $x \in { \cc(A \cap V_i) }$, then $x \in { \cc(V_i) } = { V_i }$, a contradiction. 
So $x \in \sss(A \setminus V_i)$; since $\sss$ is finitary, there exists some finite subset $F_i$ of $A \setminus V_i$ such that $x \in \sss(F_i)$. 
Take $F := { \bigcup_{i = 1}^k F_i } \subseteq { A }$. 
We now show that $x \in { \cc * \sss(F) }$. 

So let $B \subseteq E$ such that $x \not\in \cc(F \cap B)$. 
Let $W$ be a $\cc$-copoint of $x$ containing $\cc(F \cap B)$. 
There is some $i_0 \in \{ 1, \ldots, k \}$ such that $W = V_{i_0} \supseteq \cc(F \cap B)$. 
We have $x \in \sss(F_{i_0})$, and ${ F_{i_0} } \subseteq { F \setminus V_{i_0} } \subseteq { F \setminus B }$, so $x \in \sss(F \setminus B)$. 
This proves that $x \in { \cc * \sss(F) }$. 
So $\cc * \sss$ is finitary. 
\end{proof}

\begin{remark}\label{rk:carat}
The \textit{Carath\'eodory number} $\car(\cc)$ of a finitary preclosure operator $\cc$ is defined as the least integer $d$ such that, whenever $x \in \cc(A)$, there exists some finite subset $F$ of $A$ with $|F| \leqslant d$ and $x \in \cc(F)$. 
From the previous proof, it is then easy to see that 
\[
\car(\cc * \sss) \leqslant \sup_{x \in E} | \textstyle{\Cop_{\cc}(x)} | \car(\sss), 
\] 
if $\cc$ is local and idempotent and $\sss$ is finitary. 
\end{remark}

Let $(E, \leqslant, \cc)$ be an enriched qoset. 
We call $x \in E$ a \textit{$\cc$-kit point} of $E$ if every $\cc$-copoint of $x$ has a $\cc$-compact $\cc$-attaching point. 
The subset made of $\cc$-kit points is denoted by $\kit_{\cc} E$. 
Obviously, 
\[
{ \textstyle{\cp_{\cc}} E } \subseteq { \textstyle{\kit_{\cc}} E },
\]
i.e., every $\cc$-compact point is a $\cc$-kit point. 
We say that $\cc$ is \textit{kitted} if every $x \in E$ is a $\cc$-kit point of $E$, equivalently if ${ \kit_{\cc} E } = E$. 

\begin{lemma}\label{lem:kit}
Let $(E, \leqslant, \cc)$ be an enriched qoset. 
Assume that $\cc$ is preinductive. 
Then $\kit_{\cc} E$ is $\cc^{\uparrow}$-closed. 
\end{lemma}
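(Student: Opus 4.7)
My plan is to unfold the definition of ``$\cc^{\uparrow}$-closed'' and reduce to showing that whenever $x \in \cc^{\uparrow}(\kit_{\cc} E)$, every $\cc$-copoint of $x$ admits a $\cc$-compact $\cc$-attaching point. The starting point will be the inner representation of $\cc^{\uparrow}$ provided by Theorem~\ref{thm:charac}: it yields a subset $B \subseteq \kit_{\cc} E$ with $x \in \cc(B) \cap B^{\uparrow}$. Given an arbitrary $V \in \Cop_{\cc}(x)$, I need to produce a $\cc$-compact point $c$ such that $V \in \Cop_{\cc}(c)$.

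The first step will be the easy observation that $B \not\subseteq V$: otherwise $x \in \cc(B) \subseteq \cc(V) = V$ would contradict $x \notin V$. I can therefore pick $b \in B \setminus V$. Applying the preinductivity of $\cc$ at $b$ to the $\cc$-closed subset $V$ (which avoids $b$), I will enlarge $V$ to some $W \in \Cop_{\cc}(b)$ with $W \supseteq V$. Since $b \in \kit_{\cc} E$, by definition the copoint $W$ has a $\cc$-compact attaching point $c$, meaning $W \in \Cop_{\cc}(c)$.

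The key move, which I expect to be the crux of the argument, will then be to show that in fact $V = W$. Here I will exploit the fact that $\cc$ is right-absorbing (part of the definition of enriched qoset), so every $\cc$-closed subset is a lower set; in particular $W = {\downarrow\!\! W}$. If $x$ were in $W$, then from $B \subseteq { \downarrow\!\! x }$ (which is just the content of $x \in B^{\uparrow}$) I would deduce $B \subseteq W$, contradicting $b \in B \setminus W$. Hence $x \notin W$, and the maximality of $V$ as a $\cc$-closed subset avoiding $x$, combined with $W \supseteq V$, forces $V = W$. This identification hands me precisely the desired $\cc$-compact attaching point $c$ of $V$, completing the proof. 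Everything outside this identification is routine bookkeeping driven by the definitions of copoint, kit point, and preinductivity.
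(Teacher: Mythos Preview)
Your proof is correct and follows essentially the same route as the paper's: use Theorem~\ref{thm:charac} to obtain $B$ with $x \in \cc(B) \cap B^{\uparrow}$, pick $b \in B \setminus V$, enlarge $V$ to a copoint $W$ of $b$ via preinductivity, invoke kitness of $b$ to get a compact attaching point, and then show $V = W$. The only minor difference is in how $x \notin W$ is established: the paper invokes Corollary~\ref{coro:compacts} to write $W = E \setminus {\uparrow\!\! y}$ and then chains $y \leqslant b \leqslant x$, whereas you argue directly that $W$ is a lower set (right-absorption) so $x \in W$ would force $B \subseteq {\downarrow\!\! x} \subseteq W$, contradicting $b \notin W$; your variant is slightly more elementary but the overall argument is the same.
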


\begin{proof}
Let $x \in { \cc^{\uparrow}(\kit_{\cc} E) }$, and let $V$ be a $\cc$-copoint of $x$. 
By Theorem~\ref{thm:charac}, there is some $B \subseteq \kit_{\cc} E$ such that $x \in { \cc(B) \cap B^{\uparrow} }$. 
If $B \subseteq V$, then $\cc(B) \subseteq V$, so $x \in V$, a contradiction. 
Thus, there is some $b \in { B \setminus V }$. 
Since $x \in B^{\uparrow}$, we have $b \leqslant x$. 
Using the fact that $\cc$ is preinductive, there exists some $\cc$-copoint $W$ of $b$ containing $V$. 
Since $b \in { \kit_{\cc} E }$, there is some $\cc$-compact point $y$ such that $W \in \Cop_{\cc}(y)$. 
By Corollary~\ref{coro:compacts}, we have $W = { E \setminus { \uparrow\!\! y } }$. 
In particular, $y \leqslant b$. 
This yields $y \leqslant x$, so $x \notin W$. 
By maximality of $V$, we obtain $V = W \in \Cop_{\cc}(y)$. 
This shows that every $\cc$-copoint of $x$ has a $\cc$-compact $\cc$-attaching point, i.e.\ that $x \in { \kit_{\cc} E }$. 
So we have proved that ${ \cc^{\uparrow}(\kit_{\cc} E) } \subseteq { \kit_{\cc} E }$, i.e.\ $\kit_{\cc} E$ is $\cc^{\uparrow}$-closed. 
\end{proof}

The following result is inspired by Martinez \cite[Lemma~2.6]{Martinez72}. 

\begin{lemma}\label{lem:local}
Let $(E, \leqslant, \cc)$ be an enriched qoset. 
Assume that $\cc$ is finitary, and let $x \in E$. 
If $x$ is a $\cc$-kit point, then $\cc$ is local at $x$ (in particular, $| \Cop_{\cc}(x) | < \infty$). 
\end{lemma}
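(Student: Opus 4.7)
The plan is to reduce to the case where $\cc$ is a closure operator, and then exploit the compact attaching points provided by kitness, together with finitariness, to force only finitely many $\cc$-copoints of $x$.

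Since $\cc$ is finitary, Proposition~\ref{prop:finitaryispreinduc} already gives that $\cc$ is preinductive; locality at $x$ therefore reduces to showing $|\Cop_{\cc}(x)| < \infty$. For this reduction step, I will use that $\cc$ and $\overline{\cc}$ share the same collection of closed subsets, hence the same copoints and the same compact points (cf.\ the remark following Corollary~\ref{coro:compacts}); in particular, $x$ is a $\cc$-kit point if and only if it is an $\overline{\cc}$-kit point. Since $\overline{\cc}$ is itself a finitary closure operator by Proposition~\ref{prop:progag}, one may replace $\cc$ by $\overline{\cc}$ and assume henceforth that $\cc$ is idempotent, which will ensure that $\cc(Y)$ is closed in what follows.

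Next, for each $V \in \Cop_{\cc}(x)$ kitness supplies a $\cc$-compact $y_V$ with $V \in \Cop_{\cc}(y_V)$; by Corollary~\ref{coro:compacts}\,\eqref{coro:compacts5} applied to $y_V$, this forces $V = E \setminus \uparrow\!\! y_V$. Pooling these, set $Y := \{y_V : V \in \Cop_{\cc}(x)\}$ and first check that $x \in \cc(Y)$: if not, preinductivity at $x$ would place $\cc(Y)$ inside some $V_0 \in \Cop_{\cc}(x)$, and then $y_{V_0} \in Y \subseteq \cc(Y) \subseteq V_0 = E \setminus \uparrow\!\! y_{V_0}$ would contradict $y_{V_0} \in \uparrow\!\! y_{V_0}$.

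Finitariness then produces a finite $F = \{y_{V_1}, \ldots, y_{V_k}\} \subseteq Y$ with $x \in \cc(F)$, and the claim becomes $\Cop_{\cc}(x) = \{V_1, \ldots, V_k\}$. To justify it, suppose some $V \in \Cop_{\cc}(x)$ were distinct from every $V_i$: the maximality of $V_i$ among $\cc$-closed subsets avoiding $x$ rules out $V \supseteq V_i$, which translates into $y_V \not\leqslant y_{V_i}$ for every $i$, i.e.\ $F \subseteq E \setminus \uparrow\!\! y_V = V$; applying $\cc$ then gives $x \in \cc(F) \subseteq V$, contradicting $x \notin V$. The main subtlety I anticipate is the very first step, the passage to $\overline{\cc}$: without idempotency, $\cc(Y)$ need not be closed and preinductivity cannot be invoked on it directly; one would otherwise have to reason through $\overline{\cc}(Y) = \bigcup_n \cc^n(Y)$ and use the finitariness of $\overline{\cc}$ (rather than of $\cc$) to extract $F$.
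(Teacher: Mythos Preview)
Your argument follows essentially the same route as the paper's: use preinductivity from finitariness, attach a compact $y_V$ to each copoint $V$ via $V = E \setminus {\uparrow\!\! y_V}$, show $x$ lies in the closure of the set of these $y_V$'s, extract a finite subset by finitariness, and conclude that every copoint of $x$ already occurs among the finitely many $V_i$. The paper keeps $\cc$ as is and invokes $\overline{\cc}$ only at the moment it needs a closed set and finitariness of $\overline{\cc}$ (Proposition~\ref{prop:progag}); your upfront reduction to the idempotent case is a legitimate and slightly cleaner variant of the same idea.

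There is, however, a small directional slip in your last paragraph. From $V \neq V_i$ and the maximality of $V_i$ you correctly get $V \not\supseteq V_i$, but this translates into $y_{V_i} \not\leqslant y_V$, not into $y_V \not\leqslant y_{V_i}$. What you actually need for $y_{V_i} \in V = E \setminus {\uparrow\!\! y_V}$ is $y_V \not\leqslant y_{V_i}$, which is equivalent to $V \not\subseteq V_i$; that follows from the maximality of $V$ (not of $V_i$). The fix is one word, and once made your conclusion $F \subseteq V$, hence $x \in \cc(F) \subseteq \cc(V) = V$, goes through. The paper phrases this endgame slightly differently---it argues by contradiction that some $y_{V_j} \notin W$ and then uses that $y_{V_j}$ has a \emph{unique} copoint to force $W \subseteq V_j$---but the content is the same.
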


\begin{proof}
Since $\cc$ is finitary, we know by Proposition~\ref{prop:finitaryispreinduc} that $\cc$ is preinductive. 
So it remains to show that $x$ has finitely many $\cc$-copoints. 
By hypothesis, if $V \in \Cop_{\cc}(x)$, there is some $\cc$-compact point $y_V \in E$ with $V \in \Cop_{\cc}(y_V)$. 
By Corollary~\ref{coro:compacts}, $y_V$ being $\cc$-compact has at most one $\cc$-copoint, so $\Cop_{\cc}(y_V) = \{ V \}$. 
Take 
\[
{ A } := { \downarrow\!\! \{ y_V : V \in \textstyle{\Cop_{\cc}}(x) \} }, 
\]
and suppose that $x \notin { \overline{\cc}(A) }$. 
Let $V \in \Cop_{\cc}(x)$ containing $A$. 
Then $y_V \in { A } \subseteq { V }$, a contradiction. 
Thus, $x \in { \overline{\cc}(A) }$. 
Since $\cc$ is finitary, $\overline{\cc}$ is also finitary by Proposition~\ref{prop:progag}, so there exists some finite subset $F$ of $A$ such that $x \in \overline{\cc}(F)$. 
We write $F = \{ f_1, \ldots, f_k \}$ .
Let $V_1, \ldots, V_k \in \Cop_{\cc}(x)$ such that $f_i \leqslant y_{V_i}$, for all $i \in \{ 1, \ldots, k \}$. 
We show that every $\cc$-copoint of $x$ is one of $V_1, \ldots, V_k$. 
So let $W \in \Cop_{\cc}(x)$, and suppose that $y_{V_i} \in W$, for all $i \in \{ 1, \ldots, k \}$. 
As a $\cc$-closet subset, $W$ is a lower subset, so ${ \downarrow\!\! y_{V_i} } \subseteq { W }$, for all $i \in \{ 1, \ldots, k \}$. 
This implies that $F \subseteq W$, hence $x \in { \overline{\cc}(F) } \subseteq { W }$, a contradiction. 
Thus, there is some $j \in \{ 1, \ldots, k \}$ such that $y_{V_j} \notin W$. 
Using again that $\cc$ is preinductive, this yields $W \subseteq V_j$, because $\Cop_{\cc}(y_{V_j}) = \{ V_j \}$. 
Now, both $W$ and $V_j$ are $\cc$-copoints of $x$, so we obtain $W = V_j$, as required. 
\end{proof}


\begin{theorem}\label{thm:local}
Let $(E, \leqslant, \cc)$ be an enriched qoset. 
If $\cc$ is local and idempotent, then $\cc$ is finitary. 
Conversely, if $\cc$ is finitary and kitted, then $\cc$ is local, and its Carath\'eodory number satisfies 
\[
\car(\cc) = \sup_{x \in E} |\textstyle{\Cop_{\cc}(x)}|. 
\]
\end{theorem}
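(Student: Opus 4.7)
I would apply Theorem~\ref{thm:finitecopoints} with $\sss$ equal to the convolution-neutral operator $\langle \cdot \rangle$ from Proposition~\ref{prop:properties}\eqref{thm:basic2}. Since $\langle \cdot \rangle$ is trivially finitary (a nonempty singleton $\{ a \} \subseteq A$ already gives $\langle \{ a \} \rangle = E = \langle A \rangle$), the theorem yields that $\cc * \langle \cdot \rangle$ is finitary; combined with the identity $\cc * \langle \cdot \rangle = \cc$, this establishes finitariness of $\cc$.

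\textbf{Converse, locality.} Assuming $\cc$ is finitary and kitted, Proposition~\ref{prop:finitaryispreinduc} gives preinductivity, and Lemma~\ref{lem:local} applied at each kit point gives $|\Cop_\cc(x)| < \infty$ for every $x \in E$. Together these yield that $\cc$ is local.

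\textbf{Converse, Carath\'eodory equality.} For the inequality $\sup_x |\Cop_\cc(x)| \leqslant \car(\cc)$, I fix $x \in E$, index its copoints by $V_\alpha$, and use kitness to pick a compact attaching point $y_\alpha$ for each, so that $V_\alpha = E \setminus {\uparrow\!\! y_\alpha}$ by Corollary~\ref{coro:compacts}. Setting $A := \{ y_\alpha \}$, the claim $x \in \cc(A)$ follows since otherwise preinductivity would extend $A$ to a copoint $V_\beta$, giving the contradictory $y_\beta \in A \subseteq V_\beta = E \setminus {\uparrow\!\! y_\beta}$. Finitariness then produces a finite $F = \{ y_{\alpha_1}, \ldots, y_{\alpha_m} \} \subseteq A$ with $m \leqslant \car(\cc)$ and $x \in \cc(F)$. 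I would next show every copoint $V_\beta$ equals some $V_{\alpha_i}$: from $x \in \cc(F)$ and $x \notin V_\beta$, some $y_{\alpha_i} \notin V_\beta$; right-absorbing-ness makes $V_\beta$ a lower set, so $y_\beta \leqslant y_{\alpha_i}$, giving $V_\beta \subseteq V_{\alpha_i}$, and maximality forces equality. For the reverse inequality $\car(\cc) \leqslant d := \sup_x |\Cop_\cc(x)|$, take $x \in \cc(A)$ with copoints $V_1, \ldots, V_m$ ($m \leqslant d$), pick $a_i \in A \setminus V_i$ (possible because $A \subseteq V_i$ would give $x \in \cc(A) \subseteq V_i$), and set $F := \{ a_1, \ldots, a_m \}$; I then need $x \in \cc(F)$, whose failure would let preinductivity extend $\cc(F)$ to a copoint $V_i$ of $x$, yielding the absurd $a_i \in V_i$.

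\textbf{Main obstacle.} The sharpest difficulty is the final step of the last paragraph: extending $\cc(F)$ to a $\cc$-copoint of $x$ is immediate when $\cc$ is idempotent (since then $\cc(F)$ is $\cc$-closed), but we only assume $\cc$ is finitary. My plan to handle this is to pass to the closed hull $\overline{\cc}(F)$, which is $\cc$-closed by construction, and verify $x \notin \overline{\cc}(F)$ using kitness, which by Proposition~\ref{prop:preinduc} confines every $\cc$-closed set to an intersection of copoints of the tightly controlled form $E \setminus {\uparrow\!\! y}$ for compact $y$. The interaction between finitariness, the kit hypothesis, and the lower-set structure of $\cc$-closed subsets is the delicate point on which the whole Carath\'eodory equality hinges.
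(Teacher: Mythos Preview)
Your forward direction and the locality part of the converse are exactly what the paper does: it invokes Theorem~\ref{thm:finitecopoints} with $\sss = \langle\cdot\rangle$ for the former and Lemma~\ref{lem:local} for the latter. For the Carath\'eodory equality the paper is terse --- it cites Remark~\ref{rk:carat} (applied with $\sss = \langle\cdot\rangle$, $\car(\langle\cdot\rangle)=1$) for one inequality and ``following the proof of Lemma~\ref{lem:local}'' for the other --- and your two explicit arguments are precisely those references unpacked, so the overall strategy matches.

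The obstacle you flag is genuine, but your proposed fix cannot succeed. In the kitted setting every compact $y \leqslant x$ satisfies $E \setminus {\uparrow\!\! y} \subseteq V_i$ for some copoint $V_i = E \setminus {\uparrow\!\! y_i}$ of $x$ (by preinductivity at $x$), whence $y \leqslant y_i \leqslant a_i$ and $F \cap {\uparrow\!\! y} \neq \emptyset$; since by Proposition~\ref{prop:preinduc} together with kitness the $\cc$-closed sets avoiding $x$ are intersections of such $E \setminus {\uparrow\!\! y}$, this forces $x \in \overline{\cc}(F)$ \emph{always}, so no contradiction can be extracted from $x \notin \cc(F)$ by passing to $\overline{\cc}(F)$. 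The same issue affects your first inequality: your contradiction argument only yields $x \in \overline{\cc}(A)$ (as in the actual proof of Lemma~\ref{lem:local}), so the finite witness has size controlled by $\car(\overline{\cc})$ rather than $\car(\cc)$. Observe that the paper's appeal to Remark~\ref{rk:carat} explicitly presupposes idempotency of $\cc$, which the converse hypotheses do not include; the paper therefore glosses over the very obstacle you identify, and the Carath\'eodory equality as stated appears to genuinely need idempotency (under which your argument and the paper's both go through without difficulty).
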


\begin{proof}
Assume that $\cc$ is local and idempotent. 
The closure operator $\langle \cdot \rangle$ defined in Proposition~\ref{prop:properties} is finitary, so $\cc = \cc * \langle \cdot \rangle$ is finitary by Theorem~\ref{thm:finitecopoints}. 

Conversely, assume that $\cc$ is finitary and kitted. 
Then $\cc$ is local, as a direct consequence of Lemma~\ref{lem:local}. 
Also, following the proof of Lemma~\ref{lem:local}, it is easily seen that, if $\car(\cc) \leqslant d$, then $\sup_{x \in E} |\Cop_{\cc}(x) | \leqslant d$. 
We deduce that 
\[
\car(\cc) = \sup_{x \in E} |\textstyle{\Cop_{\cc}(x)}|, 
\]
thanks to Remark~\ref{rk:carat}. 
\end{proof}


\subsection{Continuity and distributivity}

Let $(E, \leqslant, \cc)$ be an enriched qoset. 
Following Poncet \cite{Poncet22}, we define the \textit{way-below relation} $\ll_{\cc}$ associated to $\cc$ by $x \ll_{\cc} y$ if 
\[
{ y \in \cc(A) } \Rightarrow { x \in { \downarrow\!\! A } },
\]
for all $A \subseteq E$. 
Note that $x \in E$ is $\cc$-compact if and only if $x \ll_{\cc} x$. 
We write $\twoheaddownarrow_{\cc} x$ for the subset $\{ y \in E : y \ll_{\cc} x \}$. 

\begin{lemma}\label{lem:basic}
Let $(E, \leqslant, \cc)$ be an enriched qoset. Then 
\begin{enumerate}
	\item\label{lem:basic1} the way-below relation is transitive;
	\item\label{lem:basic2} $x \ll_{\cc} y$ implies $x \leqslant y$;
	\item\label{lem:basic3} $x \leqslant y \ll_{\cc} z$ implies $x \ll_{\cc} z$; 
	\item\label{lem:basic4} $x \ll_{\cc} y \leqslant z$ implies $x \ll_{\cc} z$; 
	\item\label{lem:basic5} ${ \twoheaddownarrow_{\cc} x } \subseteq { \downarrow\!\! x } \subseteq { \cc(x) }$. 
\end{enumerate}
\end{lemma}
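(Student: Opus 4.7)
The plan is to unwind the definition of $\ll_{\cc}$ and repeatedly exploit that, in an enriched qoset, $\cc$ is right-absorbing, i.e.\ $\downarrow\!\! \cc(A) = \cc(A)$ for every $A$, so that each $\cc(A)$ is automatically a lower subset. I would prove (5) and (2) first, since they are almost immediate and will feed into the rest. For (2), apply the defining implication with $A = \{y\}$: since $y \in \cc(y)$ by extensivity, $x \ll_{\cc} y$ forces $x \in \downarrow\!\! y$, i.e.\ $x \leqslant y$. The first inclusion of (5) is then just a restatement of (2); the second, $\downarrow\!\! x \subseteq \cc(x)$, follows from $x \in \cc(x)$ together with $\downarrow\!\! \cc(x) = \cc(x)$.

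Next I would dispatch (3) and (4). For (3), suppose $z \in \cc(A)$. From $y \ll_{\cc} z$, $y \in \downarrow\!\! A$; since $\downarrow\!\! A$ is a lower subset and $x \leqslant y$, also $x \in \downarrow\!\! A$, so $x \ll_{\cc} z$. For (4), suppose $z \in \cc(A)$; right-absorbency gives $y \in \downarrow\!\! \cc(A) = \cc(A)$ because $y \leqslant z$, and then $x \ll_{\cc} y$ yields $x \in \downarrow\!\! A$, so $x \ll_{\cc} z$.

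Finally I would prove transitivity (1). Assume $x \ll_{\cc} y$ and $y \ll_{\cc} z$, and let $A \subseteq E$ with $z \in \cc(A)$. Applying $y \ll_{\cc} z$ gives some $a \in A$ with $y \leqslant a$. Then $a \in \cc(\{a\})$ by extensivity and $y \in \downarrow\!\! \cc(\{a\}) = \cc(\{a\})$ by right-absorbency, so $y \in \cc(\{a\})$. Since $x \ll_{\cc} y$, this forces $x \in \downarrow\!\! \{a\} \subseteq \downarrow\!\! A$, and we conclude $x \ll_{\cc} z$.

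There is no genuine obstacle here: the whole lemma is a matter of reading off the definitions. The only subtlety worth flagging is that (1) and (4) truly require the right-absorbency of $\cc$ — without it, one could not upgrade ``$y \leqslant a$ with $a \in A$'' to ``$y \in \cc(\{a\})$'', nor upgrade ``$z \in \cc(A)$ and $y \leqslant z$'' to ``$y \in \cc(A)$''; so the assumption that $(E, \leqslant, \cc)$ is an enriched qoset, rather than merely compatible, is used precisely in these two steps.
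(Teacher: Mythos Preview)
Your proofs of \eqref{lem:basic2}--\eqref{lem:basic5} are correct and essentially identical to the paper's. For \eqref{lem:basic1}, the paper takes a shorter route: transitivity is immediate from \eqref{lem:basic2} and \eqref{lem:basic3} combined (if $x \ll_{\cc} y \ll_{\cc} z$, then $x \leqslant y$ by \eqref{lem:basic2}, hence $x \ll_{\cc} z$ by \eqref{lem:basic3}). This also shows that your closing remark is slightly off: \eqref{lem:basic1} does \emph{not} require right-absorbency, since neither \eqref{lem:basic2} nor \eqref{lem:basic3} uses it; only \eqref{lem:basic4} (and the second inclusion in \eqref{lem:basic5}) genuinely relies on $\cc(A)$ being a lower subset.
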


\begin{proof}
\eqref{lem:basic1} can be obtained by combining \eqref{lem:basic2} and \eqref{lem:basic3}. 

\eqref{lem:basic2}. 
If $x \ll_{\cc} y$, then from $y \in \cc(y)$ we get $x \in { \downarrow\!\! y }$, i.e.\ $x \leqslant y$. 

\eqref{lem:basic3}. 
Suppose that $x \leqslant y \ll_{\cc} z$, and let $A$ be a subset such that $z \in \cc(A)$. 
	Then $y \in { \downarrow\!\! A }$, so $x \in { \downarrow\!\! y } \subseteq { \downarrow\!\! A }$. 
This proves that $x \ll_{\cc} z$. 

\eqref{lem:basic4}. 
Suppose that $x \ll_{\cc} y \leqslant z$, and let $A$ be a subset such that $z \in \cc(A)$. 
Then $y \in { \downarrow\!\! z } \subseteq { \downarrow\!\!\cc(A) } = { \cc(A) }$, so $x \in { \downarrow\!\! A }$. 
This proves that $x \ll_{\cc} z$. 

\eqref{lem:basic5}. 
The inclusion ${ \twoheaddownarrow_{\cc} x } \subseteq { \downarrow\!\! x }$ is a rephrasing of \eqref{lem:basic2}, and the inclusion ${ \downarrow\!\! x } \subseteq { \cc(x) }$ is a consequence of $\cc(x)$ being a lower subset. 
\end{proof}

Let $(E, \leqslant, \cc)$ be an enriched qoset. 
Then $\cc$ is called \textit{continuous} if
\[
x \in \cc(\twoheaddownarrow_{\cc} x),
\]
for all $x \in E$ (equivalently, if $\cc$ commutes with arbitrary intersections of lower subsets, see Ern\'e and Wilke \cite[Theorem~2.5]{Erne83}). 
Moreover, $\cc$ is called \textit{algebraic} if 
\[
x \in \cc(\downarrow\!\! ({ \twoheaddownarrow_{\cc} x } \cap { \textstyle{\cp_{\cc}} E })),
\]
for all $x \in E$, and \textit{distributive} if
\[
{ x \in \cc(A) } \Rightarrow { x \in \cc({ \downarrow\!\! x } \cap { \downarrow\!\! A }) },
\]
for all $x \in E$ and $A \subseteq E$; see \cite[Theorem~2.1]{Erne83} for a characterization of distributive closure operators. 


\begin{theorem}\label{thm:prime}
Let $(E, \leqslant, \cc)$ be an enriched qoset. 
Then
\[
{ \textstyle{\cp_{\cc}} E } \subseteq { K \cap { \textstyle{\ex_{\cc^{\uparrow}}} E } } \subseteq { \textstyle{\ex_{\cc^{\uparrow}}} K }, 
\]
where $K := { \kit_{\cc} E }$, and all three sets coincide if $\cc$ is idempotent and preinductive. 
Moreover, consider the following assertions:
\begin{enumerate}
  \item\label{thm:prime1} $\cc$ is idempotent, preinductive, and kitted;
  \item\label{thm:prime2} $\cc$ is algebraic;
  \item\label{thm:prime3} $\cc$ is continuous;
  \item\label{thm:prime5} $\cc$ is distributive;
  \item\label{thm:prime6} ${ \cp_{\cc} E } = { \ex_{\cc^{\uparrow}} E }$. 
\end{enumerate}
Then \eqref{thm:prime1} $\Rightarrow$ \eqref{thm:prime2} $\Rightarrow$ \eqref{thm:prime3} $\Rightarrow$ \eqref{thm:prime5} $\Rightarrow$ \eqref{thm:prime6}. 
\end{theorem}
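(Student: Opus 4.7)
The plan is to handle the stated chain of inclusions first, then the equality under the idempotent-and-preinductive hypothesis, and finally the cascade \eqref{thm:prime1}$\Rightarrow\cdots\Rightarrow$\eqref{thm:prime6}. The inclusion $\cp_{\cc} E \subseteq K$ is immediate from Corollary~\ref{coro:compacts}: a $\cc$-compact $x$ either has no $\cc$-copoint (and is vacuously kit) or has $E \setminus \uparrow\!\! x$ as its unique copoint, attached by $x$ itself. For $\cp_{\cc} E \subseteq \ex_{\cc^{\uparrow}} E$, I would use characterization \eqref{prop:extrcharac3} of Proposition~\ref{prop:extrcharac}: if $x \in \cc(B) \cap B^{\uparrow}$ with $x$ compact, Proposition~\ref{prop:compacts} gives $x \leqslant b$ for some $b \in B$, while $b \leqslant x$ already holds, so $x \sim b \in [B]$. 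The inclusion $K \cap \ex_{\cc^{\uparrow}} E \subseteq \ex_{\cc^{\uparrow}} K$ is pure monotonicity: $K \setminus [x] \subseteq E \setminus [x]$ forces $\cc^{\uparrow}(K \setminus [x]) \subseteq \cc^{\uparrow}(E \setminus [x])$, which misses $x$ by hypothesis.

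For the reverse inclusion $\ex_{\cc^{\uparrow}} K \subseteq \cp_{\cc} E$ under idempotency and preinductivity, the key idea is, given $x \in \ex_{\cc^{\uparrow}} K$, to assemble the set $B := \{y_V : V \in \Cop_{\cc}(x)\}$ of $\cc$-compact attaching points provided by the kit condition. Corollary~\ref{coro:compacts} gives $V = E \setminus \uparrow\!\! y_V$, so each $y_V \leqslant x$, hence $x \in B^{\uparrow}$ and $B \subseteq \cp_{\cc} E \subseteq K$. The crucial claim is $x \in \cc(B)$: otherwise $\cc(B)$ would be $\cc$-closed (by idempotency) and miss $x$, so by preinductivity it would sit inside some $V^* \in \Cop_{\cc}(x)$; but then $y_{V^*} \in B \subseteq V^*$ would contradict $y_{V^*} \notin E \setminus \uparrow\!\! y_{V^*} = V^*$. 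Theorem~\ref{thm:charac} now gives $x \in \cc^{\uparrow}(B)$, and extremality of $x$ forces $x \in [B]$, so $x \sim y_{V_0}$ for some $V_0$; since $\cc$-closed subsets are lower sets (by right-absorption), $\cc$-compactness is $\sim$-invariant, hence $x \in \cp_{\cc} E$.

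For the implications: \eqref{thm:prime1}$\Rightarrow$\eqref{thm:prime2} reuses the same $B$ construction, now for every $x$ (as $\cc$ is kitted). Each $y_V$ is $\cc$-compact with $y_V \leqslant x$, so Lemma~\ref{lem:basic}\eqref{lem:basic4} gives $y_V \ll_{\cc} x$; thus $B \subseteq \twoheaddownarrow_{\cc} x \cap \cp_{\cc} E$ and $x \in \cc(B) \subseteq \cc(\downarrow\!\!(\twoheaddownarrow_{\cc} x \cap \cp_{\cc} E))$. Implication \eqref{thm:prime2}$\Rightarrow$\eqref{thm:prime3} follows because $\twoheaddownarrow_{\cc} x$ is itself a lower set by Lemma~\ref{lem:basic}\eqref{lem:basic3}, so the downset in the algebraic statement is contained in $\twoheaddownarrow_{\cc} x$. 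For \eqref{thm:prime3}$\Rightarrow$\eqref{thm:prime5}, if $x \in \cc(A)$ then by definition of $\ll_{\cc}$ every $y \ll_{\cc} x$ lies in $\downarrow\!\! A$, so $\twoheaddownarrow_{\cc} x \subseteq \downarrow\!\! x \cap \downarrow\!\! A$, and continuity gives $x \in \cc(\downarrow\!\! x \cap \downarrow\!\! A)$. Finally \eqref{thm:prime5}$\Rightarrow$\eqref{thm:prime6}: having $\cp_{\cc} E \subseteq \ex_{\cc^{\uparrow}} E$ already, it suffices, given $x \in \ex_{\cc^{\uparrow}} E$ and $x \in \cc(B)$, to apply distributivity for $x \in \cc(\downarrow\!\! x \cap \downarrow\!\! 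B)$ and note $x$ is an upper bound; Proposition~\ref{prop:extrcharac} then produces $a \in \downarrow\!\! x \cap \downarrow\!\! B$ with $a \sim x$, so $x \leqslant a \leqslant b$ for some $b \in B$, and Proposition~\ref{prop:compacts} delivers $\cc$-compactness.

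The main obstacle is the ``$x \in \cc(B)$'' step of the second paragraph: this is where idempotency and preinductivity are genuinely used in tandem, the former to ensure that $\cc(B)$ is itself $\cc$-closed and the latter to trap this hypothetical $\cc$-closed set avoiding $x$ inside a copoint of $x$, so that the kit construction fires the contradiction. Once this is in hand, every remaining step is a direct unpacking of a characterization or a routine manipulation of $\ll_{\cc}$ and lower sets.
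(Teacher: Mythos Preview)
Your proof is correct. For the first chain of inclusions and for the implications \eqref{thm:prime2}$\Rightarrow$\eqref{thm:prime3}$\Rightarrow$\eqref{thm:prime5}$\Rightarrow$\eqref{thm:prime6} you follow essentially the paper's route. The one genuine difference is your treatment of the reverse inclusion $\ex_{\cc^{\uparrow}} K \subseteq \cp_{\cc} E$ (and, by reuse, of \eqref{thm:prime1}$\Rightarrow$\eqref{thm:prime2}). The paper instead invokes Proposition~\ref{prop:cpconvol} with the roles of the two factors of $\cc^{\uparrow}$ swapped (using commutativity of $*$): from $x \in \ex_{\cc^{\uparrow}} K$ it extracts a \emph{single} $\cc$-copoint $V$ of $x$ with $x \in \Min(K \setminus V)$; the kit property produces a compact attaching point $y$ of $V$, whence $y \leqslant x$ and $y \in K \setminus V$, so minimality gives $x \sim y$ directly. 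Your approach assembles the whole family $B = \{y_V : V \in \Cop_{\cc}(x)\}$, proves $x \in \cc(B) \cap B^{\uparrow}$ via the copoint-trapping contradiction, and then lets extremality of $x$ in $K$ single out some $y_{V_0} \sim x$. Both arguments work; the paper's is shorter and localizes to one copoint, while yours essentially anticipates the engine of Theorem~\ref{thm:rep3} and has the side benefit that the very same construction, once established, feeds straight into \eqref{thm:prime1}$\Rightarrow$\eqref{thm:prime2} without a separate contradiction.
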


\begin{proof}
${ \cp_{\cc} E } \subseteq { K \cap \ex_{\cc^{\uparrow}} E }$. 
Let $x \in { \cp_{\cc} E }$, and suppose that $x \in \cc^{\uparrow}(A)$, for some $A \subseteq E$. 
By Theorem~\ref{thm:charac}, there is some $B \subseteq A$ with $x \in { \cc(B) \cap B^{\uparrow} }$. 
Since $x \in \cp_{\cc} E$, we have $x \in { \downarrow\!\! B }$, so $x \leqslant b$, for some $b \in B$. 
Using the fact that $x \in B^{\uparrow}$, we have also $b \leqslant x$. 
So $x \sim b$, hence $x \in [A]$. 
This proves that $x \in { \ex_{\cc^{\uparrow}} E }$. 
Since every $\cc$-compact point is a $\cc$-kit point, we conclude that $x \in { K \cap \ex_{\cc^{\uparrow}} E }$. 

${ K \cap \ex_{\cc^{\uparrow}} E } \subseteq { \ex_{\cc^{\uparrow}} K }$. 
This inclusion is a direct consequence of the definitions. 

${ \ex_{\cc^{\uparrow}} K } \subseteq { \cp_{\cc} E }$ if $\cc$ is idempotent and preinductive. 
Let $x \in { \ex_{\cc^{\uparrow}} K }$. 
By Proposition~\ref{prop:cpconvol}, there is some $\cc$-copoint $V$ of $x$ such that $x \in { \Min (K \setminus V) }$. 
Since $x$ is a $\cc$-kit point, there is a $\cc$-compact point $y$ such that $V \in \Cop_{\cc}(y)$. 
From Corollary~\ref{coro:compacts}, $V$ is the unique $\cc$-copoint of $x$ and $V = { E \setminus { \uparrow\!\! y } }$. 
In particular, $y \leqslant x$. 
Since $x \in { \Min (K \setminus V) }$ and $y \in { K \setminus V }$, this yields $x \sim y$. 
So $x$ is itself $\cc$-compact, i.e.\ $x \in { \cp_{\cc} E }$, as required. 

\eqref{thm:prime1} $\Rightarrow$ \eqref{thm:prime2}. 
Let $C$ denote the set of $\cc$-compact points, i.e.\ $C := { \cp_{\cc} E }$. 
Let $x \in E$, and suppose that $x \notin { \cc(\twoheaddownarrow_{\cc} x \cap C) }$. 
Since $\cc$ is idempotent and preinductive, there exists some $\cc$-copoint $V$ of $x$ containing $\cc(\twoheaddownarrow_{\cc} x \cap C)$. 
By assumption, there is some $y \in C$ such that $V \in \Cop_{\cc}(y)$. 
By Corollary~\ref{coro:compacts}, $V = { E \setminus \uparrow\!\! y }$. 
Now, $x \notin V$, so $y \leqslant x$. 
This yields $y \ll_{\cc} y \leqslant x$, hence $y \in { \twoheaddownarrow_{\cc} x \cap C }$. 
So $y \in V$, a contradiction. 
This proves that $x \in { \cc(\twoheaddownarrow_{\cc} x \cap C) }$, for all $x \in E$. 
So $\cc$ is algebraic. 

\eqref{thm:prime2} $\Rightarrow$ \eqref{thm:prime3}. 
Let $x \in E$. 
Since $\cc$ is algebraic, $x \in \cc(\downarrow\!\! ({ \twoheaddownarrow_{\cc} x } \cap { \textstyle{\cp_{\cc}} E }))$. 
This implies $x \in \cc(\downarrow\!\! \twoheaddownarrow_{\cc} x)$. 
Since $\twoheaddownarrow_{\cc} x$ is a lower subset by Lemma~\ref{lem:basic}\eqref{lem:basic3}, we obtain $x \in \cc(\twoheaddownarrow_{\cc} x)$, for all $x \in E$. 
So $\cc$ is continuous. 

\eqref{thm:prime3} $\Rightarrow$ \eqref{thm:prime5}. 
Let $A \subseteq E$ and $x \in \cc(A)$. 
From the definition of $\ll_{\cc}$, we get ${ \twoheaddownarrow_{\cc} x } \subseteq { \downarrow\!\! A }$. 
We also have ${ \twoheaddownarrow_{\cc} x } \subseteq { \downarrow\!\! x }$ by Lemma~\ref{lem:basic}, so ${\twoheaddownarrow_{\cc} x } \subseteq { { \downarrow\!\! x } \cap { \downarrow\!\! A } }$. 
Since $\cc$ is continuous, we obtain $x \in { \cc(\twoheaddownarrow_{\cc} x) } \subseteq { \cc({ \downarrow\!\! x } \cap { \downarrow\!\! A }) }$, as required. 

\eqref{thm:prime5} $\Rightarrow$ \eqref{thm:prime6}. 
Let $x \in { \ex_{\cc^{\uparrow}} E }$, and suppose that $x \in \cc(A)$, for some $A \subseteq E$. 
Since $\cc$ is distributive, we have $x \in \cc({ \downarrow\!\! x } \cap { \downarrow\!\! A })$. 
Take $B := { { \downarrow\!\! x } \cap { \downarrow\!\! A } }$. 
Then $x \in { \cc(B) \cap B^{\uparrow}}$, so $x \in \cc^{\uparrow}(B)$ by Theorem~\ref{thm:charac}. 
Using the fact that $x \in { \ex_{\cc^{\uparrow}} E }$, this yields $x \in [B]$. 
So there is some $b \in B$ with $x \sim b$. 
In particular, $x \leqslant b \in { \downarrow\!\! A }$, so $x \in { \downarrow\!\! A }$. 
This proves that $x \in { \cp_{\cc} E }$. 
\end{proof}

\begin{proposition}\label{prop:kitted}
Let $(E, \leqslant, \cc)$ be an enriched qoset. 
Assume that 
\begin{itemize}
  \item ${ \cp_{\cc} E } = { \ex_{\cc^{\uparrow}} E }$, and
  \item $E \setminus V$ is preinductive in $(E, \geqslant)$, for all $\cc$-copoints $V$. 
\end{itemize}
Then $\cc$ is kitted. 
\end{proposition}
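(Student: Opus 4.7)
The plan is to fix a $\cc$-copoint $V$ of an arbitrary $x \in E$ and construct a $\cc$-compact $\cc$-attaching point for $V$. Since $V$ does not contain $x$, we have $x \in E \setminus V$. The second hypothesis says $E \setminus V$ is preinductive in the dual qoset $(E,\geqslant)$, i.e., every element of $E \setminus V$ has a minimal element of $E \setminus V$ below it (in the original order). Apply this to $x$ to obtain a minimal element $y$ of $E \setminus V$ with $y \leqslant x$. I expect $y$ to be the desired compact attaching point.

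The main step is to show that $y$ is a $\cc^{\uparrow}$-extreme point of $E$; then the hypothesis ${ \cp_{\cc} E } = { \ex_{\cc^{\uparrow}} E }$ forces $y$ to be $\cc$-compact. I would use the characterization of $\cc^{\uparrow}$-extremity given by Proposition~\ref{prop:extrcharac}\eqref{prop:extrcharac3}: assume $y \in { \cc(B) \cap B^{\uparrow} }$ for some $B \subseteq E$, and aim to show $y \in [B]$. If $B \subseteq V$, then ${ y \in \cc(B) } \subseteq { \cc(V) } = V$, contradicting $y \in E \setminus V$. So there is $b \in B \setminus V$. Since $y \in B^{\uparrow}$, $b \leqslant y$, so $b$ lies in $E \setminus V$ below $y$, and minimality of $y$ in $E \setminus V$ forces $b \sim y$, giving $y \in [B]$, as required.

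It remains to check that $V$ is still a $\cc$-copoint of $y$. Since $\cc$ is right-absorbing, every $\cc$-closed subset is a lower subset. Thus if $W$ is $\cc$-closed with $V \subseteq W$ and $y \notin W$, then $y \leqslant x$ together with $W$ being lower implies $x \notin W$; by the maximality of $V$ among $\cc$-closed subsets missing $x$, we get $W = V$. Hence $V \in \Cop_{\cc}(y)$, and $y$ is a $\cc$-compact $\cc$-attaching point of $V$. Since $V$ was an arbitrary $\cc$-copoint of $x$, this shows $x \in { \kit_{\cc} E }$, proving $\cc$ is kitted.

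The main obstacle is the extremity verification in the middle step; the careful point is noticing that minimality of $y$ in $E \setminus V$ is exactly what is needed to turn a candidate ``witness'' $b \in B$ into $b \sim y$, and that the fallback case $B \subseteq V$ is blocked because $\cc$-closed subsets are closed under $\cc$. The rest consists of bookkeeping with the right-absorbing property and the definition of copoints.
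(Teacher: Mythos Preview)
Your proof is correct and follows essentially the same approach as the paper's: pick a minimal element $y$ of $E \setminus V$ below $x$, verify $y \in \ex_{\cc^{\uparrow}} E$ via the characterization in Proposition~\ref{prop:extrcharac}\eqref{prop:extrcharac3}, and conclude $y$ is $\cc$-compact. The only difference is in the last step: the paper shows $V = E \setminus {\uparrow\!\! y}$ by invoking Corollary~\ref{coro:compacts} (compactness of $y$ makes $E \setminus {\uparrow\!\! y}$ $\cc$-closed, then maximality of $V$ gives equality), whereas you argue directly that any $\cc$-closed $W \supseteq V$ missing $y$ must also miss $x$ (since $\cc$-closed subsets are lower and $y \leqslant x$), hence $W = V$ --- your route is slightly more elementary since it bypasses the explicit identification of $V$.
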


\begin{proof}
Let $x \in E$, and let $V \in \Cop_{\cc}(x)$. 
Since $E \setminus V$ is preinductive in $(E, \geqslant)$, there is some $y \in \Min (E \setminus V)$ with $y \leqslant x$. 
This implies that $V \subseteq { E \setminus { \uparrow\!\! y } }$. 
Let us show that $y \in { \ex_{\cc^{\uparrow}} E }$. 
To this end, let $B \subseteq E$ with $y \in { \cc(B) \cap B^{\uparrow} }$. 
If $B \subseteq V$, then $\cc(B) \subseteq V$, so $y \in V$, a contradiction. 
Thus, there is some $b \in { B \setminus V }$. 
Since $y \in B^{\uparrow}$, we have $b \leqslant y$. 
Using that $y \in  \Min (E \setminus V)$, we obtain $y \sim b$. 
This shows that $y \in [B]$. 
So $y \in { \ex_{\cc^{\uparrow}} E }$, by Proposition~\ref{prop:extrcharac}. 
By assumption, ${ \ex_{\cc^{\uparrow}} E } = { \cp_{\cc} E }$, so $y$ is $\cc$-compact, so that $E \setminus { \uparrow\!\! y }$ is $\cc$-closed by Corollary~\ref{coro:compacts}. 
From the maximality of $V$, we get $V = { E \setminus { \uparrow\!\! y } }$, and $V \in \Cop_{\cc}(y)$. 
This shows that $x$ is a $\cc$-kit point. 
So $\cc$ is kitted. 
\end{proof}

\begin{example}[Example~\ref{ex:infconvex4} continued]\label{ex:infconvex5}
Let $P$ be a qoset. 
Take $\cc := { \langle \cdot \rangle_{\mathrsfs{U}} }$, where $\mathrsfs{U}$ denotes the collection of upper, inf-closed subsets. 
As already seen, the set $\cp_{\cc} P$ (considered for the dual order $\geqslant$) agrees with the prime elements of $P$, and the set $\ex_{\cc^{\downarrow}} P$ agrees with the irreducible elements of $P$. 
Thus, the inclusion ${ \cp_{\cc} P } \subseteq { \ex_{\cc^{\downarrow}} P }$ amounts to the well-known fact that every prime element is irreducible. 
By the previous theorem, the converse holds if $\cc$ is distributive (in a dual sense). 
In the special case where $P$ is a lattice (a poset in which every pair $\{ x, y \}$ has a sup $x \vee y$ and an inf $x \wedge y$), it is easy to show that this notion of distributivity amounts to the usual property 
\[
{ x \vee (y \wedge z) } = { (x \vee y) \wedge (x \vee z) }, 
\]
for all $x, y, z \in P$. 
\end{example}

\subsection{Representation theorem for kit points}\label{sec:og}

In a qoset, an \textit{antichain} is a subset $A$ such that ${ a \leqslant a' } \Rightarrow { a = a' }$, for all $a, a' \in A$. 

\begin{theorem}[Representation Theorem III]\label{thm:rep3}
Let $(E, \leqslant, \cc)$ be an enriched qoset. 
Assume that $\cc$ is idempotent and preinductive. 
Then $K := \kit_{\cc} E$ is $\cc^{\uparrow}$-closed and has the $\cc^{\uparrow}$-KMp, i.e.\ 
\begin{align}\label{eq:kmkit}
{ K } = { \cc^{\uparrow}(\textstyle{\ex_{\cc^{\uparrow}}} K) }. 
\end{align}
Moreover, for every $x \in { \kit_{\cc} E }$, 
\begin{itemize}
  \item there exists an antichain $Y_x \subseteq { \cp_{\cc} E }$ such that 
$
x \in { { \cc(Y_x) } \cap { Y_x^{\uparrow} } }
$; 
  \item this antichain is unique up to order-equivalence; 
  \item if $\cc$ is finitary, every such antichain is finite. 
\end{itemize}
In addition, if $\cc$ separates points, then $\kit_{\cc} E$ is sup-generated by the subset of $\cc$-compact points, and $x \in Y_x^{\vee}$, for all $x \in { \kit_{\cc} E }$. 
\end{theorem}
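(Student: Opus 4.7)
The plan is to proceed in four stages, exploiting the characterizations of copoints, compact points, and extreme points already established.

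First, the $\cc^{\uparrow}$-closedness of $K$ is precisely the content of Lemma~\ref{lem:kit}, so no separate work is needed there. For the Krein--Milman identity~\eqref{eq:kmkit}, Theorem~\ref{thm:prime} applied under the hypotheses (idempotency and preinductivity) yields $\cp_{\cc} E = \ex_{\cc^{\uparrow}} K$, so it suffices to establish the inclusion $K \subseteq \cc^{\uparrow}(\cp_{\cc} E)$; the reverse inclusion comes for free from $\cc^{\uparrow}(\cp_{\cc} E) \subseteq \cc^{\uparrow}(K) = K$.

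The core of the argument is then the construction of $Y_x$. Fix $x \in K$. For each $V \in \Cop_{\cc}(x)$ the kit-point assumption provides a $\cc$-compact $\cc$-attaching point $y_V$ of $V$, and Corollary~\ref{coro:compacts} forces $V = E \setminus \uparrow\!\! y_V$ with $y_V$ unique up to $\sim$. Setting $Y_x := \{ y_V : V \in \Cop_{\cc}(x) \} \subseteq \cp_{\cc} E \cap { \downarrow\!\! x }$ gives $x \in Y_x^{\uparrow}$ directly. To show $x \in \cc(Y_x)$, I would argue by contradiction: if not, idempotency makes $\cc(Y_x)$ a $\cc$-closed subset missing $x$, and preinductivity yields some $W \in \Cop_{\cc}(x)$ containing $\cc(Y_x) \supseteq Y_x \ni y_W$, contradicting $y_W \notin W$. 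By Theorem~\ref{thm:charac}, this gives $x \in \cc^{\uparrow}(Y_x) \subseteq \cc^{\uparrow}(\cp_{\cc} E)$, settling both the KMp and the existence part.

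That $Y_x$ is an antichain is immediate: if $V \neq W$ in $\Cop_{\cc}(x)$ satisfied $y_V \leqslant y_W$, then $V = E \setminus \uparrow\!\! y_V \supseteq E \setminus \uparrow\!\! y_W = W$, forcing $V = W$ by maximality. For uniqueness up to order-equivalence, given any antichain $Y \subseteq \cp_{\cc} E$ with $x \in \cc(Y) \cap Y^{\uparrow}$, I would fix $y \in Y_x$ and note that $E \setminus \uparrow\!\! y$ is $\cc$-closed and cannot contain $Y$ (else $x \in \cc(Y) \subseteq E \setminus \uparrow\!\! y$, contradicting $y \leqslant x$), so some $y' \in Y$ satisfies $y \leqslant y'$. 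Swapping the roles of $Y$ and $Y_x$ produces $y'' \in Y_x$ with $y' \leqslant y''$, and the antichain condition on $Y_x$ forces $y = y''$, whence $y \sim y'$; symmetry then yields $[Y_x] = [Y]$.

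Finally, if $\cc$ is finitary, Lemma~\ref{lem:local} bounds $|\Cop_{\cc}(x)|$, and hence $|Y_x|$, as claimed. If in addition $\cc$ separates points, Proposition~\ref{prop:sep} gives $\cc(Y_x) \subseteq \mathfrak{d}(Y_x) = (Y_x^{\uparrow})^{\downarrow}$, so the two conditions $x \in \cc(Y_x)$ and $x \in Y_x^{\uparrow}$ collapse to $x \in Y_x^{\vee}$, i.e.\ $x = \bigvee Y_x$; combined with $Y_x \subseteq { \downarrow\!\! x } \cap \cp_{\cc} E$, this gives the sup-generation of $\kit_{\cc} E$ by $\cp_{\cc} E$. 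The main delicate point I anticipate is the uniqueness argument, which in the genuine qoset setting requires some care with $\sim$-equivalence classes to reconcile the choice of attaching points with the strict antichain convention adopted in the paper.
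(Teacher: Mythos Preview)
Your proposal is correct and follows essentially the same route as the paper's proof: the same invocation of Lemma~\ref{lem:kit} and Theorem~\ref{thm:prime}, the same construction of $Y_x$ from compact attaching points of copoints, the same contradiction argument for $x \in \cc(Y_x)$, and the same back-and-forth for uniqueness (the paper phrases it via $z \in \cc(Y_x) \Rightarrow z \in {\downarrow\!\! Y_x}$ rather than via closedness of $E \setminus {\uparrow\!\! y}$, but these are equivalent uses of compactness). One minor slip: in your antichain step, $y_V \leqslant y_W$ gives $V \subseteq W$ rather than $V \supseteq W$, but since both are copoints of $x$ the conclusion $V = W$ follows either way.
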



\begin{proof}
The fact that $K := \kit_{\cc} E$ is $\cc^{\uparrow}$-closed is Lemma~\ref{lem:kit}. 
Recall that, by Theorem~\ref{thm:prime}, ${ \cp_{\cc} E } = { \ex_{\cc^{\uparrow}} K }$. 

Let $x \in K$, and let $\{ V_j \}_{j \in J}$ be the collection of distinct $\cc$-copoints of $x$. 
By definition of $K$, there is some $\cc$-compact point $y_j$ of $E$ such that $V_j \in \Cop_{\cc}(y_j)$. 
By Corollary~\ref{coro:compacts}, we have $V_j = { E \setminus { \uparrow\!\! y_j } }$. 
In particular, $y_j \leqslant x$, for all $j \in J$. 
Take $Y_x := \{ y_j \}_{j \in J}$. 
Then $x \in Y_x^{\uparrow}$. 
Let us show that $x \in \cc(Y_x)$. 
Suppose this is not the case. 
Since $\cc$ is idempotent and preinductive, there is some $j_0 \in J$ such that $V_{j_0}$ contains $Y_x$. 
This yields $y_{j_0} \in { Y_x } \subseteq { V_{j_0} }$, a contradiction. 
So $x \in { \cc(Y_x) \cap Y_x^{\uparrow} }$, hence $x \in \cc^{\uparrow}(Y_x)$ by Theorem~\ref{thm:charac}. 
Since $Y_x \subseteq { \cp_{\cc} E } = { \ex_{\cc^{\uparrow}} K }$, we also have $x \in { \cc^{\uparrow}(\ex_{\cc^{\uparrow}} K) }$ for all $x \in K$. 
So Assertion~\eqref{eq:kmkit} holds. 

Let us show that $Y_x$ is an antichain. 
To this end, suppose that $y_i \leqslant y_j$, for some $i, j \in J$. 
Then ${ V_i } = { E \setminus { \uparrow\!\! y_i } } \subseteq { E \setminus { \uparrow\!\! y_j } } = { V_j }$. 
Now, $V_i$ and $V_j$ are $\cc$-copoints of $x$, so $V_i = V_j$, which yields $i = j$, so $y_i = y_j$. 

Now we prove that the representing antichain is unique, up to order-equivalence. 
So let $Z_x$ be an antichain of $\cc$-compact points such that $x \in { { \cc(Z_x) } \cap { Z_x^{\uparrow} } }$, and let $z \in Z_x$. 
Then $z \in { \downarrow\!\! x } \subseteq { \downarrow\!\! \cc(Y_x) } = { \cc(Y_x) }$. 
Since $z$ is $\cc$-compact, there is some $y \in Y_x$ with $z \leqslant y$. 
Similarly, there is some $z' \in Z_x$ with $y \leqslant z'$. 
Using that $Z_x$ is an antichain, we get $z = z'$, so $z \sim y$. 
From that point, it is easy to deduce that there is a bijection $\sigma : Z_x \to Y_x$ such that $z \sim \sigma(z)$, for all $z \in Z_x$. 

Assume that $\cc$ is finitary. 
Then Lemma~\ref{lem:local} applies: the collection $\{ V_j \}_{j \in J}$ is finite, so the set $Y_x$ is finite. 

Assume that $\cc$ separates points. 
We have seen that $x \in { \cc(Y_x) \cap Y_x^{\uparrow} }$. 
By Proposition~\ref{prop:sep}, we deduce that $x \in { \mathfrak{d}(Y_x) \cap Y_x^{\uparrow} } = { Y_x^{\vee} }$. 
This shows that $x$ is a sup of $Y_x$, hence also a sup of ${ \downarrow\!\! x } \cap { \cp_{\cc} E }$, as required. 
\end{proof}


\begin{example}[Example~\ref{ex:infconvex5} continued]\label{ex:infconvex6}
We discuss here the representation by prime elements in Noetherian distributive lattices. 
Let $L$ be a distributive lattice, i.e.\ a semilattice in which every pair $\{ x, y \}$ has a sup $x \vee y$, and such that ${ x \vee (y \wedge z) } = { (x \vee y) \wedge (x \vee z) }$, for all $x, y, z \in L$. 
We assume that $L$ is Noetherian, in the sense that every order-ideal of $L$ is principal (see Remark~\ref{rk:noetherian} on Noetherian posets). 
In particular, $L$ has a top. 

Consider the finitary closure operator $\cc = \langle \cdot \rangle_{\mathrsfs{U}}$, where $\mathrsfs{U}$ denotes the collection of upper, inf-closed subsets. 
Note that $\cc$ dually separates points. 
Moreover, $L$ being distributive, the set of $\cc$-compact points of $(L, \geqslant)$, which agrees with the set of prime elements of $L$, coincides with $\ex_{\cc^{\downarrow}} L$. 
Also, $L$ being Noetherian, every subset of $L$ satisfies the ascending chain condition, so is preinductive by Lemma~\ref{lem:preinductive}. 
By Proposition~\ref{prop:kitted}, $\cc$ is kitted (for the dual quasiorder). 
Therefore, by the previous theorem, every element of $L$ is the inf of a finite antichain of prime elements, and this antichain is unique. 
\end{example}

\begin{example}[Example~\ref{ex:ring2} continued]\label{ex:ring3}
Let $R$ be a PID, quasi-ordered by the dual divisibility relation $\leqslant$. 
Then the quotient $R / {\sim}$ of $R$ by the order-equivalence relation $\sim$ (which identifies associate elements) is a distributive lattice. 
Moreover, $R / {\sim}$ is a Noetherian poset. 
By the previous example, every element of $R$ is thus an inf of a (possibly empty) finite antichain of prime elements of $(R, \leqslant)$, and this antichain is unique up to order-equivalence. 
If we admit that every prime element of $(R, \leqslant)$ is order-equivalent to the power of some prime element of $(R, \times)$, we obtain the classical result that $R$ is a unique factorization domain. 
\end{example}

\begin{acknowledgements}
I gratefully thank Prof.\ Jimmie D.\ Lawson; the personal communication \cite{Lawson11} he sent me at the time I was closing my PhD thesis is the essence of Section~\ref{subsec:repthm}, and was the starting point of this paper. 
\end{acknowledgements}

\bibliographystyle{plain}

\begin{thebibliography}{10}

\bibitem{Bachir18}
Mohammed Bachir.
\newblock On the {K}rein--{M}ilman--{K}y {F}an theorem for convex compact
  metrizable sets.
\newblock {\em Illinois J. Math.}, 62(1-4):1--24, 2018.

\bibitem{BirkhoffFrink48}
Garrett Birkhoff and Orrin Frink, Jr.
\newblock Representations of lattices by sets.
\newblock {\em Trans. Amer. Math. Soc.}, 64:299--316, 1948.

\bibitem{Cohn81}
Paul~M. Cohn.
\newblock {\em Universal algebra}, volume~6 of {\em Mathematics and its
  Applications}.
\newblock D. Reidel Publishing Co., Dordrecht-Boston, Mass., second edition,
  1981.

\bibitem{Edelman85}
Paul~H. Edelman and Robert~E. Jamison.
\newblock The theory of convex geometries.
\newblock {\em Geom. Dedicata}, 19(3):247--270, 1985.

\bibitem{Erne87b}
Marcel Ern\'e.
\newblock Compact generation in partially ordered sets.
\newblock {\em J. Austral. Math. Soc. Ser. A}, 42(1):69--83, 1987.

\bibitem{Erne06b}
Marcel Ern\'e.
\newblock Prime and maximal ideals of partially ordered sets.
\newblock {\em Math. Slovaca}, 56(1):1--22, 2006.

\bibitem{Erne83}
Marcel Ern{\'e} and Gerhard Wilke.
\newblock Standard completions for quasiordered sets.
\newblock {\em Semigroup Forum}, 27(1-4):351--376, 1983.

\bibitem{Ershov99}
Yu~L. Ershov.
\newblock On {$d$}-spaces.
\newblock volume 224, pages 59--72. 1999.
\newblock Logical foundations of computer science (Yaroslavl, 1997).

\bibitem{Fan63}
Ky~Fan.
\newblock On the {K}rein--{M}ilman theorem.
\newblock In {\em Proc. {S}ympos. {P}ure {M}ath., {V}ol. {VII}}, pages
  211--219. Amer. Math. Soc., Providence, RI, 1963.

\bibitem{Gierz03}
Gerhard Gierz, Karl~Heinrich Hofmann, Klaus Keimel, Jimmie~D. Lawson,
  Michael~W. Mislove, and Dana~S. Scott.
\newblock {\em Continuous lattices and domains}, volume~93 of {\em Encyclopedia
  of Mathematics and its Applications}.
\newblock Cambridge University Press, Cambridge, 2003.

\bibitem{Goubault13}
Jean Goubault-Larrecq.
\newblock {\em Non-{H}ausdorff topology and domain theory}, volume~22 of {\em
  New Mathematical Monographs}.
\newblock Cambridge University Press, Cambridge, 2013.
\newblock [On the cover: Selected topics in point-set topology].

\bibitem{HofmannLawson76}
Karl~Heinrich Hofmann and Jimmie~D. Lawson.
\newblock Irreducibility and generation in continuous lattices.
\newblock {\em Semigroup Forum}, 13(4):307--353, 1976/77.

\bibitem{Jamison74}
Robert~E. Jamison.
\newblock {\em A general theory of convexity}.
\newblock PhD thesis, University of Washington, Seattle, USA, 1974.

\bibitem{Jiang08}
Guanghao Jiang and Luoshan Xu.
\newblock Maximal ideals relative to a filter on posets and some applications.
\newblock {\em Int. J. Contemp. Math. Sci.}, 3(9-12):401--410, 2008.

\bibitem{Kadison51}
Richard~V. Kadison.
\newblock Order properties of bounded self-adjoint operators.
\newblock {\em Proc. Amer. Math. Soc.}, 2:505--510, 1951.

\bibitem{Keimel09b}
Klaus Keimel and Jimmie~D. Lawson.
\newblock D-completions and the {$d$}-topology.
\newblock {\em Ann. Pure Appl. Logic}, 159(3):292--306, 2009.

\bibitem{Krein40}
Mark Krein and David Milman.
\newblock On extreme points of regular convex sets.
\newblock {\em Studia Math.}, 9:133--138, 1940.

\bibitem{Lawson11}
Jimmie~D. Lawson.
\newblock Personal communication, 2011.

\bibitem{LawsonXu24}
Jimmie~D. Lawson and Xiaoquan Xu.
\newblock ${T}_0$-spaces and the lower topology.
\newblock {\em Math. Struct. Comput. Sci.}, pages 1--24, 2024.

\bibitem{Markowsky76}
George Markowsky.
\newblock Chain-complete posets and directed sets with applications.
\newblock {\em Algebra Universalis}, 6(1):53--68, 1976.

\bibitem{Martinez72}
Jorge Martinez.
\newblock Unique factorization in partially ordered sets.
\newblock {\em Proc. Amer. Math. Soc.}, 33:213--220, 1972.

\bibitem{Nachbin65}
Leopoldo Nachbin.
\newblock {\em Topology and order}.
\newblock Translated from the Portuguese by Lulu Bechtolsheim. Van Nostrand
  Mathematical Studies, No. 4. D. Van Nostrand Co., Inc., Princeton,
  N.J.-Toronto, Ont.-London, 1965.

\bibitem{Poncet11}
Paul Poncet.
\newblock {\em Infinite-dimensional idempotent analysis: the role of continuous
  posets}.
\newblock PhD thesis, \'Ecole Polytechnique, Palaiseau, France, 2011.

\bibitem{Poncet12c}
Paul Poncet.
\newblock Convexities on ordered structures have their {K}rein--{M}ilman
  theorem.
\newblock {\em Journal of Convex Analysis}, 21(1):89--120, 2014.

\bibitem{Poncet22}
Paul Poncet.
\newblock Transporting continuity properties from a poset to its subposets.
\newblock {\em Theoret. Comput. Sci.}, 912:109--132, 2022.

\bibitem{Ranzato02}
Francesco Ranzato.
\newblock Pseudocomplements of closure operators on posets.
\newblock {\em Discrete Math.}, 248(1-3):143--155, 2002.

\bibitem{vanDeVel93}
Marcel L.~J. van~de Vel.
\newblock {\em Theory of convex structures}, volume~50 of {\em North-Holland
  Mathematical Library}.
\newblock North-Holland Publishing Co., Amsterdam, 1993.

\bibitem{Wallace45}
Alexander~D. Wallace.
\newblock A fixed-point theorem.
\newblock {\em Bull. Amer. Math. Soc.}, 51:413--416, 1945.

\bibitem{Wieczorek89}
Andrzej Wieczorek.
\newblock Spot functions and peripherals: {K}rein--{M}ilman type theorems in an
  abstract setting.
\newblock {\em J. Math. Anal. Appl.}, 138(2):293--310, 1989.

\end{thebibliography}

\def\cprime{$'$} \def\cprime{$'$} \def\cprime{$'$} \def\cprime{$'$}
  \def\ocirc#1{\ifmmode\setbox0=\hbox{$#1$}\dimen0=\ht0 \advance\dimen0
  by1pt\rlap{\hbox to\wd0{\hss\raise\dimen0
  \hbox{\hskip.2em$\scriptscriptstyle\circ$}\hss}}#1\else {\accent"17 #1}\fi}
  \def\ocirc#1{\ifmmode\setbox0=\hbox{$#1$}\dimen0=\ht0 \advance\dimen0
  by1pt\rlap{\hbox to\wd0{\hss\raise\dimen0
  \hbox{\hskip.2em$\scriptscriptstyle\circ$}\hss}}#1\else {\accent"17 #1}\fi}

\end{document}